\newcommand{\bbN}{{\mathbf N}}
\newcommand{\bbQ}{{\mathbf Q}}
\newcommand{\bbR}{{\mathbf R}}
\newcommand{\bbZ}{{\mathbf Z}}
\newcommand{\bbC}{{\mathbf C}}
\newcommand{\bbT}{{\mathbf T}}
\newcommand{\bbF}{{\mathbf F}}
\newcommand{\HH}{{\mathbf H}}
\newcommand{\scrC}{\mathscr{C}}
\newcommand{\scrF}{\mathscr{F}}
\newcommand{\scrR}{\mathscr{R}}
\newcommand{\scrH}{\mathscr{H}}
\newcommand{\scrS}{\mathscr{S}}
\newcommand{\SL}{\operatorname{SL}}
\newcommand{\acts}{\curvearrowright}
\newcommand{\Rel}{\scrR}
\newcommand{\Srel}{\scrS}
\newcommand{\Isom}{\operatorname{Isom}}
\newcommand{\hypsp}{{\mathbb H}}
\newcommand{\Ad}{\operatorname{Ad}}
\newcommand{\Aut}{\operatorname{Aut}}
\newcommand{\Inn}{\operatorname{Inn}}
\newcommand{\Out}{\operatorname{Out}}
\newcommand{\II}{\operatorname{II}}
\newcommand{\cost}{\operatorname{cost}}
\newcommand{\gcost}{\operatorname{\scrC}}
\newcommand{\gcostinf}{\operatorname{\scrC}_{*}}
\newcommand{\gcostsup}{\operatorname{\scrC}^{*}}
\newcommand{\Grps}{\operatorname{\mathscr{C}}}
\newcommand{\Gdsc}{\operatorname{\mathscr{G}_{\rm dsc}}}
\newcommand{\Gcpt}{\operatorname{\mathscr{G}_{\rm cpt}}}
\newcommand{\Creg}{\operatorname{\mathcal{C}_{\rm reg}}}
\newcommand{\Cmix}{\operatorname{\mathcal{C}}}
\newcommand{\Dyn}{\operatorname{\mathcal{D}}}
\newcommand{\Dynam}{\operatorname{\mathcal{D}_{\rm ea}}}
\newcommand{\Ufin}{\operatorname{\mathscr{U}_{\rm fin}}}
\newcommand{\Ginv}{\operatorname{\mathscr{G}_{\rm binv}}}
\newcommand{\Galg}{\operatorname{\mathscr{G}_{\rm alg}}}
\newcommand{\rank}{\operatorname{rank}}
\newcommand{\srank}{\operatorname{RG}}
\newcommand{\me}{\stackrel{\text{\rm\tiny ME}}{\sim}}
\newcommand{\ME}[1]{{\rm ME}({#1})}
\newcommand{\qi}{\stackrel{\text{\rm\tiny QI}}{\sim}}
\newcommand{\QI}[1]{{\rm QI}{#1}}
\newcommand{\ore}{\stackrel{\text{\rm\tiny OE}}{\sim}}
\newcommand{\soe}{\stackrel{\text{\rm\tiny SOE}}{\sim}}
\newcommand{\overto}[1]{{\buildrel{#1}\over\longrightarrow}}
\newcommand{\overfrom}[1]{{\buildrel{#1}\over\longleftarrow}}
\newcommand{\setdef}[2]{ \left\{ {#1}\ :\ {#2} \right\} }
\newcommand{\embf}[1]{{\bf {#1}}}
\newtheorem{theorem}{Theorem}[section]
\newtheorem{thm}[theorem]{Theorem}
\newtheorem{lemma}[theorem]{Lemma}
\newtheorem{corollary}[theorem]{Corollary}
\newtheorem{cor}[theorem]{Corollary}
\newtheorem{prop}[theorem]{Proposition}
\newtheorem{question}[theorem]{Question}
\newtheorem*{ques*}{Question}
\theoremstyle{definition}
\newtheorem{definition}[theorem]{Definition}
\newtheorem{defn}[theorem]{Definition}
\newtheorem{example}[theorem]{Example}
\newtheorem{remark}[theorem]{Remark}
\numberwithin{equation}{section}
\begin{document}

\title{A survey of Measured Group Theory}

\author{Alex Furman}\thanks{Supported by NSF Grant DMS 0604611, and BSF Grant 2004345}
\address{University of Illinois at Chicago}

\begin{center}
	To Robert Zimmer on the occasion of his 60th birthday. 
\end{center}

\begin{abstract}
Measured Group Theory is an area of research that studies infinite groups 
using measure-theoretic tools, and studies the restrictions that group structure 
imposes on ergodic-theoretic properties of their actions.
The paper is a survey of recent developments focused on the notion of Measure Equivalence
between groups, and Orbit Equivalence between group actions.
\end{abstract}

\date{\today}.

\maketitle
\tableofcontents


\section{Introduction} 
\label{sec:introduction}

This survey concerns an area of mathematics which studies infinite countable groups 
using measure-theoretic tools, and studies Ergodic Theory of group actions, 
emphasizing the impact of group structure on the actions.
\emph{Measured Group Theory} is a particularly fitting title as it suggests
an analogy with \emph{Geometric Group Theory}.  
The origins of Measured Group Theory go back to the seminal paper of Robert Zimmer \cite{Zimmer:cocyclesuper:80},
which established a deep connection between questions on Orbit Equivalence in Ergodic Theory to
Margulis' celebrated superrigidity theorem for lattices in semi-simple groups.  
The notion of amenable actions, introduced by Zimmer in an earlier work \cite{Zimmer:Amenable:78},
became an indispensable tool in the field.
Zimmer continued to study orbit structures of actions of large groups
in \cite{Zimmer:OE:1981, Zimmer:1981:cohom, MR672181, Zimmer:1982:Bull, Zimmer:1983:products, 
Zimmer:book:84, Feldman+Sutherland+Zimmer:1989, Cowling+Zimmer:89sp, Zimmer:1991:trans} and \cite{Stuck+Zimmer:1994}. 
The monograph \cite{Zimmer:book:84} had a particularly big impact on both ergodic theorists
and people studying big groups, as well as researchers in other areas, such as Operator Algebras
and Descriptive Set Theory\footnote{
 Zimmer's cocycle superrigidity proved in \cite{Zimmer:cocyclesuper:80} plays a central role in another
area of research, vigorously pursued by Zimmer and other, concerning actions of
large groups on manifolds. 
David Fisher surveys this direction in \cite{Fisher:survey} in this volume.}.

In the recent years several new layers of results have been added to what
we called Measured Group Theory, and this paper aims to give an overview of the current state of the subject.
Such a goal is unattainable -- any survey is doomed to be partial, biased, and outdated before it 
appears.
Nevertheless, we shall try our best, hoping to encourage further interest in this topic.
The reader is also referred to Gaboriau's paper \cite{Gaboriau:2005exmps}, which contains
a very nice overview of some of the results discussed here,
and to Shalom's survey \cite{Shalom:2005ECM} which is even closer to the present paper (hence the similarity of the titles).
The monographs by Kechris and Miller 
\cite{Kechris+Miller:2004book} and the forthcoming one \cite{Kechris:new} by Kechris
include topics in Descriptive Set Theory related to Measured Group Theory.
Readers interested in connections to von Neumann algebras are referred to Vaes' \cite{Vaes:2007sb}, 
Popa's \cite{Popa:2007ICM}, and references therein.

The scope of this paper is restricted to interaction of infinite Groups with Ergodic theory, 
leaving out the connections to the theory of von Neumann algebras
and Descriptive Set Theory. 
When possible, we try to indicate proofs or ideas of proofs for the stated results. 
In particular, we chose to include a proof of one
cocycle superrigidity theorem \ref{T:GammaonK},
which enables a self-contained presentation of a number
of important results: a very rigid equivalence relation (Theorem~\ref{T:OE-rigidity-combined})
with trivial fundamental group and outer automorphism group (Theorem \ref{T:Out1}),
an equivalence relation which cannot be generated by an essentially free action of any group (\S \ref{ssub:FM_question}).

\medskip

\subsection*{Disclaimer}
As usual, the quoted results are often presented not in the full possible generality,
so the reader should consult the original papers for full details.
The responsibility for inaccuracies, misquotes and other flaws lies solely with the author
of these notes.

\subsection*{Acknowledgements}
I would like to express my deep appreciation to Bob Zimmer for his singular contribution
to the subject. 
I would also like to thank Miklos Abert, Aur\'elien Alvarez, Uri Bader, Damien Gaboriau, 
Alexander Kechris, Sorin Popa, Yehuda Shalom, and the referee for the corrections and 
comments on the earlier drafts of the paper.

\subsection*{Organization of the paper}
The paper is organized as follows: the next section is devoted 
to a general introduction that emphasizes the relations between Measure Equivalence, 
Quasi-Isometry and Orbit Equivalence in Ergodic Theory.
One may choose to skip most of this, but read Definition~\ref{D:ME} and the following remarks.
Section~\ref{sec:measure_equivalence} concerns groups considered up to Measure Equivalence.
Section~\ref{sec:equivalence_relations} focuses on the notion of equivalence relations with orbit relations
as a prime (but not only) example.
In both of these sections we consider separately the \emph{invariants} of the studied objects
(groups and relations) and \emph{rigidity} results, which pertain to questions of classification.
Section~\ref{sec:techniques} describes the main techniques used in these theories (mostly for rigidity):
a discussion of superrigidity phenomena
and some of the ad hoc tools used in the subject; generalities on cocycles appear 
in the appendix \ref{sec:cocycles}.

\newpage
\section{Preliminary discussion and remarks} 
\label{sec:general_discussion}

This section contains an introduction to Measure Equivalence and related topics
and contains a discussion of this framework.
Readers familiar with the subject (especially definition~\ref{D:ME} and the following remarks) 
may skip to the next section in the first reading.

\bigskip

There are two natural entry points to Measured Group Theory, corresponding
to the ergodic-theoretic and group-theoretic perspectives. 
Let us start from the latter.

\bigskip

\subsection{Lattices, and other countable groups} 
\label{sub:groups}\hfill\\

When should two infinite discrete groups be viewed as closely related?
Isomorphism of abstract groups is an obvious, maybe trivial, answer.
The next degree of closeness would be \embf{commensurability}: two groups 
are commensurable if they contain isomorphic subgroups of finite index. 
This relation might be relaxed a bit further, by allowing to pass to 
a quotient modulo finite normal subgroups. 
The algebraic notion of being commensurable, modulo finite kernels, 
can be vastly generalized in two directions: Measure Equivalence (Measured Group Theory)
and Quasi-Isometry (Geometric Group Theory). 

\medskip

The key notion discussed in this paper is that of \embf{Measure Equivalence} of groups, 
introduced by Gromov in \cite[0.5.E]{Gromov:1993asinv}.

\begin{defn}\label{D:ME}
Two infinite discrete countable groups $\Gamma$, $\Lambda$ are \embf{Measure Equivalent} 
(abbreviated as ME, and denoted $\Gamma\me \Lambda$) if there exists an infinite measure space $(\Omega,m)$ 
with a measurable, measure preserving 
action of $\Gamma\times \Lambda$, so that both actions $\Gamma\acts (\Omega,m)$ and 
$\Lambda\acts (\Omega,m)$ admit finite measure fundamental domains $Y,X\subset\Omega$: 
\[
	\Omega=\bigsqcup_{\gamma\in\Gamma} \gamma Y=\bigsqcup_{\lambda\in\Lambda} \lambda X.
\]
The space $(\Omega,m)$ is called a $(\Gamma,\Lambda)$-\embf{coupling} or ME-coupling.
The \embf{index} of $\Gamma$ to $\Lambda$ in $\Omega$ is the ratio of the measures 
of the fundamental domains 
\[
	[\Gamma:\Lambda]_\Omega=\frac{m(X)}{m(Y)}\quad 
		\left(=\frac{{\rm meas}(\Omega/\Lambda)}{{\rm meas}(\Omega/\Gamma)}\right).
\]
\end{defn}

\medskip

We shall motivate this definition after making a few immediate comments. 

\medskip

\embf{(a)} The index $[\Gamma:\Lambda]_\Omega$ is well defined -- it does not depend on the choice of the 
fundamental domains $X$, $Y$ for $\Omega/\Lambda$, $\Omega/\Gamma$ respectively,
because their measures are determined by the group actions on $(\Omega,m)$.
However, a given pair $(\Gamma,\Lambda)$ might have ME-couplings with different indices
(the set $\{[\Gamma:\Lambda]_\Omega\}$ is a coset of a subgroup of $\bbR^*_+$ corresponding
to possible indices $[\Gamma:\Gamma]_\Omega$ of self $\Gamma$-couplings. 
Here it makes sense to focus on \emph{ergodic} couplings only). 

\medskip

\embf{(b)} Any ME-coupling can be decomposed into an integral over a probability space
of \emph{ergodic} ME-couplings, i.e., ones for which the $\Gamma\times\Lambda$-action
is ergodic.

\medskip

\embf{(c)} Measure Equivalence is indeed an \emph{equivalence relation} between groups:
for any countable $\Gamma$ the action of $\Gamma\times\Gamma$ on $\Omega=\Gamma$
with the counting measure $m_\Gamma$ by $(\gamma_1,\gamma_2):\gamma\mapsto \gamma_1\gamma\gamma_2^{-1}$
provides the \embf{trivial} self ME-coupling, giving reflexivity; symmetry is obvious from the 
definition\footnote{ One should formally distinguish between $(\Omega,m)$ as a $(\Gamma,\Lambda)$
coupling, and the same space with the same actions as a $(\Lambda,\Gamma)$
coupling; hereafter we shall denote the latter by $(\check{\Omega},\check{m})$.
Example~\ref{E:ME-lattices} illustrates the need to do so.};    
while transitivity follows from the following construction of \embf{composition},
or \embf{fusion}, of ME-couplings. 
If $(\Omega,m)$ is a $(\Gamma_1,\Gamma_2)$ coupling and $(\Omega',m')$ 
is a $(\Gamma_2,\Gamma_3)$ coupling, then the quotient $\Omega''=\Omega\times_{\Gamma_2}\Omega'$
of $\Omega\times\Omega'$ under the diagonal action 
$\gamma_2:(\omega,\omega')\mapsto (\gamma_2\omega,\gamma_2^{-1}\omega')$
inherits a measure $m''=m\times_{\Gamma_2} m'$ so that $(\Omega'',m'')$ becomes
a $(\Gamma_1,\Gamma_3)$ coupling structure. The indices satisfy:
\[
	[\Gamma_1:\Gamma_3]_{\Omega''}=[\Gamma_1:\Gamma_2]_{\Omega}\cdot[\Gamma_2:\Gamma_3]_{\Omega'}.
\]

\medskip

\embf{(d)} The notion of ME can be extended to the broader class of all \emph{unimodular locally compact
second countable} groups: a ME-coupling of $G$ and $H$ is a measure space
$(\Omega,m)$ with measure space isomorphisms 
\[
	i:(G,m_G)\times (Y,\nu)\cong(\Omega,m),\qquad j:(H,m_H)\times (X,\mu)\cong (\Omega,m)
\]
with $(X,\mu)$, $(Y,\nu)$ being \emph{finite} measure spaces, so that 
the actions $G\acts (\Omega,m)$, $H\acts (\Omega,m)$
given by $g:i(g',y)\mapsto i(gg',y)$, $h:j(h',x)\mapsto j(hh',x)$, commute. 
The index is defined by $[G:H]_\Omega=\mu(X)/\nu(Y)$.

\medskip

\embf{(e)} Measure Equivalence between countable groups can be viewed as
a \emph{category}, whose \emph{objects} are countable groups
and \emph{morphisms} between, say $\Gamma$ and $\Lambda$, 
are possible $(\Gamma,\Lambda)$ couplings.
Composition of morphisms is the operation of composition of ME-couplings
as in (c).
The trivial ME-coupling $(\Gamma,m_\Gamma)$ is nothing but the \emph{identity} of the object $\Gamma$.
It is also useful to consider \embf{quotient maps} $\Phi:(\Omega_1,m_1)\to(\Omega_2,m_2)$
between $(\Gamma,\Lambda)$-couplings (these are 2-morphisms in the category),
which are assumed to be $\Gamma\times\Lambda$ non-singular maps, i.e., $\Phi_*[m_1]\sim m_2$.
Since preimage of a fundamental domain is a fundamental domain, it follows 
(under ergodicity assumption) that $m_1(\Phi^{-1}(E))=c\cdot m_2(E)$, $E\subset \Omega_2$, where $0<c<\infty$.
ME self couplings of $\Gamma$ which have the trivial
$\Gamma$-coupling are especially useful, their cocycles are conjugate to isomorphisms. 
Similarly, $(\Gamma,\Lambda)$-couplings which have 
a discrete coupling as a quotient, correspond to virtual isomorphisms (see Lemma~\ref{L:v-iso}).

\medskip

\embf{(f)} Finally, one might relax the definition of quotients by considering equivariant
maps $\Phi:\Omega_1\to\Omega_2$ between $(\Gamma_i,\Lambda_i)$-couplings $(\Omega_i,m_i)$
with respect to homomorphisms $\Gamma_1\to\Gamma_2$, $\Lambda_1\to\Lambda_2$
with finite kernels and co-kernels.

\medskip

Gromov's motivation for ME comes from the theory of lattices.
Recall that a subgroup $\Gamma$ of a locally compact second countable (\embf{lcsc} for short) group $G$
is a \embf{lattice} if $\Gamma$ is discrete in $G$ and the quotient space $G/\Gamma$
carries a finite $G$-invariant Borel regular measure (necessarily unique up to normalization);
equivalently, if the $\Gamma$-action on $G$ by left (equivalently, right) translations
admits a Borel fundamental domain of finite positive Haar measure.
A discrete subgroup $\Gamma<G$ with $G/\Gamma$ being compact is automatically a lattice. 
Such lattices are called \embf{uniform} or \embf{cocompact}; others are \embf{non-uniform}. 
The standard example of a non-uniform lattice is $\Gamma=\SL_n(\bbZ)$ in $G=\SL_n(\bbR)$.
Recall that a lcsc group which admits a lattice is necessarily unimodular.

A common theme in the study of lattices (say in Lie, or algebraic groups over local fields) 
is that certain properties of the ambient group are inherited by its lattices.
From this perspective it is desirable to have a general framework in which 
lattices in the same group are considered equivalent.
Measure Equivalence provides such a framework.

\begin{example}\label{E:ME-lattices}
If $\Gamma$ and $\Lambda$ are lattices in the same lcsc group $G$, then $\Gamma\me \Lambda$;
the group $G$ with the Haar measure $m_G$ is a $(\Gamma,\Lambda)$ coupling
where 
\[
	(\gamma,\lambda):g\mapsto \gamma g\lambda^{-1}.
\]
(In fact, $\Gamma\me G\me \Lambda$ if ME is considered in the broader 
context of  unimodular lcsc groups: $G\times\{pt\}\cong \Gamma\times G/\Gamma$).
This example also illustrates the fact that the \emph{dual} $(\Lambda,\Gamma)$-coupling $\check G$ 
is better related to the original $(\Gamma,\Lambda)$-coupling $G$ via $g\mapsto g^{-1}$ rather
than the identity map.
\end{example}	 

\medskip

In \embf{Geometric Group Theory} the basic notion of equivalence is \embf{quasi-isometry} (QI).
Two metric spaces $(X_i,d_i)$, $i=1,2$ are quasi-isometric (notation: $X_1\qi X_2$) 
if there exist maps $f:X_1\to X_2$, $g:X_2\to X_1$, and constants $M,A$ so that
\begin{align*}
	&d_2(f(x),f(x'))<M\cdot d_1(x,x')+A\qquad &(x,x'\in X_1)\\
	&d_2(g(y),g(y'))<M\cdot d_2(y,y')+A\qquad &(y,y'\in X_2)\\
	&d_1(g\circ f(x),x)<A\qquad &(x\in X_1) \\
	&d_2(f\circ g(y),y)<A\qquad &(y\in X_2).
\end{align*}
Two finitely generated groups are QI if their Cayley graphs (with respect to some/any
finite sets of generators) are QI as metric spaces.
It is easy to see that finitely generated groups commensurable modulo finite groups 
are QI.

\medskip

Gromov observes that QI between groups can be characterized as \embf{Topological Equivalence}
(TE) defined in the the following statement.

\begin{theorem}[{Gromov \cite[Theorem 0.2.${\rm C}'_2$]{Gromov:1993asinv}}]
	\label{T:QI-TE}
Two finitely generated groups $\Gamma$ and $\Lambda$ are quasi-isometric iff
there exists a locally compact space $\Sigma$ with a continuous action of $\Gamma\times\Lambda$,
where both actions $\Gamma\acts \Sigma$ and $\Lambda\acts \Sigma$ are properly discontinuous
and cocompact.
\end{theorem}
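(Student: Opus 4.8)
The plan is to prove both implications, treating the reverse direction (from a topological coupling to a quasi-isometry) as the soft, Milnor--\v{S}varc-type direction, and the forward direction (constructing $\Sigma$ from a quasi-isometry) as the substantive one. The guiding model is the topological incarnation of Example~\ref{E:ME-lattices}: if $\Gamma,\Lambda$ are \emph{cocompact} lattices in the same lcsc group $G$, then $\Sigma=G$ with the action $(\gamma,\lambda)\colon g\mapsto\gamma g\lambda^{-1}$ is exactly such a coupling, the properly discontinuous cocompact actions being left and right translations. The general statement asserts that every quasi-isometry is modelled on such a picture, with $\Sigma$ replaced by a suitable space of quasi-isometries.

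For the easy direction ($\Leftarrow$), assume such a $\Sigma$ is given and fix a base point $\sigma_0\in\Sigma$. Using that $\Lambda\acts\Sigma$ is cocompact, choose a compact $K$ with $\Lambda K=\Sigma$ and define $f\colon\Gamma\to\Lambda$ by selecting, for each $\gamma$, some $f(\gamma)\in\Lambda$ with $\gamma\sigma_0\in f(\gamma)K$; a symmetric choice using $\Gamma\acts\Sigma$ produces $g\colon\Lambda\to\Gamma$. The key mechanism is that proper discontinuity makes each \emph{displacement set} $\{\gamma\in\Gamma:\gamma L\cap L\neq\emptyset\}$ finite for compact $L$, which plays the role of ``bounded generation'' and yields the coarse-Lipschitz inequalities for $f$ and $g$; cocompactness yields coarse surjectivity, and commutativity of the two actions makes $g$ a coarse inverse of $f$. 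I expect this direction to require only careful bookkeeping of compact sets, with no real obstacle.

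For the forward direction ($\Rightarrow$), start from a quasi-isometry $f\colon\Gamma\to\Lambda$ with constants $(L,C)$ and set
\[
  \Sigma=\bigl\{\phi\colon\Gamma\to\Lambda \ :\ \phi \text{ is an }(L,C)\text{-quasi-isometry and } \sup_{x}d_\Lambda(\phi(x),f(x))<\infty\bigr\}
\]
(nonempty, as it contains $f$), topologized as a subspace of $\Lambda^\Gamma$ with the topology of pointwise convergence. Let $\Lambda$ act by post-composition, $(\lambda\cdot\phi)(x)=\lambda\phi(x)$, and $\Gamma$ by pre-composition with right translations, $(\gamma\cdot\phi)(x)=\phi(x\gamma)$; right-invariance of $d_\Gamma$ and left-invariance of $d_\Lambda$ show these are commuting actions preserving $\Sigma$. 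Local compactness is where finiteness of balls in a Cayley graph enters: fixing the value $\phi(e)=\lambda_0$ together with the upper Lipschitz bound confines each $\phi(x)$ to the finite ball $B(\lambda_0,Ld_\Gamma(x,e)+C)$, so the clopen slice $\{\phi:\phi(e)=\lambda_0\}$ sits inside a product of finite sets and is compact by Tychonoff --- an Arzel\`a--Ascoli phenomenon. Since $(\lambda\cdot\phi)(e)=\lambda\lambda_0$, the $\Lambda$-action permutes these compact slices simply transitively, so a single slice is a compact fundamental domain and $\Lambda\acts\Sigma$ is properly discontinuous and cocompact at once. For $\Gamma$, the lower quasi-isometry bound forces $d_\Gamma(e,\gamma)\le LC$ whenever $\gamma$ fixes a slice setwise, giving proper discontinuity, while coarse surjectivity of $f$ with a uniform constant $R_0$ lets one translate any $\phi$ so that $\phi(\gamma)=(\gamma\cdot\phi)(e)$ lands in $B(\lambda_0,R_0)$, making the finite union of slices over $B(\lambda_0,R_0)$ a compact fundamental domain.

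The hard part is to arrange \emph{all four} conditions --- local compactness together with proper discontinuity and cocompactness of \emph{both} actions --- on a single, honestly locally compact $\Sigma$. The tension is that the constraints needed for $\Gamma$-cocompactness (coarse surjectivity with a \emph{uniform} constant) and for tying $\Sigma$ to $f$ (bounded distance from $f$) are not closed conditions in the pointwise topology, so the compact slices above need not be closed in the ambient product and local compactness is not automatic. I would resolve this by passing to the closure of $\Sigma$ inside the upper- and lower-$(L,C)$-Lipschitz maps --- these two bounds \emph{are} closed conditions, hence define a locally compact, $\Gamma\times\Lambda$-invariant space --- and then checking that the coarse-surjectivity and bounded-distance estimates, while not closed pointwise, hold \emph{uniformly along orbits}, which is exactly what cocompactness requires. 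Verifying that this closure introduces no degenerate maps (non-surjective or collapsing ones) that would destroy $\Gamma$-cocompactness is the delicate point, and is where the genuine bi-Lipschitz and coarse-density properties of the original $f$, as opposed to those of a mere coarse-Lipschitz map, must be used.
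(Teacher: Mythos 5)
Your ``easy'' direction and your general strategy for the hard direction (realize $\Sigma$ inside $\Lambda^\Gamma$, act by pre-/post-composition, get local compactness from Tychonoff applied to the slices $\{\phi:\phi(e)=\lambda_0\}$) follow the paper's sketch. But the specific space you build does not work, and the failure is not the one you flag. The condition $\sup_x d_\Lambda(\phi(x),f(x))<\infty$ is not merely non-closed: it is \emph{not invariant} under the actions you define. With $d_\Lambda$ left-invariant (which you need so that post-composition by $\lambda$ preserves the QI constants), the displacement of $\lambda\cdot\phi$ from $\phi$ at $x$ is $d_\Lambda(\lambda\phi(x),\phi(x))=|\phi(x)^{-1}\lambda^{-1}\phi(x)|$, a conjugate of $\lambda^{-1}$, which is unbounded in $x$ whenever $\Lambda$ is nonabelian and $\phi$ has coarsely dense image. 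Concretely, for $\Gamma=\Lambda=\bbF_2$ and $f=\mathrm{id}$ one has $\sup_x d(\lambda x,x)=\sup_x|x^{-1}\lambda x|=\infty$, so $\lambda\cdot f\notin\Sigma$ although $f\in\Sigma$; the same problem hits your $\Gamma$-action under your stated convention that $d_\Gamma$ is right-invariant (and if you swap conventions, the actions preserve bounded distance but no longer preserve the constants). So the sentence ``these are commuting actions preserving $\Sigma$'' is false, and with it the simple transitivity of $\Lambda$ on the slices and the fundamental-domain arguments that rest on it.

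Your proposed repair---pass to the closure of $\Sigma$ inside the two-sided $(L,C)$-Lipschitz maps---cannot fix this, since the closure of a non-invariant set need not be invariant, and here it provably is not: in the $\bbF_2$ example every $\phi\in\Sigma$ induces the identity on $\partial\bbF_2$, and a quasi-isometry of a proper hyperbolic space inducing the identity boundary map lies within a uniform distance $D(L,C)$ of the identity; hence every element of $\Sigma$ \emph{and} of its pointwise closure is $D(L,C)$-close to $\mathrm{id}$, while $\lambda\cdot\psi$ then has unbounded displacement and lies outside the closure. The correct move is the opposite of what you propose: \emph{drop} the bounded-distance-from-$f$ condition altogether. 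Either take all $(L,C)$-quasi-isometries (two-sided bounds plus $C$-dense image), or---as the paper does---take the pointwise closure of the $\Gamma\times\Lambda$-\emph{orbit} of $f$; both sets are invariant, and both are honestly closed. Note also that, contrary to your ``delicate point,'' coarse surjectivity with a uniform constant \emph{is} closed under pointwise limits: if $d(\phi_n(x_n),\lambda)\le C$, the uniform lower QI bound gives $d_\Gamma(x_n,e)\le L\bigl(2C+d(\lambda,\phi(e))\bigr)$ eventually, so the witnesses $x_n$ cannot escape to infinity and a constant subsequence does the job. With the bounded-distance condition removed, your own slice/Tychonoff arguments for local compactness, proper discontinuity and cocompactness of both actions go through essentially verbatim.
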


The space $X$ in the above statement is called a \embf{TE-coupling}.
Here is an idea for the proof.
Given a TE-coupling $\Sigma$ one obtains a quasi-isometry from any point $p\in \Sigma$
by choosing $f:\Gamma\to\Lambda$, $g:\Lambda\to \Gamma$ so that
$\gamma p\in f(\gamma) X$ and $\lambda p\in g(\lambda) Y$,
where $X,Y\subset\Sigma$ are open sets with compact closures
and $\Sigma=\bigcup_{\gamma\in\Gamma} \gamma Y=\bigcup_{\lambda\in\Lambda} \lambda X$.
To construct a TE-coupling $\Sigma$ from a quasi-isometry $f:\Gamma\to\Lambda$,
consider the pointwise closure of the $\Gamma\times\Lambda$-orbit
of $f$ in the space of all maps $\Gamma\to\Lambda$ where $\Gamma$ acts 
by pre-composition on the domain and $\Lambda$ by post-composition on the image.
For more details see the guided exercise in \cite[p. 98]{delaHarpe:GGTbook}.

\medskip

A nice instance of QI between groups is a situation where
the groups admit a common \embf{geometric model}.
Here a \embf{geometric model} for a finitely generated group
$\Gamma$ is a (complete) separable metric space $(X,d)$ 
with a properly discontinuous and cocompact 
action of $\Gamma$ on $X$ by \emph{isometries}.
If $X$ is a common geometric model for $\Gamma_1$ and $\Gamma_2$, then
$\Gamma_1\qi X\qi \Gamma_2$.
For example, fundamental groups $\Gamma_i=\pi_1(M_i)$
of compact locally symmetric manifolds $M_1$ and $M_2$
with the same universal cover $\tilde{M}_1\cong\tilde{M}_2=X$
have $X$ as a common geometric model.
Notice that the common geometric model $X$ itself does not serve 
as a TE-coupling because the actions of the two groups do not commute.
However, a TE-coupling can be explicitly constructed
from the group $G=\Isom(X,d)$, which is locally compact
(in fact, compactly generated due to finite generation assumption
on $\Gamma_i$) second countable group.
Indeed, the isometric actions $\Gamma_i\acts (X,d)$
define homomorphisms $\Gamma_i\to G$ with finite kernels
and images being uniform lattices.
Moreover, the converse is also true: if $\Gamma_1,\Gamma_2$
admit homomorphisms with finite kernels and images being
uniform lattices in the same compactly generated second countable group
$G$, then they have a common geometric model -- take $G$ with a (pseudo-)metric
arising from an analogue of a word metric using compact sets.

Hence all uniform lattices in the same group $G$ are QI to each other.
Yet, typically, non-uniform lattices in $G$ are not QI to uniform ones
-- see Farb's survey \cite{Farb:1997qi} for the QI classification for lattices 
in semi-simple Lie groups.

\medskip

\emph{To summarize this discussion}: the notion of Measure Equivalence
is an equivalence relation between countable groups,
an important instance of which is given by groups which can imbedded
as lattices (uniform or not) in the same lcsc group.
It can be viewed as a measure-theoretic analogue of the equivalence
relation of being Quasi-Isometric (for finitely generated groups),
by taking Gromov's Topological Equivalence point of view.
An important instance of QI/TE is given by groups
which can be imbedded as uniform lattices in the same lcsc group.
In this situation one has both ME and QI.
However, we should emphasize that this is merely an \emph{analogy}:
the notions of QI and ME do not imply each other. 


\bigskip

\subsection{Orbit Equivalence in Ergodic Theory} 
\label{sub:ergodic_theory_orbit_equivalence}\hfill\\

Ergodic Theory investigates dynamical systems
from measure-theoretic point of view. 
Hereafter we shall be interested in measurable, measure preserving group actions
on a standard non-atomic probability measure space, and will refer to such actions
as \emph{probability measure preserving} (\embf{p.m.p.}).
It is often convenient to assume the action to be \embf{ergodic}, i.e.,
to require all measurable $\Gamma$-invariant sets to be \embf{null}
or \embf{co-null} (that is $\mu(E)=0$ or $\mu(X\setminus E)=0$).

A basic question in this context concerns possible \emph{orbit structures} of actions.
Equivalence of orbit structures is captured by the following notions of Orbit Equivalence 
(the notion of an orbit structure itself is discussed in \S~\ref{sub:rel_basic_definition}).
\begin{definition}\label{D:OE}
Two p.m.p. actions $\Gamma\acts(X,\mu)$ and $\Lambda\acts (Y,\nu)$ are 
\embf{orbit equivalent} (abbreviated OE, denoted $\Gamma\acts(X,\mu)\ore \Lambda\acts (Y,\nu)$) 
if there exists a measure space isomorphism $T:(X,\mu)\cong (Y,\nu)$ which takes $\Gamma$-orbits
onto $\Lambda$-orbits.
More precisely, an orbit equivalence is a Borel isomorphism 
$T:X'\cong Y'$ between co-null subsets 
$X'\subset X$ and $Y'\subset Y$ with $T_*\mu(E)=\mu(T^{-1}E)=\nu(E)$, $E\subset Y'$ and
$T(\Gamma.x\cap X')=\Lambda.T(x)\cap Y'$ for $x\in X'$.

A \embf{weak OE}, or \embf{Stable OE} (SOE) is a Borel isomorphism $T:X'\cong Y'$ between
\emph{positive measure} subsets $X'\subset X$ and $Y'\subset Y$
with $T_*\mu_{X'}=\nu_{Y'}$, where $\mu_{X'}=\mu(X')^{-1}\cdot \mu|_{X'}$, $\nu_{Y'}=\nu(Y')^{-1}\cdot \nu|_{Y'}$,
so that $T(\Gamma.x\cap X')=\Lambda.T(x)\cap Y'$ for all $x\in X'$.
The \embf{index} of such SOE-map $T$ is $\mu(Y')/\nu(X')$.
\end{definition} 

\medskip

In the study of orbit structure of dynamical systems in the topological
or smooth category, one often looks at such objects as fixed or periodic
points/orbits. 
Despite the important role these notions play in the underlying dynamical system,
periodic orbits have zero measure and therefore are invisible from the purely measure-theoretic
standpoint. 
Hence OE in Ergodic Theory is a study of the global orbit structure.
This point of view is consistent with the general philosophy 
of "non-commutative measure theory", i.e. von Neumann algebras.
Specifically OE in Ergodic Theory is closely related to the theory of
$\II_1$ factors as follows.

In the 1940s Murray and von Neumann introduced the so called "group-measure space"
construction to provide interesting examples of von-Neumann factors\footnote{ von Neumann algebras whose center consists only
of scalars.}: given a probability measure preserving (or more generally, non-singular) group 
action $\Gamma\acts (X,\mu)$ the associated von-Neumann algebra 
$M_{\Gamma\acts X}$ is a cross-product of $\Gamma$ with the Abelian algebra $L^\infty(X,\mu)$,
namely the weak closure in bounded operators on $L^2(\Gamma\times X)$
of the algebra generated by the operators $\setdef{f(g,x)\mapsto f(\gamma g,\gamma.x)}{\gamma\in\Gamma}$
and $\setdef{f(g,x)\mapsto \phi(x)f(g,x)}{\phi\in L^\infty(X,\mu)}$.
Ergodicity of $\Gamma\acts (X,\mu)$ is equivalent to $M_{\Gamma\acts X}$ being a factor.
It turns out that (for \emph{essentially free}) OE actions $\Gamma\acts X\ore\Lambda\acts Y$
the associated algebras are isomorphic $M_{\Gamma\acts X}\cong M_{\Lambda\acts Y}$,
with the isomorphism identifying the Abelian subalgebras $L^\infty(X)$ and $L^\infty(Y)$.
The converse is also true (one has to specify, in addition, an element in $H^1(\Gamma\acts X,\bbT)$)
-- see Feldman-Moore \cite{Feldman+Moore:77I, Feldman+Moore:77II}.
So Orbit Equivalence of (essentially free p.m.p. group actions) fits into the study
of $\II_1$ factors $M_{\Gamma\acts X}$ with a special focus on the so called \emph{Cartan subalgebra} 
given by $L^\infty(X,\mu)$. 
We refer the reader to Popa's 2006 ICM lecture \cite{Popa:2007ICM} and Vaes' Seminar Bourbaki  paper
\cite{Vaes:2007sb} for some more recent reports on this rapidly developing area. 

\medskip

The above mentioned assumption of \embf{essential freeness} of an action $\Gamma\acts (X,\mu)$
means that, up to a null set, the action is free; equivalently, for $\mu$-a.e. $x\in X$
the stabilizer $\setdef{\gamma\in\Gamma}{\gamma.x=x}$ is trivial.
This is a natural assumption, when one wants the acting group $\Gamma$
to "fully reveal itself" in a.e. orbit of the action.   
Let us now link the notions of OE and ME.

\begin{theorem}\label{T:ME-SOE}
Two countable groups $\Gamma$ and $\Lambda$ are Measure Equivalent iff
they admit essentially free (ergodic) probability measure preserving actions
$\Gamma\acts (X,\mu)$ and $\Lambda\acts (Y,\nu)$ which are Stably Orbit Equivalent.
\end{theorem}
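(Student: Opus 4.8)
The plan is to prove the two implications separately, using the elementary dictionary between a $(\Gamma,\Lambda)$-coupling and the orbit relations carried by its two systems of fundamental domains. The single genuine difficulty is arranging \emph{essential freeness}, and I would handle it by a product trick.

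First I would treat ME $\Rightarrow$ SOE. Starting from an ergodic coupling $(\Omega,m)$ (reducing to the ergodic case by comment (b)) with fundamental domains $Y$ for $\Gamma$ and $X$ for $\Lambda$, I note that since the two actions commute, $\Lambda$ descends to a measure preserving action on $\Omega/\Gamma\cong Y$ and $\Gamma$ to one on $\Omega/\Lambda\cong X$; after normalizing $m|_X$ and $m|_Y$ these are p.m.p. The key observation is that, read inside $\Omega$, two points of $X$ lie in the same $\Gamma$-orbit for the quotient action exactly when they lie in the same $\Gamma\times\Lambda$-orbit of $\Omega$, and symmetrically for $Y$; hence both orbit relations are incarnations of the single $\Gamma\times\Lambda$-orbit relation. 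Since $m(X)=\sum_{\gamma}m(X\cap\gamma Y)$, after replacing $Y$ by a translate $\gamma_0^{-1}Y$ I may assume $Z:=X\cap Y$ has positive measure; then the identity map $Z\to Z$ (viewed as $Z\subset X$ on the source and $Z\subset Y$ on the target) automatically satisfies the normalization $T_*\mu_{X'}=\nu_{Y'}$ and carries $\Gamma$-orbits to $\Lambda$-orbits. This is precisely a stable orbit equivalence, of index matching the coupling index $[\Gamma:\Lambda]_\Omega$.

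The hard part is that the quotient actions $\Gamma\acts\Omega/\Lambda$ and $\Lambda\acts\Omega/\Gamma$ need not be essentially free (for lattices $\Gamma,\Lambda$ in a common $G$ they typically are not). I would remove this by enlarging the coupling: fix free weakly mixing p.m.p. actions $\Gamma\acts(A,\alpha)$ and $\Lambda\acts(B,\beta)$ (Bernoulli actions suffice) and let $\Gamma\times\Lambda$ act on $\Omega':=\Omega\times A\times B$ by acting on $\Omega$ as before, on $A$ through $\Gamma$ (trivially through $\Lambda$), and on $B$ through $\Lambda$ (trivially through $\Gamma$). Then $Y\times A\times B$ and $X\times A\times B$ are finite-measure fundamental domains for $\Gamma$ and $\Lambda$, so $\Omega'$ is again a $(\Gamma,\Lambda)$-coupling; and now $\Gamma\acts\Omega'/\Lambda$ is essentially free because its action on the $A$-coordinate is free ($\gamma a=a$ forces $\gamma=e$), and symmetrically $\Lambda\acts\Omega'/\Gamma$ via the $B$-coordinate. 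Weak mixing of the Bernoulli factors keeps the quotient actions ergodic (or one passes to ergodic components at the end). Applying the previous paragraph to $\Omega'$ yields the required SOE between essentially free ergodic p.m.p. actions.

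For the converse SOE $\Rightarrow$ ME I am handed essentially free ergodic actions $\Gamma\acts(X,\mu)$, $\Lambda\acts(Y,\nu)$ and an SOE $T\colon X'\to Y'$, so here essential freeness is a hypothesis and I may define the Zimmer OE-cocycle: for $x$ and $\gamma$ with both $x,\gamma x$ in the relevant section, freeness of the $\Lambda$-action makes the element $\alpha(\gamma,x)\in\Lambda$ determined by $T(\gamma x)=\alpha(\gamma,x)\,T(x)$ unambiguous, and the cocycle identity is routine. Modeling the honest-OE case ($X'=X$, $Y'=Y$) first, I build the coupling $\Omega=X\times\Lambda$ with the infinite measure $\mu\times m_\Lambda$, letting $\Gamma$ act by $\gamma\cdot(x,\lambda)=(\gamma x,\alpha(\gamma,x)\lambda)$ and $\Lambda$ by right translation $\lambda'\cdot(x,\lambda)=(x,\lambda\lambda'^{-1})$; these commute, $X\times\{e\}$ is a $\Lambda$-fundamental domain of finite measure, and the $\Gamma$-invariant, $\Lambda$-equivariant map $(x,\lambda)\mapsto\lambda^{-1}T(x)$ identifies $\Gamma\backslash\Omega$ with $(Y,\nu)$, exhibiting a finite-measure $\Gamma$-fundamental domain. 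Thus $\Gamma\me\Lambda$. The genuine stable case then follows by running the same construction over the sections $X',Y'$, i.e. using the return-cocycle of the restricted relation $\mathcal R_{\Gamma\acts X}|_{X'}\cong\mathcal R_{\Lambda\acts Y}|_{Y'}$, with the resulting fundamental-domain measures differing precisely by the SOE-index. This closes the equivalence.
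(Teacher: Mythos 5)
Your proof is correct and follows essentially the same route as the paper's: the ME $\Rightarrow$ SOE direction via restricting the $\Gamma\times\Lambda$-orbit relation to overlapping fundamental domains, with essential freeness repaired by passing to $\Omega\times A\times B$ with auxiliary free actions (the paper's $\Omega\times Z\times W$), and the SOE $\Rightarrow$ ME direction via the rearrangement cocycle and the coupling $X\times\Lambda$, deferring the genuinely stable case to an adjusted cocycle construction exactly as the paper does. Your added remarks (weak mixing to preserve ergodicity, the explicit quotient map $(x,\lambda)\mapsto\lambda^{-1}T(x)$) are harmless refinements of the same argument.
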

(SOE)$\ \Longrightarrow\ $(ME) direction is more transparent in the special case of Orbit
Equivalence, i.e., index one. Let $\alpha:\Gamma\times X\to\Lambda$ be
the cocycle associated to an orbit equivalence $T:(X,\mu)\to(Y,\nu)$ 
defined by $T(g.x)=\alpha(g,x).T(x)$ (here freeness of $\Lambda\acts Y$ is used).
Consider $(\Omega,m)=(X\times\Lambda,\mu\times m_\Lambda)$ with the actions 
\begin{equation}\label{e:XLambda}
	g: (x,h)\mapsto (g x,\alpha(g,x)h),\qquad
	h: (x,k)\mapsto (x,hk^{-1})\qquad (g\in\Gamma,\ h\in\Lambda).
\end{equation}
Then $X\times\{1\}$ is a common fundamental domain for both actions
(note that here freeness of $\Gamma\acts X$ is used).
Of course, the same coupling $(\Omega,m)$ can be viewed as $(Y\times\Gamma,\nu\times m_\Gamma)$
with the $\Lambda$-action defined using $\beta:\Lambda\times Y\to\Gamma$ given by
$T^{-1}(h.y)=\beta(h,y).T^{-1}(y)$.
In the more general setting of \emph{Stable} OE one needs to adjust the definition
for the cocycles (see \cite{Furman:OE:99}) to carry out a similar construction.

Alternative packaging for the (OE)$\ \Longrightarrow\ $(ME) argument uses
the language of equivalence relations (see \S \ref{sub:rel_basic_definition}). 
Identifying $Y$ with $X$ via $T^{-1}$, one views $\Rel_{\Lambda\acts Y}$
and $\Rel_{\Gamma\acts X}$ as a single relation $\Rel$.
Taking $\Omega=\Rel$ equipped with the measure $\tilde{\mu}$ (\ref{e:tildemu}) consider the actions
\[
	g:(x,y)\mapsto (g.x,y),\qquad h:(x,y)\mapsto (x,h.y)\qquad (g\in\Gamma,\ h\in\Lambda).
\]
Here the diagonal embedding $X\mapsto\Rel$, $x\mapsto (x,x)$, gives the fundamental domain
for both actions.

(ME)$\ \Longrightarrow\ $(SOE).
Given an ergodic $(\Gamma,\Lambda)$ coupling $(\Omega,m)$, let $X,Y\subset\Omega$ be
fundamental domains for the $\Lambda$, $\Gamma$ actions;
these may be chosen so that $m(X\cap Y)>0$.
The finite measure preserving actions 
\begin{equation}\label{e:MESEO}
	\Gamma\acts X\cong \Omega/\Lambda,\qquad\Lambda\acts Y\cong \Omega/\Gamma.
\end{equation}
have weakly isomorphic orbit relations, since they appear as the restrictions to $X$ and $Y$ of the 
relation $\Rel_{\Gamma\times\Lambda\acts \Omega}$ (of type $\II_\infty$); these restrictions coincide on $X\cap Y$.
The index of this SOE coincides with the ME-index $[\Gamma:\Lambda]_\Omega$
(if $[\Gamma:\Lambda]_\Omega=1$ one can find a common fundamental domain $X=Y$).
The only remaining issue is that the actions $\Gamma\acts X\cong\Omega/\Lambda$, 
$\Lambda\acts Y\cong\Omega/\Gamma$ may not be essential free.
This can be fixed (see \cite{Gaboriau:2000survey}) by passing to an extension 
$\Phi:(\bar{\Omega},\bar{m})\to(\Omega,m)$ 
where $\Gamma\acts \bar\Omega/\Lambda$ and $\Lambda\acts\bar\Omega/\Gamma$
are essentially free. Indeed, take $\bar\Omega=\Omega\times Z\times W$,
where $\Lambda\acts Z$ and $\Lambda\acts W$ are \emph{free probability measure preserving actions}
and let
\[
	g:(\omega,z,w)\mapsto (g\omega,g z,w),\qquad
	h:(\omega,z,w)\mapsto (h\omega,z,h w)\qquad(g\in \Gamma,\ h\in\Lambda).
\]
%
\begin{remark}
Freeness of actions is mostly used in order to \emph{define} the rearrangement cocycles
for a (stable) orbit equivalence between actions. However, if SOE comes from a ME-coupling
the well defined ME-cocycles \emph{satisfy} the desired rearrangement property (such as
$T(g.x)=\alpha(g,x).T(x)$) and freeness becomes superfluous.  

If $\Phi:\bar\Omega\to\Omega$ is as above, and $\bar{X}, \bar{Y}$ denote the preimages of $X,Y$, 
then $\bar{X}, \bar{Y}$ are $\Lambda, \Gamma$ fundamental domains, the OE-cocycles 
$\Gamma\acts \bar{X}\soe\Lambda\acts\bar Y$ coincide with the ME-cocycles associated with 
$X,Y\subset\Omega$. 

Another, essentially equivalent, point of view is that ME-coupling defines a \emph{weak isomorphism}
between the \embf{groupoids} $\Gamma\acts \Omega/\Lambda$ and $\Lambda\acts \Omega/\Gamma$.
In case of free actions these groupoids reduce to their \emph{relations groupoids},
but in general the information about stabilizers is carried by the ME-cocycles.  
\end{remark} 


\bigskip
 
\subsection{Further comments on QI, ME and related topics} 
\label{sub:further comments}\hfill\\

Let $\Sigma$ be Gromov's Topological Equivalence between $\Gamma$ and $\Lambda$.
Then any point $x\in\Sigma$ defines a quasi-isometry $q_x:\Gamma\to\Lambda$
(see the sketch of proof of Theorem~\ref{T:QI-TE}).
In ME the maps $\alpha(-,x):\Gamma\to\Lambda$ defined for a.e. $x\in X$
play a similar role. 
However due to their measure-theoretic nature, such maps are insignificant taken individually,
and are studied as a measured family with the additional structure given by
the cocycle equation.

\medskip

Topological and Measure Equivalences are related to the following interesting
notion, introduced by Nicolas Monod in \cite{Monod:2006-ICM} under the appealing term \embf{randomorphisms}.
Consider the Polish space $\Lambda^\Gamma$ of all maps $f:\Gamma\to \Lambda$ with the product uniform topology,
and let 
\[
	[\Gamma,\Lambda]=\setdef{f:\Gamma\to\Lambda}{f(e_\Gamma)=e_\Lambda}.
\]
Then $\Gamma$ acts on $[\Gamma,\Lambda]$ by $g: f(x)\mapsto f(xg)f(g)^{-1}$, $x\in \Gamma$.
The basic observation is that homomorphisms $\Gamma\to\Lambda$ are precisely $\Gamma$-fixed points
of this action.
\begin{defn} \label{D:randomorphism}
A \embf{randomorphism} is a $\Gamma$-invariant probability measure on $[\Gamma,\Lambda]$.
\end{defn}
A measurable cocycle $c:\Gamma\times X\to\Lambda$ over a p.m.p. action $\Gamma\acts (X,\mu)$
defines a randomorphism by pushing forward the measure $\mu$ by the cocycle $x\mapsto c(-,x)$.
Thus Orbit Equivalence cocycles (see \ref{sub:oe_cocycles}) correspond to randomorphisms
supported on \emph{bijections} in $[\Gamma,\Lambda]$.
Also note that the natural composition operation for randomorphisms, given by the push-forward
of the measures under the natural map
\[
	[\Gamma_1,\Gamma_2]\times[\Gamma_2,\Gamma_3]\to[\Gamma_1,\Gamma_3],\qquad (f,g)\mapsto g\circ f
\]
corresponds to composition of couplings.
The view point of topological dynamics of the $\Gamma$-action on $[\Gamma,\Lambda]$
may be related to quasi-isometries and Topological Equivalence. 
For example, points in $[\Gamma,\Lambda]$ with precompact $\Gamma$-orbits
correspond to Lipschitz embeddings $\Gamma\to\Lambda$.

\medskip

\subsubsection{Using ME for QI} 
\label{ssub:using_me_for_qi}\hfill\\

Although Measure Equivalence and Quasi Isometry are parallel in many ways,
these concepts are different and neither one implies the other.
Yet, Yehuda Shalom has shown \cite{Shalom:2006:acta} how one can use ME ideas to study QI of \emph{amenable} groups.
The basic observation is that a topological coupling $\Sigma$ of amenable groups $\Gamma$ and $\Lambda$
carries a $\Gamma\times\Lambda$-invariant measure $m$ (coming from a $\Gamma$-invariant \emph{probability measure} 
on $\Sigma/\Lambda$), which gives a measure equivalence. 
It can be thought of as an invariant distribution on quasi-isometries $\Gamma\to\Lambda$,
and can be used to induce unitary representations and cohomology with unitary coefficients etc.
from $\Lambda$ to $\Gamma$.
Using such constructions, Shalom \cite{Shalom:2006:acta} was able to obtain a list of new QI invariants in the class of amenable groups,
such as (co)-homology over $\bbQ$, ordinary Betti numbers $\beta_i(\Gamma)$ among nilpotent groups and others.
Shalom also studied the notion of \embf{uniform embedding} (UE) between groups 
and obtained group invariants which are \emph{monotonic} with respect to UE.

In \cite{Sauer:2006:GAFA} Roman Sauer obtains further QI-invariants and UE-monotonic invariants using a combination of
QI, ME and homological methods. 
 
In another work \cite{Sauer:2006p69} Sauer used ME point of view
to attack problems of purely topological nature, related to the work of Gromov.


\medskip

\subsubsection{$\ell^p$-Measure Equivalence} 
\label{ssub:_ell_p_measure_equivalence}\hfill\\

Let $\Gamma$ and $\Lambda$ be finitely generated groups, equipped with some word metrics 
$|\cdot |_\Gamma$, $|\cdot |_\Lambda$.
We say that a $(\Gamma,\Lambda)$ coupling $(\Omega,m)$ is $\ell^p$ for some $1\le p\le\infty$
if there exist fundamental domains $X,Y\subset\Omega$ so that the associated ME-cocycles 
(see \ref{sub:me_cocycles})
$\alpha:\Gamma\times X\to\Lambda$ and $\beta:\Lambda\times Y\to\Gamma$ satisfy
\[
	\forall g\in\Gamma:\quad |\alpha(g,-)|_\Lambda\in L^p(X,\mu),\qquad
	\forall h\in\Lambda:\quad |\beta(h,-)|_\Gamma\in L^p(Y,\nu).
\]
If an $\ell^p$-ME-coupling exists, say that $\Gamma$ and $\Lambda$ are $\ell^p$-ME.
Clearly any $\ell^p$-ME-coupling is $\ell^q$ for all $q\le p$.
So $\ell^1$-ME is the weakest and $\ell^\infty$-ME is the most stringent among these relations.
One can check that $\ell^p$-ME is an equivalence relation on groups (the $\ell^p$ condition is preserved under 
composition of couplings), so we obtain a hierarchy of $\ell^p$-ME categories with $\ell^1$-ME being the weakest 
(largest classes) and at $p=\infty$ one arrives at ME+QI.
Thus $\ell^p$-ME amounts to Measure Equivalence with some geometric flavor. 

The setting of $\ell^1$-ME is considered in \cite{Bader+Furman+Sauer:2009, Bader+Furman+Sauer:2010} by Uri Bader, Roman Sauer and the author
to analyze rigidity of the least rigid family of lattices -- lattices in ${\rm SO}_{n,1}(\bbR)\simeq\Isom(\HH^n_\bbR)$,
$n\ge 3$, and fundamental groups of general negatively curved manifolds. 
It should be noted, that examples of non-amenable ME groups which are not $\ell^1$-ME 
seem to be rare (surface groups and free groups seem to be the main culprits).
In particular, it follows from Shalom's computations in \cite{Shalom:2000p101} that for $n\ge 3$
all lattices in ${\rm SO}_{n,1}(\bbR)$ are mutually $\ell^1$-ME.
We shall return to invariants and rigidity in $\ell^1$-ME framework in \S \ref{ssub:dimension_and_simplicial_volume} 
and \S \ref{ssub:hyperbolic_lattices}.


    
%
%


\newpage
\section{Measure Equivalence between groups} 
\label{sec:measure_equivalence}


This section is concerned with the notion of Measure Equivalence between 
countable groups $\Gamma\me\Lambda$ (Definition~\ref{D:ME}).
First recall the following deep result (extending previous work of Dye \cite{Dye:1959, Dye:1963}
on some amenable groups, and followed by Connes-Feldman-Weiss \cite{Connes+Feldman+Weiss:81} 
concerning all non-singular actions of all amenable groups)
\begin{theorem}[{Ornstein-Weiss \cite{Ornstein+Weiss:80rohlin}}]
\label{T:Ornstein-Weiss}
Any two ergodic probability measure preserving actions of any two infinite countable
amenable groups are Orbit Equivalent. 
\end{theorem}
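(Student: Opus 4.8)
The plan is to factor the statement through the single structural feature that amenability confers on an orbit relation: \emph{hyperfiniteness}. First I would reduce the problem to a uniqueness statement. Since orbit equivalence is manifestly an equivalence relation among actions, it suffices to fix one ergodic p.m.p. action $\bbZ\acts(X_0,\mu_0)$ — say the dyadic odometer, or any ergodic $\bbZ$-action on a non-atomic probability space — and prove that \emph{every} ergodic p.m.p. action $\Gamma\acts(X,\mu)$ of an infinite countable amenable $\Gamma$ is orbit equivalent to it. Transitivity then matches any two such actions, of $\Gamma$ and of $\Lambda$, through this common model.

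The core is to show that the orbit relation $\Rel_{\Gamma\acts X}$ is hyperfinite, i.e. that, up to a null set, $\Rel_{\Gamma\acts X}=\bigcup_n\Rel_n$ is an increasing union of measurable subrelations $\Rel_n$ with finite equivalence classes. This is where amenability enters, and it is the step I expect to be the main obstacle. The engine is the Ornstein--Weiss quasi-tiling theorem: an amenable group admits, for every finite $K\subset\Gamma$ and every $\varepsilon>0$, finitely many $(K,\varepsilon)$-invariant F\o lner sets $F_1,\dots,F_k$ whose left translates \emph{quasi-tile} $\Gamma$, covering all but an $\varepsilon$-fraction while overlapping in at most an $\varepsilon$-fraction. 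Transporting this combinatorial fact into the dynamics via a Rokhlin-type lemma, one tiles all but an $\varepsilon$-portion of $(X,\mu)$ by measurable castles modeled on the $F_i$, whose column partitions refine to finite subrelations exhausting $\Rel_{\Gamma\acts X}$. Ergodicity guarantees that the limiting relation is the full orbit relation and that the measure type is $\II_1$.

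With hyperfiniteness in hand, the last step is uniqueness of the ergodic hyperfinite type $\II_1$ relation — essentially Dye's theorem at the level of relations. Given two such relations, each presented as an increasing union of finite subrelations, I would construct the orbit equivalence by a back-and-forth exhaustion: at stage $n$ one builds a measure isomorphism matching $\Rel_n$ on one side to a finite subrelation of comparable class-size on the other, arranging each choice to be consistent with the previous stage. Finiteness and measure preservation let one correct the matching on a controlled error set, and uniform summability of the corrections produces a genuine orbit equivalence in the limit. Since both $\Gamma\acts X$ and $\Lambda\acts Y$ are ergodic hyperfinite $\II_1$, each is orbit equivalent to the model, hence to one another.

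A remark on where the difficulty concentrates: ergodicity, measure preservation, and the passage from quasi-tilings to castles are essentially bookkeeping once the quasi-tiling theorem is granted, and the final back-and-forth is a soft limiting argument. The genuinely hard content is the quasi-tiling theorem itself, which is exactly what makes this result deep and special to the amenable setting — for non-amenable $\Gamma$ the orbit relation is typically non-hyperfinite and the conclusion fails.
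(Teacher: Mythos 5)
The paper offers no proof of this theorem -- it is quoted directly from Ornstein--Weiss \cite{Ornstein+Weiss:80rohlin} -- but your outline correctly reproduces the standard argument behind the citation: hyperfiniteness of the orbit relation via the quasi-tiling/Rokhlin machinery, followed by Dye-type uniqueness of the ergodic hyperfinite $\II_1$ relation. This is exactly the structure the survey itself alludes to in \S\ref{ssub:amenability_strong_ergodicity_property_t_} (amenable relations are hyperfinite and there is a unique amenable $\II_1$-relation $\Rel_{amen}$), so your proposal matches the intended route.
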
	
This result implies that all infinite countable amenable groups are ME; moreover for any two infinite amenable
groups $\Gamma$ and $\Lambda$ there exists an ergodic ME-coupling $\Omega$ with index $[\Gamma:\Lambda]_\Omega=1$
(hereafter we shall denote this situation by $\Gamma\ore\Lambda$).
Measure Equivalence of all amenable groups shows that many QI-invariants are not ME-invariants;
these include: growth type, being virtually nilpotent, (virtual) cohomological dimension, finite generations/presentation etc.

\medskip

The following are basic constructions and examples of Measure Equivalent groups:
\begin{enumerate}
	\item 
	If $\Gamma$ and $\Lambda$ can be embedded as lattices in the same lcsc group, then $\Gamma\me\Lambda$.
	\item\label{i:free}
	If $\Gamma_i\me\Lambda_i$ for $i=1,\dots,n$ then $\Gamma_1\times\cdots \times\Gamma_n\me \Lambda_1\times\cdots\times\Lambda_n$.
	\item
	If $\Gamma_i\ore\Lambda_i$ for $i\in I$ (i.e. the groups admit an ergodic ME-coupling with index one) then 
	$(*_{i\in I}\Gamma_i)\ore (*_{i\in I}\Lambda_i)$\footnote{ The appearance of the sharper condition 
	$\ore$ in (\ref{i:free}) is analogous to the one in the QI context:
	if groups $\Gamma_i$ and $\Lambda_i$ are \emph{bi-Lipschitz} then $*_{i\in I}\Gamma_i\qi *_{i\in I}\Lambda_i$.}. 
\end{enumerate}

\medskip

For $2\ne n,m<\infty$ the free groups $\bbF_n$ and $\bbF_m$ are commensurable, and therefore are ME
(however, $\bbF_\infty\not\me\bbF_2$).
The Measure Equivalence class $\ME{\bbF_{2\le n<\infty}}$ is very rich and remains mysterious (see \cite{Gaboriau:2005exmps}). 
For example it includes: surface groups $\pi_1(\Sigma_g)$, $g\ge 2$, non uniform (infinitely generated) lattices in $\SL_2(F_p[[X]])$, 
the automorphism group of a regular tree, free products $*_{i=1}^n A_i$ of arbitrary infinite amenable groups, more complicated free products such as $\bbF_2*\pi_1(\Sigma_g)*\bbQ$, etc. 
In the aforementioned paper by Gaboriau he constructs interesting geometric examples
of the form $*_{c}^n\bbF_{2g}$, which are fundamental groups of certain "branched surfaces".
Bridson, Tweedale and Wilton \cite{Bridson+Tweedale+Wilton:ETDS} prove that a large class of \emph{limit groups},
namely all \emph{elementarily free} groups, are ME to $\bbF_2$.
Notice that $\ME{\bbF_{2\le n<\infty}}$ contains uncountably many groups.

\medskip

The fact that some ME classes are so rich and complicated should emphasize the impressive list of ME invariants and rigidity results below.


\bigskip

\subsection{Measure Equivalence Invariants} 
\label{sub:me_invariants}\hfill\\

By ME-\embf{invariants} we mean properties of groups which are preserved
under Measure Equivalence, and numerical invariants which are preserved
or predictably transformed as a function of the ME index.

\medskip

\subsubsection{Amenability, Kazhdan's property (T), a-T-menability} 
\label{ssub:amenability_property_t_a_t_menability}\hfill\\

These properties are defined using the language of unitary representations.
Let $\pi:\Gamma\to U(\scrH)$ be a unitary representation of a (topological) group.
Given a finite (resp. compact) subset 
$K\subset G$ and $\epsilon>0$, we say that a unit vector $v\in \scrH$ is $(K,\epsilon)$-\embf{almost invariant} 
if $\|v-\pi(g)v\|<\epsilon$ for all $g\in K$.
A unitary $\Gamma$-representation $\pi$ which has $(K,\epsilon)$-almost invariant vectors for
all $K\subset\Gamma$ and $\epsilon>0$ is said to \embf{weakly contain} the trivial representation ${\bf 1}_\Gamma$,
denoted ${\bf 1}_\Gamma\prec\pi$.
The trivial representation ${\bf 1}_\Gamma$ is \embf{(strongly) contained} in $\pi$, denoted ${\bf 1}_\Gamma <\pi$, if
there exist non-zero $\pi(G)$-invariant vectors, i.e., $\scrH^{\pi(\Gamma)}\neq \{0\}$.
Of course ${\bf 1}_\Gamma<\pi$ trivially implies ${\bf 1}_\Gamma\prec\pi$.
We recall:
\begin{description}
	\item[{\sc Amenability}] 
	$\Gamma$ is \embf{amenable} if the trivial representation is weakly contained in the regular representation
	$\rho:\Gamma\to U(\ell^2(\Gamma))$,
	$\rho(g) f(x)=f(g^{-1}x)$.
	\item[{\sc Property (T)}]
	$\Gamma$ has \embf{property (T)} (Kazhdan \cite{Kazhdan:1967T})
	if for every unitary $\Gamma$-representation $\pi$:
	${\bf 1}_\Gamma\prec\pi$ implies ${\bf 1}_\Gamma<\pi$. 
	This is equivalent to an existence of a compact $K\subset \Gamma$ and $\epsilon>0$ so that 
	any unitary $\Gamma$-representation $\pi$
	with $(K,\epsilon)$-almost invariant vectors, has non-trivial invariant vectors.
	For compactly generated groups, another equivalent characterization (Delorme and Guichardet) 
	is that	any affine isometric $\Gamma$-action on a Hilbert space has a fixed point,
	i.e., if $H^1(\Gamma,\pi)=\{0\}$ for any (orthogonal) $\Gamma$-representation $\pi$.
	We refer to \cite{Bekka+deLaHarpe+Valette:2008T} for the details.
	\item[{\sc (HAP)}]
	$\Gamma$ is \embf{a-T-menable} (or has \embf{Haagerup Approximation Property}) if the following 
	equivalent conditions hold:  (i) $\Gamma$ has a mixing $\Gamma$-representation weakly containing the trivial one, 
	or (ii) $\Gamma$ has a proper affine isometric action on a (real) Hilbert space.
	The class of infinite a-T-menable groups contains amenable groups, free groups but is disjoint  
	from infinite groups with property (T).
	See \cite{Cherix+Cowling++:2001} as a reference.
\end{description}
Measure Equivalence allows to relate unitary representations of one group to another.
More concretely, let $(\Omega,m)$ be a $(\Gamma,\Lambda)$ coupling, and $\pi:\Lambda\to U(\scrH)$
be a unitary $\Lambda$-representation. Denote by $\tilde{\scrH}$ the Hilbert space
consisting of equivalence classes (mod null sets) of all measurable, $\Lambda$-equivariant
maps $\Omega\to\scrH$ with square-integrable norm over a $\Lambda$-fundamental domain:
\[
	\tilde\scrH=\setdef{f:\Omega\to \scrH}{ f(\lambda x)=\pi(\lambda)f(x),\ 
	\int_{\Omega/\Lambda}\|f\|^2<\infty} 
	\quad \mod \text{null sets}.
\]
The action of $\Gamma$ on such functions by translation of the argument, defines a unitary 
$\Gamma$-representation $\tilde{\pi}:\Gamma\to U(\tilde{\scrH})$.
This representation is said to be \embf{induced} from $\pi:\Lambda\to U(\scrH)$ via $\Omega$.
(In example~\ref{E:ME-lattices} this is precisely the usual Mackey induction of a unitary representations 
of a lattice to the ambient group, followed by a restriction to another lattice).

The ME invariance of the properties above (amenability, property (T), Haagerup approximation property)
can be deduced from the following observations.
Let $(\Omega,m)$ be a $(\Gamma,\Lambda)$ ME-coupling, $\pi:\Lambda\to U(\scrH)$ a unitary representation
and $\tilde{\pi}:\Gamma\to U(\tilde\scrH)$ the corresponding induced representation. Then:
\begin{enumerate}
	\item \label{i:regular}
	If $\pi$ is the regular $\Lambda$-representation on $\scrH=\ell^2(\Lambda)$, then 
	$\tilde\pi$ on $\tilde\scrH$ can be identified with the $\Gamma$-representation
	on $L^2(\Omega,m)\cong n\cdot \ell^2(\Gamma)$, where $n=\dim L^2(\Omega/\Lambda)\in\{1,2,\dots,\infty\}$.
	\item \label{i:alm-inv}
	If ${\bf 1}_\Lambda\prec \pi$ then ${\bf 1}_\Gamma\prec \tilde\pi$. 
	\item \label{i:wm}
	If $(\Omega,m)$ is $\Gamma\times\Lambda$ ergodic and $\pi$ is \emph{weakly mixing}
	(i.e. ${\rm 1}_\Lambda\not<\pi\otimes\pi^*$) then ${\bf 1}_\Gamma\not<\tilde\pi$.
	\item \label{i:mix}
	If $(\Omega,m)$ is $\Gamma\times\Lambda$ ergodic and $\pi$ is \emph{mixing} 
	(i.e. for all $v\in\scrH$: $\langle\pi(h)v,v\rangle\to 0$ as $h\to\infty$ in $\Lambda$)
	then $\tilde\pi$ is a mixing $\Gamma$-representation.
\end{enumerate}
Combining (\ref{i:regular}) and (\ref{i:alm-inv}) we obtain that being amenable is an ME-invariant.
The deep result of Ornstein-Weiss \cite{Ornstein+Weiss:80rohlin} and Theorem~\ref{T:ME-SOE}
imply that any two infinite countable amenable groups are ME. This gives: 
\begin{corollary}\label{C:MEZ-Amen}
The Measure Equivalence class of $\bbZ$ is the class of all infinite countable amenable groups
\[
	\ME{\bbZ}={\rm Amen}.
\] 	
\end{corollary}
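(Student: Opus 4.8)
The plan is to prove the two inclusions $\ME{\bbZ}\subseteq{\rm Amen}$ and ${\rm Amen}\subseteq\ME{\bbZ}$ separately, the first via the ME-invariance of amenability recorded in observations~(\ref{i:regular})--(\ref{i:alm-inv}), the second via the Ornstein--Weiss theorem combined with Theorem~\ref{T:ME-SOE}.

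For the inclusion ${\rm Amen}\subseteq\ME{\bbZ}$, let $\Lambda$ be an arbitrary infinite countable amenable group. First I would produce essentially free ergodic p.m.p. actions of both $\bbZ$ and $\Lambda$; the Bernoulli actions $\bbZ\acts([0,1],{\rm Leb})^{\bbZ}$ and $\Lambda\acts([0,1],{\rm Leb})^{\Lambda}$ serve this purpose, being ergodic and, since the groups are infinite, essentially free. By Theorem~\ref{T:Ornstein-Weiss} these two actions are orbit equivalent, hence in particular stably orbit equivalent of index one, so Theorem~\ref{T:ME-SOE} yields $\Lambda\me\bbZ$. As $\Lambda$ was arbitrary, every infinite countable amenable group lies in $\ME{\bbZ}$.

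For the reverse inclusion, suppose $\Gamma\me\bbZ$ via a coupling $(\Omega,m)$. By the very definition of ME the group $\Gamma$ is infinite and countable, so only amenability must be checked, and here I would run the induction argument of observations~(\ref{i:regular})--(\ref{i:alm-inv}). Taking $\pi$ to be the regular representation of $\bbZ$ on $\ell^2(\bbZ)$, amenability of $\bbZ$ gives $\mathbf{1}_{\bbZ}\prec\pi$, whence $\mathbf{1}_\Gamma\prec\tilde\pi$ by~(\ref{i:alm-inv}); and by~(\ref{i:regular}) the induced representation $\tilde\pi$ is identified with the $\Gamma$-representation on $L^2(\Omega,m)\cong n\cdot\ell^2(\Gamma)$, a multiple of the regular representation of $\Gamma$. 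Thus $\mathbf{1}_\Gamma\prec n\cdot\ell^2(\Gamma)$.

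The one step that requires care is deducing amenability of $\Gamma$ from weak containment in the \emph{multiple} $n\cdot\ell^2(\Gamma)$ rather than in the regular representation itself. This is resolved by the standard fact that $n\cdot\ell^2(\Gamma)$ is weakly equivalent to $\ell^2(\Gamma)$: an almost-invariant unit vector for the multiple yields, by restricting to a coordinate block of maximal norm (which has norm at least $1/\sqrt{n}$) and renormalizing, an almost-invariant vector for a single copy, at the cost of worsening the constant. Hence $\mathbf{1}_\Gamma\prec n\cdot\ell^2(\Gamma)$ already forces $\mathbf{1}_\Gamma\prec\ell^2(\Gamma)$, which is exactly the definition of amenability of $\Gamma$. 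Combining the two inclusions gives $\ME{\bbZ}={\rm Amen}$.
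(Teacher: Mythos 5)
You follow exactly the paper's route: ${\rm Amen}\subseteq\ME{\bbZ}$ is obtained from Theorem~\ref{T:Ornstein-Weiss} together with Theorem~\ref{T:ME-SOE} (the paper does not name specific actions, but Bernoulli actions work), and $\ME{\bbZ}\subseteq{\rm Amen}$ is obtained by inducing the regular representation of $\bbZ$ through the coupling, i.e.\ from observations (\ref{i:regular}) and (\ref{i:alm-inv}). The first inclusion is correct as you wrote it. The second contains a genuine gap at precisely the step you flagged: the passage from ${\bf 1}_\Gamma\prec n\cdot\ell^2(\Gamma)$ to ${\bf 1}_\Gamma\prec\ell^2(\Gamma)$.

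Your argument picks the coordinate block of maximal norm and asserts it has norm at least $1/\sqrt n$. This is meaningful only when $n$ is finite, whereas observation (\ref{i:regular}) explicitly allows $n=\infty$, and $n=\infty$ is the case you must handle: $n$ is the dimension of the $L^2$-space of a fundamental domain, which is infinite whenever that domain is non-atomic --- e.g.\ for every coupling arising from the (OE)$\Rightarrow$(ME) construction in Theorem~\ref{T:ME-SOE} --- and it cannot be sidestepped by choosing a better coupling: $n<\infty$ forces the fundamental domain to consist of finitely many atoms, so the coupling is discrete and $\Gamma$ is then commensurable to $\bbZ$ up to finite kernels; hence already for $\Gamma=\bbZ^2\me\bbZ$ \emph{every} coupling has $n=\infty$. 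When $n=\infty$ a unit vector may be spread evenly over $N$ blocks; its largest block has norm $1/\sqrt N$, and renormalizing inflates the almost-invariance error by $\sqrt N$, so no uniform estimate survives. The repair is the standard fact that weak containment is insensitive to arbitrary multiplicity. Writing $v=(v_i)$ for a unit vector of $n\cdot\ell^2(\Gamma)$, $\tilde\pi$ for the diagonal $\Gamma$-action on it and $\rho$ for the regular representation on a single copy, two quick fixes are: (a) \emph{pigeonhole rather than maximum}: summing the inequality $\sum_i\|\rho(g)v_i-v_i\|^2<\epsilon^2$ over $g$ in the finite set $K$ and comparing with $|K|\,\epsilon^2\sum_i\|v_i\|^2$ produces an index $i$ with $v_i\neq 0$ and $\|\rho(g)v_i-v_i\|<\sqrt{|K|}\,\epsilon\,\|v_i\|$ for all $g\in K$, so $v_i/\|v_i\|$ does the job; (b) \emph{collapse the blocks}: set $|v|(x)=\bigl(\sum_i|v_i(x)|^2\bigr)^{1/2}$, a unit vector of $\ell^2(\Gamma)$; the reverse triangle inequality in the block variable gives $\bigl\|\rho(g)|v|-|v|\bigr\|\le\bigl\|\tilde\pi(g)v-v\bigr\|$, so $|v|$ is itself $(K,\epsilon)$-almost invariant. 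With either fix your proof is complete, and it then coincides with the paper's, which silently appeals to this same standard fact when it asserts that combining (\ref{i:regular}) and (\ref{i:alm-inv}) makes amenability an ME-invariant.
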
	
Bachir Bekka and Alain Valette \cite{Bekka+Valette:1993wmT} showed that if $\Lambda$ does not have property (T)
then it admits a \emph{weakly mixing} representation $\pi$ weakly containing the trivial one.
By (\ref{i:alm-inv}) and (\ref{i:wm}) this implies that property (T) is a ME-invariant 
(this is the argument in \cite[Corollary 1.4]{Furman:ME:99}, see also Zimmer \cite[Theorem 9.1.7 (b)]{Zimmer:book:84}).
The ME-invariance of amenability and Kazhdan's property for groups indicates that it should be possible
to define these properties for \emph{equivalence relations} and then relate them to groups.
This was done by Zimmer in \cite{Zimmer:Amenable:78, Zimmer:1981:cohom} and was recently further studied in the context 
of measured groupoids in \cite{Anantharaman-Delaroche+Renault:2000, Anantharaman-Delaroche:2005T}. 
We return to this discussion in \S \ref{ssub:amenability_strong_ergodicity_property_t_}. 
The ME-invariance of a-T-menability follows from (\ref{i:alm-inv}) and (\ref{i:mix}); 
see \cite{Jolissaint:2001, Cherix+Cowling++:2001}.


\medskip

\subsubsection{Cost of groups} 
\label{ssub:cost_of_groups}\hfill\\

The notion of the cost of an action/relation was introduced by Levitt \cite{Levitt:1995cost}
and developed by Damien Gaboriau \cite{Gaboriau:CRAS1998cost, Gaboriau:Inven2000cost, Gaboriau:2000survey};
the monographs \cite{Kechris+Miller:2004book} and \cite{Kechris:new} also contain an extensive
discussion of this topic.

The cost of an essentially free p.m.p. action $\Gamma\acts (X,\mu)$, denoted $\cost(\Gamma\acts X)$, 
is the cost of the corresponding orbit relations $\cost(\Rel_{\Gamma\acts X})$ 
as defined in \S~\ref{ssub:rel_cost} (it is the infimum of the weights of generating systems
for the groupoid where the "weight" is the sum of the measures of the domain/image sets of the generating system).
%
The cost of an action can be turned into a group invariant/s by setting
\[
	\gcostinf(\Gamma)=\inf_X\ \cost(\Gamma\acts X),\qquad \gcostsup(\Gamma)=\sup_X\ \cost(\Gamma\acts X)
\]
where the infimum/supremum are taken over all essentially free p.m.p. actions 
of $\Gamma$ (we drop ergodicity assumption here; in the definition of $\gcostinf(\Gamma)$ essential freeness 
is also superfluous). 
Groups $\Gamma$ for which $\gcostinf(\Gamma)=\gcostsup(\Gamma)$ are said to have fixed price, or 
\embf{prix fixe} (abbreviated P.F.). 
For general groups, Gaboriau defined \embf{the cost of a group} to be the lower one: 
\[
	\gcost(\Gamma)=\gcostinf(\Gamma).
\]
To avoid confusion, we shall use here the notation $\gcostinf(\Gamma)$ for general groups, and reserve 
$\gcost(\Gamma)$ for P.F. groups only.
\begin{question}\label{Q:fixed-price}
Do all countable groups have property P.F.?
\end{question}
The importance of this question will be illustrated in \S~\ref{ssub:rel_cost};
for example a positive answer to an apparently weaker question \ref{Q:cost-finind} 
would have applications to groups theory and 3-manifold (Abert-Nikolov \cite{Abert+Nikolov:2007cost}). 

The properties  $\gcostinf=1$, $1<\gcostinf<\infty$, and $\gcostinf=\infty$ are ME-invariants.
More precisely:
\begin{theorem}\label{T:gcostinf}
If $\Gamma\me\Lambda$ then 
$
	\gcostinf(\Lambda)-1=[\Gamma:\Lambda]_\Omega\cdot \left(\gcostinf(\Gamma)-1\right)
$
for some/any $(\Gamma,\Lambda)$-coupling $\Omega$.
\end{theorem}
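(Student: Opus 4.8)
The plan is to relate the cost of actions of $\Gamma$ and $\Lambda$ through the ME-coupling, using the correspondence between ME-couplings and stably orbit equivalent actions (Theorem~\ref{T:ME-SOE}), together with the known behavior of cost under restriction of a relation to a positive-measure subset. The central structural input is that $\gcostinf(\Gamma)-1$ should behave like an \emph{Euler-characteristic-type} quantity that scales multiplicatively under the ME-index: passing from a relation to its restriction on a subset of relative measure $t$ multiplies $\cost-1$ by $1/t$. This is the relation-level analogue of the multiplicativity of the ME-index under composition recorded in comment~\textbf{(c)}, and it is what forces the affine (rather than linear) transformation law in the statement.

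First I would fix an ergodic $(\Gamma,\Lambda)$-coupling $(\Omega,m)$ and realize, via the (ME)$\Rightarrow$(SOE) construction in Theorem~\ref{T:ME-SOE}, fundamental domains $X,Y\subset\Omega$ for the $\Lambda$- and $\Gamma$-actions together with essentially free p.m.p. actions $\Gamma\acts X\cong\Omega/\Lambda$ and $\Lambda\acts Y\cong\Omega/\Gamma$ whose orbit relations are restrictions of the single type $\II_\infty$ relation $\Rel_{\Gamma\times\Lambda\acts\Omega}$, agreeing on $X\cap Y$, with SOE-index equal to $[\Gamma:\Lambda]_\Omega=m(X)/m(Y)$. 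Next I would invoke Gaboriau's formula for how cost transforms under such a stable orbit equivalence / restriction: for a p.m.p. relation $\Rel$ on $(\Omega,m)$ and a positive-measure subset $A$, one has $\cost(\Rel|_A)-1 = \tfrac{1}{m(A)}\bigl(\cost(\Rel)-1\bigr)$ (suitably normalized). Applying this to the two restrictions of the common relation and eliminating the cost of the ambient $\II_\infty$ relation yields, for these particular actions, the identity $\cost(\Lambda\acts Y)-1 = [\Gamma:\Lambda]_\Omega\cdot\bigl(\cost(\Gamma\acts X)-1\bigr)$.

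To upgrade from a statement about \emph{these} two specific actions to the infimal invariant $\gcostinf$, I would argue that the SOE construction can be arranged starting from an arbitrary free $\Gamma$-action (by composing the given coupling with the ME-coupling $X\times\Lambda$ produced from that action as in the (OE)$\Rightarrow$(ME) direction), so that the $\Gamma$-side action realizes cost arbitrarily close to $\gcostinf(\Gamma)$; the matching $\Lambda$-side action then has cost bounded above in terms of it, giving one inequality after taking infima, and the symmetric argument using the dual coupling $\check\Omega$ (comment on symmetry in \textbf{(c)}) gives the reverse. Since the index of the dual coupling is $[\Gamma:\Lambda]_\Omega^{-1}$, the two affine inequalities combine into the claimed equality, and the independence of the right-hand side from the choice of coupling follows because any two indices differ by an element of the self-coupling index group, which the affine law absorbs consistently.

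The hard part will be the second step: establishing the precise scaling law $\cost-1\mapsto \tfrac{1}{t}(\cost-1)$ under restriction to a subset of measure $t$ with the correct normalization, and checking that the two restrictions of the ambient $\II_\infty$ relation really do produce the orbit relations of $\Gamma\acts\Omega/\Lambda$ and $\Lambda\acts\Omega/\Gamma$ with matching cost data on the overlap $X\cap Y$. One must be careful that cost is genuinely an invariant of the restricted \emph{relation} (equivalently the restricted groupoid) and that freeness — which is only needed to pass between the relation and the group action — has been secured by the extension $\bar\Omega=\Omega\times Z\times W$ from the remark following Theorem~\ref{T:ME-SOE}, without changing the cost. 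Handling the infinite-cost case ($\gcostinf=\infty$) and the degenerate arithmetic of $\infty-1$ requires treating the three regimes $\gcostinf=1$, $1<\gcostinf<\infty$, $\gcostinf=\infty$ separately, which is exactly why the theorem is phrased as the ME-invariance of those three regimes rather than as a bare numerical identity.
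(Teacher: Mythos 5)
Your overall route is the same as the paper's: realize the coupling as an SOE via Theorem~\ref{T:ME-SOE}, apply Gaboriau's scaling formula (Theorem~\ref{T:cost-restriction}) to the two restrictions of the common relation, compose with auxiliary actions to pass from two specific actions to the infima, and symmetrize using the dual coupling. However, your ``upgrade'' step has a genuine gap. First, the mechanism you cite does not exist as stated: the (OE)$\Rightarrow$(ME) construction produces a coupling $X\times\Lambda$ from an \emph{orbit equivalence} between a $\Gamma$-action and a $\Lambda$-action, and an arbitrary free $\Gamma$-action $\Gamma\acts W$ comes with no such partner. What one can actually do (and what the paper does, with the roles of the two groups reversed) is form $\bar\Omega=\Omega\times W$ with $\Gamma$ acting diagonally and $\Lambda$ acting on the first coordinate only; equivalently, compose $\Omega$ with the trivial self-coupling $\Gamma\times W$. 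Second, and more seriously: even with this correct construction the $\Gamma$-side of $\bar\Omega$ is not $W$ but the diagonal extension $\bar X=X\times W$, so your assertion that ``the $\Gamma$-side action realizes cost arbitrarily close to $\gcostinf(\Gamma)$'' requires the monotonicity of cost under equivariant quotients: since $\bar X\to W$ is a $\Gamma$-equivariant quotient map, any graphing of $\Rel_{\Gamma\acts W}$ lifts (by taking preimages) to a graphing of $\Rel_{\Gamma\acts \bar X}$ of the same cost, whence $\cost(\Gamma\acts\bar X)\le\cost(\Gamma\acts W)$. This lemma is the crux of the paper's proof (there in the mirror form $\cost(\Lambda\acts Y\times Z)\le\cost(\Lambda\acts Z)$), and it appears nowhere in your proposal; without it you have no control over the cost of either side of any coupling, and the passage from the identity for the two actions $\Gamma\acts\Omega/\Lambda$, $\Lambda\acts\Omega/\Gamma$ to a statement about $\gcostinf$ collapses.

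A smaller point: the issues you flag as ``the hard part'' are not where the difficulty lies. The scaling law is exactly Theorem~\ref{T:cost-restriction}, used as a black box, and the freeness repair $\bar\Omega=\Omega\times Z\times W$ need not preserve cost (extensions can only lower it, by the very monotonicity above); it only needs to preserve the \emph{index} $[\Gamma:\Lambda]_\Omega$, which it does, so one simply assumes from the outset that the coupling has free quotient actions. Likewise no case analysis over the regimes $\gcostinf=1$, $1<\gcostinf<\infty$, $\gcostinf=\infty$ is needed: the two inequalities $\gcostinf(\Lambda)-1\le c\cdot(\gcostinf(\Gamma)-1)$ and $\gcostinf(\Gamma)-1\le c^{-1}\cdot(\gcostinf(\Lambda)-1)$ are valid in $[0,\infty]$ and combine directly into the stated identity, which is indeed a numerical identity and not merely the invariance of the three regimes.
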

We do not know whether the same holds for $\gcostsup$. 
Note that in \cite{Gaboriau:Inven2000cost} this ME-invariance is stated for P.F. groups only. 
\begin{proof}
Let $\Omega$ be a $(\Gamma,\Lambda)$-coupling with $\Gamma\acts X=\Omega/\Lambda$
and $\Lambda\acts Y=\Omega/\Gamma$ being free, where $X,Y\subset \Omega$
are $\Lambda-$, $\Gamma-$ fundamental domains.
Given any essentially free p.m.p. action $\Lambda\acts Z$, consider 
the $(\Gamma,\Lambda)$-coupling $\bar\Omega=\Omega\times Z$ with the actions 
\[
	g:(\omega,z)\mapsto (g\omega,z),\qquad h:(\omega,z)\mapsto (h\omega,hz)
	\qquad(g\in\Gamma,\ h\in\Lambda).
\]
The actions $\Gamma\acts \bar{X}=\bar{\Omega}/\Lambda$ and $\Lambda\acts \bar{Y}=\bar\Omega/\Gamma$
are Stably Orbit Equivalent with index $[\Gamma:\Lambda]_{\bar\Omega}=[\Gamma:\Lambda]_\Omega=c$.
Hence (using Theorem~\ref{T:cost-restriction} below) we have
\[
	c\cdot (\cost(\Rel_{\Gamma\acts \bar{X}})-1)=\cost(\Rel_{\Lambda\acts \bar{Y}})-1.
\] 
While $\Gamma\acts\bar{X}$ is a skew-product over $\Gamma\acts X$, the action $\Lambda\acts \bar{Y}$
is the diagonal action on $\bar{Y}=Y\times Z$. 
Since $\bar{Y}=Y\times Z$ has $Z$ as a $\Lambda$-equivariant quotient,
it follows (by considering preimages of any "graphing system") that
\[
	\cost(\Lambda\acts \bar{Y})\le \cost(\Lambda\acts Z).
\]
Since $\Lambda\acts Z$ was arbitrary, we deduce $\gcostinf(\Lambda)-1\ge c\cdot \left(\gcostinf(\Gamma)-1\right)$.
A symmetric argument completes the proof.
\end{proof}
\begin{theorem}[{Gaboriau \cite{Gaboriau:CRAS1998cost, Gaboriau:Inven2000cost, Gaboriau:2000survey}}]\label{T:PF}
The following classes of groups have P.F.:
\begin{enumerate}
	\item\label{i:finite}
	Any finite group $\Gamma$ has $\gcostinf(\Gamma)=\gcostsup(\Gamma)=1-\frac{1}{|\Gamma|}$.
	\item \label{i:amenable}
	Infinite amenable groups have $\gcostinf(\Gamma)=\gcostsup(\Gamma)=1$.
	\item 
	Free group $\bbF_n$, $1\le n\le \infty$, have $\gcostinf(\bbF_n)=\gcostsup(\bbF_n)=n$.
	\item 
	Surface groups $\Gamma=\pi_1(\Sigma_g)$ where $\Sigma_g$ is a closed orientable surface of genus $g\ge 2$ have 
	$\gcostinf(\Gamma)=\gcostsup(\Gamma)=2g-1$.
	\item\label{i:costoffree}
	Amalgamated products $\Gamma=A*_CB$ of finite groups have P.F. with
	\[
		\gcostinf(\Gamma)=\gcostsup(\Gamma)=1-(\frac{1}{|A|}+\frac{1}{|B|}-\frac{1}{|C|}).
	\]
	In particular $\gcostinf(\SL_2(\bbZ))=\gcostsup(\SL_2(\bbZ))=1+\frac{1}{12}$.
	\item\label{i:costofamalgam}
	Assume $\Gamma_1$, $\Gamma_2$ have P.F. then the free product $\Gamma_1*\Gamma_2$, and more general 
	amalgamated free products $\Lambda=\Gamma_1*_A\Gamma_2$ over
	an amenable group $A$, has P.F. with
	\[
		\gcost(\Gamma_1*\Gamma_2)=\gcost(\Gamma_1)+\gcost(\Gamma_2),\qquad
		\gcost(\Gamma_1*_A\Gamma_2)=\gcost(\Gamma_1)+\gcost(\Gamma_2)-\gcost(A).
	\] 
	\item\label{i:product}
	Products $\Gamma=\Gamma_1\times\Gamma_2$ of infinite non-torsion groups have $\gcostinf(\Gamma)=\gcostsup(\Gamma)=1$.
	\item\label{i:normal-amen}
	Finitely generated groups $\Gamma$ containing an infinite amenable normal subgroup have 
	$\gcostinf(\Gamma)=\gcostsup(\Gamma)=1$.
	\item\label{i:Qrank2}
	Arithmetic lattices $\Gamma$ of higher $\bbQ$-rank (e.g. $\SL_{n\ge 3}(\bbZ)$) have 
	$\gcostinf(\Gamma)=\gcostsup(\Gamma)=1$. 
\end{enumerate}
\end{theorem}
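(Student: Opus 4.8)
The plan is to group the nine assertions by technique rather than treat them in order, relying on a handful of tools from Gaboriau's cost theory which I would record first: (i) an ergodic type $\II_1$ relation always has cost $\ge 1$, while any graphing has cost at most its weight; (ii) the cost-restriction formula (Theorem~\ref{T:cost-restriction}), so cost minus one scales by the index under stable orbit equivalence; (iii) the \emph{treeing theorem} — a treeable relation has cost equal to the weight of any treeing, so treeings are cost-minimizing; and (iv) additivity of cost under free products of relations. Each P.F.\ claim then reduces to matching an explicit graphing (upper bound) with one of these rigidity inputs (lower bound). \emph{Base cases.} For finite $\Gamma$ acting freely every class has $|\Gamma|$ points, and a graphing that spans each orbit by a tree uses $|\Gamma|-1$ partial isomorphisms of measure $1/|\Gamma|$, of weight $1-1/|\Gamma|$; the finite-class formula $\cost(\Rel)=\mu(X)-\int|[x]_\Rel|^{-1}\,d\mu$ shows this is exact for \emph{every} free action, giving (1). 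For infinite amenable $\Gamma$, Ornstein--Weiss (Theorem~\ref{T:Ornstein-Weiss}) makes every free action hyperfinite, hence generated by one transformation of weight $1$, so $\gcostsup(\Gamma)\le 1\le\gcostinf(\Gamma)$ and both equal $1$, giving (2).

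\emph{Free and amalgamated products (3), (5), (6).} Here the engine is additivity. For $\bbF_n$ and any essentially free action, the $n$ generators have orbit graphs equal to the Cayley trees of $\bbF_n$, i.e.\ they form a treeing of weight $n$; by (iii) every free action has cost exactly $n$, so $\bbF_n$ is P.F.\ of cost $n$. More generally I would realize the relation of a free-product action as a free product of the factor relations and invoke (iv) to obtain $\gcost(\Gamma_1*\Gamma_2)=\gcost(\Gamma_1)+\gcost(\Gamma_2)$, with the P.F.\ property passing to the product; amalgamation over an amenable $A$ contributes a shared cost-one subrelation, producing the correction $-\gcost(A)$. Item (5) is the case of finite factors, where substituting the values from (1) gives $1-(|A|^{-1}+|B|^{-1}-|C|^{-1})$, and $\SL_2(\bbZ)\cong\bbZ/4*_{\bbZ/2}\bbZ/6$ yields $1+\tfrac1{12}$.

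\emph{Surface groups (4).} Since $\pi_1(\Sigma_g)$ is not free, treeings are unavailable and the value $2g-1=1-\chi(\Sigma_g)$ must be reached otherwise: the upper bound comes from the graphing attached to the cell structure of $\Sigma_g$, where the single defining relation lets one trim the naive weight $2g$ down to $2g-1$, and the matching lower bound is Gaboriau's estimate, in modern terms $\gcostinf(\Gamma)-1\ge\beta_1^{(2)}(\Gamma)=2g-2$. \emph{Cost-one results (7)--(9)} all rest on one \emph{dilution} mechanism: if a generator $s$ commutes with (or is compatible with the relation generated by) an aperiodic transformation $t$ whose orbits lie in $\Rel$, then, $t$-orbits being infinite, a Rokhlin-type complete section $E$ for $t$ can be chosen of measure $\epsilon$, and $s|_E$ together with the $t$-moves regenerates $s$, so $s$ costs only $\epsilon$. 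For products of infinite non-torsion groups (7), picking infinite-order $t_i\in\Gamma_i$ and diluting every generator against a commuting $t_i$ leaves only a $\bbZ^2$-subrelation of cost $1$. For a finitely generated $\Gamma$ with infinite amenable normal $N$ (8), the $N$-relation is hyperfinite of cost $1$ and normality makes each remaining generator compatible with $\Rel_N$, so it dilutes to weight $\epsilon$. Item (9) is the same argument, the higher $\bbQ$-rank ($\ge 2$) guaranteeing a web of commuting infinite-order unipotents along which the dilution propagates across all generators.

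\emph{Main obstacle.} The upper bounds are uniformly explicit and routine; the whole difficulty sits in the lower bounds. The two genuinely deep inputs are the cost-minimality of treeings used in (3) and (6) and the $\ell^2$-Betti / Euler-characteristic estimate used in (4): each asserts that one cannot generate the relation more cheaply than a tree, or than the Euler characteristic permits, and neither is formal. In the cost-one cases (7)--(9) the subtlety is instead careful bookkeeping — one must verify that the diluted generators, reconstituted through the $t$- or $N$-moves, really recover the entire relation while the total weight tends to $1$.
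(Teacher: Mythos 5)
Your proposal is correct, and on the only items the paper actually argues it takes the same route: the paper, being a survey, (a) derives item (3) from Gaboriau's treeing theorem (Theorem~\ref{T:cost-treeable}), exactly as you do, and (b) sketches only item (\ref{i:normal-amen}), and that sketch is precisely your dilution mechanism --- generate $\Rel_{A\acts X}$ by a single transformation $\phi_0$, restrict each of the $n$ generators of $\Gamma$ to a set $E$ with $\mu(E)<\epsilon$ whose $A$-saturation is $X$, and use normality ($gag^{-1}\in A$) to check that these pieces regenerate $\Rel_{\Gamma\acts X}$, for total cost $1+n\epsilon$. The genuinely deep inputs (the treeing theorem itself, the free/amalgamated product additivity behind (\ref{i:costoffree})--(\ref{i:costofamalgam}), the inequality (\ref{e:beta2cost})) are cited rather than proved both by you and by the paper, so your division into routine upper bounds and deep lower-bound inputs matches the paper's economy; only note that your tool (i) should be stated for p.m.p.\ relations with infinite classes rather than ergodic ones, since the paper's definition of $\gcostinf$, $\gcostsup$ explicitly drops ergodicity. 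The one place you genuinely diverge is item (4). Gaboriau's route, not reproduced in the paper, is to cut the surface: $\pi_1(\Sigma_g)\cong \bbF_2 *_{\bbZ}\bbF_{2g-2}$, so (4) is a corollary of (3) and (\ref{i:costofamalgam}), with cost $2+(2g-2)-1=2g-1$. You instead take the lower bound from $\beta^{(2)}_1(\Gamma)=2g-2$ via (\ref{e:beta2cost}) and claim an upper bound by ``trimming'' the presentation graphing. That trimming is the vaguest sentence in your write-up, but it can be made precise by your own dilution mechanism: the surface relation gives $a_g b_g a_g^{-1}=Q\,b_g$ with $Q$ a word in the remaining $2g-2$ generators, and iterating, $a_g^n b_g a_g^{-n}=Q_n\,b_g$ with $Q_n$ a word in the full generators; hence restricting $b_g$ to a complete section of measure $\epsilon$ for the aperiodic $a_g$-action still generates, giving cost at most $2g-1+\epsilon$ for every essentially free action. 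What your route buys is independence from the amalgamation formula; what it costs is an appeal to the $\ell^2$-Betti inequality, a later and deeper result of Gaboriau than the cost theorems being surveyed --- both routes are valid.
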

Note that for an infinite group $\gcostsup(\Gamma)=1$ iff $\Gamma$ has P.F. of cost one. 
So the content of cases (\ref{i:amenable}), (\ref{i:product}), (\ref{i:normal-amen}), (\ref{i:Qrank2}) 
is that $\gcostsup(\Gamma)=1$.
\begin{question}
Is it true that for all (irreducible) lattices $\Gamma$ in a (semi-)simple Lie group $G$ of higher rank have P.F. 
of $\gcostsup(\Gamma)=1$? \\
Is it true that any infinite group $\Gamma$ with Kazhdan's property (T) has P.F. with $\gcostsup(\Gamma)=1$?
\end{question}
Item (\ref{i:Qrank2}) in Theorem~\ref{T:PF} provides a positive answer to the first question for some \emph{non-uniform lattices} 
in higher rank Lie groups, but the proof relies on the internal structure of such lattices
(chains of pairwise commuting elements), rather than on its relation to the ambient Lie group $G$ 
(which also has a lot of commuting elements).
Note also that Theorem~\ref{T:gcostinf} implies that $\gcostinf(\Gamma)=1$ for all higher rank lattices.
The motivation for the second question is that property (T) implies vanishing of the 
first $\ell^2$-Betti number, $\beta^{(2)}_1(\Gamma)=0$; while for infinite groups it was shown by Gaboriau that
\begin{equation}\label{e:beta2cost}
	\beta^{(2)}_1(\Gamma)=\beta^{(2)}_1(\Rel_{\Gamma\acts X})\le \cost(\Rel_{\Gamma\acts X})-1.
\end{equation}
Furthermore, there are no known examples of strict inequality.
Lattices $\Gamma$ in higher rank semi-simple Lie groups without property (T) still 
satisfy $\beta^{(2)}_1(\Gamma)=0$ (an argument in the spirit of the current discussion is:
$\beta_1^{(2)}$ for ME groups are positively proportional by Gaboriau's Theorem \ref{T:Gaboriau:L2},
an irreducible lattice in a product is ME to a product of lattices and products
of infinite groups have $\beta^{(2)}_1=0$ by K\"uneth formula. 
Shalom's \cite{Shalom:2000:Inven} provides a completely geometric explanation).

\medskip

To give the flavor of the proofs let us indicate the argument for (\ref{i:normal-amen}) in Theorem~\ref{T:PF}.
Let $\Gamma$ be a group generated by a finite set $\{g_1,\dots,g_n\}$ and containing an 
infinite normal amenable subgroup $A$ and $\Gamma\acts (X,\mu)$ be an essentially free (ergodic) p.m.p. action.
Since $A$ is amenable, there is a $\bbZ$-action on $X$ with $\Rel_{A\acts X}=\Rel_{\bbZ\acts X}$ (mod null sets),
and we let $\phi_0:X\to X$ denote the action of the generator of $\bbZ$.
Given $\epsilon>0$ one can find a subset $E\subset X$ with $0<\mu(E)<\epsilon$
so that $\bigcup_{a\in A} aE=\bigcup\phi_0^n E=X$ mod null sets (if $A$-action is ergodic any positive measure set
works; in general, one uses the ergodic decomposition).
For $i=1,\dots,n$ let $\phi_i$ be the restriction of $g_i$ to $E$.
Now one easily checks that the normality assumption implies that $\Phi=\{\phi_0,\phi_1,\dots,\phi_n\}$
generates $\Rel_{\Gamma\acts X}$, while $\cost(\Phi)=1+n\epsilon$.

%

\medskip
For general (not necessarily P.F.) groups $\Gamma_i$ a version of (\ref{i:costofamalgam}) still holds:
\[
	\gcostinf(\Gamma_1*\Gamma_2)=\gcostinf(\Gamma_1)+\gcostinf(\Gamma_2),
	\qquad
	\gcostinf(\Gamma_1*_A\Gamma_2)=\gcostinf(\Gamma_1)+\gcostinf(\Gamma_2)-\gcost(A)
\]
where $A$ is finite or, more generally, amenable.
 
\medskip

Very recently Miklos Abert and Benjamin Weiss \cite{Abert+Weiss:2008} showed:
\begin{thm}[Abert-Weiss \cite{Abert+Weiss:2008}]\label{T:Abert+Weiss}
For any discrete countable group $\Gamma$, the highest cost $\gcostsup(\Gamma)$
is attained by non-trivial Bernoulli actions $\Gamma\acts (X_0,\mu_0)^\Gamma$
and their essentially free quotients.	
\end{thm}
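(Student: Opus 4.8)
The plan is to recast the statement in the language of \emph{weak containment} of probability measure preserving actions and to reduce everything to a single approximate-independence phenomenon. Recall that an action $\Gamma\acts(X,\mu)$ is weakly contained in $\Gamma\acts(Y,\nu)$, written $a\prec b$, if every finite pattern of $a$ --- the joint distribution of finitely many translates of a finite measurable partition --- is matched to within $\epsilon$ by some partition of $Y$. Two ingredients drive the argument. First, cost is monotone non-increasing under weak containment: $a\prec b$ implies $\cost(a)\ge \cost(b)$, so cost is an invariant of the weak equivalence class. This is Kechris' upper semicontinuity of cost, which I would either cite or reprove by transferring a near-optimal graphing of $b$ across the approximating partitions of $a$. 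Second, and this is the heart of the matter:

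$(\star)$ for every essentially free p.m.p. action $a$ of $\Gamma$, the Bernoulli action $s_\Gamma=\Gamma\acts(X_0,\mu_0)^\Gamma$ satisfies $s_\Gamma\prec a$.

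Granting $(\star)$, the theorem follows quickly. For a nontrivial base $(X_0,\mu_0)$ and infinite $\Gamma$ the Bernoulli action is itself essentially free, so $\cost(s_\Gamma)\le \gcostsup(\Gamma)$. Conversely, for any essentially free $a$ we have $s_\Gamma\prec a$ by $(\star)$, whence $\cost(s_\Gamma)\ge\cost(a)$; taking the supremum over $a$ gives $\cost(s_\Gamma)\ge\gcostsup(\Gamma)$, so $\cost(s_\Gamma)=\gcostsup(\Gamma)$. If $\Gamma\acts(Y,\nu)$ is an essentially free factor of $s_\Gamma$, then $Y\prec s_\Gamma$ because a factor is always weakly contained in its source (pull the partition back through the factor map to match statistics exactly), while $s_\Gamma\prec Y$ by $(\star)$; the two actions are therefore weakly equivalent and share the same cost $\gcostsup(\Gamma)$, which is precisely the assertion about essentially free quotients. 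The case of finite $\Gamma$ is handled separately, since such groups have fixed price $1-1/|\Gamma|$ by Theorem~\ref{T:PF}, so every free action, Bernoulli included, attains $\gcostsup$.

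To prove $(\star)$ I would first unwind weak containment for the Bernoulli source. Taking the canonical generating partition $P$ of $(X_0,\mu_0)^\Gamma$ --- the one reading off the coordinate at the identity --- its $\Gamma$-translates are mutually independent, each distributed as $\mu_0$. Matching the joint statistics of $\{\gamma P:\gamma\in F\}$ inside $a=\Gamma\acts(Y,\nu)$ thus amounts to the following Rokhlin-type statement, which is the crux.

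\emph{Approximate-independence Lemma:} for every finite $F\subset\Gamma$, every finite probability vector $(p_i)$, and every $\epsilon>0$, a free p.m.p. action $\Gamma\acts(Y,\nu)$ admits a measurable partition $(B_i)$ with $\nu(B_i)=p_i$ whose translates $\{\gamma B_i:\gamma\in F\}$ are $\epsilon$-independent.

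Producing such nearly independent translates is where the difficulty concentrates, and it is exactly here that one cannot simply import the classical Rokhlin lemma: $\Gamma$ need not be amenable, so there are no F\o lner sets to tile along. The approach I would take exploits freeness directly rather than invariance: since $\mu$-a.e. point has all its $F$-translates distinct, one builds the partition by a measurable random labeling along orbits, using an auxiliary i.i.d.\ coloring on a suitable complete section together with a counting or martingale estimate to show that, over the finite window $F$, the induced joint distribution concentrates around the product measure. I expect the main obstacle to be making this labeling simultaneously measurable on $(Y,\nu)$ and quantitatively independent over all of $F$ at once; the amenable case can be handled cleanly through the Ornstein--Weiss machinery (Theorem~\ref{T:Ornstein-Weiss}), but the general case requires the genuinely combinatorial argument of Abert--Weiss. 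Once the Lemma is in hand, assembling the matching partition for an arbitrary finite collection of cylinder patterns --- and hence $(\star)$ --- is routine.
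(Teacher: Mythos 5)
Your proposal follows essentially the same route as the paper: Kechris' anti-monotonicity of cost under weak containment combined with the Abert--Weiss fact that Bernoulli actions (and their free quotients) are weakly contained in every essentially free action, which is exactly how the survey deduces the theorem. Your additional sketch of the approximate-independence lemma behind $(\star)$ correctly identifies where the genuinely new Abert--Weiss argument lives, but it does not change the structure of the deduction.
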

Some comments are in order. Kechris \cite{Kechris:new} introduced the following notion:
for probability measure preserving actions of a fixed group $\Gamma$ say that 
$\Gamma\acts (X,\mu)$ \embf{weakly contains} $\Gamma\acts Y$ if given any finite measurable partition
$Y=\bigsqcup_{i=1}^n Y_i$, a finite set $F\subset \Gamma$ and an $\epsilon>0$,
there is a finite measurable partition $X=\bigsqcup_{i=1}^n X_i$ so that
\[
	\left|\mu(g X_i\cap X_j)-\nu(gY_i\cap Y_j)\right|<\epsilon\qquad (1\le i,j\le n,\ g\in F).
\]
The motivation for the terminology is the fact that weak containment of actions implies (but not equivalent to)
weak containment of the corresponding unitary representations: $L^2(Y)\preceq L^2(X)$.
It is clear that a quotient is (weakly) contained in the larger action.
It is also easy to see that the cost of a quotient action is no less than that of
the original (because one can lift any graphing from a quotient to the larger action maintaining
the cost of the graphing).
Kechris \cite{Kechris:new} proves that this (anti-)monotonicity still holds in 
the context of weak containment of essentially free actions of finitely generated groups, namely:
\[
	\Gamma\acts Y\preceq \Gamma\acts X\qquad\Longrightarrow\qquad
	\cost(\Gamma\acts Y)\ge \cost(\Gamma\acts X).
\] 
In fact, it follows from the more general fact that cost is upper semi-continuous in
the topology of actions.
Abert and Weiss prove that Bernoulli actions (and their quotients) are \emph{weakly contained}
in any essentially free action of a group.
Thus Theorem~\ref{T:Abert+Weiss} follows from the monotonicity of the cost.

\medskip


\subsubsection{$\ell^2$-Betti numbers} 
\label{ssub:_ell_2_betti_numbers}\hfill\\

The $\ell^2$-Betti numbers of (coverings of ) manifolds were introduced by Atiyah in \cite{Atiyah:1976}.
Cheeger and Gromov \cite{Cheeger+Gromov:1986L2} defined $\ell^2$-Betti numbers $\beta^{(2)}_i(\Gamma)\in[0,\infty]$, $i\in\bbN$,
for arbitrary countable group $\Gamma$ as dimensions (in the sense of Murray von-Neumann)
of certain homology groups (which are Hilbert $\Gamma$-modules).
For reference we suggest \cite{Eckmann:2000L2}, \cite{Luck:2002L2book}.
Here let us just point out the following facts:
\begin{enumerate}
	\item 
	If $\Gamma$ is infinite amenable, then $\beta^{(2)}_i(\Gamma)=0$, $i\in\bbN$;
	\item
	For free groups $\beta^{(2)}_1(\bbF_n)=n-1$ and $\beta^{(2)}_i(\bbF_n)=0$ for $i>1$.
	\item
	For groups with property (T), $\beta^{(2)}_1(\Gamma)=0$.
	\item\label{i:Kuneth}
	K\"uneth formula: $\beta^{(2)}_k(\Gamma_1\times\Gamma_2)=\sum_{i+j=k}\beta^{(2)}_i(\Gamma_1)\cdot \beta^{(2)}_j(\Gamma_2)$.
	\item
	Kazhdan's conjecture, proved by L\"uck, states that for residually finite groups satisfying appropriate finiteness properties
	(e.g. finite $K(pi,1)$) the $\ell^2$-Betti numbers are the stable limit of Betti numbers of finite index subgroups normalized by
	the index: $\beta^{(2)}_i(\Gamma)=\lim \frac{\beta_i(\Gamma_n)}{[\Gamma:\Gamma_n]}$ where $\Gamma>\Gamma_1>\dots$ 
	is a chain of normal subgroups of finite index.
	\item 
	The $\ell^2$ Euler characteristic $\chi^{(2)}(\Gamma)=\sum (-1)^i\cdot\beta^{(2)}_i(\Gamma)$ coincides
	with the usual Euler characteristic $\chi(\Gamma)=\sum (-1)^i\cdot\beta_i(\Gamma)$, provided both
	are defined, as is the case for fundamental group $\Gamma=\pi_1(M)$ of a compact aspherical manifold.
	\item\label{i:Atiya}
	According to the Hopf-Singer conjecture the $\ell^2$-Betti numbers for a fundamental group $\Gamma=\pi_1(M)$ 
	of a compact aspherical manifold $M$, vanish except, possibly, in the middle dimension $n$.
	Atiyah's conjecture states that $\ell^2$-Betti numbers are integers.
\end{enumerate}
The following remarkable result of Damien Gaboriau states that these intricate numeric invariants of groups are preserved
under Measure Equivalence, after a rescaling by the coupling index.
%
\begin{theorem}[Gaboriau \cite{Gaboriau:2000CRAS-L2}, \cite{Gaboriau:IHES2002L2}]
	\label{T:Gaboriau:L2}
Let $\Gamma\me\Lambda$ be ME countable groups. Then 
\[
	\beta^{(2)}_i(\Lambda)=c\cdot \beta^{(2)}_i(\Gamma)
	\qquad (i\in\bbN)
\]	
where $c=[\Gamma:\Lambda]_{\Omega}$ is a/the index of some/any $(\Gamma,\Lambda)$-coupling. 
\end{theorem}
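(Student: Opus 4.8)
The plan is to route the entire statement through a theory of $\ell^2$-Betti numbers $\beta^{(2)}_i(\Rel)$ for measured equivalence relations, reducing the group statement to a compression (rescaling) formula for relations. Three ingredients are needed: (i) a definition of $\beta^{(2)}_i(\Rel)$ for a type $\II_1$ p.m.p. relation, together with the fact that for an essentially free action $\Gamma\acts(X,\mu)$ one has $\beta^{(2)}_i(\Gamma)=\beta^{(2)}_i(\Rel_{\Gamma\acts X})$ \emph{independently} of the chosen free action; (ii) the behaviour of these numbers under restriction to a positive-measure subset; and (iii) the dictionary between ME and SOE furnished by Theorem~\ref{T:ME-SOE}.

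First I would use Theorem~\ref{T:ME-SOE} to replace the coupling $\Omega$ by a pair of essentially free ergodic p.m.p. actions $\Gamma\acts(X,\mu)$ and $\Lambda\acts(Y,\nu)$ that are stably orbit equivalent of index $c=[\Gamma:\Lambda]_\Omega$. Concretely, realize both orbit relations as restrictions of the single type $\II_\infty$ relation $\Rel_{\Gamma\times\Lambda\acts\Omega}$ to the fundamental domains $X,Y\subset\Omega$, as in~(\ref{e:MESEO}); the SOE then amounts to a measure-scaling isomorphism between restrictions $\Rel_{\Gamma\acts X}|_{A}$ and $\Rel_{\Lambda\acts Y}|_{B}$ for suitable positive-measure sets $A,B$ whose normalized measures stand in ratio $c$.

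Next I would invoke the compression formula for relation $\ell^2$-Betti numbers: for a p.m.p. relation $\Rel$ on $(X,\mu)$ and a positive-measure $A\subset X$ meeting a.e. class,
\[
	\beta^{(2)}_i(\Rel|_A)=\frac{1}{\mu(A)}\,\beta^{(2)}_i(\Rel).
\]
This is the measured analogue of the multiplicativity of $\ell^2$-Betti numbers under passage to a finite-index subgroup, and it is established just as the corresponding compression statement for cost (cf.\ Theorem~\ref{T:cost-restriction}): Murray--von Neumann dimension over the relation algebra rescales by $\mu(A)$ under restriction. Combining this with the reduction above and with the group-to-relation identity $\beta^{(2)}_i(\Gamma)=\beta^{(2)}_i(\Rel_{\Gamma\acts X})$ gives, after chasing the index,
\[
	\beta^{(2)}_i(\Lambda)=\beta^{(2)}_i(\Rel_{\Lambda\acts Y})=c\cdot\beta^{(2)}_i(\Rel_{\Gamma\acts X})=c\cdot\beta^{(2)}_i(\Gamma),
\]
which is the asserted proportionality.

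The hard part is ingredient (i): defining $\beta^{(2)}_i(\Rel)$ and proving it agrees with $\beta^{(2)}_i(\Gamma)$ for \emph{every} essentially free action, since this is precisely where orbit-equivalence invariance is born. I would build an $\Rel$-equivariant (measured) free simplicial resolution of the trivial module by Hilbert modules over the finite von Neumann algebra $L(\Rel)=L^\infty(X)\rtimes\Gamma$, and define $\beta^{(2)}_i(\Rel)$ as the von Neumann dimension of its homology, using L\"uck's dimension theory over finite von Neumann algebras. The agreement $\beta^{(2)}_i(\Rel_{\Gamma\acts X})=\beta^{(2)}_i(\Gamma)$ then reduces to a dimension comparison: a free $\bbZ\Gamma$-resolution of $\bbZ$ induces, via the induction functor from $L(\Gamma)$-modules to $L(\Rel)$-modules, a resolution computing $\beta^{(2)}_i(\Rel)$, and this induction is dimension-preserving because the canonical trace on $L(\Rel)$ restricts to the canonical trace on the subalgebra $L(\Gamma)$. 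Proving that this induction is exact and dimension-preserving, and that the resulting invariants are independent of all choices (the resolution and the free action), is the technical core; everything else is bookkeeping with the index $c$.
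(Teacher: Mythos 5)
Your proposal is correct and takes essentially the same route as the paper: the survey derives Theorem~\ref{T:Gaboriau:L2} exactly as you do, by (i) defining $\ell^2$-Betti numbers $\beta^{(2)}_i(\Rel)$ for $\II_1$-relations and proving $\beta^{(2)}_i(\Gamma)=\beta^{(2)}_i(\Rel_{\Gamma\acts X})$ for every essentially free p.m.p.\ action, (ii) the rescaling of $\beta^{(2)}_i$ under restriction to a positive-measure subset (your version, $\beta^{(2)}_i(\Rel|_A)=\mu(A)^{-1}\beta^{(2)}_i(\Rel)$, is the one that makes the final bookkeeping with $c=[\Gamma:\Lambda]_\Omega$ come out right), and (iii) the ME$\leftrightarrow$SOE dictionary of Theorem~\ref{T:ME-SOE}. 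The only divergence is in how you flesh out ingredient (i): your construction via L\"uck's dimension theory over $L^\infty(X)\rtimes\Gamma$ is Sauer's alternative treatment, which the survey also cites in \S~\ref{ssub:rel_ell_2_betti_numbers}, whereas Gaboriau's original definition (the one the survey sketches) uses measured fields of pointed contractible simplicial complexes, complexes of Hilbert modules, and von Neumann dimension over $M_\Rel$ --- both implementations yield the same invariants, so the architecture is unchanged.
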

In fact, Gaboriau introduced the notion of $\ell^2$-Betti numbers for $\II_1$-relations
and related them to $\ell^2$-Betti numbers of groups in case of the orbit relation
for an essentially free p.m.p. action -- see more comments in 
\S \ref{ssub:rel_ell_2_betti_numbers} below.

Thus the geometric information encoded in the $\ell^2$-Betti numbers for
fundamental groups of aspherical manifolds, such as Euler characteristic
and sometimes the dimension, pass through Measure Equivalence.
In particular, if lattices $\Gamma_i$ ($i=1,2$) (uniform or not)
in ${\rm SU}_{n_i,1}(\bbR)$ are ME then $n_1=n_2$;
the same applies to ${\rm Sp}_{n_i,1}(\bbR)$ and ${\rm SO}_{2n_i,1}(\bbR)$.
(The higher rank lattices are covered by stronger rigidity statements
-- see \S \ref{ssub:higher_rank_lattices} below).
Furthermore, it follows from Gaboriau's result that in general the set 
\[
	D^{(2)}(\Gamma)=\setdef{i\in\bbN}{0<\beta^{(2)}_i(\Gamma)<\infty}
\]
is a ME-invariant. Conjecture (\ref{i:Atiya}) relates this to the dimension
of a manifold $M$ in case of $\Gamma=\pi_1(M)$. 
One shouldn't expect $\dim(M)$ to be an ME-invariant of $\pi_1(M)$
as the examples of tori show; note also that for any manifold
$M$ one has $\pi_1(M\times \bbT^n)\me \pi_1(M\times\bbT^k)$.
However, among negatively curved manifolds Theorem~\ref{T:BFS-dim} 
below shows that $\dim(M)$ is invariant of $\ell^1$-ME.

For closed aspherical manifolds $M$ the dimension 
$\dim(M)$ is a QI invariant of $\pi_1(M)$.
Pansu proved that the whole set $D^{(2)}(\Gamma)$ is a QI invariant of $\Gamma$.
However, positive proportionality of $\ell^2$-Betti numbers for ME
fails under QI; in fact, there are QI groups whose Euler characteristics
have opposite signs. Yet

\begin{corollary}
For ME groups $\Gamma$ and $\Lambda$ with well defined Euler characteristic, 
say fundamental groups of compact manifolds, one has
\[
	\chi(\Lambda)=c\cdot \chi(\Gamma),\qquad  \text{where}\qquad
	c=[\Gamma:\Lambda]_\Omega \in (0,\infty). 
\]
In particular, the sign (positive, zero, negative) of the Euler characteristic is a ME-invariant.
\end{corollary}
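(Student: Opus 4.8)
The plan is to deduce this directly from Gaboriau's positive-proportionality theorem (Theorem~\ref{T:Gaboriau:L2}) combined with fact (6) above, which identifies the $\ell^2$-Euler characteristic $\chi^{(2)}$ with the ordinary Euler characteristic $\chi$ whenever both are defined. The whole point is that although the individual $\ell^2$-Betti numbers are only proportional (not equal) across an ME-coupling, the proportionality constant is uniform in the degree $i$, so it factors out of any alternating sum.

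First I would fix an ergodic $(\Gamma,\Lambda)$-coupling $\Omega$ and set $c=[\Gamma:\Lambda]_\Omega\in(0,\infty)$. By Theorem~\ref{T:Gaboriau:L2} we have $\beta^{(2)}_i(\Lambda)=c\cdot\beta^{(2)}_i(\Gamma)$ for every $i\in\bbN$, with the \emph{same} constant $c$ in all degrees. For $\Gamma=\pi_1(M)$ with $M$ a compact aspherical manifold of dimension $n$, the Hilbert $\Gamma$-modules computing $\beta^{(2)}_i$ vanish for $i>n$ and each $\beta^{(2)}_i(\Gamma)$ is finite; the same holds for $\Lambda$. Thus $\chi^{(2)}(\Gamma)=\sum_{i}(-1)^i\beta^{(2)}_i(\Gamma)$ is genuinely a finite sum of finite terms, scalar multiplication distributes through it, and
\[
	\chi^{(2)}(\Lambda)=\sum_i(-1)^i\beta^{(2)}_i(\Lambda)=c\sum_i(-1)^i\beta^{(2)}_i(\Gamma)=c\cdot\chi^{(2)}(\Gamma).
\]

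Finally I would invoke fact (6): for fundamental groups of compact aspherical manifolds both $\chi$ and $\chi^{(2)}$ are defined and coincide, so $\chi^{(2)}(\Gamma)=\chi(\Gamma)$ and $\chi^{(2)}(\Lambda)=\chi(\Lambda)$. Substituting yields $\chi(\Lambda)=c\cdot\chi(\Gamma)$, and since $c>0$ the sign of $\chi$ is preserved, giving the ME-invariance of the sign. The only step requiring genuine care is the legitimacy of distributing the constant $c$ through the alternating sum: this is precisely where I rely on the vanishing of $\beta^{(2)}_i$ above the dimension and their finiteness in the aspherical compact case, so that no convergence or rearrangement issue arises. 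Given those finiteness facts the argument is essentially formal, so the real work is entirely contained in Theorem~\ref{T:Gaboriau:L2} and in fact (6), both of which I am free to assume.
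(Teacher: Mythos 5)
Your proposal is correct and follows exactly the route the paper intends: the corollary is stated as an immediate consequence of Theorem~\ref{T:Gaboriau:L2}, with the degree-independent proportionality constant $c$ factoring out of the alternating sum and fact (6) ($\chi^{(2)}=\chi$ when both are defined) closing the argument. Your added care about finiteness and vanishing of $\beta^{(2)}_i$ above the dimension is a reasonable precaution but does not change the substance of the argument.
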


\medskip

\subsubsection{Cowling-Haagerup $\Lambda$-invariant} 
\label{ssub:Lambda_invariant}\hfill\\

This numeric invariant $\Lambda_G$, taking values in $[1,\infty]$, is defined for any lcsc group 
$G$ in terms of norm bounds on unit approximation in the Fourier algebra $A(G)$ 
(see Cowling and Haagerup \cite{Cowling+Haagerup:1989}).
The $\Lambda$-invariant coincides for a lcsc group and its lattices. 
Moreover, Cowling and Zimmer \cite{Cowling+Zimmer:89sp} proved that 
$\Gamma_1\ore\Gamma_2$ implies $\Lambda_{\Gamma_1}=\Lambda_{\Gamma_2}$.
In fact, their proof implies the invariance under Measure Equivalence (see \cite{Jolissaint:2001}).
So $\Lambda_\Gamma$ is a ME-invariant.

Cowling and Haagerup \cite{Cowling+Haagerup:1989} computed the $\Lambda$-invariant for simple Lie groups
and their lattices:
in particular, proving that $\Lambda_G=1$ for $G\simeq {\rm SO}_{n,1}(\bbR)$ and ${\rm SU}_{n,1}(\bbR)$,
$\Lambda_G=2n-1$ for $G\simeq {\rm Sp}_{n,1}(\bbR)$, and $\Lambda_G=21$ for
the exceptional rank-one group $G=F_{4(-20)}$.   

One may observe that simple Lie groups split into two classes:
(1) ${\rm SO}_{n,1}(\bbR)$ and ${\rm SU}_{n,1}(\bbR)$ family, and (2) $G\simeq {\rm Sp}_{n,1}(\bbR)$, $F_{4(-20)}$
and higher rank. Groups in the first class have Haagerup Approximation Property (HAP, a.k.a. a-T-menability) and $\Lambda_G=1$,
while groups in the second class have Kazhdan's property (T) and $\Lambda_G>1$.
Cowling conjectured that $\Lambda_G=1$ and (HAP) might be equivalent. 
Recently one implication of this conjecture has been disproved: 
Cornulier, Stalder and Valette \cite{Cornulier+Stalder+Valette:2008HAP}
proved that the wreath product $H\wr \bbF_2$ of a finite group $H$ by the free group $\bbF_2$ 
has (HAP), while Ozawa and Popa \cite{Ozawa+Popa:2008Cartan1} prove that $\Lambda_{H\wr \bbF_2}>1$.
The question whether $\Lambda_\Gamma=1$ implies (HAP) is still open.

\medskip

One may deduce now that if $\Gamma$ is a lattice in $G\simeq {\rm Sp}_{n,1}(\bbR)$ or in $F_{4(-20)}$ 
and $\Lambda$ is a lattice in a simple Lie group $H$, then $\Gamma\me\Lambda$ iff $G\simeq H$.
Indeed, higher rank $H$ are ruled out by Zimmer's Theorem \ref{C:ME}; 
$H$ cannot be in the families ${\rm SO}_{n,1}(\bbR)$ and ${\rm SU}_{n,1}(\bbR)$ by property (T) 
or Haagerup property; and within the family of 
${\rm Sp}_{n,1}(\bbR)$ and $F_{4(-20)}$ the $\Lambda$-invariant detects $G$
($\ell^2$-Betti numbers can also be used for this purpose).


\medskip

\subsubsection{Treeability, anti-treeability, ergodic dimension} 
\label{ssub:treeability_anti_triability_ergodic_dimension}\hfill\\

In \cite{Adams:1990treeable} Scott Adams introduced the notion of \embf{treeable} equivalence relations
(see \S \ref{ssub:rel_treeability}). Following \cite{Kechris+Miller:2004book}, a group $\Gamma$ is
\begin{description}
	\item[\embf{Treeable}] if there \emph{exists} an essentially free p.m.p. $\Gamma$-action with a treeable orbit relation.
	\item[\embf{Strongly treeable}] if \emph{every} essentially free p.m.p. $\Gamma$-action gives a treeable orbit relation.
	\item[\embf{Anti-treeable}] if there \emph{are no} essentially free p.m.p. $\Gamma$-actions with a treeable 
	orbit relation.  
\end{description}
Amenable groups and free groups are strongly treeable. 
It seems to be still unknown whether there exist treeable but not strongly treeable groups,
in particular it is not clear whether surface groups (which are treeable) are strongly treeable.

The properties of being treeable or anti-treeable are ME-invariants. 
Moreover, $\Gamma$ is treeable iff $\Gamma$ is amenable (i.e. $\me \bbF_1=\bbZ$),
or is ME to either $\bbF_2$ or $\bbF_\infty$ (this fact uses Hjorth's \cite{Hjorth:06lemma}, see
\cite[Theorems 28.2 and 28.5]{Kechris+Miller:2004book}).
Groups with Kazhdan's property (T) are anti-treeable \cite{Adams+Spatzier:1990:T-tree}.
More generally, it follows from the recent work of Alvarez and Gaboriau \cite{Alvarez+Gaboriau:2008p1322} 
that a non-amenable group $\Gamma$ with $\beta^{(2)}_1(\Gamma)=0$ is anti-treeable
(in view of (\ref{e:beta2cost}) this also strengthens \cite[Corollaire VI.22]{Gaboriau:Inven2000cost}, 
where Gaboriau showed that a non-amenable $\Gamma$ with $\gcostinf(\Gamma)=1$ is anti-treeable).

\medskip

A treeing of a relation can be seen as a $\Gamma$-invariant assignment of pointed trees with $\Gamma$
as the set of vertices. 
One may view the relation \emph{acting} on this measurable family of pointed trees by moving
the marked point. 
More generally, one might define actions by relations, or measured groupoids,
on fields of simplicial complexes. 
Gaboriau defines (see \cite{Gaboriau:IHES2002L2}) the \embf{geometric dimension} of a relation $\Rel$ to be
the smallest possible dimension of such a field of \emph{contractible} simplicial complexes;
the \embf{ergodic dimension} of a group $\Gamma$ will be the minimal geometric dimension
over orbit relations $\Rel_{\Gamma\acts X}$ of all essentially free p.m.p. $\Gamma$-actions.
In this terminology $\Rel$ is treeable iff it has geometric dimension one, and a group $\Gamma$
is treeable if its ergodic dimension is one.
There is also a notion of an \embf{approximate geometric/ergodic dimension} \cite{Gaboriau:IHES2002L2}
describing the dimensions of a sequence of subrelations approximating a given orbit relation.

\begin{thm}[{Gaboriau \cite{Gaboriau:IHES2002L2}}] 
	Ergodic dimension and approximate ergodic dimension are ME-invariants.
\end{thm}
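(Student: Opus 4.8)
The plan is to reduce the statement to two structural properties of the geometric dimension $\operatorname{gdim}(\Rel)$ of a type $\II_1$ relation, and then to compare the two groups through the same product-coupling construction that appears in the proof of Theorem~\ref{T:gcostinf}. Since $\operatorname{gdim}$ takes values in $\bbN\cup\{\infty\}$, the ergodic dimension of a group is an honest minimum over essentially free p.m.p. actions, so no $\epsilon$-approximation is needed at this stage. By symmetry it suffices, for $\Gamma\me\Lambda$, to produce for every essentially free $\Gamma\acts X_0$ an essentially free $\Lambda$-action whose orbit relation has geometric dimension $\le\operatorname{gdim}(\Rel_{\Gamma\acts X_0})$; applying this to an $X_0$ realizing the ergodic dimension of $\Gamma$ then yields (ergodic dimension of $\Lambda$) $\le$ (ergodic dimension of $\Gamma$), and the reverse inequality follows by exchanging the roles of $\Gamma$ and $\Lambda$.

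The two lemmas I would establish are the following. First, \emph{monotonicity under factors}: if $\Gamma\acts A$ is essentially free and admits $\Gamma\acts B$ as an equivariant quotient, then $\operatorname{gdim}(\Rel_{\Gamma\acts A})\le\operatorname{gdim}(\Rel_{\Gamma\acts B})$. I would prove this by pulling back a $\Rel_{\Gamma\acts B}$-equivariant field of contractible complexes along the equivariant factor map $\pi\colon A\to B$: since $\pi\times\pi$ carries $\Rel_{\Gamma\acts A}$ into $\Rel_{\Gamma\acts B}$, the pulled-back field is $\Rel_{\Gamma\acts A}$-equivariant with the same fibers, hence contractible and of the same dimension. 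Second, \emph{invariance under stable orbit equivalence}: if $\Rel_1\soe\Rel_2$ then $\operatorname{gdim}(\Rel_1)=\operatorname{gdim}(\Rel_2)$. As SOE is an isomorphism composed with restrictions to positive-measure subsets, and isomorphism-invariance is trivial, the content is that restriction to a subset $X'$ of positive measure preserves $\operatorname{gdim}$: restricting a field from $X$ to $X'$ shows $\operatorname{gdim}(\Rel|_{X'})\le\operatorname{gdim}(\Rel)$, while a measurable $\Rel$-retraction $X\to X'$ (available by ergodicity) lets one \emph{spread} a field from $X'$ over all of $X$, giving the reverse inequality.

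With these in hand, fix $\Gamma\me\Lambda$ via an ergodic coupling $(\Omega,m)$, let $\Gamma\acts X_0$ be essentially free with $\operatorname{gdim}(\Rel_{\Gamma\acts X_0})$ equal to the ergodic dimension of $\Gamma$, and let $\Lambda\acts W$ be any free p.m.p. action. Form the $(\Gamma,\Lambda)$-coupling $\bar\Omega=\Omega\times X_0\times W$ with
\[
	g\colon(\omega,x,w)\mapsto(g\omega,gx,w),\qquad h\colon(\omega,x,w)\mapsto(h\omega,x,hw).
\]
Then $\bar X=\bar\Omega/\Lambda$ carries the diagonal $\Gamma$-action on $V\times X_0$, where $V=(\Omega\times W)/\Lambda$, so it admits $\Gamma\acts X_0$ as a quotient and is essentially free (from freeness of $\Gamma\acts X_0$); while $\bar Y=\bar\Omega/\Gamma$ carries a diagonal $\Lambda$-action with $W$ as a factor, hence is essentially free. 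By Theorem~\ref{T:ME-SOE} the two side relations satisfy $\Rel_{\Gamma\acts\bar X}\soe\Rel_{\Lambda\acts\bar Y}$, so combining the two lemmas gives
\[
	\operatorname{gdim}(\Rel_{\Lambda\acts\bar Y})=\operatorname{gdim}(\Rel_{\Gamma\acts\bar X})\le\operatorname{gdim}(\Rel_{\Gamma\acts X_0}),
\]
whence the ergodic dimension of $\Lambda$ is at most that of $\Gamma$, as desired.

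For the approximate ergodic dimension the strategy is identical: both lemmas are purely functorial constructions on fields, and pullback along $\pi$, restriction to $X'$, and spreading back all carry an approximating sequence of subrelations to an approximating sequence of subrelations without raising any of the dimensions involved. Thus the same coupling $\bar\Omega$ transports an approximating tower realizing the approximate ergodic dimension of $\Gamma$ to one for a free $\Lambda$-action, yielding the inequality and, by symmetry, equality. The hard part throughout is the second lemma — specifically the measurable, equivariant spreading of a field (and, in the approximate case, of an entire tower of subrelations) from a positive-measure subset back to the full space — which requires care with measurable selections along the relation and with preserving contractibility; by contrast the factor-monotonicity and the coupling bookkeeping are comparatively routine.
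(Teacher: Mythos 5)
The survey itself gives no proof of this theorem — it is quoted from Gaboriau \cite{Gaboriau:IHES2002L2} — so the only argument in the text to measure your proposal against is the proof of the cost analogue, Theorem~\ref{T:gcostinf}, on which you explicitly model your coupling bookkeeping. Your route is correct and is essentially Gaboriau's own: (i) geometric dimension does not increase under pulling back a field of contractible complexes along an equivariant quotient $A\to B$; (ii) geometric dimension is unchanged by restriction to positive-measure subsets, hence is an SOE-invariant; (iii) the enlarged coupling $\bar\Omega=\Omega\times X_0\times W$, exactly as in the freeness repair following Theorem~\ref{T:ME-SOE}, produces essentially free SOE actions with $\Gamma\acts\bar\Omega/\Lambda$ factoring onto $\Gamma\acts X_0$, which yields the inequality chain and, by symmetry, equality. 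Your observation that the infimum defining ergodic dimension is attained (integer values) and your transport of an exhausting tower of subrelations through the same pullback and spreading constructions for the approximate dimension are both sound, and you correctly isolate the spreading step as the technical core.

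One hypothesis in your Lemma 2 needs repair where you apply it. You justify spreading by "a measurable $\Rel$-retraction $X\to X'$, available by ergodicity," but the relations you actually feed in, $\Rel_{\Gamma\acts\bar\Omega/\Lambda}$ and $\Rel_{\Lambda\acts\bar\Omega/\Gamma}$, need not be ergodic: ergodicity of $\Omega$ does not imply ergodicity of $\Gamma\times\Lambda$ acting on $\Omega\times X_0\times W$. This is not a genuine gap, because what the spreading argument really requires is only that the subset meet almost every class, and that holds here automatically: $\bar X=\bar\Omega/\Lambda$ and $\bar Y=\bar\Omega/\Gamma$ are fundamental domains, hence meet every $\Gamma\times\Lambda$-orbit, so a field (or a tower of subrelations) can be spread from $\bar X$ to $\bar Y$ directly inside the $\II_\infty$-relation $\Rel_{\Gamma\times\Lambda\acts\bar\Omega}$; alternatively one decomposes $\bar\Omega$ into ergodic components and argues componentwise, using that restricting a field to a component can only lower the dimension. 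With the lemma stated under the correct hypothesis ("$X'$ meets a.e.\ $\Rel$-class" rather than "$\Rel$ is ergodic"), your argument goes through for both the ergodic and the approximate ergodic dimension.
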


This notion can be used to obtain some information about ME of lattices in the family
of rank one groups ${\rm SO}_{n,1}(\bbR)$.
If $\Gamma_i<{\rm SO}_{n_i,1}(\bbR)$, $i=1,2$ are lattices
and $\Gamma_1\me\Gamma_2$, then Gaboriau's result on $\ell^2$-Betti numbers shows that
if one of $n_i$ is even, then $n_1=n_2$. However, for $n_i=2k_i+1$ all $\beta^{(2)}_i$ vanish.
In this case Gaboriau shows, using the above ergodic dimension, that $k_1\le k_2\le 2k_1$ or
$k_2\le k_1\le 2k_2$.


\medskip

\subsubsection{Free products} 
\label{ssub:me_free_products}\hfill\\

It was mentioned above that if $\Gamma_i\ore\Lambda$ then $*_{i\in I}\Gamma_i\ore *_{i\in I}\Lambda_i$ 
(Here $\Gamma\ore\Lambda$ means that the two groups admit an ergodic ME-coupling with index one,
equivalently admit essentially free actions which are \emph{orbit equivalent}).
To what extent does the converse hold? Namely when can one recognize the free factors on the level of Measure Equivalence?

This problem was extensively studied by Ioana, Peterson, and Popa in \cite{Ioana+Peterson+Popa:2008} where
strong rigidity results were obtained for orbit relations under certain assumptions on the actions (see \S \ref{ssub:free_decompositions}).
Here let us formulate a recent result from Alvarez and Gaboriau \cite{Alvarez+Gaboriau:2008p1322} which can be stated in purely group 
theoretic terms. 
In \cite{Alvarez+Gaboriau:2008p1322} a notion of \embf{measurably freely indecomposable} groups (MFI) is introduced,
and it is shown that this class includes all non-amenable group with $\beta_1^{(2)}=0$.
Thus infinite property (T) groups, non-amenable direct products, are examples of MFI groups.

\begin{theorem}[Alvarez-Gaboriau \cite{Alvarez+Gaboriau:2008p1322}]\label{T:Alvarez-Gaboriau:ME-rigidity}
Suppose that $*_{i=1}^n\Gamma_i\ \me \ *_{j=1}^m\Lambda_j$,
where $\{\Gamma_i\}_{i=1}^n$ and $\{\Lambda_j\}_{j=1}^m$ are two sets
of MFI groups with $\Gamma_i\not\me\Gamma_{i'}$ for $1\le i\neq i'\le n$,
and $\Lambda_{j}\not\me \Lambda{j'}$ for $1\le j\ne j'\le m$.\\
Then $n=m$ and, up to a permutation of indices, $\Gamma_i\me\Lambda_i$.
%
\end{theorem}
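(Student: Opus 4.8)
The plan is to pass from Measure Equivalence to the language of orbit equivalence relations and to prove a unique-factorization statement for free products of relations, in analogy with the Kurosh subgroup theorem. First I would use Theorem~\ref{T:ME-SOE} to replace the ME-coupling $*_i\Gamma_i \me *_j\Lambda_j$ by a pair of essentially free p.m.p. actions that are stably orbit equivalent; after restricting and amplifying (normalizing the index), this yields a single $\II_1$ relation $\Rel$ carrying two decompositions, namely $\Rel\cong\Rel_{(*_i\Gamma_i)\acts X}$ on one side and $\Rel\cong\Rel_{(*_j\Lambda_j)\acts Y}$ on the other. For an essentially free action of a free product $*_i\Gamma_i$, the orbit relation is canonically a free product (in the sense of measured equivalence relations) of the orbit subrelations $\Rel_{\Gamma_i\acts X}$ of the factors, so $\Rel$ is simultaneously $\Rel_{\Gamma_1\acts X}*\cdots*\Rel_{\Gamma_n\acts X}$ and $\Rel_{\Lambda_1\acts Y}*\cdots*\Rel_{\Lambda_m\acts Y}$.

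Second, I would invoke the measurable analogue of the Kurosh subgroup theorem: any subrelation of a free product $\Rel_1*\cdots*\Rel_n$ splits as a free product of a treeable relation with subrelations each of which is conjugate, by a partial isomorphism of the ambient relation, into one of the free factors $\Rel_i$. The hypothesis that each $\Gamma_i$ and each $\Lambda_j$ is measurably freely indecomposable is exactly the statement that in every essentially free action its orbit relation admits no nontrivial free-product splitting; since MFI groups are in particular non-amenable, the relevant pieces are non-amenable and hence cannot absorb any nontrivial treeable part. Applying the Kurosh decomposition to each factor $\Rel_{\Lambda_j\acts Y}$, viewed inside the decomposition $\Rel_{\Gamma_1\acts X}*\cdots*\Rel_{\Gamma_n\acts X}$, free indecomposability forces the treeable remainder to be trivial and forces $\Rel_{\Lambda_j\acts Y}$ to be conjugate into a single factor $\Rel_{\Gamma_{\sigma(j)}\acts X}$.

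Third, I would run the symmetric argument to obtain that each $\Rel_{\Gamma_i\acts X}$ is conjugate into a single $\Lambda$-factor, and then combine the two matchings. A conjugation of $\Rel_{\Lambda_j\acts Y}$ into $\Rel_{\Gamma_{\sigma(j)}\acts X}$ produces a stable orbit equivalence, hence, again by Theorem~\ref{T:ME-SOE}, a Measure Equivalence $\Lambda_j\me\Gamma_{\sigma(j)}$. The pairwise non-ME hypotheses $\Gamma_i\not\me\Gamma_{i'}$ and $\Lambda_j\not\me\Lambda_{j'}$ are what prevent a single factor on one side from being distributed among, or identified with, two distinct factors on the other side, so the two matchings are mutually inverse bijections; in particular $n=m$ and $\Gamma_i\me\Lambda_i$ after relabelling.

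The main obstacle is the second step: setting up the correct notion of a free product of measured equivalence relations and proving the measurable Kurosh theorem, including the precise control of the treeable remainder. Equally delicate is the bookkeeping in the third step, where one must upgrade ``$\Rel_{\Lambda_j}$ is conjugate into $\Rel_{\Gamma_{\sigma(j)}}$'' to a genuine ME of the groups and use indecomposability on both sides, together with the non-ME separation hypotheses, to force the matching to be a clean bijection rather than merely a refinement.
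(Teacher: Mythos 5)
Your overall route --- pass from the ME-coupling to stably orbit equivalent essentially free actions via Theorem~\ref{T:ME-SOE}, observe that the resulting relation carries two free-product decompositions into the orbit relations of the factors, and match the factors through a measurable Kurosh/Bass--Serre theorem --- is exactly the Alvarez--Gaboriau strategy that this survey points to in \S\ref{ssub:free_decompositions}; note that the survey itself offers no proof of the theorem, only the citation together with the SOE-level statement (Theorem~\ref{T:Alvarez-Gaboriau:irr-rigidity}), so your outline is being compared against that machinery rather than against a written-out argument.

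There is, however, a genuine gap at the point you dismiss as bookkeeping. You claim that ``a conjugation of $\Rel_{\Lambda_j\acts Y}$ \emph{into} $\Rel_{\Gamma_{\sigma(j)}\acts X}$ produces a stable orbit equivalence, hence, by Theorem~\ref{T:ME-SOE}, a Measure Equivalence $\Lambda_j\me\Gamma_{\sigma(j)}$.'' This inference is false: what a Kurosh-type theorem delivers is an injective relation morphism, i.e.\ $\Rel_{\Lambda_j\acts Y}$ is carried onto a \emph{subrelation} of (a restriction of) $\Rel_{\Gamma_{\sigma(j)}\acts X}$, whereas SOE (Definition~\ref{D:OE}, Definition~\ref{D:weak-iso}) requires the image to be a full \emph{restriction} $(\Rel_{\Gamma_{\sigma(j)}\acts X})_A$. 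A subrelation can be drastically smaller --- the orbit relation of a subgroup already gives an embedding, and $\bbZ<\bbF_2$ shows that ``embeds into'' does not imply ME. Closing this gap is precisely the uniqueness half of the measurable Bass--Serre theory, and it is the real content of Alvarez--Gaboriau's argument: one composes the two families of embeddings ($\Lambda$-factor into $\Gamma$-factor into $\Lambda$-factor) and proves, from the structure of free products of relations, that a factor conjugated into a \emph{different} factor must be trivial, while a factor conjugated into \emph{itself} is in fact conjugated, by an element of the full pseudo-group, onto a restriction --- only then does Theorem~\ref{T:ME-SOE} apply. A second, intertwined issue is that the factor actions $\Gamma_i\acts X$ and $\Lambda_j\acts Y$ arising from the coupling need not be ergodic, so the Kurosh conclusion is a priori only piecewise: different invariant pieces of $\Rel_{\Lambda_j\acts Y}$ may be conjugated into different $\Gamma$-factors, and MFI does not exclude this, since a decomposition over an invariant partition is not a free-product decomposition of the kind MFI forbids. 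The pairwise non-ME hypotheses are indeed what restore consistency there (as you guessed), but invoking them again requires the genuine restriction-onto conjugacy on each piece, not the embedding; the survey itself flags this difficulty by stressing, after Theorem~\ref{T:Alvarez-Gaboriau:irr-rigidity}, that ergodicity of each factor action ``is important.''
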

Another result from \cite{Alvarez+Gaboriau:2008p1322} concerning decompositions of equivalence relations 
as free products of sub-relations is discussed in \S \ref{ssub:free_decompositions}. 

Let us also mention recent works of Kida \cite{Kida:amalgamated} and Popa and Vaes \cite{Popa+Vaes:amalgamated}
which describe extremely strong rigidity properties for certain amalgamated products of various rigid groups.


\medskip

\subsubsection{The classes $\Creg$ and $\Cmix$} 
\label{ssub:classes_creg_and_cmix}\hfill\\

In  \S \ref{ssub:products_of_hyperbolic_like_groups} below we shall discuss rigidity results
obtained by Nicolas Monod and Yehuda Shalom in \cite{Monod+Shalom:OE:05} 
(see also \cite{Monod+Shalom:CRAS:03, Monod+Shalom:CO:04} 
and jointly with Mineyev \cite{Mineyev+Monod+Shalom:2004}).
These results involve \embf{second bounded cohomology} with unitary coefficients:
$H^2_b(\Gamma,\pi)$ -- a certain vector space associated to a countable group $\Gamma$
and a unitary representation $\pi:\Gamma\to U(\scr{H}_\pi)$. 
(Some background on bounded cohomology can be found in \cite[\S 3]{Monod+Shalom:OE:05} 
or \cite{Monod:2006-ICM}; for more details see \cite{Burger+Monod:GAFA:02, Monod:01bdd}).
Monod and Shalom define the class $\Creg$ of groups characterized by the property 
that 
\[
	H^2_b(\Gamma,\ell^2(\Gamma))\neq \{0\}
\]
and (potentially larger) class $\Cmix$ of groups $\Gamma$ with non-vanishing $H^2_b(\Gamma,\pi)$
for some \emph{mixing} $\Gamma$-representation $\pi$.
Known examples of groups in $\Creg\subset\Cmix$ include groups admitting "hyperbolic-like"
actions of the following types:
(see \cite{Monod+Shalom:CO:04}, \cite{Mineyev+Monod+Shalom:2004})
\begin{itemize}
	\item[(i)] non-elementary simplicial action on some simplicial tree, 
	proper on the set of edges; 
	\item[(ii)] non-elementary proper isometric action on some proper CAT(-1) space; 
	\item[(iii)] non-elementary proper isometric action on some Gromov-hyperbolic graph of bounded valency.
\end{itemize}
Hence $\Creg$ includes free groups, free products of arbitrary countable groups 
and free products amalgamated over a finite group (with the usual exceptions of order two),
fundamental groups of negatively curved manifolds, Gromov hyperbolic groups, and non-elementary subgroups of the 
above families.
Examples of groups not in $\Cmix$ include amenable groups, products of at least two infinite groups,
lattices in higher rank simple Lie groups (over any local field), irreducible lattices in products of general compactly generated
non-amenable groups (see \cite[\S 7]{Monod+Shalom:OE:05}).
\begin{thm}[Monod-Shalom \cite{Monod+Shalom:OE:05}]
$\quad$
\begin{enumerate}
	\item Membership in $\Creg$ or $\Cmix$ is a ME-invariant.
	\item 
	For direct products $\Gamma=\Gamma_1\times\cdots\times\Gamma_n$ where $\Gamma_i\in\Creg$
	are torsion free, the number of factors and their ME types are ME-invariants.
	\item
	For $\Gamma$ as above, if $\Lambda\me \Gamma$ then $\Lambda$ cannot be written as product of $m>n$
	infinite torsion free factors.
\end{enumerate}
\end{thm}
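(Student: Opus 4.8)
The engine behind all three parts is an induction mechanism for second bounded cohomology that runs parallel to the induction of unitary representations described just above. The plan is first to establish, for a $(\Gamma,\Lambda)$-coupling $(\Omega,m)$ and a unitary $\Lambda$-representation $\pi$ on $\scrH_\pi$, a natural isomorphism
\[
	H^2_b(\Lambda,\pi)\ \cong\ H^2_b(\Gamma,\tilde\pi),
\]
where $\tilde\pi$ is the representation induced through $\Omega$. I would prove this using the functorial (Burger--Monod) realization of bounded cohomology by complexes of $L^\infty$-cochains on amenable parameter spaces: choosing an amenable $\Gamma$-space $A$ and an amenable $\Lambda$-space $B$, one presents both $H^2_b(\Gamma,\tilde\pi)$ and $H^2_b(\Lambda,\pi)$ as the cohomology of a single mixed complex of $\Gamma\times\Lambda$-invariant $L^\infty$-cochains on $A\times\Omega\times B$ valued in $\scrH_\pi$, exploiting that a fundamental domain trivializes one of the two commuting actions and that $\tilde\scrH$ is by definition the space of $\Lambda$-equivariant sections over $\Omega$. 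This is the measure-theoretic analogue of the classical fact that two lattices in a common group $G$ inherit the bounded cohomology of $G$.

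Granting this isomorphism, part (1) is immediate. If $\Lambda\in\Creg$, pick a nonzero class in $H^2_b(\Lambda,\ell^2(\Lambda))$; by observation (\ref{i:regular}) the induced representation of $\ell^2(\Lambda)$ is a multiple $n\cdot\ell^2(\Gamma)$, so the induction isomorphism delivers a nonzero class in $H^2_b(\Gamma,n\cdot\ell^2(\Gamma))$, hence in $H^2_b(\Gamma,\ell^2(\Gamma))$, giving $\Gamma\in\Creg$. For $\Cmix$ one argues identically, replacing $\ell^2$ by an arbitrary mixing $\pi$ and invoking observation (\ref{i:mix}) to ensure that $\tilde\pi$ is again mixing. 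Since $\me$ is symmetric, membership in either class transfers in both directions.

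For parts (2) and (3) the plan is to combine the non-vanishing of $H^2_b(\Gamma_i,\ell^2(\Gamma_i))$ with a cocycle-theoretic product-rigidity statement. Starting from a coupling between $\Lambda$ and $\Gamma=\Gamma_1\times\cdots\times\Gamma_n$, I would pass via Theorem~\ref{T:ME-SOE} to the associated ME-cocycle and compose it with the projection $\Gamma\to\Gamma_i$. The heart of the matter is to show that, because $\Gamma_i\in\Creg$, each such cocycle must \emph{untwist}: it is cohomologous to one that factors through a single direct factor of $\Lambda$ (when $\Lambda$ is itself written as a product), the nonzero $\ell^2$-valued bounded class supplying the equivariant boundary map that forces this factorization, with torsion-freeness removing finite-kernel ambiguities. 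Running this over all $i$ sets up an injection from the $\Gamma$-factors into the $\Lambda$-factors that respects ME-type; the symmetric argument gives the reverse injection, so $n=m$ and $\Gamma_i\me\Lambda_i$ after reindexing, which is (2). Part (3) is the one-sided count obtained from the same untwisting: every decomposition of $\Lambda$ into infinite torsion-free factors injects into the $n$ factors of $\Gamma$, so no more than $n$ such factors can occur.

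The main obstacle is the construction and naturality of the bounded-cohomology induction isomorphism, and, for (2)--(3), the untwisting step. The former demands care with the homological algebra of relatively injective $L^\infty$-modules and with verifying that amenability really propagates through the mixed resolution built from $\Omega$; the latter is the genuinely deep part of Monod--Shalom, since it is where a non-vanishing second bounded cohomology class is converted into rigid geometric data --- a measurable equivariant boundary map --- that pins down the product structure of $\Lambda$. By contrast, the representation-theoretic inputs (\ref{i:regular}) and (\ref{i:mix}) and the bookkeeping of factors are comparatively routine once these two mechanisms are in place.
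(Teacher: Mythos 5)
Your part (1) is, in outline, the right mechanism and the one behind the quoted result of \cite{Monod+Shalom:OE:05}: induce bounded cohomology through the coupling and identify the induced module via observations (\ref{i:regular}) and (\ref{i:mix}). Two caveats, though. You claim a natural \emph{isomorphism} $H^2_b(\Lambda,\pi)\cong H^2_b(\Gamma,\tilde\pi)$; only an \emph{injection} is needed (non-vanishing must transfer), and injectivity is what the induction argument actually provides -- the isomorphism is an overstatement you never use. More seriously, your step ``a nonzero class in $H^2_b(\Gamma,n\cdot\ell^2(\Gamma))$, hence in $H^2_b(\Gamma,\ell^2(\Gamma))$'' is not automatic when $n=\dim L^2(\Omega/\Lambda)=\infty$ (the generic case): second bounded cohomology is not known to commute with infinite Hilbert multiples, so passing from $\infty\cdot\ell^2(\Gamma)$ back to a single copy of the regular representation requires a separate argument. (For $\Cmix$ this issue disappears, since an infinite multiple of a mixing representation is still mixing, so the induced class already certifies membership.)

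The genuine gap is in parts (2)--(3): your counting runs backwards. Untwisting can only be applied to cocycles whose \emph{target} is hyperbolic-like, i.e.\ to $\pr_i\circ\beta:\Lambda\times Y\to\Gamma_i$ with $\Gamma_i\in\Creg$ (and even this needs irreducibility of $\Lambda\acts\Omega/\Gamma$, which you never address and which is exactly where the extra hypotheses of \cite{Monod+Shalom:OE:05} enter). This produces an assignment $i\mapsto j(i)$ from $\Gamma$-factors to $\Lambda$-factors. An \emph{injection} in this direction would give $n\le m$ -- the opposite of what (3) asserts -- and the ``reverse injection'' you invoke cannot be constructed at all in (3), because the $\Lambda_j$ are merely infinite torsion-free groups, not in $\Creg$, so no superrigidity theorem applies to cocycles valued in them. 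The correct mechanism, as described in \S\ref{sub:superrigidity_for_products}, is \emph{surjectivity} of $i\mapsto j(i)$: assembling the untwisted pieces yields a homomorphism $\rho:\Lambda\to\Gamma$ cohomologous to the ME-cocycle, and an ME-cocycle cohomologous to a homomorphism forces that homomorphism to have finite kernel (cf.\ Lemma~\ref{L:v-iso}); if some $\Lambda_{j_0}$ were missed, $\rho$ would annihilate the infinite group $\Lambda_{j_0}$, a contradiction. Surjectivity gives $m\le n$, which is (3); in the symmetric situation of (2) it gives $n=m$ and hence bijectivity, after which one still has to extract couplings between matched factors from the maps $\rho_i$ and $\Omega$ (the ``intricate analysis of kernels and cokernels'' in Monod--Shalom) -- a step your sketch asserts rather than supplies.
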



\medskip
 

\subsubsection{Dimension and simplicial volume ($\ell^1$-ME)} 
\label{ssub:dimension_and_simplicial_volume}\hfill\\

Geometric properties are hard to capture with the notion of Measure Equivalence.
The $\ell^2$-Betti numbers is an exception, but this invariant benefits from 
its Hilbert space nature.
In \cite{Bader+Furman+Sauer:2009, Bader+Furman+Sauer:2010} Uri Bader, Roman Sauer and the author consider a restricted version of
Measure Equivalence, namely $\ell^1$-ME (see \ref{ssub:_ell_p_measure_equivalence} for definition).
Being $\ell^1$-ME is an equivalence relation between finitely generated groups,
in which any two \embf{integrable} lattices in the same lcsc group are $\ell^1$-ME.
All uniform lattices are integrable, and so are all lattices in ${\rm SO}_{n,1}(\bbR)\simeq\Isom(\hypsp^n_\bbR)$
(see \ref{ssub:hyperbolic_lattices}).

\begin{theorem}[Bader-Furman-Sauer \cite{Bader+Furman+Sauer:2010}]\label{T:BFS-dim}
Let $\Gamma_i=\pi_1(M_i)$ where $M_i$ are closed manifolds which admit a Riemannian metric of negative sectional curvature.
Assume that $\Gamma_1$ and $\Gamma_2$ admit an $\ell^1$-ME-coupling $\Omega$.
Then
\[
	\dim(M_1)=\dim(M_2)\qquad\text{and}\qquad \|M_1\|=[\Gamma_2:\Gamma_1]_\Omega\cdot \|M_2\|,
\]
where $\|M_i\|$ denotes the simplicial volume of $M_i$.
\end{theorem}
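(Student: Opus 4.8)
The plan is to extract both the dimension equality and the proportionality of simplicial volumes from a single homological mechanism: transfer of fundamental cohomology classes through the $\ell^1$-ME coupling. The key is that simplicial volume $\|M\|$ is the $\ell^1$-seminorm of the fundamental class in $\ell^1$-homology $H_n^{\ell^1}(\Gamma;\mathbf{R})$, and dually can be detected by bounded cohomology $H^n_b(\Gamma;\mathbf{R})$. The $\ell^1$-integrability of the coupling is precisely what is needed to make the induction map on $\ell^1$-chains (or, dually, on bounded cochains) a well-defined bounded operator. So the first step is to set up, for an $\ell^1$-$(\Gamma_1,\Gamma_2)$-coupling $(\Omega,m)$ with fundamental domains $X,Y$ and ME-cocycles $\alpha,\beta$, an induction homomorphism on measurable bounded cohomology that takes the bounded fundamental cohomology class of one group to that of the other, multiplying norms by the coupling index.

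I would carry this out in three stages. First, I would recall that for a closed aspherical manifold $M=K(\Gamma,1)$ the simplicial volume equals $\|M\|=\sup\setdef{\langle\omega,[M]\rangle^{-1}}{\|\omega\|_\infty\le 1,\ \langle\omega,[M]\rangle\neq 0}^{-1}$, i.e. it is computed by the dual bounded cohomology pairing in degree $n=\dim M$; in particular $\|M\|>0$ forces $H^n_b(\Gamma;\mathbf{R})\neq\{0\}$ in that top degree, while vanishing of the top $\ell^1$-homology in higher degrees gives a homological notion of dimension. Second, I would build the ME-induction map on the complex of $L^\infty$ measurable bounded cochains, using the coupling $\Omega$ exactly as unitary representations are induced in \S\ref{ssub:amenability_property_t_a_t_menability}: a bounded $\Gamma_2$-cocycle on $\Gamma_2$-tuples is pulled back along $\beta$ and integrated over the finite-measure fundamental domain, the $\ell^1$-condition guaranteeing that this integral converges and is norm-controlled. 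Third, I would check that this map sends the (bounded) fundamental class of $\Gamma_2$ to an index-multiple of the fundamental class of $\Gamma_1$; comparing the norms of these classes yields the proportionality $\|M_1\|=[\Gamma_2:\Gamma_1]_\Omega\cdot\|M_2\|$, and comparing the top nonvanishing degrees forces $\dim M_1=\dim M_2$.

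The hard part will be the convergence and cocycle bookkeeping in the induction map — specifically, verifying that pulling the bounded cocycle back along the measurable cocycle $\beta$ and integrating really produces a \emph{bounded} cochain whose coboundary is the induced coboundary, and that under the $\ell^1$-hypothesis the resulting operator is norm-nonincreasing up to exactly the index factor. The $\ell^1$-integrability enters precisely here: without it the induced cochain need not be essentially bounded, and the whole transfer breaks down. A secondary subtlety is that negative curvature is used not only to guarantee asphericity (so $M_i$ are $K(\Gamma_i,1)$'s and simplicial volume is a group invariant via $\|M\|=\|\Gamma\|$) but also to ensure the fundamental class survives in bounded cohomology; for this I would invoke Gromov's and Thurston's positivity of the simplicial volume of negatively curved manifolds, which guarantees $\|M_i\|>0$ and hence that the top-degree classes being compared are genuinely nonzero.

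Finally, to pin down the dimension I would argue that $\ell^1$-ME induces isomorphisms (or at least mutual domination) on the top-degree $\ell^1$-homology that supports a nonzero fundamental class; since that top degree equals $\dim M_i$ for a closed aspherical negatively curved manifold, the two dimensions must agree. I expect the cleanest route is to phrase everything homologically in terms of an $\ell^1$-version of the chain-level induction $H^{\ell^1}_*(\Gamma_2)\to H^{\ell^1}_*(\Gamma_1)$ dual to the bounded-cohomology construction above, then read off both invariants simultaneously from how the fundamental class transforms.
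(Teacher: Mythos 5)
A preliminary caveat: this survey does not prove Theorem \ref{T:BFS-dim} at all — it is quoted from \cite{Bader+Furman+Sauer:2010}, and the surrounding text only recalls the definition of simplicial volume via the comparison map $H_n(M)\to H_n^{\ell^1}(M)$ and Gromov's positivity $\|M\|>0$ in negative curvature. So your proposal can only be measured against the strategy of the cited work; its overall architecture — transferring fundamental classes through the coupling, duality between $\ell^1$-homology and bounded cohomology, and Gromov--Thurston positivity — is indeed the right family of ideas. But your outline misplaces the actual difficulty in two ways.

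First, the role of the $\ell^1$-condition. Inducing a \emph{bounded} cochain through an ME-coupling and integrating over a finite-measure fundamental domain is automatically well-defined and norm-nonincreasing: boundedness of the integrand and finiteness of the measure are all one needs. This is the same induction used in \S \ref{ssub:amenability_property_t_a_t_menability} and by Monod--Shalom, and it works for \emph{arbitrary} ME-couplings; if that were the whole mechanism, proportionality of simplicial volume would follow from plain ME, which is exactly what one cannot prove (and what motivated introducing $\ell^1$-ME in the first place). The integrability is needed for the step your proposal treats as bookkeeping: showing that the induction is compatible with the comparison maps between ordinary (co)homology and the bounded/$\ell^1$ theories — equivalently, that the transferred fundamental cycle, after straightening, still represents an index multiple of the fundamental class of the other manifold. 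Transferred chains are smeared over group elements whose word lengths are the cocycle values $|\alpha(g,x)|_\Lambda$, and controlling the straightening and subdivision of such chains is precisely where the condition $|\alpha(g,-)|_\Lambda\in L^1(X,\mu)$ enters; this is the content of the ``efficient subdivision'' technique of \cite{Bader+Furman+Sauer:2010}.

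Second, the dimension argument as you state it fails. It is not true that the top nonvanishing degree of $\ell^1$-homology (equivalently, of bounded cohomology, by duality) of $\Gamma_i$ equals $\dim M_i$: bounded cohomology does not vanish above the dimension of an aspherical manifold — for instance, by a theorem of Soma the third bounded cohomology of a closed hyperbolic surface group is nonzero — so a nonzero transferred class in $H_{n_2}^{\ell^1}(\Gamma_1;\bbR)$ with $n_2>\dim M_1$ is not by itself a contradiction. The contradiction arises only once the transferred class is shown to lie in the image of \emph{ordinary} homology, where $H_{n_2}(\Gamma_1;\bbR)=H_{n_2}(M_1;\bbR)=0$ by asphericity; establishing that the transfer respects this comparison map is again the same crux as above, and it is where both the integrability hypothesis and the negative curvature (beyond mere positivity of $\|M_i\|$) are genuinely used.
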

The simplicial volume $\|M\|$ of a closed manifold $M$, introduced by Gromov in \cite{Gromov:1983volume},
is the norm of the image of the fundamental class under the comparison map
$H_n(M)\to H_n^{\ell^1}(M)$ into the $\ell^1$-homology, which is an $\ell^1$-completion of the usual homology.
This is a homotopy invariant of manifolds.
Manifolds carrying a Riemannian metric of negative curvature have $\|M\|>0$ (Gromov \cite{Gromov:1983volume}).



\subsection{Orbit/Measure Equivalence Rigidity} 
\label{sub:me_rigidity}\hfill\\

Let us now turn to Measure Equivalence rigidity results, i.e., classification results in the ME category.
In the introduction to this section we mentioned that 
the ME class $\ME{\bbZ}$ is precisely all infinite amenable groups.
The (distinct) classes $\ME{\bbF_{2\le n<\infty}}$ and $\ME{\bbF_\infty}$
are very rich and resist precise description.
However, much is known about more rigid families of groups.

\subsubsection{Higher rank lattices} 
\label{ssub:higher_rank_lattices}\hfill

%
\begin{theorem}[Zimmer \cite{Zimmer:cocyclesuper:80}]\label{T:ZimmerOE}
Let $G$ and $G'$ be center free simple Lie groups with ${\rm rk}_\bbR(G)\ge 2$, 
let $\Gamma<G$, $\Gamma'<G'$ be lattices and 
$\Gamma\acts (X,\mu)\ore\Gamma'\acts (X',\mu')$
be Orbit Equivalence between essentially free probability 
measure preserving actions. 
Then $G\cong G'$. Moreover the induced actions $G\acts (G\times_\Gamma X)$, 
$G'\acts (G'\times_{\Gamma'} Y)$ are isomorphic
up to a choice of the isomorphism $G\cong G'$.
\end{theorem}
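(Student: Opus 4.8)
The plan is to run Zimmer's cocycle superrigidity machinery on the rearrangement cocycle of the orbit equivalence. First I would fix an orbit equivalence $T\colon(X,\mu)\to(X',\mu')$ and, using essential freeness of $\Gamma'\acts(X',\mu')$, define the measurable rearrangement cocycle $\alpha\colon\Gamma\times X\to\Gamma'$ by the rule $T(\gamma.x)=\alpha(\gamma,x).T(x)$. This $\alpha$ satisfies the cocycle identity, and for a.e.\ $x$ the map $\gamma\mapsto\alpha(\gamma,x)$ is in fact a \emph{bijection} of $\Gamma$ onto $\Gamma'$, since $T$ carries the orbit $\Gamma.x$ bijectively onto $\Gamma'.T(x)$ and both actions are free. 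Composing with the inclusion $\Gamma'\hookrightarrow G'$ I regard $\alpha$ as a cocycle into the simple Lie group $G'$, and I pass to the induced $G$-action on $\bar X=G\times_\Gamma X$, which carries a \emph{finite} $G$-invariant measure because $\Gamma$ is a lattice, together with the induced cocycle $\bar\alpha\colon G\times\bar X\to G'$.

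The heart of the argument is to identify the algebraic hull of $\alpha$ inside $G'$ and then apply superrigidity. I would show that the hull is all of $G'$, i.e.\ that $\alpha$ is not cohomologous to a cocycle valued in any proper algebraic subgroup: here one combines the Borel density theorem (so that $\Gamma'$ is Zariski dense in $G'$) with the tightness furnished by the bijectivity of $\gamma\mapsto\alpha(\gamma,x)$ to preclude any reduction of the hull. Granting this, Zimmer's cocycle superrigidity theorem for higher-rank lattices \cite{Zimmer:cocyclesuper:80} produces a continuous homomorphism $\pi\colon G\to G'$ and a measurable straightening $\phi\colon X\to G'$ with
\[
	\alpha(\gamma,x)=\phi(\gamma.x)^{-1}\,\pi(\gamma)\,\phi(x)\qquad(\gamma\in\Gamma,\ \text{a.e. }x\in X).
\]
Since $G$ is simple with trivial center and $\pi$ is nontrivial, $\ker\pi$ is a proper normal, hence central, hence trivial subgroup, so $\pi$ is injective. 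Moreover, as $\alpha$ is cohomologous to the homomorphism cocycle $(\gamma,x)\mapsto\pi(\gamma)$, the hull equals the Zariski closure of $\pi(\Gamma)$, which (again by Borel density for $\Gamma<G$ and closedness of $\pi(G)$) is exactly $\pi(G)$. Fullness of the hull then forces $\pi(G)=G'$, so $\pi$ is an isomorphism and $G\cong G'$.

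It remains to realize the conjugacy of the induced actions through $\pi$. I would use the explicit map $\bar F\colon G\times_\Gamma X\to G'\times_{\Gamma'}X'$ given by $\bar F[g,x]=[\pi(g)\phi(x),\,T(x)]$. A direct computation shows that the cocycle relation above is precisely what makes $\bar F$ descend from $G\times X$ to the $\Gamma$-quotient and land consistently in the $\Gamma'$-quotient, and that $\bar F$ intertwines the $G$-action $h.[g,x]=[hg,x]$ with the $G'$-action via $\pi$. Using that $T$ is a measure isomorphism and $\phi$ is measurable one checks that $\bar F$ is an isomorphism of the two finite-measure $G$- and $G'$-spaces; composed with $\pi$ it is exactly the asserted isomorphism of induced actions $G\acts(G\times_\Gamma X)$ and $G'\acts(G'\times_{\Gamma'}X')$.

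The step I expect to be the main obstacle is establishing that the algebraic hull of the rearrangement cocycle is all of $G'$, and, hand in hand with it, invoking the superrigidity theorem in the form stated above. The hull computation is genuinely delicate: one must convert the combinatorial tightness of an honest orbit equivalence (bijectivity of $\gamma\mapsto\alpha(\gamma,x)$, together with Zariski density of $\Gamma'$) into the exclusion of every proper algebraic reduction, and it is here, through the representation-theoretic and algebraic-group input underlying superrigidity, that the hypothesis $\rank_\bbR(G)\ge 2$ is essential. The remaining verifications, that $\pi$ is injective, onto, and that $\bar F$ is a measurable isomorphism, are then comparatively routine bookkeeping with the straightening data $\pi$ and $\phi$.
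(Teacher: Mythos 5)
Your proposal is correct and follows essentially the same route as the paper: form the rearrangement cocycle $\alpha:\Gamma\times X\to\Gamma'\subset G'$, establish that its algebraic hull is all of $G'$ via a form of the Borel density theorem (using the bijectivity of $\gamma\mapsto\alpha(\gamma,x)$ coming from the orbit equivalence), apply Zimmer's cocycle superrigidity through the induced $G$-action on $G\times_\Gamma X$, and use the straightening data $(\pi,\phi)$ to conclude $G\cong G'$ and to write down the intertwining isomorphism of the induced actions. Your explicit map $\bar F[g,x]=[\pi(g)\phi(x),T(x)]$ is exactly the isomorphism implicit in the paper's sketch, and you correctly identify the hull computation as the step where the higher-rank hypothesis and the "inverse" structure of an OE cocycle enter.
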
	
In other words ergodic (infinite) p.m.p. actions of lattices in distinct 
higher rank semi-simple Lie groups always have distinct orbit structures\footnote{ 
There is no need here to assume here that the actions are essentially free.
Stuck and Zimmer \cite{Stuck+Zimmer:1994} showed that all non-atomic ergodic p.m.p. actions of
higher rank lattices are essentially free; this is based on and generalizes the famous Factor Theorem of 
Margulis \cite{Margulis:1978factors},
see \cite{Margulis:book}.}, for example 
\[
	2\le n < m\qquad\Longrightarrow\qquad\SL_n(\bbZ)\acts\bbT^n\quad\not\ore\quad\SL_m(\bbZ)\acts\bbT^m.
\] 
This remarkable result (a contemporary of Ornstein-Weiss Theorem \ref{T:Ornstein-Weiss}) 
not only showed that the variety of orbit structures of \emph{non-amenable} groups is very rich,
but more importantly established a link between OE in Ergodic Theory and the theory of algebraic groups
and their lattices; in particular, introducing Margulis' \emph{superrigidity} phenomena
into Ergodic Theory. 
This seminal result can be considered as the birth of the subject discussed in this survey.
Let us record a ME conclusion of the above.
\begin{corollary}[Zimmer]\label{C:ME}
Let $G$, $G'$ be connected center free simple Lie groups with ${\rm rk}_\bbR(G)\ge 2$, 
$\Gamma<G$ and $\Gamma'<G'$ lattices. Then $\Gamma\me\Gamma'$ iff $G\cong G'$.
\end{corollary}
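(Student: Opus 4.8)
The plan is to prove the two implications separately: the direction $G\cong G'\Rightarrow\Gamma\me\Gamma'$ is immediate, while the reverse direction carries all the content and is where Theorem~\ref{T:ZimmerOE}, together with the cocycle superrigidity underlying it, does the work.

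For $G\cong G'\Rightarrow\Gamma\me\Gamma'$ I would argue as follows. Transporting $\Gamma'<G'$ through a fixed isomorphism $G\cong G'$ realizes $\Gamma$ and $\Gamma'$ as two lattices in the \emph{same} group $G$. Example~\ref{E:ME-lattices} then exhibits $G$ itself, with Haar measure and the action $(\gamma,\gamma')\colon g\mapsto \gamma g\gamma'^{-1}$, as a $(\Gamma,\Gamma')$-coupling, so $\Gamma\me\Gamma'$. This half uses neither the rank hypothesis nor simplicity.

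The reverse direction $\Gamma\me\Gamma'\Rightarrow G\cong G'$ is the heart of the matter. I would start from an ergodic $(\Gamma,\Gamma')$-coupling $(\Omega,m)$ and, via Theorem~\ref{T:ME-SOE} (or directly from the coupling), produce essentially free ergodic p.m.p. actions $\Gamma\acts X$ and $\Gamma'\acts Y$ that are stably orbit equivalent, together with the associated ME rearrangement cocycle $\alpha\colon\Gamma\times X\to\Gamma'$. Composing with the inclusion $\Gamma'\hookrightarrow G'$ yields a measurable cocycle $\alpha\colon\Gamma\times X\to G'$ over the higher-rank lattice $\Gamma<G$. Now I would invoke Zimmer's cocycle superrigidity: since ${\rm rk}_\bbR(G)\ge 2$ and $\alpha$ is not cohomologous to a cocycle into a proper algebraic subgroup (the nondegeneracy that the bijective, orbit-equivalence nature of the coupling supplies), $\alpha$ is cohomologous to a homomorphism $\pi\colon\Gamma\to G'$, up to an a priori compact correction that the simplicity and center-freeness of $G'$ let one absorb. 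With $\pi$ in hand, Margulis superrigidity for the higher-rank lattice $\Gamma<G$ extends $\pi$ to a continuous homomorphism $\tilde\pi\colon G\to G'$. Because $G$ is simple, $\ker\tilde\pi$ is trivial or all of $G$; the latter is excluded since $\pi(\Gamma)$ is infinite and Zariski dense. Hence $\tilde\pi$ is injective, and $\tilde\pi(G)$ is a connected closed subgroup containing the lattice $\pi(\Gamma)$, which is Zariski dense by Borel density, forcing $\tilde\pi(G)=G'$. Thus $\tilde\pi$ is an isomorphism and $G\cong G'$. (Equivalently, this can be read off the final assertion of Theorem~\ref{T:ZimmerOE} that the induced $G$- and $G'$-actions agree after an identification $G\cong G'$.)

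The hard part will be the cocycle superrigidity step: verifying that the ME cocycle is genuinely nondegenerate (Zariski dense, reductive, with no escape into a compact factor) so that it straightens to an honest homomorphism rather than merely a cocycle into a proper or compact subgroup. A secondary technical point is passing from the stable orbit equivalence of Theorem~\ref{T:ME-SOE} to a statement at index one; I expect this to be harmless here, since the straightening of the cocycle, and hence the homomorphism $\Gamma\to G'$ it produces, does not see the coupling index.
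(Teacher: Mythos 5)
Your proof is correct and takes essentially the same route as the paper: the easy direction is Example~\ref{E:ME-lattices}, and the hard direction combines Theorem~\ref{T:ME-SOE} with Zimmer's Theorem~\ref{T:ZimmerOE}, whose cocycle-superrigidity proof (including the Borel-density verification that the ME-cocycle is Zariski dense and non-compact) you unfold exactly as the paper sketches in \S\ref{ssub:superrigidity}. Your closing remark that the straightened homomorphism $\Gamma\to G'$ does not see the coupling index is precisely the point that lets the argument run for stable orbit equivalence rather than index-one orbit equivalence.
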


\medskip

The picture of ME classes of lattices in higher rank simple Lie groups
can be sharpened as follows.
\begin{theorem}[\cite{Furman:ME:99}]\label{T:Furman:ME}
Let $G$ be a center free simple Lie group with ${\rm rk}_\bbR(G)\ge 2$,
$\Gamma<G$ a lattice, $\Lambda$ some group Measure Equivalent to $\Gamma$.

Then $\Lambda$ is commensurable up to finite kernels to a lattice in $G$.
Moreover any ergodic $(\Gamma,\Lambda)$-coupling has a quotient 
which is either an atomic coupling (in which case $\Gamma$ and $\Lambda$ are commensurable),
or $G$, or $\Aut(G)$ with the Haar measure.
\end{theorem}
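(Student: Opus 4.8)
The plan is to reduce the statement to Zimmer's cocycle superrigidity theorem (the engine behind Theorem~\ref{T:ZimmerOE}). The essential difficulty is that superrigidity applies to measurable cocycles valued in the higher-rank Lie group $G$, whereas the raw data of a $(\Gamma,\Lambda)$-coupling most naturally produces a cocycle valued in the \emph{abstract} group $\Lambda$, to which no rigidity is available. To circumvent this I would first pass to a self-coupling of $\Gamma$. Fix an ergodic $(\Gamma,\Lambda)$-coupling $(\Omega,m)$ and form $\Xi=\Omega\times_\Lambda\check\Omega$, the composition of $\Omega$ with its dual as in remark (c); this is an ergodic $(\Gamma,\Gamma)$-coupling. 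Its associated ME-cocycle is valued in $\Gamma$, and composing with the inclusion $\Gamma\hookrightarrow G$ yields a measurable cocycle $c\colon \Gamma\times(\Xi/\Gamma)\to G$ over the p.m.p.\ action $\Gamma\acts \Xi/\Gamma$. Now both the acting group and the target are of the type required by Zimmer's theorem.

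Next I would verify the non-degeneracy hypotheses for $c$ — that its essential image is Zariski dense and unbounded, and that $c$ does not reduce to a proper algebraic subgroup — and then invoke cocycle superrigidity to conclude that $c$ is cohomologous to a homomorphism $\Gamma\to G$. Since this homomorphism is the restriction of a lattice embedding of a higher-rank $\Gamma$, it extends to a continuous endomorphism of the simple group $G$, hence to an automorphism. Equivalently, superrigidity produces an essentially unique $\Gamma\times\Gamma$-equivariant measurable map $\Xi\to G'$, where $G'=\Aut(G)\supset\Inn(G)\cong G$; recall that $[\Aut(G):\Inn(G)]=|\Out(G)|<\infty$ because $G$ is center-free and simple.

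Then comes the descent from the self-coupling back to $\Omega$. Since $\Xi=(\Omega\times\check\Omega)/\Lambda$, the equivariant map above lifts to a $\Lambda$-invariant, $\Gamma\times\Gamma$-equivariant map on $\Omega\times\check\Omega$, and disintegrating over the $\check\Omega$-factor I would extract a $\Gamma$-equivariant measurable map $\Phi\colon\Omega\to G'$ that intertwines the $\Lambda$-action on $\Omega$ with right translation by $\rho(\Lambda)$, for a homomorphism $\rho\colon\Lambda\to G'$. By construction $\Phi$ is a quotient map of couplings onto $(G',m_{G'})$, with $\Gamma$ acting on the left through $\Gamma\hookrightarrow G\hookrightarrow G'$ and $\Lambda$ on the right through $\rho$. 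Finiteness of the fundamental-domain measures forces $\rho$ to have finite kernel with image a lattice in $G'$; as $G'$ contains $G=\Inn(G)$ with finite index, $\Lambda$ is commensurable up to finite kernels to a lattice in $G$, and the quotient coupling is $\Aut(G)$, or is $G$ precisely when $\rho(\Lambda)\subset\Inn(G)$. The remaining atomic alternative occurs exactly when the straightened cocycle is already cohomologous into the discrete subgroup $\Gamma<G$: then $\Phi$ factors through a countable $G'$-orbit, the quotient is atomic, and this is equivalent to $\Gamma$ and $\Lambda$ being commensurable.

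The main obstacle will be the descent step together with the verification of the superrigidity hypotheses. One must rule out reducibility and boundedness of $c$ — this is where the genuinely higher-rank input ($\mathrm{rk}_\bbR(G)\ge 2$, via Corollary~\ref{C:ME}'s source) enters — and, crucially, one must upgrade the \emph{abstract} homomorphism $\rho$ delivered by superrigidity to an honest lattice embedding with finite kernel rather than an arbitrary homomorphism. This last point is where the measure-preserving, finite-covolume structure of the coupling, and not merely its measurable structure, has to be exploited.
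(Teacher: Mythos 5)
Your overall route---passing to the self-coupling $\Omega\times_\Lambda\check\Omega$, straightening its $\Gamma$-valued ME-cocycle via Zimmer's cocycle superrigidity to obtain a bi-$\Gamma$-equivariant map into $\Aut(G)$, and then descending to a map $\Phi:\Omega\to\Aut(G)$ together with a representation $\rho:\Lambda\to\Aut(G)$ with finite kernel and lattice image---is exactly the paper's proof: it is Theorem~\ref{T:canonical-quotients} (with $\Gamma_1=\Gamma_2=\Gamma$) fed into the construction of Theorem~\ref{T:reps}. You also correctly flag the upgrade of the abstract homomorphism $\rho$ to a finite-kernel, finite-covolume embedding as the step where the measure structure of the coupling must be exploited; that is precisely the content of Theorem~\ref{T:reps}.

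There is, however, one genuine gap: your claim that ``by construction $\Phi$ is a quotient map of couplings onto $(G',m_{G'})$,'' i.e.\ that the pushforward $\Phi_*m$ is Haar measure except in the atomic case. Nothing in the construction gives this. What the construction yields is only that $\Phi_*m$ is a Radon measure on $G'=\Aut(G)$ invariant under the maps $g\mapsto\gamma g\bar\lambda$ for $\gamma\in\Gamma$, $\bar\lambda\in\rho(\Lambda)$; equivalently, a finite $\Gamma$-invariant ergodic measure on $G'/\rho(\Lambda)$. A priori such a measure could be singular or supported on a proper closed invariant subset, and the dichotomy you assert---atomic exactly when the straightened cocycle is cohomologous into the discrete subgroup $\Gamma$, Haar otherwise---is precisely the content of a nontrivial measure-classification theorem, not a formal consequence of equivariance. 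This is where the paper invokes Ratner's theorem: $\Gamma$-ergodic invariant probability measures on $\bar G/\rho(\Lambda)$ are either atomic (finite orbits, giving the commensurability case) or Haar measures on orbits of closed subgroups, and simplicity of $G$ together with Zariski density of $\Gamma$ then leaves only groups $G'$ with $\Ad(G)\le G'\le \Aut(G)$, so the quotient coupling is $G$ or (a finite extension inside) $\Aut(G)$ with Haar measure. Without this input the trichotomy of quotients in the statement does not follow.
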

(Recall that $\Aut(G)$ contains $\Ad(G)\cong G$ as a finite index subgroup).
The main point of this result is a construction of a representation $\rho:\Lambda\to\Aut(G)$
for the \emph{unknown} group $\Lambda$ using ME to a higher rank lattice $\Gamma$. 
It uses Zimmer's cocycle superrigidity theorem 
and a construction involving 
a bi-$\Gamma$-equivariant measurable map $\Omega\times_\Lambda\check\Omega\to \Aut(G)$.
An updated version of this construction is stated in \S \ref{sub:constructing_representations}.
The by-product of this construction, is a map $\Phi:\Omega\to\Aut(G)$
satisfying
\[
	\Phi(\gamma \omega)=\gamma\, \Phi(\omega),\qquad\Phi(\lambda\omega)=\Phi(\omega)\rho(\lambda)^{-1}.
\]
It defines the above quotients (the push-forward measure $\Phi_*m$ is identified as either
atomic, or Haar measure on $G\cong\Ad(G)$ or on all of $\Aut(G)$,
using Ratner's theorem \cite{Ratner:1994:ICM}).
This additional information is useful to derive OE rigidity results (see Theorem~\ref{T:OE-rigidity-combined}).


\medskip

\subsubsection{Products of hyperbolic-like groups} 
\label{ssub:products_of_hyperbolic_like_groups}\hfill\\

The results above use in an essential way the cocycle superrigidity theorem of Zimmer,
which exploits \emph{higher rank phenomena} as in Margulis' superrigidity.
A particular situation where such phenomena take place are \emph{irreducible}
lattices in products of (semi)simple groups, starting from $\SL_2(\bbR)\times\SL_2(\bbR)$;
or cocycles over \embf{irreducible actions} of a product of $n\ge 2$ simple groups.
Here irreducibility of an action $G_1\times\cdots\times G_n\acts (X,\mu)$ 
means ergodicity of $G_i\acts (X,\mu)$ for each $1\le i\le n$\footnote{ Sometimes this can be relaxed
to ergodicity of $G_i'\acts (X,\mu)$ where $G_i'=\prod_{j\ne i}G_j$.}.
It recently became clear that higher rank phenomena occur
also for irreducible lattices in products of $n\ge 2$ of rather general lcsc groups;
and in the cocycle setting, for cocycles over irreducible actions of products of $n\ge 2$
of rather general groups (see the introduction to \cite{Monod+Shalom:OE:05}). 
This is to say that the product structure alone seems to provide sufficient \emph{"higher rank thrust"} to the situation.
The following break-through results of Nicolas Monod and Yehuda Shalom is an excellent
illustration of this fact (see \S \ref{sub:superrigidity_for_products}). 
Similar phenomena were independently discovered by Greg Hjorth and 
Alexander Kechris in \cite{Hjorth+Kechris:MAMS:05}. 
%
\begin{theorem}[{Monod-Shalom \cite[Theorem 1.16]{Monod+Shalom:OE:05}}]
	\label{T:Monod+Sahlom-ME}
Let $\Gamma=\Gamma_1\times\cdots\times\Gamma_n$ and $\Lambda=\Lambda_1\times\cdots\times\Lambda_m$
be products of torsion-free countable groups, where $\Gamma_i\in\Creg$.
Assume that $\Gamma\me\Lambda$.

Then $n\ge m$. If $n=m$ then, after a permutation of the indices, 
$\Gamma_i\me\Lambda_i$. In the latter case ($n=m$) any ergodic ME-coupling of $\Gamma\cong\Lambda$
has the trivial coupling as a quotient.
\end{theorem}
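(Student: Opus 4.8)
The plan is to reduce the statement to the Monod--Shalom cocycle superrigidity theorem of \S\ref{sub:superrigidity_for_products}, applied to the rearrangement cocycles of an ME-coupling, and to read off the matching of factors from the resulting homomorphisms. First I would fix an ergodic $(\Gamma,\Lambda)$-coupling $(\Omega,m)$ together with fundamental domains $X,Y\subset\Omega$ and the associated ME-cocycle $\beta\colon\Lambda\times Y\to\Gamma$ (and its partner $\alpha\colon\Gamma\times X\to\Lambda$). Composing $\beta$ with the coordinate projections $\pr_i\colon\Gamma\to\Gamma_i$ yields cocycles $\beta_i=\pr_i\circ\beta\colon\Lambda\times Y\to\Gamma_i$ whose targets lie in $\Creg$. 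The goal is to show that the product decomposition of the \emph{source} $\Lambda=\Lambda_1\times\cdots\times\Lambda_m$ is forced to align with the factors $\Gamma_i$.

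The heart of the argument is a localization phenomenon that is the cocycle analogue of the elementary fact that a homomorphism from a direct product into a torsion-free hyperbolic group must, up to an elementary kernel, factor through a single factor (the centralizer of a non-elementary subgroup being too small). Because $\Gamma_i\in\Creg$, a nonzero class $0\neq\omega_i\in H^2_b(\Gamma_i,\ell^2(\Gamma_i))$ is available, and it can be transported through the cocycle $\beta_i$; two commuting factors of $\Lambda$ cannot jointly support such a ``reduced'' hyperbolic-like class. Invoking the superrigidity theorem, each $\beta_i$ is then cohomologous to a cocycle that factors through a single factor $\Lambda_{\sigma(i)}$, and in fact comes from a homomorphism $\rho_i\colon\Lambda_{\sigma(i)}\to\Gamma_i$ with finite kernel and cofinite image, up to the usual ME-ambiguity. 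This defines a map $\sigma\colon\{1,\dots,n\}\to\{1,\dots,m\}$ and realizes $\Gamma_i\me\Lambda_{\sigma(i)}$.

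Next I would extract the counting statements. For $n\ge m$: if some index $j$ were missed by $\sigma$, then every $\beta_i$ would be independent of the factor $\Lambda_j$, so (after simultaneously untwisting) the full cocycle $\beta=(\beta_1,\dots,\beta_n)$ would be independent of $\Lambda_j$; but $\beta$ is the rearrangement cocycle of a genuine coupling in which $\Lambda$ acts essentially freely, so it cannot ignore a factor, a contradiction. Hence $\sigma$ is surjective and $n\ge m$. If $n=m$ then $\sigma$ is a bijection, and after permuting indices we obtain $\Gamma_i\me\Lambda_i$. Finally, in the case $\Gamma\cong\Lambda$ with $n=m$, the homomorphisms $\rho_i$ are isomorphisms, so $\beta$ is cohomologous to the identity cocycle precomposed with an automorphism; by the dictionary in remark (e) of \S\ref{sec:general_discussion} this says exactly that the coupling admits the trivial coupling as a quotient, in parallel with the equivariant map $\Phi\colon\Omega\to\Aut(G)$ produced in Theorem~\ref{T:Furman:ME}.

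The main obstacle is the superrigidity theorem itself, i.e.\ the transition from ``$\Gamma_i\in\Creg$'' to ``$\beta_i$ localizes on one $\Lambda$-factor.'' This requires two nontrivial ingredients: the functoriality of second bounded cohomology with unitary coefficients under measure equivalence, so that $\omega_i$ can be induced across the coupling, and the structural lemma that a reduced class in $H^2_b$ cannot be carried by two commuting subgroups. The latter rests on the Burger--Monod description of $H^2_b$ via $L^\infty$-cochains on boundaries together with double ergodicity, and is precisely where the hypothesis $\Gamma_i\in\Creg$, rather than mere non-amenability, is used; arranging the simultaneous untwisting needed to assemble the $\beta_i$ into a single factor-respecting homomorphism $\Lambda\to\Gamma$ is the remaining delicate point.
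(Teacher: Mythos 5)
Your overall strategy---run the Monod--Shalom cocycle superrigidity machine on rearrangement cocycles and read the matching of factors off the resulting homomorphisms---is the right spirit, but you apply the machine in the wrong direction, and this creates a genuine gap. Theorem~\ref{T:Monod-Shalom:CSR} requires its \emph{source} to be a product of at least two factors acting \emph{irreducibly} (each factor acting ergodically), while the hyperbolic-like hypothesis sits on the \emph{target}. You feed it the cocycles $\beta_i=\pr_i\circ\beta\colon\Lambda\times Y\to\Gamma_i$, so your source is the unknown group $\Lambda=\Lambda_1\times\cdots\times\Lambda_m$. But nothing gives you that each $\Lambda_j$ acts ergodically on $Y=\Omega/\Gamma$ --- ergodicity of the coupling only yields ergodicity of the full $\Lambda$-action --- and there is no way to arrange such irreducibility for an arbitrary coupling (passing to ergodic components of a single factor destroys the product structure of the action rather than repairing it); one cannot even invoke the theorem when $m=1$. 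This is not a technicality: it is precisely why the companion SOE statement (Theorem~\ref{T:Monod-Shalom-wMMIX}) must assume $\Gamma\acts X$ irreducible and $\Lambda\acts Y$ mildly mixing, assumptions which Monod and Shalom show cannot be dropped there. An argument that invokes Theorem~\ref{T:Monod-Shalom:CSR} for $\Lambda\acts Y$ in an arbitrary ergodic coupling has no hypothesis to stand on.

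The paper's route (following Monod--Shalom) is arranged exactly so that nothing needs to be known about $\Lambda$: one passes to the \emph{self}-coupling $\Sigma=\Omega\times_\Lambda\check\Omega$ of $\Gamma$ and applies cocycle superrigidity to the ME-cocycle of $\Gamma\acts\Sigma/\Gamma$, decomposed as $c_i=\pr_i\circ c\colon\Gamma\times\Sigma/\Gamma\to\Gamma_i$. Here both requirements are supplied by the known group: the source is $\Gamma$, a product of $n\ge 2$ factors, and the targets $\Gamma_i$ lie in $\Creg$. The conclusion---after the ``intricate analysis'' showing that $i\mapsto j(i)$ is a permutation and the $\rho_i$ are isomorphisms, a step your sketch also compresses when you assert finite kernel and cofinite image directly from superrigidity---is that $c$ is conjugate to an automorphism of $\Gamma$, giving a bi-$\Gamma$-equivariant map $\Sigma\to\Gamma$; this is then fed into the representation-building machinery of \S\ref{sub:constructing_representations} (Theorem~\ref{T:reps}) to identify $\Lambda$, match the factors, and produce the trivial quotient of the coupling when $n=m$. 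Your counting argument (surjectivity of $\sigma$ because an ME-cocycle cannot restrict trivially to an infinite factor) is sound as far as it goes, but it sits downstream of a superrigidity application that your setup cannot legitimately make.
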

%
%
%
\begin{theorem}[{Monod-Shalom \cite{Monod+Shalom:OE:05}}]
	\label{T:Monod-Shalom-wMMIX}
Let $\Gamma=\Gamma_1\times\cdots\times\Gamma_n$ where $n\ge 2$ and $\Gamma_i$ are torsion free groups
in class $\Cmix$, and $\Gamma\acts (X,\mu)$ be an irreducible action (i.e., every $\Gamma_i\acts (X,\mu)$
is ergodic); let $\Lambda$ be a torsion free countable group and $\Lambda\acts (Y,\nu)$
be a mildly mixing action. 
If $\Gamma\acts X\soe \Lambda\acts Y$, then this SOE has index one, $\Lambda\cong\Gamma$
and the actions are isomorphic.
\end{theorem}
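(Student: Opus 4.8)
The plan is to convert the stable orbit equivalence into a measurable cocycle and then feed it into the cocycle superrigidity theorem for products, untwisting the cocycle to a genuine homomorphism from which both the isomorphism $\Gamma\cong\Lambda$ and the conjugacy of the actions drop out.

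First I would record the \emph{rearrangement cocycle}. After identifying $Y$ with a positive-measure piece of $X$ through the SOE map $T$, and using essential freeness of both actions, one obtains a measurable cocycle $\alpha\colon\Gamma\times X\to\Lambda$ determined (up to the adjustment needed in the stable, as opposed to index-one, case; cf.\ the proof of Theorem~\ref{T:ME-SOE}) by the relation $T(g.x)=\alpha(g,x).T(x)$. Freeness of $\Lambda\acts Y$ makes $\alpha$ well defined, the cocycle identity $\alpha(gg',x)=\alpha(g,g'x)\alpha(g',x)$ holds almost everywhere, and the SOE index is encoded in the relative normalizations of the two fundamental pieces. It is the cohomology class of $\alpha$, rather than $\alpha$ itself, that is the true invariant of the SOE.

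The heart of the matter is the Monod--Shalom cocycle superrigidity theorem for products (the engine behind \S\ref{sub:superrigidity_for_products}), which I would apply to $\alpha$. The hypotheses are precisely calibrated for this: the decomposition $\Gamma=\Gamma_1\times\cdots\times\Gamma_n$ with $n\ge 2$ and each $\Gamma_i\in\Cmix$ furnishes, via the non-vanishing of $H^2_b(\Gamma_i,\pi)$ for a mixing representation $\pi$, the ``higher-rank thrust'' that rigidifies cocycles, while irreducibility of $\Gamma\acts(X,\mu)$ (ergodicity of every factor) is what lets the resulting bounded class detect all factors at once. The conclusion to extract is that $\alpha$ is cohomologous to a homomorphism: there are a measurable transfer $\phi\colon X\to\Lambda$ and a homomorphism $\rho\colon\Gamma\to\Lambda$ with $\alpha(g,x)=\phi(g.x)\rho(g)\phi(x)^{-1}$ almost everywhere. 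Here the mild mixing of $\Lambda\acts(Y,\nu)$ is exactly the input that forbids the untwisted cocycle from folding onto a proper quotient or a finite extension, i.e.\ it forces $\rho$ to be genuine rather than virtual.

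Finally I would promote $\rho$ to an isomorphism and close the argument. Torsion-freeness of $\Lambda$ together with essential freeness of $\Lambda\acts Y$ forces $\ker\rho$ to act trivially and hence $\rho$ to be injective; the orbit-matching built into the SOE, again combined with mild mixing (which excludes a proper, finite-index image), forces $\rho$ to be surjective, so $\rho$ is an isomorphism $\Gamma\cong\Lambda$. Substituting $\rho$ back into the untwisting relation, the transfer $\phi$ conjugates $\Gamma\acts(X,\mu)$ to $\Lambda\acts(Y,\nu)$ along $\rho$, so the actions are isomorphic; tracking the measure normalizations then pins the SOE index at $1$. The principal obstacle is the cocycle superrigidity input itself: constructing the requisite second bounded cohomology class and propagating it across the product to pin $\alpha$ to a homomorphism is the technically heavy step, with the careful deployment of mild mixing to upgrade ``cohomologous to a homomorphism'' into ``isomorphism of index one'' as the delicate endgame.
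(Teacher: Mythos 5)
Your plan founders at its central step: you propose to apply the Monod--Shalom cocycle superrigidity theorem for products (Theorem~\ref{T:Monod-Shalom:CSR}) directly to the rearrangement cocycle $\alpha\colon\Gamma\times X\to\Lambda$. That theorem requires the \emph{target} of the cocycle to be a hyperbolic-like group (a non-elementary cocycle into a group of class $\Cmix$ or $\Creg$), whereas here $\Lambda$ is an arbitrary torsion-free countable group carrying no such assumption. Worse, the very conclusion you are trying to prove forces $\Lambda\cong\Gamma_1\times\cdots\times\Gamma_n$ with $n\ge 2$ infinite factors, and products of two or more infinite groups are explicitly \emph{not} in $\Cmix$ (see \S\ref{ssub:classes_creg_and_cmix}); so in every instance of this theorem the target of your cocycle lies outside the scope of the superrigidity machine. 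The ``technically heavy step'' you defer --- constructing the requisite bounded cohomology class for the target --- is not heavy, it is unavailable: the relevant classes live on the hyperbolic-like side, and $\Lambda$ has none.

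The actual argument routes around this by arranging for the cocycle to take values in $\Gamma$ itself (see \S\ref{sub:superrigidity_for_products}). From the SOE one builds the $(\Gamma,\Lambda)$-coupling $\Omega$ and then the \emph{self}-coupling $\Sigma=\Omega\times_\Lambda\check\Omega$ of $\Gamma$; its ME-cocycle $c\colon\Gamma\times\Sigma/\Gamma\to\Gamma$ splits into components $c_i=\mathrm{pr}_i\circ c$ whose targets are the individual factors $\Gamma_i\in\Cmix$, and Theorem~\ref{T:Monod-Shalom:CSR} applies to each $c_i$. Since the factors commute, the untwistings can be performed simultaneously; an analysis of the resulting assignment $i\mapsto j(i)$ shows it is a permutation with each $\rho_i$ an isomorphism, so $c$ is conjugate to an automorphism of $\Gamma$. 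This yields a bi-$\Gamma$-equivariant map $\Sigma\to\Gamma$ as in Theorem~\ref{T:canonical-quotients}, which is then fed into the construction of Theorem~\ref{T:reps} to identify the unknown group $\Lambda$ and the action. This also corrects your account of where mild mixing enters: it is not an endgame device ruling out proper quotients or virtual homomorphisms of $\rho$, but the hypothesis guaranteeing that $\Gamma\acts\Sigma/\Gamma$ is \emph{irreducible}, without which the superrigidity theorem cannot be invoked at all; Monod and Shalom show this hypothesis cannot be dropped.
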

\begin{theorem}[{Monod-Shalom \cite{Monod+Shalom:OE:05}}]\label{T:Monod+Shalom:Am-rad}
For $i=1,2$ let $1\to A_i\to \bar\Gamma_i\to \Gamma_i\to 1$
be short exact sequence of groups with
$A_i$ amenable and $\Gamma_i$ are in $\Creg$ and are torsion free.
Then $\bar\Gamma_1\me \bar\Gamma_2$ implies $\Gamma_1\me \Gamma_2$. 
\end{theorem}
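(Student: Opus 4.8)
The plan is to start from an ergodic $(\bar\Gamma_1,\bar\Gamma_2)$-coupling $(\Omega,m)$ (invoking comment (b) to reduce to the ergodic case) and to manufacture a $(\Gamma_1,\Gamma_2)$-coupling by dividing out the amenable kernels. Since $A_i\triangleleft\bar\Gamma_i$ and the two factors commute, the group $A_1\times A_2$ acts on $\Omega$ by measure-preserving transformations, and I would pass to the measurable quotient $\Omega_0$ of $\Omega$ by the $\sigma$-algebra of $(A_1\times A_2)$-invariant sets (the Mackey realization of $L^\infty(\Omega)^{A_1\times A_2}$, i.e.\ the space of ergodic components of the $A_1\times A_2$-action). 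Because $A_1\times A_2$ is normal in $\bar\Gamma_1\times\bar\Gamma_2$, this $\sigma$-algebra is $\bar\Gamma_1\times\bar\Gamma_2$-invariant, so $\Omega_0$ carries commuting actions of $\Gamma_1=\bar\Gamma_1/A_1$ and $\Gamma_2=\bar\Gamma_2/A_2$.

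Next I would verify that $\Omega_0$ inherits finite fundamental domains. If $Y,X\subset\Omega$ are $\bar\Gamma_1$-, $\bar\Gamma_2$-fundamental domains, then $\Omega/\bar\Gamma_2\cong X$ and $\Omega/\bar\Gamma_1\cong Y$ have finite measure; the residual commuting amenable actions act on these quotients, and dividing further identifies $\Omega_0/\Gamma_2$ with $X/A_1$ and $\Omega_0/\Gamma_1$ with $Y/A_2$, again of finite measure. Thus, \emph{provided} $\Omega_0$ is non-degenerate, it is a genuine ME-coupling of $\Gamma_1$ and $\Gamma_2$ (with index transformed by the measures of the amenable quotients), and $\Gamma_1\me\Gamma_2$ follows.

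The main obstacle is precisely this non-degeneracy and the coupling property of $\Omega_0$: one must rule out that collapsing the amenable directions destroys the coupling, e.g.\ produces an atomic or null object or a $\Gamma_i$-action with no finite fundamental domain. Here a direct appeal to the ME-invariance of $\Creg$ is unavailable, since if $A_i$ is infinite then $(\ell^2\bar\Gamma_i)^{A_i}=0$, whence $H^2_b(\bar\Gamma_i,\ell^2\bar\Gamma_i)=0$ and $\bar\Gamma_i\notin\Creg$. Instead I would argue relatively: let $\pi_i$ be the representation of $\bar\Gamma_i$ obtained by inflating $\ell^2\Gamma_i$ along $\bar\Gamma_i\to\Gamma_i$, on which $A_i$ acts trivially. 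The inflation isomorphism $H^2_b(\Gamma_i,\ell^2\Gamma_i)\cong H^2_b(\bar\Gamma_i,\pi_i)$ for amenable kernels transports the non-vanishing class witnessing $\Gamma_i\in\Creg$ to a non-zero class for $\bar\Gamma_i$ with coefficients $\pi_i$. Feeding this class into the Monod--Shalom induction-through-couplings machinery yields $\bar\Gamma_i$-equivariant boundary data on $\Omega$; since $A_i$ acts trivially on $\pi_i$, this data is $A_1\times A_2$-invariant and hence descends to $\Omega_0$, certifying that $\Omega_0$ is non-trivial and carries the required structure.

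I expect the technical heart to lie in this last reconciliation: matching the measure-theoretic quotient by the amenable actions with the bounded-cohomology rigidity of the quotient groups, and in particular checking that the induced cohomology classes genuinely factor through $\Omega_0$, so that the hyperbolic-like rigidity of $\Gamma_i$ can be brought to bear on the descended coupling rather than on $\bar\Gamma_i$ itself.
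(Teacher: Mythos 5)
The survey you are working from contains no proof of this statement: it is quoted from Monod--Shalom, with only the remark that the bounded-cohomology ($H^2_b$) cocycle machinery is the key tool. So the comparison is against that cited proof. Your skeleton is indeed the right one and matches the strategy of the cited argument: reduce to an ergodic coupling, form the Mackey quotient $\Omega_0=\Omega/(A_1\times A_2)$, and use normality of $A_1\times A_2$ in $\bar\Gamma_1\times\bar\Gamma_2$ to get commuting $\Gamma_1$- and $\Gamma_2$-actions on $\Omega_0$. Your preliminary observations are also correct: $\bar\Gamma_i\notin\Creg$ when $A_i$ is infinite, so ME-invariance of $\Creg$ is genuinely unavailable; the inflation isomorphism $H^2_b(\bar\Gamma_i,\pi_i)\cong H^2_b(\Gamma_i,\ell^2\Gamma_i)$ for amenable kernels is a valid tool; and equivariant maps into coefficient modules on which $A_i$ acts trivially do automatically descend to $\Omega_0$.

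The gap lies exactly at what you call the main obstacle, and it is the entire content of the theorem, not a reconciliation step. What must be proved is that each action $\Gamma_i\acts\Omega_0$ is essentially free and admits a finite-measure Borel fundamental domain; finiteness of the measure of $\Omega_0/\Gamma_2\cong X/A_1$ is necessary but far from sufficient, and non-atomicity of $\Omega_0$ (your ``non-degeneracy'') is not the relevant issue at all. Concretely, write $\Omega/A_1\cong\Gamma_1\times Y$ equivariantly (this much is automatic, since $A_1$ acts with a Borel fundamental domain), so that $A_2$ acts by $a:(\gamma,y)\mapsto(\gamma\,c(a,y)^{-1},\,ay)$ for a cocycle $c:A_2\times Y\to\Gamma_1$, namely the ME-cocycle of $\bar\Gamma_2$ composed with $\bar\Gamma_1\to\Gamma_1$ and restricted to $A_2$. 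Nothing in your argument excludes the possibility that $c$ is, say, an injective homomorphism onto an infinite cyclic subgroup $\langle w\rangle\le\Gamma_1$ --- this is perfectly compatible with amenability of $A_2$ --- and in that case $\Omega_0$ fibers $\Gamma_1$-equivariantly over $\Gamma_1/\langle w\rangle$, the $\Gamma_1$-action has infinite stabilizers, and $\Omega_0$ is \emph{not} a coupling. The substance of the theorem is that this degeneration cannot occur: up to conjugation the restricted cocycles are trivial, and proving that requires using all three hypotheses at once --- amenability of $A_2$ (to get an elementary reduction of $c$ from the boundary theory underlying $\Creg$), normality of $A_2$ in $\bar\Gamma_2$ (so that the canonical elementary reduction is equivariant under the full cocycle), and finiteness of covolume together with torsion-freeness and non-elementarity of $\Gamma_1$ (to rule out an infinite cyclic reduction). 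Note that your proposal uses normality only to define the quotient action and never in the hard step --- a red flag. Your closing mechanism, that the induced class ``descends to $\Omega_0$, certifying that $\Omega_0$ is non-trivial and carries the required structure,'' cannot carry this weight: non-vanishing of an induced bounded-cohomology class is far too soft to certify essential freeness or produce a fundamental domain. What the machinery must actually be used for is to construct equivariant boundary/barycenter maps and deduce from them the triviality of the restricted cocycles --- precisely the portion of the Monod--Shalom argument that your sketch omits.
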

A key tool in the proofs of these results is a cocycle superrigidity theorem \ref{T:Monod-Shalom:CSR},
which involves \emph{second bounded cohomology} $H^2_b$ of groups.
In \cite{Bader+Furman:Weyl-hyplike} (see also \cite{Bader+Furman+Shaker:Weyl-circle}) 
Uri Bader and the author develop a different approach to higher rank phenomena, in particular showing 
an analogue of Monod-Shalom Theorem \ref{T:Monod-Shalom:CSR}, as stated in Theorem \ref{T:Bader+Furman:CSR}.
This result concerns a class of groups which admit \emph{convergence action} on
a compact metrizable space (i.e. a continuous action $H\acts M$ where the action 
$H\acts M^3\setminus {\rm Diag}$ on the locally compact 
space of distinct triples is proper).
Following Furstenberg \cite{Furstenberg:1967elvelop} we denote this class as $\Dyn$,
and distinguish a subclass $\Dynam$ of groups admitting convergent action $H\acts M$
with amenable stabilizers.
As a consequence of this superrigidity theorem it follows that
Theorems \ref{T:Monod+Sahlom-ME}--\ref{T:Monod+Shalom:Am-rad}
remain valid if class $\Creg$ is replaces by $\Dynam$.
Recently Hiroki Sako \cite{Sako:IMRN-classS, Sako:JFA-ME} has obtained similar results for groups in 
Ozawa's class $\mathcal{S}$ (see \cite{Ozawa:Kurosh}).

Let us point out that each of the classes $\Creg$, $\Dynam$, $\mathcal{S}$ include all Gromov hyperbolic groups 
(and many relatively hyperbolic ones), are closed under taking subgroups, and exclude direct products
of two infinite groups. These are key features of what one would like to call "hyperboli-like" group.

%


\subsubsection{Mapping Class Groups} 
\label{ssub:mapping_class_groups}\hfill\\

The following remarkable result of Yoshikata Kida concerns Mapping Class Groups of surfaces. 
Given a compact orientable surface $\Sigma_{g,p}$ of genus $g$ with $p$ boundary components
the \embf{extended mapping class group} $\Gamma(\Sigma_{g,p})^\diamond$ is the group of isotopy 
components of diffeomorphisms of $\Sigma_{g,p}$ (the \embf{mapping class group} itself
is the index two subgroup of isotopy classes of orientation preserving diffeomorphisms).
In the following assume $3g+p>0$, i.e., rule out the torus $\Sigma_{1,0}$, once punctured torus $\Sigma_{1,1}$, 
and spheres $\Sigma_{0,p}$ with $p\le 4$ punctures.


%
\begin{theorem}[Kida \cite{Kida:2006ME}]\label{T:Kida-MErigidity}
Let $\Gamma$ be a finite index subgroup in $\Gamma(\Sigma_{g,p})^\diamond$ with $3g+p-4>0$,
or in a finite product of such Mapping Class groups $\prod_{i=1}^n \Gamma(\Sigma_{g,p})^\diamond$. 
 
Then any group $\Lambda\me \Gamma$ is commensurable up to finite kernels to $\Gamma$,
and ergodic ME-coupling has a discrete $(\Gamma,\Lambda)$-coupling as a quotient.
\end{theorem}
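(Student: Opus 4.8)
The plan is to follow the general architecture of Measure Equivalence rigidity developed by Furman for higher rank lattices (Theorem~\ref{T:Furman:ME}): starting from an ergodic $(\Gamma,\Lambda)$-coupling $(\Omega,m)$, construct a homomorphism $\rho:\Lambda\to\Gamma^\diamond$ into the extended mapping class group with finite kernel and finite-index image, and exhibit a measurable map $\Phi:\Omega\to\Gamma^\diamond$ intertwining the two actions and pushing $m$ to a discrete measure. First I would reduce to the case of a single mapping class group; the product case is handled by an inductive argument showing that the coupling must respect the product decomposition, using that each factor $\Gamma(\Sigma_{g,p})^\diamond$ is ``indecomposable'' and cannot split further. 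The substance of the proof is thus the single-surface statement.

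The central geometric input is the action of $\Gamma$ on the Thurston boundary $\mathcal{PMF}(\Sigma)$ of Teichm\"uller space, together with the simplicial action on the \emph{curve complex} $\mathcal{C}(\Sigma)$. Two rigidity facts drive the argument: first, the $\Gamma$-action on $\mathcal{PMF}(\Sigma)$ is \emph{amenable} in Zimmer's sense (an analogue of the amenability of the $G$-action on $G/P$ for semisimple $G$); second, Ivanov's theorem identifies $\Aut(\mathcal{C}(\Sigma))$ with $\Gamma(\Sigma)^\diamond$, the low-complexity exceptions being excluded precisely by the hypothesis $3g+p-4>0$. Amenability lets me build a measurable $\Gamma$-equivariant map from the coupling into $\Prob(\mathcal{PMF}(\Sigma))$; feeding in the ME-cocycle associated to fundamental domains (as in Theorem~\ref{T:ME-SOE}) then produces an equivariant boundary map for the unknown group $\Lambda$.

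The technical heart, and the main obstacle, is a measure-theoretic classification of such equivariant maps: one must show that a $\Gamma$-equivariant measurable assignment into $\Prob(\mathcal{PMF}(\Sigma))$ concentrates, almost everywhere, on the canonical finite-dimensional simplices of measured foliations supported on systems of disjoint simple closed curves --- that is, on exactly the combinatorial data recorded by $\mathcal{C}(\Sigma)$. This requires a delicate analysis of the dynamics of the $\Gamma$-action, exploiting the north--south dynamics of pseudo-Anosov elements on $\mathcal{PMF}(\Sigma)$ and the way reducible and irreducible mapping classes interact with the curve complex. Establishing this rigidity is where the full machinery of measured groupoids acting on $\mathcal{C}(\Sigma)$ must be deployed, and it is the step I expect to be hardest.

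Once the boundary map is pinned to the simplicial structure, the cocycle is seen to take values (after reduction) in $\Aut(\mathcal{C}(\Sigma))=\Gamma^\diamond$, and a standard cocycle-straightening argument shows it is cohomologous to a genuine homomorphism $\rho:\Lambda\to\Gamma^\diamond$. Properness of the resulting equivariant map forces $\Ker\rho$ to be finite and $\rho(\Lambda)$ to have finite index, giving the commensurability-up-to-finite-kernels conclusion. Finally, the map $\Phi:\Omega\to\Gamma^\diamond$ assembled from $\rho$ and the fundamental-domain data pushes $m$ to a measure invariant under the two now-discrete actions, hence a scalar multiple of counting measure; this realizes the discrete $(\Gamma,\Lambda)$-coupling as the desired quotient.
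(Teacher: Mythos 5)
Your proposal follows essentially the same route the survey sketches for Kida's theorem: amenability (indeed the $\Gamma$-boundary property) of the action on Thurston's boundary $\mathcal{PML}(\Sigma)$, a hard classification of the resulting equivariant measurable maps that pins everything to the curve complex, a groupoid version of Ivanov's theorem identifying $\Aut(C(\Sigma))\cong\Gamma(\Sigma)^\diamond$ (this is the cocycle strong rigidity Theorem~\ref{T:Kida-coc}, applied to the self-coupling $\Omega\times_\Lambda\check\Omega$), and finally the representation-constructing machinery of \S\ref{sub:constructing_representations} (Theorem~\ref{T:reps}) to recognize $\Lambda$ and produce the discrete quotient coupling. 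You correctly isolate the classification of boundary maps as the technical heart where Kida's real work lies, so your plan matches the paper's own account of the proof.
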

This work (spanning \cite{Kida:2008-thesis, Kida:2006ME, Kida:2008OE}) is a real tour de force. 
Mapping Class Groups $\Gamma(\Sigma)$ are often compared to a lattice in a semi-simple Lie group $G$:
the Teichm\"{u}ller space $\mathcal{T}(\Sigma)$ is analogous to the symmetric space $G/K$,
Thurston boundary $\mathcal{PML}(\Sigma)$ analogous to Furstenberg boundary $B(G)=G/P$,
and the curve complex $C(\Sigma)$ to the spherical Tits building of $G$.
The MCG has been extensively studied as a geometric object, while Kida's work
provides a new ergodic-theoretic perspective.
For example, Kida proves that Thurston boundary $\mathcal{PML}(\Sigma)$ with the Lebesgue measure class 
is $\Gamma$-\embf{boundary} in the sense of Burger-Monod for the Mapping Class Group,
i.e., the action of the latter is amenable and doubly ergodic with unitary coefficients.
Properties of the MCG action on $\mathcal{PML}(\Sigma)$ allow Kida to characterize certain 
subrelations/subgroupoids arising in self Measure Equivalence of a MCG;
leading to the proof of a  cocycle (strong) rigidity theorem~\ref{T:Kida-coc},
which can be viewed as a groupoid version of Ivanov's rigidity theorem.
This strong rigidity theorem can be used with \S \ref{sub:constructing_representations}
to recognize arbitrary groups ME to a MCG.

Note that a Mapping Class Group behaves like a "lattice without ambient Lie group" -- all its ME-couplings
have discrete quotients.
Moreover, Kida's ME rigidity results extends to products of MCGs without any irreducibility assumptions. 
From this point of view MCGs are more ME rigid than higher rank lattices,
despite the fact that they lack many other rigidity attributes, such as property (T)
(see Andersen \cite{Andersen:MCG-no-T}). 


\medskip

{\bf Added in proof}. 
Very recently additional extremely strong ME-rigidity results were obtained in Kida \cite{Kida:amalgamated}
and Popa and Vaes \cite{Popa+Vaes:amalgamated} for certain amalgamated products of higher rank lattices, and the also 
mapping class groups. The latter paper also establishes $W^*$-rigidity.

\medskip

\subsubsection{Hyperbolic lattices and $\ell^1$-ME} 
\label{ssub:hyperbolic_lattices}\hfill\\

Measure Equivalence is motivated by the theory of lattices, with ME-couplings
generalizing the situation of groups imbedded as lattices in in the same ambient  
lcsc group. 
Thus, in the context of semi-simple groups, one wonders whether ME rigidity
results would parallel Mostow rigidity; and in particular would apply to
(lattices in) all simple groups with the usual exception of 
$\SL_2(\bbR)\simeq{\rm SO}_{2,1}(\bbR)\simeq{\rm SU}_{1,1}(\bbR)$.
The higher rank situation (at least that of simple groups) is well understood (\S \ref{ssub:higher_rank_lattices}).
In the rank one case (consisting of the families ${\rm SO}_{n,1}(\bbR)$, ${\rm SU}_{m,1}(\bbR)$, ${\rm Sp}_{k,1}(\bbR)$, 
and $F_{4(-20)}$) known ME-invariants discussed above (namely: property (T), $\ell^2$-Betti numbers, $\Lambda$-invariant, 
ergodic dimension) allow to distinguish lattices among most rank one groups.
This refers to statements of the form: if $\Gamma_i<G_i$ are lattices then $\Gamma_1\me\Gamma_2$ iff $G_1\simeq G_2$.
However ME classification such as in Theorems \ref{T:Furman:ME}, \ref{T:Monod+Sahlom-ME}, \ref{T:Kida-MErigidity}
are not known for rank one cases. 
The ingredient which is missing in the existing approach is an appropriate cocycle superrigidity theorem\footnote{
 For ${\rm Sp}_{n,1}(\bbR)$ and $F_{4(-20)}$ a cocycle superrigidity theorem was proved by
Corlette and Zimmer \cite{Corlette+Zimmer:1994} (see also Fisher and Hitchman \cite{Fisher+Hitchman:csr}), 
but these results requires boundness assumptions which
preclude them from being used for ME-cocycles. }. 

In a joint work with Uri Bader and Roman Sauer a cocycle strong rigidity theorem 
is proved for ME-cocycles for lattices in ${\rm SO}_{n,1}(\bbR)\simeq\Isom(\HH^n_\bbR)$, $n\ge 3$,
under a certain $\ell^1$-assumption (see \S \ref{ssub:_ell_p_measure_equivalence}).
It is used to obtain the following:
\begin{theorem}[Bader-Furman-Sauer \cite{Bader+Furman+Sauer:2009}]
	\label{T:BFSrigidity}
Let $\Gamma$ is a lattice in $G=\Isom( \hypsp^n)$, $n\ge 3$, and $\Lambda$ is some finitely generated
group $\ell^1$-ME to $\Gamma$ then $\Lambda$ is a lattice in $G$ modulo a finite normal subgroup.
Moreover any ergodic $(\Gamma,\Lambda)$-coupling has a quotient, which is ether discrete,
or $G=\Aut(G)$, or $G^0$ with the Haar measure. 
\end{theorem}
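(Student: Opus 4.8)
The plan is to follow the same strategy as in the higher-rank case (Theorem~\ref{T:Furman:ME}), replacing Zimmer's cocycle superrigidity by the $\ell^1$-cocycle strong rigidity theorem for lattices in $G=\Isom(\hypsp^n)$, $n\ge 3$, alluded to in the text. The overall architecture is: (i) take an ergodic $\ell^1$-ME-coupling $(\Omega,m)$ of $\Gamma$ with the unknown group $\Lambda$; (ii) extract from it an ME-cocycle $\alpha\colon\Lambda\times Y\to\Gamma<G$ and feed it into the cocycle superrigidity theorem to straighten it, up to a measurable coboundary, to an honest homomorphism $\rho\colon\Lambda\to\Aut(G)$; (iii) use the straightening map together with the bi-equivariant construction $\Omega\times_\Lambda\check\Omega\to\Aut(G)$ to produce a measurable map $\Phi\colon\Omega\to\Aut(G)$ satisfying $\Phi(\gamma\omega)=\gamma\,\Phi(\omega)$ and $\Phi(\lambda\omega)=\Phi(\omega)\rho(\lambda)^{-1}$; (iv) identify $\Phi_*m$ using a homogeneity/rigidity input for the $G$-action and conclude.

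First I would set up step (ii). The $\ell^1$-hypothesis on the coupling guarantees that the ME-cocycle is $L^1$ in the sense of \S\ref{ssub:_ell_p_measure_equivalence}, which is exactly the integrability needed so that the cocycle strong rigidity theorem for $\Isom(\hypsp^n)$ applies to ME-cocycles (this is the point of introducing $\ell^1$-ME: the earlier superrigidity results of Corlette--Zimmer carry boundedness assumptions that fail for ME-cocycles, whereas the $\ell^1$-version is tailored to this setting). The theorem then yields a homomorphism $\rho\colon\Lambda\to\Aut(G)$ and a measurable map realizing the cocycle as cohomologous to $\rho$. From $\rho$ one reads off that $\Lambda$, modulo the finite kernel of $\rho$, embeds as a lattice in $G$: the image $\rho(\Lambda)$ is discrete with finite covolume because the push-forward of the finite coupling measure controls the covolume, and $\Aut(G)$ contains $\Ad(G)\cong G$ with finite index.

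Next I would carry out the construction in (iii) following the updated version in \S\ref{sub:constructing_representations}: symmetrizing the straightened cocycles for $\Gamma$ and for $\Lambda$ on the coupling $\Omega$ and its dual $\check\Omega$ produces the bi-equivariant map $\Phi\colon\Omega\to\Aut(G)$ with the two equivariance identities above. The last step is to identify the measure $\Phi_*m$ on $\Aut(G)$. Because $\Phi_*m$ is invariant under the $\Gamma\times\rho(\Lambda)$-action by left and right translations and both factors act as lattices, ergodicity forces $\Phi_*m$ to be either atomic (the discrete/commensurable case), or the Haar measure on $G^0=\Ad(G)$, or the Haar measure on all of $\Aut(G)$; this trichotomy is the desired conclusion about quotients of the coupling.

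The main obstacle, as the text itself flags, is step (ii): securing a cocycle \emph{strong} rigidity theorem valid for ME-cocycles of rank-one lattices in $\Isom(\hypsp^n)$, $n\ge3$. In higher rank one has Zimmer's theorem outright, but in the rank-one hyperbolic case the only available superrigidity statements (Corlette--Zimmer, Fisher--Hitchman) require boundedness/integrability hypotheses that are not automatic for ME-cocycles; the entire role of the $\ell^1$-restriction is to supply precisely the integrability that makes a superrigidity argument go through, and establishing that theorem (rather than the bookkeeping of steps (iii)--(iv), which are by now standard) is where the real work lies. A secondary technical point is verifying that the $\ell^1$-integrability is genuinely present for the given coupling and is preserved under the passage to extensions needed to guarantee essential freeness, so that the cocycle one actually feeds into the superrigidity theorem satisfies its hypotheses.
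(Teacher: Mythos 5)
There is a genuine gap at the heart of your step (ii), and it is precisely the step where all the work lies. You propose to apply the $\ell^1$-cocycle strong rigidity theorem to the ME-cocycle $\alpha\colon\Lambda\times Y\to\Gamma<G$, i.e., to a cocycle over an action of the \emph{unknown} group $\Lambda$. But that theorem (Theorem~\ref{T:BFS-mostow}, the $\ell^1$-analogue of Theorem~\ref{T:canonical-quotients}) is a statement about ME-couplings of \emph{two lattices in $G$}: its hypotheses are on the acting group, which must itself be a lattice in $\Isom(\hypsp^n_\bbR)$, $n\ge 3$, satisfying the $\ell^1$ condition. Nothing is assumed about $\Lambda$ beyond finite generation, so no strong rigidity statement can be invoked for cocycles over $\Lambda$-actions; and the other ME-cocycle, $\Gamma\times X\to\Lambda$, has the unknown discrete group as \emph{target}, where again no such theorem applies. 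Consequently the homomorphism $\rho\colon\Lambda\to\Aut(G)$ cannot serve as an input to your step (iii): in the actual argument it is an \emph{output} of that construction.

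The correct order of operations is the one described in the survey (see the discussion following Theorem~\ref{T:Furman:ME}, remark (c) after Theorem~\ref{T:OE-rigidity-combined}, and \S\ref{sub:constructing_representations}): first form the self-coupling $\Sigma=\Omega\times_\Lambda\check\Omega$ of $\Gamma$, which is a $(\Gamma,\Gamma)$-coupling and is again $\ell^1$ because the $\ell^1$ condition is preserved under composition of couplings (\S\ref{ssub:_ell_p_measure_equivalence}). The strong rigidity theorem now applies, since both sides are the known lattice $\Gamma$, and it yields the bi-$\Gamma$-equivariant map $\Phi\colon\Sigma\to\Aut(G)$. Feeding $\Phi$ into Theorem~\ref{T:reps} then produces simultaneously the finite kernel $K$, the lattice $\bar\Lambda<G$ (hence the representation $\rho$), and the map $\Psi\colon\Omega\to G$ whose pushforward is a Radon measure invariant under $g\mapsto\gamma g\bar\lambda$; Ratner's theorem applied to this measure gives the trichotomy discrete / $G=\Aut(G)$ / $G^0$. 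Your steps (iii)--(iv) essentially coincide with this part of the argument, so the architecture is salvageable, but as written the proposal rests the entire proof on an application of the key theorem outside its hypotheses.
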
			

\bigskip

Recently Sorin Popa has introduced a new set of ideas for studying Orbit Equivalence.
These results, rather than relying on rigidity of the acting groups alone, 
exploit rigidity aspects of \emph{groups actions} of certain type. 
We shall discuss them in \S\S \ref{sub:deformation_vs_rigidity_technique} 
and \ref{sub:local_rigidity}, \ref{ssub:cohomology_of_cocycles}.  


\medskip

\subsection{How many Orbit Structures does a given group have?} 
\label{sub:how_many_orbit_structures_does_a_given_group_have_}\hfill\\

Theorem \ref{T:Ornstein-Weiss} of Ornstein and Weiss \cite{Ornstein+Weiss:80rohlin} implies that for an infinite
amenable countable group $\Gamma$ all ergodic probability measure preserving actions $\Gamma\acts (X,\mu)$
define the same orbit structure, namely $\Rel_{amen}$.
What happens for non-amenable groups $\Gamma$? 
%
\begin{theorem}[{Epstein \cite{Epstein:2008}, after Ioana \cite{Ioana:2006contF2} 
	and Gabotiau-Lyons \cite{Gaboriau+Lyons:2007}}]\label{T:non-OE}
Any non-amenable countable group $\Gamma$ has a continuum of essentially free 
ergodic probability measure preserving actions $\Gamma\acts(X,\mu)$, no two of which
are stably Orbit Equivalent.
\end{theorem}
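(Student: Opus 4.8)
The plan is to reduce the statement for an arbitrary non-amenable $\Gamma$ to the already-understood case of the free group $\bbF_2$, and then to transport a continuum of pairwise non-SOE $\bbF_2$-actions up to $\Gamma$. Three inputs drive this. First, the measurable solution of von Neumann's problem of Gaboriau and Lyons, which asserts that the orbit relation of the Bernoulli action $\Gamma\acts (X_0,\mu_0)^\Gamma$ contains, as a subrelation, the orbit relation $\Rel_{\bbF_2\acts Y}$ of an essentially free ergodic p.m.p. action of $\bbF_2$. Second, the construction of Gaboriau--Popa, extended by Ioana, of a continuum $\{\bbF_2\acts (Y_t,\nu_t)\}_{t\in T}$ of essentially free ergodic p.m.p. actions that are pairwise non-stably-orbit-equivalent. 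Third, a \emph{co-induction} procedure manufacturing, out of each $Y_t$ together with the Gaboriau--Lyons data, an essentially free ergodic $\Gamma$-action $X_t$, in such a way that the assignment $t\mapsto X_t$ remains injective up to SOE.

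For the free-group case itself I would use rigidity coming from relative property (T). Embedding $\bbF_2$ as a finite-index subgroup of $\SL_2(\bbZ)$ one obtains the pair $(\bbZ^2\rtimes\bbF_2,\bbZ^2)$, which has relative property (T); the dual action $\bbF_2\acts \bbT^2$ is then a rigid building block in Popa's sense. Gluing this rigid block to a continuously varying family of complementary factors (for instance by forming products with Bernoulli-type pieces indexed by a real parameter) produces the family $(Y_t,\nu_t)$. Popa's cocycle superrigidity for the rigid part reduces any SOE between $Y_s$ and $Y_t$ to a conjugacy of the rigid blocks together with an identification of the variable factors, and a continuously varying invariant of the variable part (an entropy, or the Koopman spectrum) then separates uncountably many of the $t$, giving the continuum of pairwise non-SOE actions.

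The heart of the assembly is the third step. The Gaboriau--Lyons inclusion $\Rel_{\bbF_2\acts Y}\subset \Rel_{\Gamma\acts X}$ plays the role of a subgroup inclusion $\bbF_2\hookrightarrow\Gamma$ at the level of relations, and I would imitate the ordinary co-induction $\mathrm{CoInd}_{\bbF_2}^{\Gamma}$ in this groupoid setting: form the space of $\bbF_2$-equivariant maps from the $\Gamma$-orbit data into $Y_t$, on which $\Gamma$ acts by translating the source. One must then verify that $\Gamma\acts X_t$ is p.m.p., essentially free and ergodic, with freeness and ergodicity following from the mixing built into the Bernoulli action and from the genuine freeness of the $\bbF_2$-factor, and invariance of the measure from the measure-preserving nature of the inclusion.

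The step I expect to be the genuine obstacle is \emph{faithfulness} of this co-induction on SOE classes: I must show that $X_s\soe X_t$ forces $Y_s\soe Y_t$, so that the pairwise inequivalence of the $(Y_t)$ is inherited by the $(X_t)$. This functoriality is what makes the scheme work, and it is delicate, since an SOE of the large $\Gamma$-actions need not respect the distinguished $\bbF_2$-subrelations a priori; recovering the $\bbF_2$-data up to SOE requires a canonical description of the subrelation inside the co-induced relation (via the relative property (T) rigidity of the free-group models) so that any SOE can be shown to descend. Granting this, $t\mapsto X_t$ is injective up to SOE and, since $T$ has the cardinality of the continuum, the theorem follows. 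The two deep inputs --- Gaboriau--Lyons, whose proof rests on Bernoulli percolation on Cayley graphs, and the relative-(T) rigidity underlying the $\bbF_2$ family --- are where all the analytic difficulty is concentrated, and I would cite them as black boxes.
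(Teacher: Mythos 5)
Your overall architecture is exactly the one behind the theorem as the paper presents it: the Gaboriau--Lyons measurable embedding of $\bbF_2$ into the orbit relation of the Bernoulli action, the Gaboriau--Popa/Ioana family of $\bbF_2$-actions whose rigidity comes from relative property (T) of $\bbZ^2\rtimes\bbF_2$ acting on $\bbT^2$, and Ioana's co-induction carried out (following Epstein) at the level of the subrelation rather than an honest subgroup. So the route is the same; the issue is with the one step you single out as the obstacle.

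That step, as you formulate it, is a genuine gap: you require strict faithfulness, namely that $X_s\soe X_t$ forces $Y_s\soe Y_t$, and you propose to get it by identifying the $\bbF_2$-subrelation canonically inside the co-induced relation so that any SOE descends. No such SOE-invariant characterization of the distinguished subrelation is available --- an arbitrary stable orbit equivalence of the big $\Gamma$-actions has no reason to respect the chosen $\bbF_2$-subrelations, and this is not what Ioana or Epstein prove. What the actual arguments establish, and all that is needed, is weaker: within the co-induced family $\{X_t\}$, every SOE class is at most \emph{countable}. This is the rigidity vs.\ separability scheme (Gaboriau--Popa, Hjorth, Ioana): if uncountably many $X_t$ were mutually SOE, separability of a suitable Polish space of OE-cocycles/intertwiners would produce two parameters whose orbit equivalence has an almost trivial cocycle on the rigid part; the relative property (T) inherited by the co-induced action from the $\bbT^2$ block then untwists the cocycle, forcing a conjugacy that the construction is designed to exclude. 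Since the parameter set has the cardinality of the continuum and each SOE class in the family is countable, continuum many classes survive --- the theorem follows without ever descending an SOE. The survey makes this point explicitly: these arguments ``show that within certain families of actions the grouping into SOE-ones has countable classes, therefore giving only implicit families of non-SOE actions.'' For the same reason, your description of the $\bbF_2$ family as being separated by a continuously varying invariant (entropy or Koopman spectrum) misstates the black box: Gaboriau--Popa do not compute a separating invariant, they run this same countable-class argument.
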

Let us briefly discuss the problem and its solution. 
Since ${\rm Card}(\Aut(X,\mu)^\Gamma)=\aleph=2^{\aleph_0}$ there are at most
continuum many \emph{actions} for a fixed countable group $\Gamma$.
The fact, this upper bound on the cardinality of isomorphism classes of actions
is achieved, using the corresponding fact about unitary representations and 
the Gaussian construction.
Hence one might expect at most $\aleph$-many non-OE actions for any given $\Gamma$.
OE rigidity results showed that some specific classes of  groups indeed have this 
many mutually non-OE actions; this includes: higher rank lattices \cite{Gefter+Golodets:1988:fg}, 
products of hyperbolic-like groups \cite[Theorem 1.7]{Monod+Shalom:OE:05},
and some other classes of groups \cite{Popa:2006p750, Popa:2007coc1}).
But the general question, regarding an arbitrary non-amenable $\Gamma$, remained open.

Most invariants of equivalence relations depend on the acting group rather than the action,
and thus could not be used to distinguish actions of the fixed group $\Gamma$. 
The notable exception to this meta-mathematical statement appears for non-amenable groups 
which do not have property (T). For such groups \emph{two} non-SOE actions can easily be constructed: 
(1) a \emph{strongly ergodic} action (using Schmidt's \cite{Schmidt:1981strerg}) and 
(2) an ergodic action which is \emph{not strongly ergodic} (using Connes-Weiss \cite{Connes+Weiss:1980T}).
Taking a product with an essentially free weakly mixing strongly ergodic $\Gamma$-actions 
(e.g. the Bernoulli action $(X_0,\mu_0)^\Gamma$) one makes the above two actions essentially free
and distinct.

In \cite{Hjorth:05Dye} Greg Hjorth showed that if $\Gamma$ has property (T) the set of 
isomorphism classes of orbit structures for essentially free $\Gamma$-actions
has cardinality $\aleph$, by proving that the natural map  from the
isomorphism classes of essentially free ergodic $\Gamma$-actions to 
the isomorphism classes of $\Gamma$-orbit structures is at most countable-to-one.
More precisely, the space of $\Gamma$-actions producing a fixed orbit structure
is equipped with a structure of a Polish space (separability) where any two nearby actions are
shown to be conjugate.
This is an example of proving \emph{rigidity up to countable classes} 
combining  \emph{separability} of the ambient space with a \emph{local rigidity}
phenomenon (stemming from property (T), see \S \ref{sub:local_rigidity} below). 
These ideas can be traced back to Connes \cite{Connes:80T}, Popa \cite{Popa:correspondences},
and play a central role in the most recent developments -- see \cite{Popa:2007ICM}\S 4.

The challenge now became to show that other non-amenable groups have infinitely, or even $\aleph$-many, 
non-OE essentially free ergodic actions. 
Damien Gaboriau and Sorin Popa \cite{Gaboriau+Popa:2005Fn} achieved this goal for the 
quintessential representative of a non-amenable group without property (T), namely for the free group $\bbF_2$.
Using a sophisticated \emph{rigidity vs. separability} argument they showed that within a certain rich family 
of $\bbF_2$-actions the map from isomorphism classes of actions to orbit structures is countable-to-one.
The \emph{rigidity} component of the argument was this time provided by Popa's notion of $w$-rigid actions such as
$\SL_2(\bbZ)\acts\bbT^2$, with the rigidity related to the \emph{relative property (T)}
for the semi-direct product $\SL_2(\bbZ)\ltimes \bbZ^2$ viewing $\bbZ^2$ as the Pontryagin
dual of $\bbT^2$. 

In \cite{Ioana:2006contF2} Adrian Ioana obtained a sweeping result showing that any $\Gamma$
containing a copy of $\bbF_2$ has $\aleph$-many mutually non-SOE essentially free actions.
The basic idea of the construction being to use a family of non-SOE of $\bbF_2$-actions $\bbF_2\acts X_t$ 
to construct co-induced $\Gamma$-actions $\Gamma\acts X_t^{\Gamma/{\bbF_2}}$ 
and pushing the solution of $\bbF_2$-problem to the analysis of the co-induced actions.
The class of groups containing $\bbF_2$ covers "most of" the class of non-amenable groups with few,
very hard to obtain, exceptions. 
The ultimate solution to the problem, covering all non-amenable groups,
was shortly obtained by Inessa Epstein \cite{Epstein:2008} using a result by Damien Gaboriau and Russel Lyons \cite{Gaboriau+Lyons:2007},
who proved that any non-amenable $\Gamma$ contains a $\bbF_2$ in a sort of measure-theoretical sense.
Epstein  was able to show that this sort of containment suffices to carry out an analogue of Ioana's co-induction argument 
\cite{Ioana:2006contF2} to prove Theorem~\ref{T:non-OE}.

\medskip

Furthermore, in \cite{Ioana+Kechris+Tsankov:2008subrel} Ioana, Kechris, Tsankov jointly with Epstein 
show that for any non-amenable $\Gamma$ the space of all ergodic free p.m.p. actions taken up to OE 
not only has cardinality of the continuum, but is also impossible to classify in a very strong sense.
One may also add, that most of the general results mentioned above show that within certain
families of actions the grouping into SOE-ones has countable classes, therefore 
giving only implicit families of non-SOE actions.
In \cite{Ioana:2007F2}  Ioana provided an explicit list of a continuum of mutually 
non-SOE actions of $\bbF_2$.

\newpage
\section{Measured equivalence relations} 
\label{sec:equivalence_relations}

\subsection{Basic definitions} 
\label{sub:rel_basic_definition}\hfill\\

We start with the notion of \embf{countable equivalence relations} in the Borel setting.
It consists of a  standard Borel space $(X,\mathcal{X})$ (cf. \cite{Feldman+Moore:77I} for definitions)
and a Borel subset $\Rel\subset X\times X$ which is an equivalence relation,
whose equivalence classes $\Rel[x]=\setdef{y\in X}{(x,y)\in\Rel}$ are all countable.

To construct such relations choose a countable collection $\Phi=\{\phi_i\}_{i\in I}$
of Borel bijections $\phi_i:A_i\to B_i$ between Borel subsets $A_i, B_i\in\mathcal{X}$,
$i\in I$; and let $\Rel_\Phi$ be the smallest equivalence relation including
the graphs of all $\phi_i$, $i\in I$. 
More precisely, $(x,y)\in\Rel_\Phi$ iff there exists a finite sequence $i_1,\dots, i_k\in I$
and $\epsilon_1,\dots,\epsilon_k\in\{-1,1\}$ so that
\[
	y=\phi_{i_k}^{\epsilon_k}\circ\cdots\circ\phi_{i_2}^{\epsilon_2}\circ\phi_{i_1}^{\epsilon_1}(x).
\]
We shall say that the family $\Phi$ \embf{generates} the relation $\Rel_\Phi$.
The particular case of a collection $\Phi=\{\phi_i\}$ of Borel isomorphisms
of the whole space $X$ generates a countable group $\Gamma=\langle\Phi\rangle$
and
\[
	\Rel_\Phi=\Rel_{\Gamma\acts X}=\setdef{(x,y)}{\Gamma x=\Gamma y}=\setdef{(x,\gamma.x)}{x\in X,\ \gamma\in\Gamma}. 
\]
Feldman and Moore \cite{Feldman+Moore:77I} proved that any countable Borel equivalence relation
admits a generating set whose elements are defined on all of $X$; in other words,
any equivalence relation appears as the orbit relation $\Rel_{\Gamma\acts X}$ of
a Borel action $\Gamma\acts X$ of some countable group $\Gamma$ (see \ref{ssub:FM_question}).

Given a countable Borel equivalence relation $\Rel$ the \embf{full group} $[\Rel]$ 
is defined by 
\[
	[\Rel]=\setdef{\phi\in\Aut(X,\mathcal{X})}{\forall x\in X:\ (x,\phi(x))\in\Rel}.
\]
The \embf{full pseudo-group} $[[\Rel]]$ consists of partially defined Borel isomorphisms
\[
	\psi:{\rm Dom}(\psi)\to {\rm Im}(\psi),\qquad \text{so that}\qquad 
	{\rm Graph}(\psi)=\setdef{(x,\psi(x))}{x\in{\rm Dom}(\psi)}\subset\Rel.
\]
If $\Rel$ is the orbit relation $\Rel_{\Gamma\acts X}$ of a group action $\Gamma\acts (X,\mathcal{X})$,
then any $\phi\in [\Rel]$ has the following "piece-wise $\Gamma$-structure": 
there exist countable partitions $\bigsqcup A_i=X=\bigsqcup B_i$ 
into Borel sets and elements $\gamma_i\in\Gamma$ with $\gamma_i(A_i)=B_i$
so that $\phi(x)=\gamma_i x$ for $x\in A_i$.
Elements $\psi$ of the full pseudo-group $[[\Rel_\Gamma]]$ have a similar "piece-wise $\Gamma$-structure"
with $\bigsqcup A_i={\rm Dom}(\psi)$ and $\bigsqcup B_i={\rm Im}(\psi)$.

\bigskip

Let $\Rel$ be a countable Borel equivalence relation on a standard Borel space $(X,\mathcal{X})$.
A measure $\mu$ on $(X,\mathcal{X})$ is $\Rel$-\embf{invariant} (resp. $\Rel$-\embf{quasi-invariant})
if for all $\phi\in[\Rel]$, $\phi_*\mu=\mu$ (resp. $\phi_*\mu\sim\mu$).
Note that if $\Phi=\{\phi_i:A_i\to B_i\}$ is a generating set for $\Rel$
then $\mu$ is $\Rel$-invariant iff $\mu$ is invariant 
under each $\phi_i$, i.e. $\mu(\phi_i^{-1}(E)\cap A_i)=\mu(E\cap B_i)$ for all $E\in\mathcal{X}$.
Similarly, quasi-invariance of a measure can be tested on a generating set.
The $\Rel$-\embf{saturation} of $E\in\mathcal{X}$ is $\Rel[E]=\setdef{x\in X}{\exists y\in E,\ (x,y)\in\Rel}$.
A $\Rel$ (quasi-) invariant measure $\mu$ is \embf{ergodic} if $\Rel[E]$ is either
$\mu$-null or $\mu$-conull for any $E\in\mathcal{X}$.
In this section we shall focus on countable Borel equivalence relations $\Rel$ 
on $(X,\mathcal{X})$ equipped with an ergodic, invariant, non-atomic,
probability measure $\mu$ on $(X,\mathcal{X})$. 
Such a quadruple $(X,\mathcal{X},\mu,\Rel)$ is called \embf{type} $\II_1$-relation.
These are precisely the orbit relations of ergodic measure preserving
actions of countable groups on non-atomic standard
probability measure spaces (the non-trivial implication follows from the above 
mentioned theorem of Feldman and Moore).
 
\medskip

Given a countable Borel relation $\Rel$ on $(X,\mathcal{X})$ and an $\Rel$-quasi-invariant
probability measure $\mu$, define infinite measures $\tilde\mu_L$, $\tilde\mu_R$ on $\Rel$ by
\begin{eqnarray*}
	\tilde\mu_L(E)&=&\int_X \# \setdef{y}{(x,y)\in E\cap \Rel}\,d\mu(x),\\
	\tilde\mu_R(E)&=&\int_X \# \setdef{x}{(x,y)\in E\cap \Rel}\,d\mu(y).
\end{eqnarray*}
These measures are equivalent, and coincide if
$\mu$ is $\Rel$-invariant, which is our main focus.
In this case we shall denote
\begin{equation}\label{e:tildemu}
	\tilde\mu=\tilde\mu_L=\tilde\mu_R
\end{equation}
Hereafter, saying that some property holds a.e. on $\Rel$ would refer to $\tilde\mu$-a.e.
(this makes sense even if $\mu$ is only $\Rel$-quasi-invariant).

\medskip

\begin{remark}\label{R:unique-erg}
In some situations a Borel Equivalence relation $\Rel$ on $(X,\mathcal{X})$
has only one (non-atomic) invariant probability measure.
For example, this is the case for the orbit relation of the standard action 
of a finite index subgroup\footnote{ Or just Zariski dense subgroup, see 
\cite{BFLM:2007CRAS}.} 
$\Gamma<\SL_n(\bbZ)$ on the torus $\bbT^n=\bbR^n/\bbZ^n$, or for a lattice $\Gamma$
in a simple center free Lie group $G$ acting on $H/\Lambda$, where 
$H$ is a simple Lie group, $\Lambda<H$ is  a lattice, and $\Gamma$ acts by
left translations via an embedding $j:G\to H$ with $j(G)$ having trivial centralizer
in $H$.  
In such situations one may gain understanding of the \emph{countable Borel equivalence relation}
$\Rel$ via the study of the $\II_1$-relation corresponding to the unique $\Rel$-invariant probability measure.
\end{remark}	

\medskip

As always in the \emph{measure-theoretic} setting null sets should be considered negligible. 
So an isomorphism $T$ between (complete) measure spaces $(X_i,\mathcal{X}_i,\mu_i)$, $i=1,2$,
is a Borel isomorphism between $\mu_i$-conull sets $T:X_1'\to X_2'$
with $T_*(\mu_1)=\mu_2$.
In the context of $\II_1$-relations, we declare two relations 
$(X_i,\mathcal{X}_i,\mu_i,\Rel_i)$, $i=1,2$ to be \embf{isomorphic}, if the exists
a measure space isomorphism $T:(X_1,\mu_1)\cong (X_2,\mu_2)$ so that
$T\times T:(\Rel_1,\tilde{\mu}_1)\to(\Rel_2,\tilde{\mu}_2)$ is an isomorphism.
In other words, after a restriction to conull sets, $T$ satisfies
\[
	(x,y)\in\Rel_1\qquad\Longleftrightarrow\qquad
	(T(x),T(y))\in \Rel_2.
\]
Let us also adapt the notions of the full group and the full pseudo-group
to the measure-theoretic setting, by passing to a quotient 
$\Aut(X,\mathcal{X})\to\Aut(X,\mathcal{X},\mu)$
where two Borel isomorphism $\phi$ and $\phi'$ which agree $\mu$-a.e.
are identified.
This allows us to focus on the essential measure-theoretic issues. 
The following easy, but useful Lemma illustrates the advantage of this framework.
\begin{lemma}\label{L:inn-orbits}
Let $(X,\mathcal{X},\mu,\Rel)$ be a $\II_1$-relation.
Then for $A,B\in\mathcal{X}$ one has $\mu(\phi(A)\triangle B)=0$ for some $\phi\in[\Rel]$
iff $\mu(A)=\mu(B)$.
\end{lemma}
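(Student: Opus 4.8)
The plan is to prove both implications, treating the easy direction first. If $\phi\in[\Rel]$ satisfies $\mu(\phi(A)\triangle B)=0$, then since $\mu$ is an $\Rel$-invariant measure every element of $[\Rel]$ preserves $\mu$ by definition, so $\mu(B)=\mu(\phi(A))=\mu(A)$. The substance is the converse: assuming $\mu(A)=\mu(B)$, I want to produce $\phi\in[\Rel]$ with $\phi(A)=B$ modulo null sets.

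My strategy is to first build a single element $\psi$ of the full pseudo-group $[[\Rel]]$ with $\mathrm{Dom}(\psi)=A$ and $\mathrm{Im}(\psi)=B$ (mod null), and then glue it with a complementary pseudo-group element to obtain a genuine element of $[\Rel]$. For the gluing step, applying the same construction to the pair $X\setminus A$, $X\setminus B$ (which also have equal measure $1-\mu(A)$ by hypothesis) yields $\psi'\in[[\Rel]]$ carrying $X\setminus A$ onto $X\setminus B$; since $A$ and $X\setminus A$ partition $X$, as do $B$ and $X\setminus B$, the map $\phi$ defined as $\psi$ on $A$ and $\psi'$ on $X\setminus A$ is a $\mu$-preserving Borel bijection of $X$ (mod null) whose graph lies in $\Rel$, i.e. $\phi\in[\Rel]$, and $\phi(A)=B$.

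The heart of the matter is thus the existence of $\psi\colon A\to B$ in $[[\Rel]]$ when $\mu(A)=\mu(B)$. Here I would invoke Feldman–Moore to realize $\Rel=\Rel_{\Gamma\acts X}$ for a countable group $\Gamma$, so that every pseudo-group element has a piece-wise $\Gamma$-structure. I would then run an exhaustion: build an increasing sequence of partial isomorphisms $\psi_n\in[[\Rel]]$ with domains $A_n\subseteq A$ and images $B_n\subseteq B$, enlarging the domain at each stage by a nearly-maximal positive-measure piece with disjoint image. The key input that keeps the process from stalling is ergodicity: as long as $A\setminus A_n$ and $B\setminus B_n$ both have positive measure, the $\Rel$-saturation $\Rel[A\setminus A_n]=\bigcup_{\gamma\in\Gamma}\gamma(A\setminus A_n)$ is $\Rel$-invariant of positive measure, hence conull, so it meets $B\setminus B_n$; concretely some $\gamma\in\Gamma$ satisfies $\mu(\gamma(A\setminus A_n)\cap(B\setminus B_n))>0$, and restricting $\gamma$ to a suitable positive-measure $C\subseteq A\setminus A_n$ furnishes a fresh piece to append. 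In the limit one of the residual sets $A\setminus\bigcup_n A_n$, $B\setminus\bigcup_n B_n$ must be null, for otherwise the exhaustion could still be extended, contradicting near-maximality (the bound $\sum_n\mu(C_n)\le\mu(A)<\infty$ forces the available increments to $0$); and since $\psi=\bigcup_n\psi_n$ preserves measure and $\mu(A)=\mu(B)$, both residuals are then null, so $\psi$ maps $A$ onto $B$.

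I expect the main obstacle to be packaging this exhaustion cleanly, in particular verifying that the increasing union $\psi=\bigcup_n\psi_n$ is a well-defined injective measure-preserving map with graph in $\Rel$ (guaranteed because the increments $C_n$ and their images are chosen pairwise disjoint) and that the near-maximal choices genuinely force termination mod null. Everything else — measure-preservation of full-group elements, the saturation/ergodicity step, and the final gluing — is routine once $\Rel$ is presented as a group orbit relation.
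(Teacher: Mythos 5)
Your proposal is correct. The paper states this lemma without proof (calling it ``easy, but useful''), and your argument --- realizing $\Rel$ as $\Rel_{\Gamma\acts X}$ via Feldman--Moore, running an ergodicity-driven exhaustion by partial isomorphisms $C\mapsto\gamma(C)$ with near-maximal increments to build $\psi\in[[\Rel]]$ from $A$ onto $B$, then gluing with the analogous map on the complements to land in $[\Rel]$ --- is exactly the standard argument the author leaves to the reader.
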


\medskip

\subsubsection{Restriction and weak isomorphisms} 
\label{ssub:restriction_and_weak_isomorphisms}\hfill\\

Equivalence relations admit a natural operation of \embf{restriction}, 
sometimes called \embf{induction}, to a subset:
given a relation $\Rel$ on $X$ and a measurable subset $A\subset X$
the restriction $\Rel_A$ to $A$ is
\[
	\Rel_A=\Rel\cap (A\times A).
\]
In the presence of, say $\Rel$-invariant, measure $\mu$ on $(X,\mathcal{X})$
the restriction to a subset $A\subset X$ with $\mu(A)>0$ preserves the
restricted measure $\mu|_A$, defined by $\mu|_A(E)=\mu(A\cap E)$.
If $\mu$ is a probability measure, we shall denote by $\mu_A$ the normalized
restriction $\mu_A=\mu(A)^{-1}\cdot \mu|_A$.
It is easy to see that ergodicity is preserved, so a restriction of
a $\II_1$-relation $(X,\mu,\Rel)$ to a positive measure subset $A\subset X$ is a $\II_1$-relation
$(A,\mu_A,\Rel_A)$.

\begin{remark}\label{R:amp}
Note that it follows from Lemma~\ref{L:inn-orbits} that the isomorphism class
of $\Rel_A$ depends only on $\Rel$ and on the size $\mu(A)$,
so $\Rel_A$ may be denoted $\Rel^t$ where $t=\mu(A)$ is $0<t\le 1$.
One may also define $\Rel^t$ for $t>1$.
For an integer $k>1$ let $\Rel^k$ denote the product of $\Rel$
with the full relation on the finite set $\{1,\dots,k\}$, namely
the relation on $X\times\{1,\dots,k\}$
with $((x,i), (y,j))\in\Rel^k$ iff $(x,y)\in\Rel$.
So $(\Rel^k)^{1/k}\cong \Rel^1\cong\Rel$.
The definition of $\Rel^t$ can now be extended to all $0<t<\infty$
using an easily verified formula $(\Rel^t)^s\cong \Rel^{ts}$.
This construction is closely related to the notion of an \embf{amplification}
in von-Neumann algebras: the Murray von Neumann group-measure space construction
$M_\Rel$ satisfies $M_{\Rel^t}=(M_\Rel)^t$.
\end{remark}

The operation of restriction/induction allows one to relax
the notion of isomorphism of $\II_1$-relations as follows: 
\begin{defn}\label{D:weak-iso}
Two $\II_1$-relations $\Rel_1$ and $\Rel_2$ are weakly isomorphic
if $\Rel^1\cong \Rel_2^t$ for some $t\in\bbR_+^\times$.
Equivalently, if there exist positive measurable subsets $A_i\subset X_i$
with $\mu_2(A_2)=t\cdot \mu_1(A_1)$ and an isomorphism 
between the restrictions of $\Rel_i$ to $A_i$.
\end{defn}	

Observe that two ergodic probability measure-preserving actions
$\Gamma_i\acts (X_i,\mathcal{X}_i,\mu_i)$ of countable groups are 
orbit equivalent iff the corresponding orbit relations 
$\Rel_{\Gamma_i\acts X_i}$ are isomorphic.

 
\medskip

\subsection{Invariants of equivalence relations} 
\label{sub:rel-invariants}\hfill\\

Let us now discuss in some detail several qualitative and numerical
properties of $\II_1$ equivalence relations which are preserved
under isomorphisms and often preserved or rescaled by the index
under weak isomorphisms. 
We refer to such properties as \embf{invariants} of equivalence relations.
Many of these properties are motivated by properties of groups,
and often an orbit relation $\Rel_{\Gamma\acts X}$ of an essentially free
action of countable group would be a reflection of the corresponding
property of $\Gamma$.

\medskip

\subsubsection{Amenability, strong ergodicity, property (T)} 
\label{ssub:amenability_strong_ergodicity_property_t_}\hfill\\

Amenability of an equivalence relation can be defined in a number of ways.
In \cite{Zimmer:Amenable:78} Zimmer introduced the notion of \embf{amenability}
for a \embf{group action} on a space with quasi-invariant measure.
This notion plays a central role in the theory.
This definition is parallel to the fixed point characterization of amenability for groups.
For equivalence relation $\Rel$ on $(X,\mathcal{X})$ with a \emph{quasi-invariant} measure $\mu$
it reads as follows.

Let $E$ be a separable Banach space, and $c:\Rel\to \Isom(E)$ be a measurable $1$-\embf{cocycle},
i.e., a measurable (with respect to the weak topology on $E$) map, satisfying $\tilde\mu$-a.e.
\[
	c(x,z)=c(x,y)\circ c(y,z).
\]
Let $X\ni x\mapsto Q_x\subset E^*$ be a measurable family of non-empty convex compact subsets of 
the dual space $E^*$ taken with the $*$-topology, so that $c(x,y)^*(Q_x)=Q_y$.
The relation $\Rel$ is \embf{amenable} if any such family contains a measurable invariant
section, i.e., a measurable assignment $X\ni x\mapsto p(x)\in Q_x$, so that a.e.
\[
	c(x,y)^*p(x)=p(y).
\]
The (original) definition of amenability for group actions concerned general
cocycles $c:G\times X\to \Isom(E)$ rather than the ones depending only
on the orbit relation $\Rel_{\Gamma\acts X}$. The language of measured groupoids
provides a common framework for both settings (see \cite{Anantharaman-Delaroche+Renault:2000}).
 
Any non-singular action of an amenable group is amenable, because
any cocycle $c:\Gamma\times X\to\Isom(E)$ can be used to define an affine $\Gamma$-action 
on the closed convex subset of $L^\infty(X,E^*)=L^1(X,E)^*$ consisting
of all measurable sections $x\to p(x)\in Q_x$; the fixed point property of $\Gamma$ provides the desired 
$c^*$-invariant section.
The converse is not true: any (countable, or lcsc) group admits essentially free
amenable action with a quasi-invariant measure -- this is the main use of the notion of amenable actions.
However, for essentially \emph{free, probability measure preserving} actions,
amenability of the $\II_1$-relation $\Rel_{\Gamma\acts X}$ implies (hence is equivalent to)
amenability of $\Gamma$. Indeed, given an affine $\Gamma$ action $\alpha$ on a convex compact $Q\subset E^*$,
one can take $Q_x=Q$ for all $x\in X$ and set $c(gx,x)=\alpha(g)$; amenability of $\Rel_{\Gamma\acts X}$
provides an invariant section $p:X\to Q$ whose barycenter $q=\int_X p(x)\,d\mu(x)$ would
be an $\alpha(\Gamma)$-fixed point in $Q$.

\medskip

Connes, Feldman and Weiss \cite{Connes+Feldman+Weiss:81} proved that amenable relations 
are \embf{hyperfinite} in the sense that they can be viewed as an increasing union of finite subrelations; 
they also showed that such a relation can be generated by an action of $\bbZ$
(see also  \cite{Kaimanovich:1997:amen} by Kaimanovich for a nice exposition and several other
nice characterizations of amenability). It follows that there is only one amenable $\II_1$-relation,
which we denote hereafter by 
\[
	\Rel_{amen}.
\]
In \cite{Zimmer:1981:cohom} Zimmer introduced the notion of \embf{property (T)} for group actions on
measure spaces with quasi-invariant measure. The equivalence relation version can be stated as follows.
Let $\scrH$ be a separable Hilbert space and let $c:\Rel\to U(\scrH)$ be a measurable $1$-\emph{cocycle},
i.e., $c$ satisfies 
\[
	c(x,z)=c(x,y)\circ c(y,z)
\]
Then $\Rel$ has property (T) if any such cocycle for which there exists a sequence $v_n:X\to S(\scrH)$ 
of measurable maps into the unit sphere $S(\scrH)$ with 
\[
	\|v_n(y)-c(x,y)v_n\|\to 0\qquad [\tilde\mu]\text{-a.e.}
\] 
admits a measurable map $u:X\to S(\scrH)$ with $u(y)=c(x,y)u(x)$ for $\tilde\mu$-a.e. $(x,y)\in\Rel$.
For an essentially free probability measure preserving action $\Gamma\acts (X,\mu)$ 
the orbit relation $\Rel_{\Gamma\acts X}$ has property (T) if and only if the group $\Gamma$ has Kazhdan's property (T)
(in \cite{Zimmer:1981:cohom} weak mixing of the action was assumed for the "only if" implication, but
this can be removed as in \S \ref{ssub:amenability_property_t_a_t_menability} relying on 
Bekka - Valette \cite{Bekka+Valette:1993wmT}).
In \cite{Anantharaman-Delaroche:2005T} Anantharaman-Delaroche studied the notion of property (T) in the context of
general measured groupoids.

\bigskip

Let $\Rel$ be a ${\rm II}_1$-equivalence relation on $(X,\mu)$.
A sequence $\{A_n\}$ of measurable subsets of $X$ is 
\embf{asymptotically $\Rel$-invariant}, if 
$\mu(\phi(A_n)\triangle A_n)\to 0$ for every $\phi\in[\Rel]$.
This is satisfied trivially if $\mu(A_n)\cdot (1-\mu(A_n))\to0$. 
Relation $\Rel$ is \embf{strongly ergodic} if any 
asymptotically $\Rel$-invariant sequence of sets
is trivial in the above sense.
(Note that the condition of asymptotic invariance may be checked
on elements $\phi_i$ of any generating system $\Phi$ of $\Rel$).

The amenable relation $\Rel_{amen}$ is not strongly ergodic.
If an action $\Gamma\acts (X,\mu)$ has a \embf{spectral gap} (i.e.,
does not have almost invariant vectors) in the Koopman representation
on $L^2(X,\mu)\ominus \bbC$ then $\Rel_{\Gamma\acts X}$ is strongly ergodic.
Using the fact that the Koopman representation of a Bernoulli action $\Gamma\acts (X_0,\mu_0)^\Gamma$
is contained in a multiple of the regular representation $\infty\cdot \ell^2(\Gamma)$, 
Schmidt \cite{Schmidt:1980:SL2} characterized non-amenable groups by the property 
that they admit p.m.p. actions with strongly ergodic orbit relation.
If $\Rel$ is not strongly ergodic then it has an amenable relation as a non-singular quotient 
(Jones and Schmidt \cite{Jones+Schmidt:1987}). 
Connes and Weiss \cite{Connes+Weiss:1980T} showed that all p.m.p. actions of a group 
$\Gamma$ have strongly ergodic orbit relations if and \emph{only if} $\Gamma$
has Kazhdan's property (T). 
In this short elegant paper they introduced the idea of Gaussian actions 
as a way of constructing a \emph{p.m.p. action} from a given unitary \emph{representation}.

In general strong ergodicity of the orbit relation $\Rel_{\Gamma\acts X}$
does not imply a spectral gap for the action $\Gamma\acts (X,\mu)$  
(\cite{Schmidt:1980:SL2}, \cite{Hjorth+Kechris:MAMS:05}).
However, this implication does hold for generalized Bernoulli actions 
(Kechris and Tsankov \cite{Kechris+Tsankov:PAMS}), and when the action
has an ergodic centralizer (Chifan and Ioana \cite[Lemma 10]{Chifan+Ioana:Erg-subrels}).


\medskip

\subsubsection{Fundamental group - index values of self similarity} 
\label{ssub:fundamental_group}\hfill\\

The term \embf{fundamental group} of a $\II_1$-relation $\Rel$
refers to a subgroup of $\bbR^\times_+$ defined by
\[
	\scrF(\Rel)=\setdef{t\in\bbR^\times_+}{\Rel\cong \Rel^t}.
\]
Equivalently, for $\Rel$ on $(X,\mu)$, the fundamental group $\scrF(\Rel)$ 
consists of all ratios $\mu(A)/\mu(B)$
where $A, B\subset X$ are positive measure subsets with $\Rel_A\cong \Rel_B$
(here one can take one of the sets to be $X$ without loss of generality).
The notion is borrowed from similarly defined concept of the fundamental group
of a von Neumann algebra, introduced by Murray and von Neumann \cite{Murray+vonNeumann:1943:IV}:
$\scrF(M)=\setdef{t\in\bbR^\times_+}{ M^t\cong M}$.
However, the connection is not direct: even for group space construction 
$M=\Gamma\ltimes L^\infty(X)$ isomorphisms $M\cong M^t$ (or even automorphisms
of $M$) need not respect the Cartan subalgebra $L^\infty(X)$ in general. 

Since the restriction of the amenable relation $\Rel_{amen}$ to any positive measure subset 
$A\subset X$ is amenable, it follows
\[
	\scrF(\Rel_{amen})=\bbR^\times_+.
\] 
The same obviously applies to the product of any relation with an amenable one.

\medskip

On another extreme are orbit relations $\Rel_{\Gamma\acts X}$ of essentially free
ergodic action of ICC groups $\Gamma$ with property (T):
for such relations the fundamental group $\scrF(\Rel_{\Gamma\acts X})$ is at most countable
(Gefter and Golodets \cite[Corollary 1.8]{Gefter+Golodets:1988:fg}).

\medskip

Many relations have trivial fundamental group. This includes all $\II_1$
relations with a non-trivial numeric invariant which scales under restriction:
\begin{enumerate}
	\item 
	Relations with $1<\cost(\Rel)<\infty$; in particular, orbit relation $\Rel_{\Gamma\acts X}$
	for essentially free actions of $\bbF_n$, $1<n<\infty$, or surface groups.
	\item 
	Relations with some non-trivial $\ell^2$-Betti number $0<\beta^{(2)}_i(\Rel)<\infty$ for some $i\in\bbN$;
	in particular, orbit relation $\Rel_{\Gamma\acts X}$ for essentially free actions of a group $\Gamma$ 
	with $0<\beta^{(2)}_i(\Gamma)<\infty$ for some $i\in\bbN$, such as lattices in ${\rm SO}_{2n,1}(\bbR)$, 
	${\rm SU}_{m,1}(\bbR)$, ${\rm Sp}_{k,1}(\bbR)$.
\end{enumerate}
Triviality of the fundamental group often appears as a by-product of rigidity of groups
and group actions. For examples $\scrF(\Rel_{\Gamma\acts X})=\{1\}$ in the following situations:
\begin{enumerate}
	\item 
	Any (essentially free) action of a lattice $\Gamma$ in a simple Lie group of higher rank
	(\cite{Gefter+Golodets:1988:fg});
	\item 
	Any essentially free action of (finite index subgroups of products of) Mapping Class Groups 
	(\cite{Kida:2006ME});
	\item
	Actions of $\Gamma=\Gamma_1\times\dots\times\Gamma_n$, $n\ge 2$, of hyperbolic-like 
	groups $\Gamma_i$ where each of them acts ergodically (\cite{Monod+Shalom:OE:05});
	\item 
	$\Gdsc$-cocycle superrigid actions $\Gamma\acts X$ such as Bernoulli actions of groups
	with property (T) (\cite{Popa:2006RigidityI, Popa:2006RigidityII, Popa+Vaes:08Ber}). 
\end{enumerate}

\medskip

What are other possibilities for the fundamental group beyond the two extreme cases 
$\scrF(\Rel)=\bbR^\times_+$ and $\scrF(\Rel)=\{1\}$?
The most comprehensive answer (to date) to this question is contained in the following
result of S.Popa and S.Vaes (see \cite{Popa+Vaes:08Finf} for further references):
\begin{theorem}[{Popa-Vaes, \cite[Thm 1.1]{Popa+Vaes:08Finf}}]\label{T:Popa-Vaes:F+Out}
There exists a family $\mathcal{S}$ of additive subgroups of $\bbR$ which contains all 
countable groups, and (uncountable) groups of arbitrary Hausdorff dimension in $(0,1)$, so that 
for any $F\in\mathcal{S}$ and any totally disconnected locally compact unimodular group $G$
there exist uncountably many mutually non-SOE essentially free p.m.p. actions of $\bbF_\infty$
whose orbit relations $\Rel=\Rel_{{\bbF_\infty}\acts X}$ have $\scrF(\Rel)\cong \exp(F)$ and $\Out(\Rel)\cong G$.

Moreover, in these examples the Murray von Neumann group space factor $M=\Gamma\rtimes L^\infty(X)$
has $\scrF(M)\cong\scrF(\Rel)\cong \exp(F)$ and $\Out(M)\cong\Out(\Rel)\ltimes H^1(\Rel,\bbT)$,
where $H^1(\Rel,\bbT)$ is the first cohomology with coefficients in the $1$-torus.
\end{theorem}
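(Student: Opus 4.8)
The plan is to build a single model $\II_1$ relation whose self-similarities are rigid enough to be computed exactly, then to engineer the prescribed data $(F,G)$ into that model, the real content being that \emph{no unexpected} symmetry survives. The engine is Popa's deformation/rigidity machinery, specifically a cocycle superrigidity theorem of the type already invoked in \S\ref{ssub:fundamental_group}: I would start from a free ergodic p.m.p. action of a rigid source group $\Gamma_0$ — a property (T) group, or a pair with relative property (T) such as $\bbZ^2\rtimes\SL_2(\bbZ)$ with a malleable/Bernoulli action — which is $\Gdsc$-cocycle superrigid, so that every measurable cocycle $\Rel\to\Lambda$ into a countable group $\Lambda$ untwists to a homomorphism after a measurable coboundary. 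This is what will let me control arbitrary weak self-isomorphisms.

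First I would assemble an infinite-measure master coupling on which both $F$ and $G$ act. The group $G$ enters as an honest group of relation automorphisms: since $G$ is totally disconnected, locally compact and unimodular it contains (van Dantzig) a compact open subgroup $K$, so $G/K$ is countable; letting $G$ permute a generalised-Bernoulli (or co-induced) base indexed by $G/K$ and transporting this through the construction realises an embedding $G\hookrightarrow\Out(\Rel)$. The additive group $F\subset\bbR$ is encoded through the \emph{index} of self-couplings: one builds $\Rel$ over an infinite-measure space carrying a homomorphism onto $F$, so that restricting to subsets whose measures differ by a factor in $\exp(F)$ yields isomorphic relations, producing a measured family of weak isomorphisms $\Rel\cong\Rel^t$ for exactly $t\in\exp(F)$. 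The uncountable examples of arbitrary Hausdorff dimension in $(0,1)$ arise by taking $F$ to be such a subgroup of $(\bbR,+)$ and checking the construction tolerates uncountable $F$.

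The heart of the matter is the converse inclusions. Any $t\in\scrF(\Rel)$ is a weak self-isomorphism $\theta\colon\Rel\cong\Rel^t$, and any class in $\Out(\Rel)$ an automorphism; each produces a measurable cocycle on $\Rel$ valued in the built-in symmetry data, and $\Gdsc$-cocycle superrigidity forces this cocycle to untwist to the algebraic symmetries already present. Concretely, an intertwining-by-bimodules and comultiplication ($\Delta$) argument pins down a canonical copy of the rigid core inside $\Rel$, so every self-similarity must preserve that core and is therefore accounted for by $G$ together with the scaling homomorphism onto $F$; this gives the exact identifications $\scrF(\Rel)\cong\exp(F)$ and $\Out(\Rel)\cong G$.

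For the von Neumann statement I would invoke uniqueness of the Cartan subalgebra $L^\infty(X)\subset M$ up to unitary conjugacy for these group-measure-space factors, which collapses $\Aut(M)$ onto the automorphisms respecting the Cartan; this yields $\scrF(M)\cong\scrF(\Rel)$ and, after accounting for the cohomological ambiguity in isomorphisms of $M$ inducing the identity on $\Rel$, the splitting $\Out(M)\cong\Out(\Rel)\ltimes H^1(\Rel,\bbT)$. To realise $\Rel$ by a free $\bbF_\infty$-action I would exhibit a generating graphing that assembles into a free action of $\bbF_\infty$, which is legitimate precisely because $\bbF_\infty$ imposes no relations among its generators and has unbounded cost. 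Finally, the continuum of mutually non-SOE such actions comes from a rigidity-versus-separability count: by the rigidity above each SOE-class is countable, while the parameter space of the construction is uncountable. The main obstacle, and the technical core, is the rigidity computation of the third paragraph — producing cocycle superrigidity strong enough to control \emph{every} weak self-isomorphism, together with the Cartan-uniqueness needed on the von Neumann side; everything else is bookkeeping around these two rigidity inputs.
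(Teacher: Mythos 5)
The survey itself does not prove this theorem; it is quoted verbatim from Popa--Vaes \cite{Popa+Vaes:08Finf}, so your proposal has to be measured against their actual argument, and there it has a structural flaw that is fatal rather than cosmetic. Your rigidity engine --- $\Gdsc$-cocycle superrigidity of Bernoulli/malleable actions of a property (T) group, or of a w-rigid group such as $\bbZ^2\rtimes\SL_2(\bbZ)$ --- is incompatible with the requirement that the final relation be $\Rel_{\bbF_\infty\acts X}$ for an \emph{essentially free} action. Indeed, for an essentially free action of a free group the graphing by free generators is a treeing (free groups are strongly treeable, \S\ref{ssub:treeability_anti_triability_ergodic_dimension}), so $\Rel$ would be treeable; treeability passes to subrelations (Gaboriau \cite{Gaboriau:Inven2000cost}, cf.\ the lifting statement in \S\ref{ssub:rel_treeability}); but the rigid core you install inside $\Rel$ --- the orbit relation of a free action of a property (T) group, or of $\bbZ^2\rtimes\SL_2(\bbZ)$ --- is \emph{anti-treeable}: property (T) groups by Adams--Spatzier \cite{Adams+Spatzier:1990:T-tree}, and $\bbZ^2\rtimes\SL_2(\bbZ)$ because it is non-amenable with $\beta^{(2)}_1=0$ (infinite amenable normal subgroup $\bbZ^2$), hence anti-treeable by Alvarez--Gaboriau \cite{Alvarez+Gaboriau:2008p1322}. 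So no relation containing such a core can ever be generated by an essentially free $\bbF_\infty$-action. Correspondingly, your closing step --- ``exhibit a generating graphing that assembles into a free action of $\bbF_\infty$, legitimate because $\bbF_\infty$ imposes no relations'' --- is exactly the Feldman--Moore freeness problem discussed in \S\ref{ssub:FM_question}: any countable relation is generated by \emph{some} $\bbF_\infty$-action, but essential freeness is the entire difficulty, and by the above it provably fails for the relation you construct.

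What makes the Popa--Vaes theorem remarkable is precisely that the rigidity must coexist with treeability, so it cannot be extracted from the acting group or from cocycle superrigidity of the action. Their input is Popa's \emph{rigid Cartan inclusions} (relative property (T) of the pair): for a non-amenable free subgroup $\bbF_n<\SL_2(\bbZ)$ the essentially free action $\bbF_n\acts\bbT^2$ has a treeable orbit relation, yet the inclusion $L^\infty(\bbT^2)\subset L^\infty(\bbT^2)\rtimes\bbF_n$ is rigid, via relative property (T) of $\bbZ^2\subset\bbZ^2\rtimes\bbF_n$ \cite{Popa:2006:Betti}. They then join freely countably many such rigid-and-treeable pieces, with the pattern of sizes encoding $\exp(F)$ and a $G/K$-indexed arrangement encoding $G$ (so the whole relation is still generated by a free $\bbF_\infty$-action), and the exact computations of $\scrF(\Rel)$ and $\Out(\Rel)$ come from an intertwining-by-bimodules/separability rigidity theorem in the spirit of Ioana--Peterson--Popa \cite{Ioana+Peterson+Popa:2008}, showing that any stable self-isomorphism essentially permutes the rigid pieces. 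The von Neumann statements likewise do not follow from a general unique-Cartan theorem for $\bbF_\infty$-crossed products (no such theorem was available); they follow from uniqueness up to unitary conjugacy of the \emph{rigid} Cartan in these particular amalgamated-free-product factors. Your deformation-vs-rigidity instinct and the $G/K$ and measure-scaling bookkeeping are in the right spirit, but the rigidity input must be replaced wholesale, and that replacement is the heart of the theorem.
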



\medskip

\subsubsection{Treeability} 
\label{ssub:rel_treeability}\hfill\\

An equivalence relation $\Rel$ is said \embf{treeable} (Adams \cite{Adams:1990treeable}) 
if it admits a generating set $\Phi=\{\phi_i\}$
so that the corresponding (non-oriented) graph on a.e. $\Rel$-class is a tree.
Basic examples of treeable relations include: $\Rel_{amen}$ viewing the amenable $\II_1$-relation 
as the orbit relation of some/any action of $\bbZ=\bbF_1$,
and more generally, $\Rel_{\bbF_n\acts X}$ where $\bbF_n\acts X$ is an essentially free action of the free group $\bbF_n$, $1\le n\le \infty$.
Any restriction of a treeable relation is treeable, and $\Rel$ is treeable iff $\Rel^t$ is.

If $\Rel_1\to\Rel_2$ is a (weak) \emph{injective relation morphism} 
and $\Rel_2$ is treeable, then so is $\Rel_1$ -- the idea is to lift a treeing graphing from $\Rel_2$ to $\Rel_1$
piece by piece. This way, one shows that if a group $\Lambda$ admits an essentially free action $\Lambda\acts Z$
with treeable $\Rel_{\Lambda\acts Z}$, and $\Gamma$ and $\Lambda$ admit (S)OE essentially free actions
$\Gamma\acts X$ and $\Lambda\acts Y$ then the $\Gamma$-action on $X\times Z$, $g:(x,z)\mapsto (gx,\alpha(g,x)z)$
via the (S)OE cocycle $\alpha:\Gamma\times X\to\Lambda$, has a treeable orbit structure $\Rel_{\Gamma\acts X\times Z}$.
Since surface groups $\Gamma=\pi_1(\Sigma_g)$, $g\ge2$, and $\bbF_2$ are lattices in ${\rm PSL}_2(\bbR)$, hence ME,
the former groups have free actions with treeable orbit relations. 
Are all orbit relations of free actions of a surface group treeable?


\medskip

\subsubsection{Cost} 
\label{ssub:rel_cost}

The notion of \embf{cost} for $\II_1$-relations corresponds to the notion of \embf{rank}
for discrete countable groups. The notion of cost was introduced 
by G. Levitt \cite{Levitt:1995cost} and extensively studied by D. Gaboriau 
\cite{Gaboriau:CRAS1998cost, Gaboriau:Inven2000cost, Gaboriau+Lyons:2007}.
\begin{defn}\label{D:cost} 
Given a generating system $\Phi=\{\phi_i:A_i\to B_i\}_{i\in\bbN}$
for a $\II_1$-equivalence relation $\Rel$ on $(X,\mu)$ the 
\embf{cost of the graphing} $\Phi$ is
\[
	\cost(\Phi)=\sum_{i} \mu(A_i)=\sum_{i} \mu(B_i)
\]
and the \embf{cost of the relation} is
\[
	\cost(\Rel)=\inf\setdef{\cost(\Phi)}{\Phi\text{ generates }\Rel}.
\]  
\end{defn}
A generating system $\Phi$ defines a graph structure on every $\Rel$-class and
$\cost(\Phi)$ is half of the average valency of this graph 
over the space $(X,\mu)$. 

The cost of a $\II_1$-relation takes values in $[1,\infty]$.
In the definition of the cost of a relation it is important that the relation is
probability measure preserving, but ergodicity is not essential.
The broader context includes relations with finite classes,
such relations can values less than one.
For instance, the orbit relation of a (non-ergodic) probability measure preserving
action of a finite group $\Gamma\acts (X,\mu)$ one gets 
\[
	\cost(\Rel_{\Gamma\acts X})=1-\frac{1}{|\Gamma|}.
\] 
If $\Rel$ is the orbit relation of some (not necessarily free) action $\Gamma\acts (X,\mu)$
then $\cost(\Rel)\le{\rm rank}(\Gamma)$, where the latter stands for the minimal number of generators
for $\Gamma$. Indeed, any generating set $\{g_1,\dots, g_k\}$ for $\Gamma$
gives a generating system $\Phi=\{\gamma_i:X\to X\}_{i=1}^k$ for $\Rel_{\Gamma\acts X}$. 
Recall that the amenable $\II_1$-relation $\Rel_{amen}$ can be generated by (any) action of $\bbZ$. 
Hence 
\[
	\cost(\Rel_{amen})=1. 
\]
The cost behaves nicely with respect to restriction:
\begin{theorem}[Gaboriau \cite{Gaboriau:Inven2000cost}]\label{T:cost-restriction}
For a $\II_1$-relation $\Rel$:
\[
	t\cdot(\cost(\Rel^t)-1)=\cost(\Rel)-1\qquad(t\in\bbR^\times_+).
\]
\end{theorem}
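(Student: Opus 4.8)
The plan is to reduce to the case $0<t\le 1$, where $\Rel^t$ is (isomorphic to) the restriction $\Rel_A$ to a measurable $A\subset X$ with $\mu(A)=t$; the general case then follows from the composition rule $(\Rel^t)^s\cong\Rel^{ts}$ of Remark~\ref{R:amp}, since for $t>1$ one writes $\Rel=(\Rel^t)^{1/t}$ with $1/t<1$. Throughout I record that a graphing $\Phi=\{\phi_i:A_i\to B_i\}$ defines a graph on a.e. $\Rel$-class whose (measured) edge count is $\cost(\Phi)=\sum_i\mu(A_i)$, while the (measured) vertex count of the class is $\mu(X)=1$. The engine of the proof is the bookkeeping of the \emph{measured Euler characteristic} $1-\cost(\Rel)$: the claimed identity is exactly the assertion that $\mu(A)-\mu(A)\cdot\cost(\Rel_A)=1-\cost(\Rel)$, i.e. that this Euler characteristic is preserved under restriction, with the vertex count rescaling by $\mu(A)=t$.

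For the inequality $\cost(\Rel)-1\le t\cdot(\cost(\Rel_A)-1)$ I would argue by \emph{extending} a graphing. Given a graphing $\Psi$ of $\Rel_A$ on $(A,\mu_A)$, transport it to $(X,\mu)$, where its cost rescales to $t\cdot\cost(\Psi)$. Using ergodicity of $\Rel$, so that $A$ is a complete section meeting a.e. class, one builds by an exhaustion argument, producing partial isomorphisms that carry $X\setminus A$ into $A$ in a rooted-forest pattern, a \emph{contraction graphing} $\Theta$ of cost exactly $\mu(X\setminus A)=1-t$ (one edge per non-$A$ vertex, no cycles). The combined system $\Psi\cup\Theta$ generates $\Rel$, because any two $\Rel$-equivalent points can be routed into $A$ by $\Theta$ and then joined there by $\Psi$; hence $\cost(\Rel)\le t\cdot\cost(\Psi)+(1-t)$, and taking the infimum over $\Psi$ gives the inequality.

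The reverse inequality $\cost(\Rel_A)-1\le \frac{1}{t}(\cost(\Rel)-1)$ is the substantial direction and proceeds by \emph{inducing} a graphing on $A$. Given a near-optimal graphing $\Phi$ of $\Rel$, I would pass to a spanning tree/forest of the class graph and define the \emph{trace on} $A$: connect two points of $A$ whenever the path between them in the tree meets no other point of $A$. The trace of a tree is again a tree, so contracting the excursions through $X\setminus A$ removes equally many (measured) vertices and edges and preserves the Euler characteristic; this yields a graphing of $\Rel_A$ of unnormalized cost $\le \cost(\Phi)+t-1$, i.e. normalized cost $\le 1+\frac{1}{t}(\cost(\Phi)-1)$, whence the bound after taking the infimum over $\Phi$.

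The main obstacle is precisely this last step: making the trace construction measurable and cost-efficient when the infimum defining $\cost(\Rel)$ is \emph{not} attained by any single graphing, so that one must work with graphings whose induced class graphs are only approximately forests. Handling this requires a limiting argument together with the care that restricting a connected graph to $A$ does not silently create extra edges that would spoil the Euler-characteristic accounting; this is the content of Gaboriau's induction formula and is where the genuine work lies, the easy direction having needed only ergodicity and a forest construction. Combining the two inequalities gives equality for $0<t\le 1$, and the composition rule of Remark~\ref{R:amp} then yields the statement for all $t\in\bbR^\times_+$.
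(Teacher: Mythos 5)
First, a point of comparison: this survey states the theorem with a citation to Gaboriau \cite{Gaboriau:Inven2000cost} and contains no proof of it, so your attempt can only be measured against the standard argument (Gaboriau's induction formula; see also Kechris--Miller \cite{Kechris+Miller:2004book}). Your reduction to $0<t\le 1$ via Remark~\ref{R:amp}, the normalization bookkeeping, and the easy inequality are all fine: transporting a graphing $\Psi$ of $\Rel_A$ back to $(X,\mu)$ and adjoining a measurable ``retraction'' family $\Theta$ of total cost $\mu(X\setminus A)=1-t$ (ergodicity enters only to make $A$ a complete section) does generate $\Rel$, giving $\cost(\Rel)-1\le t\,(\cost(\Rel^t)-1)$. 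This matches the standard proof of that direction.

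The reverse inequality, however, has a genuine gap, exactly at the step you flag but misdiagnose. Your plan is to ``pass to a spanning tree/forest of the class graph'' of a near-optimal graphing $\Phi$ and then take its trace on $A$. A measurable spanning subforest of the $\Phi$-graph does not exist in general: if one could always extract it, that subforest would be a treeing, so every $\II_1$-relation would be treeable, contradicting the anti-treeability of orbit relations of infinite property (T) groups (Adams--Spatzier \cite{Adams+Spatzier:1990:T-tree}, quoted in this survey). Note that the obstruction is measurability, not the non-attainment of the infimum defining $\cost(\Rel)$: even for one fixed graphing $\Phi$ the spanning forest is unavailable, so the ``limiting argument'' you appeal to --- and which you then attribute to ``Gaboriau's induction formula'', i.e.\ to the very statement being proved --- cannot close the gap. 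The correct argument needs no spanning tree: for each $x\in X\setminus A$ choose measurably one $\Phi$-edge at $x$ that strictly decreases the $\Phi$-graph distance to $A$; distinct points choose distinct edges (the distance strictly decreases along a chosen edge), so these edges form a forest $F$ of total measure $1-t$ whose components contract onto $A$, defining a Borel retraction $\pi:X\to A$. Now transport \emph{every} remaining edge $(x,y)$ of $\Phi$ to $(\pi(x),\pi(y))$. Projecting any $\Phi$-path shows the transported family generates $\Rel_A$ (edges of $F$ collapse under $\pi$, the others map to transported edges), and its cost is at most $\cost(\Phi)-(1-t)$ in the unnormalized measure; taking the infimum over $\Phi$ gives $t\,(\cost(\Rel^t)-1)\le\cost(\Rel)-1$. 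The point your Euler-characteristic framing obscures is that only an \emph{inequality} is needed here, so one may transport all edges rather than just tree edges, and this is precisely what makes the construction measurable.
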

The following is a key tool for computations of the cost:
\begin{theorem}[Gaboriau \cite{Gaboriau:Inven2000cost}]\label{T:cost-treeable}
Let $\Rel$ be a treeable equivalence relation, and $\Phi$ be a graphing of $\Rel$ giving a tree structure
to $\Rel$-classes. Then 
\[
	\cost(\Rel)=\cost(\Phi).
\]
Conversely, for a relation $\Rel$ with $\cost(\Rel)<\infty$, if the cost is attained by some graphing $\Psi$ 
then $\Psi$ is a treeing of $\Rel$.	
\end{theorem}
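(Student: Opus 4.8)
The equality $\cost(\Rel)=\cost(\Phi)$ splits into two inequalities. The bound $\cost(\Rel)\le \cost(\Phi)$ is immediate, since a treeing of $\Rel$ is in particular a graphing generating $\Rel$, and $\cost(\Rel)$ is by definition the infimum of $\cost(\cdot)$ over all such graphings. All the content is therefore in the reverse inequality $\cost(\Rel)\ge\cost(\Phi)$, i.e.\ in showing that \emph{no} generating graphing can be cheaper than a treeing; the converse statement will then follow from a short redundancy argument.

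For the lower bound, fix the treeing $\Phi=\{\phi_i\colon A_i\to B_i\}$ and an arbitrary graphing $\Psi=\{\psi_j\colon C_j\to D_j\}$ with $\Rel_\Psi=\Rel$. The plan is to exploit the \emph{freeness} of the treeing: because $\Phi$ puts a tree structure on a.e.\ $\Rel$-class, there are no nontrivial reduced relations among the $\phi_i$ in the full pseudo-group, so $\Phi$ presents the groupoid of $\Rel$ "without relations," in analogy with a free product. Concretely, I would express each $\psi_j$ as a reduced word in the $\phi_i^{\pm1}$ over its domain, using that for a.e.\ $x$ the point $\psi_j(x)$ is joined to $x$ by a unique reduced tree-path, and then compare measures of domains. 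Here I want to stress that a naive pointwise count fails: a connected spanning graph on each class forces only $\cost\ge 1$, and a single $\psi_j$-edge may cross arbitrarily many tree-edges along its geodesic, so such an accounting runs the wrong way. What rescues the argument is the measure-preservation of $\Phi$ and $\Psi$: the invariance of $\mu$ supplies an exact, holonomy-invariant bookkeeping (a measured Euler-characteristic balance) that forbids the kind of "cheating" available to a single non-measurable spanning tree, and it is precisely this measured balance that I expect to be the main obstacle. Conceptually this is the same phenomenon as the identity $\cost(\Rel)-1=\beta^{(2)}_1(\Rel)$ for treeable relations, with only an inequality in general; for a treeing the "cycle rank" vanishes and equality is forced.

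For the converse, suppose $\cost(\Rel)<\infty$ and that $\Psi$ attains the cost but is \emph{not} a treeing. Then on a positive-measure set of classes the $\Psi$-graph contains a cycle; equivalently, there is a nontrivial reduced word $w=\psi_{j_k}^{\epsilon_k}\circ\cdots\circ\psi_{j_1}^{\epsilon_1}$ and a set $E$ of positive measure on which $w$ acts as the identity. On the corresponding piece the last letter $\psi_{j_k}$ agrees with the partial isomorphism produced by the remaining letters, hence it is redundant there: deleting that piece from the domain of $\psi_{j_k}$ yields a graphing $\Psi'$ with $\Rel_{\Psi'}=\Rel$ but $\cost(\Psi')=\cost(\Psi)-c$ for some $c>0$. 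This contradicts $\cost(\Psi)=\cost(\Rel)$, so any finite-cost cost-attaining graphing has no cycles, i.e.\ is a treeing. The only care needed here is the measurable selection of the redundant piece and the check that removing it leaves the generated relation unchanged, both of which are routine.

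In summary, the trivial half and the converse are soft; the crux is the lower bound $\cost(\Rel)\ge\cost(\Phi)$, where the freeness of the tree must be converted into a measure inequality, and where it is the measure-preservation, rather than any pointwise graph-theoretic feature, that forces a treeing to be optimal.
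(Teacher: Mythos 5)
A preliminary remark: the survey you are being compared against does not actually prove this theorem — it states it and cites Gaboriau \cite{Gaboriau:Inven2000cost} — so your attempt has to be judged against what a complete argument requires, and by that standard it has a genuine gap.

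The gap is the inequality $\cost(\Rel)\ge\cost(\Phi)$, which, as you yourself correctly say, is the entire content of the theorem, and for which you supply no argument. What you give is a plan (write each $\psi_j\in\Psi$ as a reduced $\Phi$-word and ``compare measures of domains''), then the accurate observation that the naive pointwise count runs the wrong way, and finally an appeal to ``an exact, holonomy-invariant bookkeeping (a measured Euler-characteristic balance)'' forced by measure preservation. That last phrase is not a proof; it is a renaming of the statement to be proved. The one concrete mechanism you invoke, the identity $\beta^{(2)}_1(\Rel)=\cost(\Rel)-1$ for treeable relations, cannot be cited here: as used it is circular, since establishing it requires exactly the two nontrivial ingredients you are missing — Gaboriau's general inequality $\beta^{(2)}_1(\Rel)\le\cost(\Rel)-1$ and the computation $\beta^{(2)}_1(\Rel)=\cost(\Phi)-1$ for a treeing $\Phi$ (this is the route of \cite{Gaboriau:IHES2002L2}). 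Gaboriau's original proof in \cite{Gaboriau:Inven2000cost} is a substantial piece of work (see also \cite{Kechris+Miller:2004book}): roughly, one takes an arbitrary graphing $\Psi$ and performs an infinite sequence of measured local modifications, expressing $\Psi$-edges through the tree and trading pieces of edges against one another, with measure preservation guaranteeing that the total cost never increases and that in the limit one dominates $\cost(\Phi)$. None of this bookkeeping, which is where the theorem lives, appears in your proposal.

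Your converse argument is the right idea, but the step you dismiss as routine — ``the check that removing it leaves the generated relation unchanged'' — is precisely where it can fail as written. The simple cycles based at different points of $E$ need not have disjoint edge sets: an intermediate edge of the cycle at $x$ may be the ``last edge'' of the cycle at some other $x'\in E$, so deleting the last edge of every cycle simultaneously can also delete edges needed to reconnect the pairs you deleted, and the reconnection argument regresses. The standard repair is to shrink $E$ first: the vertex maps $x\mapsto x_i$, $0\le i\le k-1$, are finitely many Borel partial injections which are pointwise distinct, so the associated collision graph on $E$ has uniformly bounded degree and admits a measurable coloring by finitely many colors; a positive-measure color class yields a subset of $E$ over which the cycles are pairwise vertex-disjoint, and on it your deletion produces a cheaper generating graphing, as desired. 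So this half is fixable by a standard marker-type argument — but it is the soft half, and without a proof of $\cost(\Rel)\ge\cost(\Phi)$ the theorem itself remains untouched.
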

	
The above result (the first part) implies that for any essentially free action $\bbF_n\acts (X,\mu)$
one has $\cost(\Rel_{\bbF_n\acts X})=n$. This allowed Gaboriau to prove the following fact,
answering a long standing question: 
\begin{cor}[Gaboriau \cite{Gaboriau:CRAS1998cost, Gaboriau:Inven2000cost}]
If essentially free probability measure preserving actions of $\bbF_n$ and $\bbF_m$
are Orbit Equivalent then $n=m$.
\end{cor}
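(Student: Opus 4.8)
The plan is to use \emph{cost} as the invariant that separates the orbit relations. Since $\cost(\Rel)$ depends only on the isomorphism class of the $\II_1$-relation $\Rel$ --- it is defined as an infimum over graphings of $\Rel$, a notion manifestly preserved by relation isomorphisms --- and since an orbit equivalence $\bbF_n\acts(X,\mu)\ore\bbF_m\acts(Y,\nu)$ is precisely an isomorphism $\Rel_{\bbF_n\acts X}\cong\Rel_{\bbF_m\acts Y}$, it suffices to show that for \emph{every} essentially free p.m.p.\ action one has $\cost(\Rel_{\bbF_n\acts X})=n$.

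First I would exhibit the natural treeing. Fix a free generating set $\{s_1,\dots,s_n\}$ of $\bbF_n$ and let $\phi_i\colon X\to X$ be the transformation $x\mapsto s_i.x$. The family $\Phi=\{\phi_1,\dots,\phi_n\}$ generates $\Rel_{\bbF_n\acts X}$, and since each $\phi_i$ is defined on all of $X$ its cost is $\cost(\Phi)=\sum_{i=1}^n\mu(X)=n$. By essential freeness, for $\mu$-a.e.\ $x$ the map $g\mapsto g.x$ identifies the orbit $\bbF_n.x$ with $\bbF_n$, and under this identification the graph that $\Phi$ puts on the orbit is exactly the Cayley graph of $\bbF_n$ with respect to $\{s_1,\dots,s_n\}$, namely the $2n$-regular tree. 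Hence $\Phi$ is a treeing, and Theorem~\ref{T:cost-treeable} gives $\cost(\Rel_{\bbF_n\acts X})=\cost(\Phi)=n$.

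With this computation in hand the corollary is immediate: an orbit equivalence yields $\Rel_{\bbF_n\acts X}\cong\Rel_{\bbF_m\acts Y}$, whence
\[
	n=\cost(\Rel_{\bbF_n\acts X})=\cost(\Rel_{\bbF_m\acts Y})=m.
\]
(Using Theorem~\ref{T:cost-restriction} the same argument handles stable orbit equivalence as well, the index then appearing as a rescaling factor relating $n-1$ and $m-1$.)

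The genuine difficulty is entirely absorbed into Theorem~\ref{T:cost-treeable}. The upper bound $\cost(\Rel_{\bbF_n\acts X})\le n$ is trivial, being witnessed by the generating system $\Phi$ itself. The hard content is the matching lower bound, i.e.\ that a tree-graphing actually \emph{attains} the infimum defining the cost, so that no cleverer (possibly partially defined, infinitely supported) generating system can undercut the value $n$. This is exactly Gaboriau's theorem, and it is the step I would expect to be the main obstacle if one had to prove it from scratch; granting it, the corollary is a one-line consequence.
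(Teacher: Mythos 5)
Your proof is correct and follows exactly the paper's own route: the paper likewise deduces the corollary by noting that the free-generator graphing is a treeing, invoking Theorem~\ref{T:cost-treeable} (Gaboriau) to get $\cost(\Rel_{\bbF_n\acts X})=n$ for every essentially free p.m.p.\ action, and then using invariance of cost under isomorphism of the orbit relations. Your closing remark about stable orbit equivalence via Theorem~\ref{T:cost-restriction} is also consistent with the paper's subsequent discussion of weak isomorphism and the index $\frac{n-1}{m-1}$.
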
	
Note that $\bbF_n$ and $\bbF_m$ are commensurable for $2\le n,m<\infty$,
hence they have essentially free actions which are \emph{weakly isomorphic}.
The index of such weak isomorphism will necessarily be $\frac{n-1}{m-1}$, or $\frac{m-1}{n-1}$
(these free groups have P.F. - fixed price). 
It should be pointed out that one of the major open problems in the theory of
von Neumann algebras is whether it is possible for the factors $L(\bbF_n)$ and $L(\bbF_m)$ to 
be isomorphic for $n\neq m$ 
(it is known that either all $L(\bbF_n)$, $2\le n<\infty$, are isomorphic, or all distinct).

\medskip

The following powerful result of Greg Hjorth provides a link
from treeable relations back to actions of free groups:
\begin{theorem}[Hjorth \cite{Hjorth:06lemma}]
Let $\Rel$ be a treeable equivalence relation with $n=\cost(\Rel)$ in $\{1,2,\dots,\infty\}$.
Then $\Rel$ can be generated by an essentially free action of $\bbF_n$.
\end{theorem}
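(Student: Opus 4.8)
The plan is to reduce the statement to a combinatorial assertion about treeings and then to build the required action by a totalization procedure. The crucial observation is that an essentially free p.m.p. action of $\bbF_n=\langle a_1,\dots,a_n\rangle$ generating $\Rel$ is exactly the same datum as a treeing of $\Rel$ consisting of $n$ \emph{total} automorphisms. Indeed, if $T_1,\dots,T_n\in[\Rel]$ are total automorphisms generating $\Rel$, then the graphing $\{T_j\}$ equips each $\Rel$-class with the image of the Cayley graph of $\bbF_n$ (the $2n$-regular tree) under the orbit map $g\mapsto g.x$; this image is a tree precisely when the orbit map is injective, that is, when the action on that orbit is free. Hence ``essentially free $\bbF_n$-action generating $\Rel$'' and ``treeing of $\Rel$ by $n$ total automorphisms'' carry the same information, and it suffices to produce the latter.

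First I would fix, by Gaboriau's Theorem~\ref{T:cost-treeable}, a treeing $\Phi=\{\varphi_i:A_i\to B_i\}$ of $\Rel$ with $\cost(\Phi)=\sum_i\mu(A_i)=n$. Since $\int_X(\text{valency})\,d\mu=\sum_i(\mu(A_i)+\mu(B_i))=2n$, such a treeing has average valency $2n$, while a treeing by $n$ total automorphisms is one of constant valency $2n$; the content of the theorem is therefore that the partial treeing $\Phi$ can be converted into $n$ total automorphisms generating the same relation. The plan is to process the (countably many) generators $\varphi_i$ one at a time, at each stage extending a partial automorphism to a larger part of $X$ and compensating by trimming or absorbing pieces of the remaining generators, keeping the graphing a treeing and its cost pinned at $n$ throughout. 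Two standing tools make the redistribution possible: Lemma~\ref{L:inn-orbits}, which matches any two subsets of equal measure by an element of $[\Rel]$, and the fact that $\Rel$-classes are infinite, so that every tree in the forest has infinitely many ends along which the still-undefined part of a map can be pushed off to infinity.

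The principal obstacle is to carry out this totalization while (i) never creating a cycle and (ii) keeping every map measurable and measure preserving. Requirement (i) is precisely freeness: when I extend a partial $\varphi:A\to B$ over a point $x\in X\setminus A$, its image must be chosen in a tree-branch not already joined to $x$ by the current graphing, so that no nontrivial reduced word in the resulting generators closes up on a positive-measure set. I would enforce this by a back-and-forth exhaustion that at each stage routes the new edge through an as-yet-unused end of the ambient tree; infiniteness of the classes guarantees that such ends exist and that the added edge genuinely merges two components, so no cycle appears. A convenient feature of the reformulation is that the count of generators is automatic: any treeing of $\Rel$ by total automorphisms has cost equal to the number of its generators, so by Theorem~\ref{T:cost-treeable} that number is forced to be $\cost(\Rel)=n$. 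Thus I need only manufacture \emph{some} treeing by total automorphisms, its cardinality being then determined; the bulk of the work is the measurable uniformization that makes all the simultaneous choices on conull sets.

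Finally, the case $n=\infty$ is genuinely easier and I would dispose of it first as a warm-up. One enumerates the partial generators $\varphi_i$ and replaces each by a total automorphism $\psi_i$ that agrees with $\varphi_i$ on $A_i$ and elsewhere joins previously unconnected branches, introducing a fresh generator whenever leftover structure remains. Here there is no valency target to hit, so the infinite supply of generators and of tree-ends eliminates the delicate accounting of the finite case, and the family $\{\psi_i\}$ directly furnishes an essentially free $\bbF_\infty$-action generating $\Rel$.
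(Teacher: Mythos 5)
Your reduction is exactly the one the paper records as ``the point of the proof'': an essentially free $\bbF_n$-action generating $\Rel$ is the same datum as a treeing by $n$ total automorphisms, i.e.\ a treeing of a.e.\ constant valency $2n$, and your remark that the number of total generators is then forced to equal $n$ by Theorem~\ref{T:cost-treeable} is correct and tidy. The gap is in the totalization procedure, and it is not a matter of missing routine detail but of a mechanism that cannot work as described. You insist on ``keeping the graphing a treeing of $\Rel$ throughout'', and a treeing of $\Rel$ \emph{spans} each equivalence class: every point of the class $\Rel[x]$ is already joined to $x$ by a path in the current graphing. Consequently there is no ``tree-branch not already joined to $x$'', no ``as-yet-unused end'', and no ``two components'' for a new edge to merge; any edge you add inside a class closes a cycle. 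The only legal move is the opposite order of operations: first \emph{delete} an edge $(u,\psi(u))$ of some partial generator, which cuts its class-tree into two components $C_u$ and $C_{\psi(u)}$, and then add a new edge of the map being extended with one endpoint in $C_u$ and one in $C_{\psi(u)}$. In particular the target of the new edge is not free to be ``routed through an end''; it is constrained, up to a choice inside the cut-off component, by which edge was deleted.

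Once the move is stated correctly, the actual difficulties of Hjorth's lemma appear, and your sketch does not engage them: the cut-off component may be finite (e.g.\ when $u$ is a leaf of its tree), or may be entirely covered by the domain and range of the map $\varphi$ being extended, in which case no $\varphi$-edge can be attached to it at all (each point carries at most one outgoing and one incoming $\varphi$-edge); so the edges to be deleted must be selected measurably in the ``right place'', and one must prove that after countably many such surgeries the maps are total a.e., the graphing still generates $\Rel$, and no cycles have appeared on a positive measure set. That bookkeeping is the substance of Hjorth's argument, not a routine exhaustion via Lemma~\ref{L:inn-orbits}. The same objection defeats your claim that $n=\infty$ is a warm-up: there are no ``previously unconnected branches'' to join in that case either, and extending each $\varphi_i$ to a total automorphism without the cut-and-reconnect surgery destroys the treeing, hence the freeness of the resulting $\bbF_\infty$-action.
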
	 
The point of the proof is to show that a relation $\Rel$ which has a treeing graphing 
with \emph{average valency} $2n$ admits a (treeing) graphing with a.e. constant
valency $2n$.

\medskip

The behavior of the cost under passing to a subrelation of finite index
is quite subtle -- the following question is still open (to the best of author's knowledge).
\begin{question}[{Gaboriau}]\label{Q:cost-finind}
Let $\Gamma'$ be a subgroup of finite index in $\Gamma$,
and $\Gamma\acts (X,\mu)$ be an essentially free p.m.p. action.
Is it true that the costs of the orbit relations of $\Gamma$ and $\Gamma'$ are related by the index $[\Gamma:\Gamma']$:
\[
	\cost(\Rel_{\Gamma'\acts X})-1 = [\Gamma:\Gamma']\cdot (\cost(\Rel_{\Gamma\acts X})-1) \quad ?
\]
\end{question}	
In general $\Gamma'$ has at most $[\Gamma:\Gamma']$-many ergodic components.
The extreme case where the number of $\Gamma'$-ergodic components is maximal: $[\Gamma:\Gamma']$,
corresponds to $\Gamma\acts (X,\mu)$ being co-induction from an ergodic $\Gamma'$-action.
In this case the above formula easily holds.
The real question lies in the other extreme where $\Gamma'$ is ergodic.

\medskip

Recall that the notion of the cost is analogous to the notion of \emph{rank}
for groups, where $\rank(\Gamma)=\inf\setdef{n\in\bbN}{\exists \text{ epimorphism } \bbF_n\to\Gamma}$.
Schreier's theorem states that for $n\in\bbN$ any subgroup $F<\bbF_n$ of finite index
$[\bbF_n:F]=k$ is itself free: $F\cong \bbF_{k(n-1)+1}$.
This implies that for any finitely generated $\Gamma$ and any finite index subgroup of $\Gamma'<\Gamma$ 
one has
\[
	\rank(\Gamma')-1\le [\Gamma:\Gamma']\cdot (\rank(\Gamma)-1)
\]
with equality in the case of free groups.
Let $\Gamma>\Gamma_1>\dots$ be a chain of subgroups of finite index.
One defines the \embf{rank gradient} (Lackenby \cite{Lackenby:2002Haken})
of the chain $\{\Gamma_n\}$ as the limit of the monotonic (!) sequence:
\[
	\srank(\Gamma,\{\Gamma_n\})=\lim_{n\to\infty}\frac{\rank(\Gamma_n)-1}{[\Gamma:\Gamma_n]}.
\]
It is an intriguing question whether (or when) is it true that $\srank(\Gamma,\{\Gamma_n\})$
depends only on $\Gamma$ and not on a particular chain of finite index subgroups. 
One should, of course, assume that the chains in question have trivial intersection,
and one might require the chains to consists of normal subgroups in the original group.
In the case of free groups $\srank$ is indeed independent of the chain.

%
In \cite{Abert+Nikolov:2007cost} Abert and Nikolov prove that the rank gradient of a chain of 
finite index subgroups of $\Gamma$ is given by the cost of a certain associated ergodic p.m.p. $\Gamma$-action.
Let us formulate a special case of this relation where the chain $\{\Gamma_n\}$ consists of \emph{normal} subgroups
$\Gamma_n$ with $\bigcap\Gamma_n=\{1\}$.
Let $K=\varprojlim \Gamma/\Gamma_n$ denote the 
profinite completion corresponding to the chain. 
The $\Gamma$-action by left translations on the compact totally disconnected group $K$
preserves the Haar measure $m_K$ and $\Gamma\acts(K,m_K)$ is a free ergodic p.m.p. action.
(let us point out in passing that this action has a spectral gap, implying strong ergodicity,
iff the chain has property $(\tau)$ introduced by Lubotzky and Zimmer \cite{Lubotzky+Zimmer:1989tau}).
\begin{theorem}[Abert-Nikolov \cite{Abert+Nikolov:2007cost}]
With the above notations:
\[
	\srank(\Gamma,\{\Gamma_n\})=\cost(\Rel_{\Gamma\acts K})-1.
\] 
\end{theorem}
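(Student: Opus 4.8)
The plan is to reduce the profinite action to finitely many ``levels'' and to play off Gaboriau's restriction formula against a Schreier-graph count. First I would record the geometry of the coupling: for each $n$ let $U_n=\overline{\Gamma_n}\le K$ be the closure of $\Gamma_n$, equivalently the kernel of the projection $\pi_n:K\to\Gamma/\Gamma_n$. Then $U_n$ is clopen with $m_K(U_n)=1/[\Gamma:\Gamma_n]$, and $U_n=\varprojlim_{m\ge n}\Gamma_n/\Gamma_m$ is itself the profinite completion of $\Gamma_n$ relative to the tail chain, carrying as its Haar measure exactly the normalized restriction of $m_K$. Since $\Gamma_n$ (being normal) is the stabilizer of the identity coset, a short check shows that for $x,y\in U_n$ one has $(x,y)\in\Rel_{\Gamma\acts K}$ iff $y=\gamma x$ with $\gamma\in\Gamma_n$; hence the restriction satisfies $\Rel_{\Gamma\acts K}|_{U_n}=\Rel_{\Gamma_n\acts U_n}$, the orbit relation of $\Gamma_n$ acting on its own profinite completion.

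Applying Gaboriau's restriction formula (Theorem~\ref{T:cost-restriction}) with $t=m_K(U_n)=1/[\Gamma:\Gamma_n]$ yields the exact, level-independent identity
\[
	\cost(\Rel_{\Gamma\acts K})-1=\frac{\cost(\Rel_{\Gamma_n\acts U_n})-1}{[\Gamma:\Gamma_n]}\qquad(n\in\bbN).
\]
Combining this with the elementary bound $\cost(\Rel_{\Gamma_n\acts U_n})\le\rank(\Gamma_n)$ (any generating set of $\Gamma_n$ is a graphing of its orbit relation) already gives
\[
	\cost(\Rel_{\Gamma\acts K})-1\le\frac{\rank(\Gamma_n)-1}{[\Gamma:\Gamma_n]}\quad\text{for every }n,
\]
and therefore $\cost(\Rel_{\Gamma\acts K})-1\le\srank(\Gamma,\{\Gamma_n\})$ after passing to the (monotone) limit, which equals the infimum.

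The reverse inequality is the heart of the matter. Here I would fix $\epsilon>0$ and a graphing $\Phi=\{\phi_i\}$ of $\Rel_{\Gamma\acts K}$ with $\cost(\Phi)<\cost(\Rel_{\Gamma\acts K})+\epsilon$; discarding a tail of negligible cost I may assume $\Phi$ is finite. Using the total disconnectedness of $K$ and the piecewise-$\Gamma$ structure of each $\phi_i$, I would approximate $\Phi$ by a graphing $\Phi'$ that is measurable with respect to some finite level $m$ and satisfies $\cost(\Phi')\le\cost(\Phi)+\epsilon$: every domain piece becomes a union of cosets of $U_m$, and on each such coset $\Phi'$ acts by a single group element $\gamma\in\Gamma$. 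Projecting to the finite set $\Gamma/\Gamma_m$ (whose points have mass $1/[\Gamma:\Gamma_m]$) turns $\Phi'$ into a $\Gamma$-labelled graph $G$ on the vertex set $\Gamma/\Gamma_m$ with $E=[\Gamma:\Gamma_m]\cdot\cost(\Phi')$ edges. Because $\Phi'$ generates the full orbit relation at level $m$, $G$ is connected and the holonomy at the identity coset recovers all of $\Gamma_m$; choosing a spanning tree and reading off the holonomies of the $E-[\Gamma:\Gamma_m]+1$ non-tree edges exhibits a generating set of $\Gamma_m$ of that size. Hence $\rank(\Gamma_m)-1\le [\Gamma:\Gamma_m]\,(\cost(\Phi')-1)$, so $\srank\le\cost(\Phi')-1<\cost(\Rel_{\Gamma\acts K})-1+2\epsilon$, and letting $\epsilon\to0$ closes the loop.

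The step I expect to be the main obstacle is this finite-level reduction: one must approximate an arbitrary measurable graphing by a level-$m$ one without inflating its cost, and, more delicately, verify that after projection the holonomies of the non-tree edges generate $\Gamma_m$ \emph{exactly} rather than a proper (or merely dense) subgroup. This is precisely where the residual finiteness encoded in $\bigcap\Gamma_n=\{1\}$ and the hypothesis that $\Phi$ generates the \emph{whole} relation (not a subrelation) must be used with care; everything else is bookkeeping with the restriction formula and the Euler-characteristic count $E-V+1$ for the rank of the fundamental group of a graph.
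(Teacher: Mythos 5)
Your first inequality, $\cost(\Rel_{\Gamma\acts K})-1\le\srank(\Gamma,\{\Gamma_n\})$, is correct and complete: the identification of the restriction of $\Rel_{\Gamma\acts K}$ to $U_n$ with $\Rel_{\Gamma_n\acts U_n}$, Gaboriau's restriction formula (Theorem~\ref{T:cost-restriction}), and the trivial bound $\cost(\Rel_{\Gamma_n\acts U_n})\le\rank(\Gamma_n)$ together give $\cost(\Rel_{\Gamma\acts K})-1\le(\rank(\Gamma_n)-1)/[\Gamma:\Gamma_n]$ for every $n$. This is exactly the direction the survey itself sketches (the ``$\ge$'' direction), except that the survey exhibits an explicit graphing of cost $1+(\rank(\Gamma_n)-1)/[\Gamma:\Gamma_n]$ rather than invoking the restriction formula; the two arguments carry the same content, and this is the only direction the survey proves, the converse being referred to \cite{Abert+Nikolov:2007cost}.

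The converse, $\srank(\Gamma,\{\Gamma_n\})\le\cost(\Rel_{\Gamma\acts K})-1$, is where your proposal has genuine gaps, and its very first step fails as written: you may \emph{not} ``discard a tail of negligible cost'' and assume $\Phi$ is finite, since a finite subfamily of a generating graphing will in general generate a proper subrelation --- the cheap tail may be exactly what connects points over some positive-measure set. The standard repair uses finite generation of $\Gamma$: fix a finite generating set $S$, choose a finite $\Phi_0\subset\Phi$ whose words realize the edge $(x,sx)$ for each $s\in S$ and all $x$ outside a set $B_s$ with $\mu(B_s)<\delta$, and adjoin the correction pieces $s|_{B_s}$; this yields a finite \emph{generating} graphing of cost at most $\cost(\Phi)+|S|\delta$. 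Likewise, your level-$m$ approximation must be arranged so that generation is preserved: decompose each piece into restrictions of single group elements and then \emph{enlarge} each domain to a clopen set (outer approximation in the totally disconnected $K$); enlargement only adds pairs, and since every piece is a restriction of a group element the generated relation stays inside $\Rel_{\Gamma\acts K}$, hence remains equal to it --- whereas approximation merely up to small symmetric difference can destroy generation. Finally, the holonomy problem you flag as the main obstacle has a clean solution, but it must be given: once all pieces are group elements defined on full $U_m$-cosets, applicability of a word at $x$ depends only on the coset of $x$, so the class of $x\in U_m$ in the generated relation, intersected with $U_m$, is exactly $Hx$, where $H\le\Gamma_m$ is the holonomy group of loops at the base vertex; generation forces $Hx=\Gamma_m x$ for a.e.\ $x$, and freeness of the action (the embedding $\Gamma\to K$ is injective because $\bigcap_n\Gamma_n=\{1\}$) then gives $H=\Gamma_m$ exactly, not a proper or merely dense subgroup. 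The spanning-tree count then yields $\rank(\Gamma_m)-1\le[\Gamma:\Gamma_m](\cost(\Phi)-1)$, and monotonicity of the rank-gradient sequence finishes the argument. With these three repairs your outline becomes the Abert--Nikolov proof; without them the hard direction is not established.
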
	
One direction ($\ge$) is easy to explain. Let $K_n$ be the closure of $\Gamma_n$ in $K$.
Then $K_n$ is an open normal subgroup of $K$ of index $m=[\Gamma:\Gamma_n]$.
Let $1=g_1,g_2,\dots,g_n\in\Gamma$ be representatives of $\Gamma_n$-cosets,
and $h_1,\dots,h_k$ generators of $\Gamma_n$ with $k=\rank(\Gamma_n)$.
Consider the graphing $\Phi=\{\phi_2,\dots,\phi_m,\psi_1,\dots,\psi_k\}$,
where $\phi_i:K_n\to g_i K_n$ are restrictions of $g_i$ ($2\le i\le m$),
and $\psi_j:K_n\to K_n$ are restrictions of $h_j$ ($1\le j\le k$).
These maps are easily seen to generate $\Rel_{\Gamma\acts K}$, with the cost of
\[
	\cost(\Phi)=k\cdot m_K(K_n)+(m-1)\cdot m_K(K_n)=\frac{k-1}{m}+1
	=1+\frac{\rank(\Gamma_n)-1}{[\Gamma:\Gamma_n]}.
\]

Abert and Nikolov observed that a positive answer to Question~\ref{Q:cost-finind}
combined with the above result shows that $\srank(\Gamma)$ is independent of
the choice of a (normal) chain, and therefore is a numeric invariant associated
to any residually finite finitely generated groups. 
Surprisingly, this turns out to be related to a problem 
in the theory of compact hyperbolic $3$-manifolds concerning 
rank versus Heegard genus \cite{Lackenby:2002Haken}  -- see 
\cite{Abert+Nikolov:2007cost} for the connection and further discussions. 

The above result has an application, independent of Question~\ref{Q:cost-finind}.
Since amenable groups have P.F. with $\gcost=1$, it follows that a finitely generated, residually finite
amenable group $\Gamma$ has sub-linear rank growth for finite index normal subgroups with trivial intersection,
i.e., $\srank(\Gamma)=0$ for any such chain.


\medskip

\subsubsection{$\ell^2$-Betti numbers} 
\label{ssub:rel_ell_2_betti_numbers}\hfill\\

We have already mentioned the $\ell^2$-Betti numbers $\beta^{(2)}_i(\Gamma)$ associated with 
a discrete group $\Gamma$ and Gaboriau's proportionality result \ref{T:Gaboriau:L2} 
for Measure Equivalence between groups. 
In fact, rather than relating the $\ell^2$-Betti numbers of groups via ME, 
in \cite{Gaboriau:IHES2002L2} Gaboriau 
\begin{itemize}
	\item[-] \emph{defines} the notion of $\ell^2$-Betti numbers 
	$\beta^{(2)}_i(\Rel)$ for a $\II_1$-equivalence relation $\Rel$;
	\item[-] proves that  $\beta^{(2)}_i(\Gamma)=\beta^{(2)}_i(\Rel_{\Gamma\acts X})$
	for essentially free ergodic action $\Gamma\acts (X,\mu)$;
	\item[-] observes that $\beta^{(2)}_i(\Rel^t)=t\cdot\beta^{(2)}_i(\Rel)$ for any 
	$\II_1$-relation.
\end{itemize}
The definition of $\beta^{(2)}_i(\Rel)$ is inspired by the definition of $\beta^{(2)}_i(\Gamma)$ 
by Cheeger and Gromov \cite{Cheeger+Gromov:1986L2}: it uses $\Rel$-action (or groupoid action) 
on pointed contractible simplicial complexes, corresponding complexes of Hilbert modules with
$\Rel$-action, and von-Neumann dimension with respect to the algebra $M_\Rel$.   

In the late 1990s Wolgang L\"uck developed an algebraic notion of dimension for arbitrary 
modules over ven Neumann algebras, in particular giving an alternative approach to $\ell^2$-Betti
numbers for groups (see \cite{Luck:2002L2book}).  
In \cite{Sauer:2005L2Betti} Roman Sauer used L\"uck's notion of dimension to define
$\ell^2$-Betti numbers of equivalence relations, and more general measured groupoids,
providing an alternative approach to Gaboriau's results. 
In \cite{Sauer+Thom:2007p196} Sauer and Thom  develop further homological tools (including a spectral sequence
for associated to strongly normal subrelations) to study $\ell^2$-Betti numbers for
groups, relations, and measured groupoids.
 

\medskip

\subsubsection{Outer automorphism group} 
\label{ssub:outer_automorphism_group}\hfill\\

Given an equivalence relation $\Rel$ on $(X,\mu)$ define the corresponding
\embf{automorphism group} as the group of self isomorphisms: 
\[
	\Aut(\Rel)=\setdef{T\in\Aut(X,\mu)}{ T\times T(\Rel)=\Rel\quad \text{(modulo null sets)}}.
\]
The subgroup $\Inn(\Rel)$ of inner automorphisms is
\[
	\Inn(\Rel)=\setdef{T\in\Aut(X,\mu)}{ (x,T(x))\in\Rel\ \text{for a.e. } x\in X}
\]
This is just the full group $[\Rel]$, but the above notation emphasizes the fact that
it is normal in $\Aut(\Rel)$ and suggest to consider the \embf{outer automorphism group} 
\[
	\Out(\Rel)=\Aut(\Rel)/\Inn(\Rel).
\]
One might think of $\Out(\Rel)$ as the group of all measurable permutations of
the $\Rel$-classes on $X$. 
Recall (Lemma~\ref{L:inn-orbits}) that $\Inn(\Rel)$ is a huge group as
it acts transitively on (classes mod null sets of) measurable subsets of any given size in $X$.
Yet the quotient $\Out(\Rel)$ might be small (even finite or trivial), and can sometimes be
explicitly computed.
\begin{remark}
As an abstract group $H=\Inn(\Rel)$ is simple, and its automorphisms come from automorphisms of $\Rel$;
in particular $\Out(H)=\Out(\Rel)$.
Moreover, Dye's reconstruction theorem states that (the isomorphism type of) $\Rel$ is 
determined by the structure of $\Inn(\Rel)$ as an abstract group 
(see \cite[\S I.4]{Kechris:new} for proofs and further facts).	
\end{remark}
Let us also note that the operation of restriction/amplification of the relation does not alter the
outer automorphism group (cf. \cite[Lemma 2.2]{Furman:Outer:05}): 
\[
	\Out(\Rel^t)\cong\Out(\Rel) \qquad (t\in\bbR^\times_+).
\]	
  
\medskip

The group $\Aut(\Rel)$ has a natural structure of a Polish group (\cite{Gefter:1996:out2},
\cite{Gefter+Golodets:1988:fg}).
First, recall that if $(Y,\nu)$ is a finite or infinite measure Lebesgue space then $\Aut(Y,\nu)$
is a Polish group with respect to the \embf{weak topology} induced from the weak (=strong) 
operator topology of the unitary group of $L^2(Y,\nu)$.
This defines a Polish topology on $\Aut(\Rel)$ when the latter is viewed
as acting on the infinite measure space $(\Rel,\tilde{\mu})$ 
\footnote{ This topology coincides with the restriction to $\Aut(\Rel)$ of the 
\embf{uniform} topology on $\Aut(X,\mu)$ given by the
metric $d(T,S)=\mu\setdef{x\in X}{T(x)\neq S(x)}$. On all of $\Aut(X,\mu)$ the uniform topology
is complete but not separable; but its restriction to $\Aut(\Rel)$ is separable.}.
However, $\Inn(\Rel)$ is not always closed in $\Aut(\Rel)$, so the topology on $\Out(\Rel)$
might be complicated. Alexander Kechris recently found the following surprising connection: 
\begin{theorem}[{Kechris \cite[Theorem 8.1]{Kechris:new}}]
If $\Out(\Rel)$	fails to be a Polish group, then $\cost(\Rel)=1$.
\end{theorem}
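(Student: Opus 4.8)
The plan is to reduce the statement to the closedness of a subgroup and then run a Rokhlin-tower cost estimate.

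First I would record the general Polish-group principle: for a Polish group $G$ and a normal subgroup $N\trianglelefteq G$, the quotient $G/N$ with its quotient topology is a Polish group when $N$ is closed, and fails even to be Hausdorff (hence is not Polish, since Polish implies metrizable implies Hausdorff) when $N$ is not closed. Applying this to $G=\Aut(\Rel)$, which is a Polish group in the topology described above, and to the normal subgroup $N=\Inn(\Rel)=[\Rel]$, I conclude that $\Out(\Rel)$ fails to be Polish precisely when $[\Rel]$ is not closed in $\Aut(\Rel)$. Thus it suffices to prove the implication: if $[\Rel]$ is not closed in $\Aut(\Rel)$, then $\cost(\Rel)=1$.

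Second, assuming $[\Rel]$ is not closed, I would extract a witness $T\in\overline{[\Rel]}\setminus[\Rel]\subset\Aut(\Rel)$, that is, a limit $T=\lim_n T_n$ of elements $T_n\in[\Rel]$ with $T\notin[\Rel]$. Since $T$ still normalizes $\Rel$ while the set $B=\{x:(x,T(x))\notin\Rel\}$ has positive measure, $T$ induces a nontrivial transformation of the space of $\Rel$-classes. Using ergodicity together with the normalizing property, I would replace $T$ by a suitable product or power so that, as a transformation of $(X,\mu)$, it is aperiodic on a conull set; this makes the Rokhlin lemma available for the $\bbZ$-action generated by $T$.

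Third, and this is the heart of the argument, I would produce for every $\epsilon>0$ a graphing of $\Rel$ of cost less than $1+\epsilon$, forcing $\cost(\Rel)=1$. The mechanism mirrors the Ornstein--Weiss/Gaboriau argument reproduced for Theorem~\ref{T:PF}(\ref{i:normal-amen}) in the excerpt: take a Rokhlin tower for $T$ with base $E$ of measure $\mu(E)<\epsilon$ whose levels $E,TE,\dots,T^{N-1}E$ exhaust $X$ up to measure $\epsilon$. The vertical moves of the tower are realized by $T$, and the approximation $T_n\to T$ together with $T\in\Aut(\Rel)$ lets me replace these vertical moves by genuine $\Rel$-moves (elements of $[\Rel]$ built from the $T_n$) off a controllably small error set. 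Assembling these $\Rel$-moves with a generating family supported on the thin base $E$ yields a graphing of $\Rel$ whose cost is concentrated on $E$ plus the error, hence at most $1+\epsilon'$ with $\epsilon'\to 0$.

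The main obstacle is exactly this third step. Cost is a uniform and combinatorial invariant, whereas the hypothesis supplies only the convergence $T_n\to T$ in the weak topology, which gives no a priori control on $\mu\{x:T_n(x)\neq T(x)\}$. The real work is to convert the escape of $T$ from $[\Rel]$ into the statement that the extension $\Rel\vee\langle T\rangle$ is approximately inner, so that the tower's vertical maps become $\Rel$-moves up to arbitrarily small measure while the assembled family still generates all of $\Rel$. This is precisely where the hypothesis that $T$ lies in $\Aut(\Rel)$, so that $T$ conjugates $\Rel$ to itself, and not merely in $\Aut(X,\mu)$, must be used.
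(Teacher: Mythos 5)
Your first paragraph is fine: $\Out(\Rel)$ fails to be Polish exactly when $\Inn(\Rel)=[\Rel]$ is not closed in $\Aut(\Rel)$, and this reduction is indeed how the statement is read (note that the survey gives no proof at all here --- it only cites Kechris's book --- so your proposal must stand on its own). The fatal problem is in your second and third paragraphs, and it is not merely the postponed difficulty you acknowledge at the end: the approximation your tower argument needs provably \emph{never} exists. The Polish topology on $\Aut(\Rel)$ is the weak topology of its action on $(\Rel,\tilde\mu)$; unwinding it on graphs of elements $\phi\in[\Rel]$, convergence $T_n\to T$ entails $\mu\setdef{x}{T_n\phi T_n^{-1}(x)\neq T\phi T^{-1}(x)}\to 0$ for every $\phi\in[\Rel]$, but it does \emph{not} entail $d_u(T_n,T)\to 0$, where $d_u(T,S)=\mu\setdef{x}{T(x)\neq S(x)}$ is the uniform metric (the paper's footnote asserting that the two topologies coincide cannot be taken at face value, for the following reason). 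First, $[\Rel]$ is always $d_u$-closed in $\Aut(X,\mu)$: if $d_u(T_n,T)\to0$, pass to a subsequence with $\sum_n d_u(T_n,T)<\infty$; by Borel--Cantelli almost every $x$ has $T(x)=T_n(x)$ for some $n$, hence $(x,T(x))\in\Rel$ a.e.\ and $T\in[\Rel]$. So if the uniform topology were the relevant one, the hypothesis of the theorem would be vacuous. Second, and worse for your argument: for ergodic $\Rel$ and $T\in\Aut(\Rel)\setminus[\Rel]$, the set $\setdef{x}{(x,T(x))\in\Rel}$ is $\Rel$-invariant (if $(x,Tx)\in\Rel$ and $(x,y)\in\Rel$ then $(Tx,Ty)\in\Rel$, so $(y,Ty)\in\Rel$), hence null by ergodicity; applying this to $\phi^{-1}T$ for any $\phi\in[\Rel]$ gives $d_u(T,\phi)=1$. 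Thus any witness $T$ to non-closedness sits at uniform distance exactly $1$ from \emph{every} element of $[\Rel]$: the quantity $\mu\setdef{x}{T_n(x)\neq T(x)}$, which your step three treats as a "controllably small error", is identically $1$.

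This collapses the Rokhlin-tower mechanism as designed. A graphing of $\Rel$ may only use partial isomorphisms whose graphs lie in $\Rel$; the vertical maps of your tower are pieces of $T$ (or of an aperiodic modification $T\phi$, $\phi\in[\Rel]$, which again lies in $\Aut(\Rel)\setminus[\Rel]$), and by the observation above their graphs are a.e.\ \emph{disjoint} from $\Rel$. So "the tower's vertical maps become $\Rel$-moves up to arbitrarily small measure" is not an unproven step but a false statement, and substituting some $T_n\in[\Rel]$ for $T$ is not a small perturbation: the sets $T_nE,T_n^2E,\dots$ bear no relation to the tower levels $TE,T^2E,\dots$, so nothing forces the assembled family to generate $\Rel$. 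Your closing paragraph concedes that converting weak escape from $[\Rel]$ into usable generation is "the real work", but that work is the entire content of the theorem, and it cannot proceed along the route you set up. Any correct argument has to manufacture cheap generators out of what the weak topology actually supplies --- for instance elements such as $\bigl(T\phi T^{-1}\bigr)^{-1}\bigl(T_n\phi T_n^{-1}\bigr)$, which do belong to $[\Rel]$ and do have small $d_u$-support since conjugates converge uniformly --- and combine them with an ergodic element of $[\Rel]$ to exhibit, for each $\epsilon>0$, a graphing of $\Rel$ of cost $<1+\epsilon$. Your proposal contains no mechanism of this kind, so it has a genuine and, in its present form, unrepairable gap.
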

Now assume that $\Rel$ can be presented as the orbit relation of an essentially free
action $\Gamma\acts (X,\mu)$, so $\Aut(\Rel)$ is the group of self orbit equivalences 
of $\Gamma\acts X$. The centralizer $\Aut_\Gamma(X,\mu)$ of $\Gamma$ in $\Aut(X,\mu)$
embeds in $\Aut(\Rel)$, and if $\Gamma$ is ICC (i.e., has Infinite Conjugacy Classes) 
then the quotient map $\Aut(\Rel)\overto{\rm out}\Out(\Rel)$ is \emph{injective} on $\Aut_\Gamma(X,\mu)$
(cf. \cite[Lemma 2.6]{Gefter:1996:out2}).
So $\Out(\Rel)$ has a copy of $\Aut_\Gamma(X,\mu)$, and the latter might be very big. 
For example in the Bernoulli action $\Gamma\acts (X,\mu)=(X_0,\mu_0)^\Gamma$, 
it contains $\Aut(X_0,\mu_0)$ acting diagonally on the factors.
Yet, if $\Gamma$ has property (T) then $\Aut_\Gamma(X,\mu)\cdot \Inn(\Rel_{\Gamma\acts X})$ 
is open in the Polish group $\Aut(\Rel_{\Gamma\acts X})$. 
In this case the image of $\Aut_\Gamma(X,\mu)$ has finite or countable 
index in $\Out(\Rel_{\Gamma\acts X})$. 
This fact was observed by Gefter and Golodets in \cite[\S 2]{Gefter+Golodets:1988:fg},
and can be deduced from Proposition~\ref{P:local-rigidity}.

To get a handle on $\Out(\Rel_{\Gamma\acts X})$ one looks at OE-cocycles
$c_T:\Gamma\times X\to\Gamma$ corresponding to elements $T\in\Aut(\Rel_{\Gamma\acts X})$.
It is not difficult to see that $c_T$ is conjugate in $\Gamma$ to the identity (i.e., $c_T(g,x)=f(gx)^{-1}g f(x)$
for $f:X\to\Gamma$) iff $T$ is in $\Aut_\Gamma(X,\mu)\cdot\Inn(\Rel)$.
Thus, starting from a group $\Gamma$ or action $\Gamma\acts X$ with strong rigidity properties for
cocycles, one controls $\Out(\Rel_{\Gamma\acts X})$ via $\Aut_\Gamma(X,\mu)$.
This general scheme (somewhat implicitly) is behind the first example of an equivalence relation
with trivial $\Out(\Rel)$ constructed by Gefter \cite{Gefter:1993:out1, Gefter:1996:out2}.
Here is a variant of this construction:
\begin{thm}\label{T:Out1}
Let $\Gamma$ be a torsion free group with property (T), $K$ a compact connected Lie group without outer automorphisms, 
and $\tau:\Gamma\to K$ a dense imbedding. Let $L<K$ be a closed subgroup
and consider the ergodic actions $\Gamma\acts (K,m_K)$ and $\Gamma\acts (K/L,m_{K/L})$
by left translations. Then 
\[
	\Out(\Rel_{\Gamma\acts K})\cong K,\qquad \Out(\Rel_{\Gamma\acts K/L})\cong N_K(L)/L. 
\]
\end{thm}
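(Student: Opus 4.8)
The plan is to prove in both cases the sharper assertion that the natural homomorphism
\[
	\iota:\Aut_\Gamma(Z,m_Z)\longrightarrow \Out(\Rel_{\Gamma\acts Z}),\qquad Z\in\{K,\ K/L\},
\]
from the centralizer of the $\Gamma$-action into the outer automorphism group (as recalled in the paragraph preceding the statement) is an \emph{isomorphism}; the two displayed formulas then drop out once I identify the centralizers. First I would compute $\Aut_\Gamma(Z,m_Z)$. Since $\tau(\Gamma)$ is dense in $K$, a measure-preserving map commuting with all $\tau(g)$ commutes with the whole translation action of $K$, so $\Aut_\Gamma(Z)=\Aut_K(Z)$. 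For $Z=K$ the $K$-equivariant measure-preserving self-maps are exactly the right translations $x\mapsto xk$, giving $\Aut_\Gamma(K)\cong K$; for $Z=K/L$ they are the right translations by $N_K(L)$ reduced modulo $L$, giving $\Aut_\Gamma(K/L)\cong N_K(L)/L$. Injectivity of $\iota$ follows from the criterion before the statement once $\Gamma$ is ICC: a nontrivial finite conjugacy class in $\Gamma$ would be central in the connected group $\overline{\tau(\Gamma)}=K$, and torsion-freeness then forces $Z(\Gamma)=\{e\}$.

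The real content is surjectivity, i.e.\ that for every $T\in\Aut(\Rel_{\Gamma\acts Z})$ the associated OE-cocycle $c_T:\Gamma\times Z\to\Gamma$ is cohomologous to the tautological cocycle $(g,z)\mapsto g$. Here I would invoke the cocycle superrigidity theorem~\ref{T:GammaonK}: it gives a measurable $f:Z\to\Gamma$ and a homomorphism $\rho:\Gamma\to\Gamma$ with $c_T(g,z)=f(gz)^{-1}\rho(g)f(z)$, and $\rho$ is an automorphism because $T$ is invertible. Setting $\Psi(z)=\tau(f(z))\,T(z)$ one checks $\Psi(\tau(g)z)=\tau(\rho(g))\,\Psi(z)$, and since $\tau(\rho(\Gamma))$ is dense the pushforward $\Psi_*m_Z$ is $K$-invariant, hence $\Psi$ is measure preserving. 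Moreover $\Psi$ differs from $T$ by left multiplication by a $\tau(\Gamma)$-valued function, so $\Psi\equiv T$ modulo $\Inn(\Rel)$ (cf.\ Lemma~\ref{L:inn-orbits}). The key structural step is to show $\Psi$ is \emph{affine}: the graph measure $(\id,\Psi)_*m_Z$ on $Z\times Z$ is invariant under the closure of $\{(\tau(g),\tau(\rho(g)))\}$, a closed subgroup of $K\times K$ projecting onto both factors, so Goursat's lemma together with the fact that the support must be a function graph forces $\Psi(x)=a\,x\,b$ for $Z=K$ (and the analogous right-translation form on $K/L$), where $\rho$ extends to the continuous automorphism $x\mapsto axa^{-1}$ of $K$.

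The main obstacle — and the place where the two special hypotheses on $K$ are spent — is the final identification of $\rho$ as an \emph{inner} automorphism of $\Gamma$, equivalently $a\in Z(K)\cdot\tau(\Gamma)$, equivalently
\[
	N_K(\tau(\Gamma))=Z(K)\cdot\tau(\Gamma).
\]
This is genuinely necessary: for any $a\in N_K(\tau(\Gamma))$ the left translation $L_a$ lies in $\Aut(\Rel_{\Gamma\acts K})$ with constant cocycle $\rho=\Inn(a)|_\Gamma$, so were there an automorphism of $\Gamma$ extending to $K$ but not inner, $[L_a]$ would be an outer class beyond the right translations and $\Out$ would exceed $K$. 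The hypothesis that $K$ has no outer automorphisms guarantees that the continuous automorphism extending $\rho$ is inner, $\Inn(a)$ with $a\in N_K(\tau(\Gamma))$; connectedness of $K$ is what kills non-central translation parts in the inner-cocycle computation. What remains — that every automorphism of $\Gamma$ extending continuously to $K$ is already inner in $\Gamma$ — is the hard rigidity input, and I expect to extract it from property~(T): since $\tau\circ\rho$ is conjugate in $K$ to $\tau$, Weil local rigidity for Kazhdan groups (in the spirit of Proposition~\ref{P:local-rigidity}) rigidifies the deformation $\tau\circ\rho$ of $\tau$ and pins $a$ into $Z(K)\tau(\Gamma)$. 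Granting this, $c_T$ is cohomologous to the identity, so $[T]\in\im\iota$ and $\iota$ is onto. Running the identical argument with $Z=K/L$, where the affine maps are right translations by $N_K(L)$, yields $\Out(\Rel_{\Gamma\acts K/L})\cong N_K(L)/L$, completing both computations.
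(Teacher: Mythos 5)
Your route is the same as the paper's sketch: compute the centralizers ($\Aut_\Gamma(K,m_K)\cong K$ by right translations, $\Aut_\Gamma(K/L,m_{K/L})\cong N_K(L)/L$), get injectivity of $\Aut_\Gamma\to\Out(\Rel)$ from ICC, and use the superrigidity Theorem~\ref{T:GammaonK} to untwist the OE-cocycle of an arbitrary $T\in\Aut(\Rel)$ into a homomorphism $\rho:\Gamma\to\Gamma$, which your graph-measure/Goursat argument upgrades to an affine form $\Psi(x)=axb$ with $\Inn(a)$ extending $\rho$ (this is where $\Out(K)=\{1\}$ enters). Two details are stated too quickly but are repairable: that $\rho$ is an automorphism requires playing $T$ against $T^{-1}$, and your ICC argument needs $Z(K)$ to be finite --- which does hold, since a connected $K$ with $\Out(K)=\{1\}$ is semisimple. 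More importantly, you have correctly isolated the crux that the paper's sketch leaves implicit: $[T]$ lies in the image of $\Aut_\Gamma$ iff $\rho$ is inner in $\Gamma$, i.e.\ iff $a\in Z(K)\tau(\Gamma)$, so the theorem forces the identity $N_K(\tau(\Gamma))=Z(K)\tau(\Gamma)$.

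The genuine gap is your final step, and it cannot be closed from the stated hypotheses. Weil local rigidity (or Proposition~\ref{P:local-rigidity}) concerns conjugating \emph{nearby} homomorphisms; here $\tau\circ\rho$ and $\tau$ are already conjugate --- by $a$ --- and the question is whether the conjugator can be chosen in $Z(K)\tau(\Gamma)$, i.e.\ whether $\rho\in\Aut(\Gamma)$ is inner in $\Gamma$. Property (T) does not decide this, and the identity $N_K(\tau(\Gamma))=Z(K)\tau(\Gamma)$ can genuinely fail under the stated hypotheses: let $\Gamma_1={\rm SO}_5(\bbZ[1/p])$ with $p\equiv 1 \pmod 4$, a dense property (T) subgroup of the connected, centerless group $K={\rm SO}(5,\bbR)$ with $\Out(K)=\{1\}$, and let $\Gamma\triangleleft\Gamma_1$ be a torsion-free principal congruence subgroup. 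All hypotheses of the theorem hold for $\Gamma$, yet every $\gamma\in\Gamma_1\setminus\Gamma$ normalizes $\tau(\Gamma)$ and induces a non-inner automorphism of $\Gamma$ (its centralizer in $K$ is $Z(K)=\{e\}$ by density). Concretely, the class $[L_{\tau(\gamma)}]\in\Out(\Rel_{\Gamma\acts K})$ lies outside the copy of $K$: if one had $\tau(\gamma)xk^{-1}x^{-1}\in\tau(\Gamma)$ for a.e.\ $x$, then Baire category plus analyticity and connectedness of $K$ would force $k\in Z(K)$ and $\tau(\gamma)\in\tau(\Gamma)Z(K)$, a contradiction. So your argument (and, read literally, the theorem itself) needs the additional hypothesis $N_K(\tau(\Gamma))=Z(K)\tau(\Gamma)$ --- an algebraic condition on the embedding $\tau$, not a consequence of property (T); granting that condition, your outline is complete and coincides with the proof the paper intends.
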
	
In particular, taking $K={\rm PO}_n(\bbR)$ and $L\cong {\rm PO}_{n-1}(\bbR)<K$ to be the stabilizer of a line in $\bbR^n$,
the space $K/L$ is the projective space $P^{n-1}$, and we get
\[
	\Out(\Rel_{\Gamma\acts P^{n-1}})=\{1\}
\]
for any property (T) dense subgroup $\Gamma<{\rm PO}_n(\bbR)$. Such a group $\Gamma$ exists iff $n\ge 5$,
Zimmer \cite[Theorem 7]{Zimmer:1984:Kazhdan-on-mfld}.
The preceding discussion, combined with the cocycles superrigidity theorem~\ref{T:GammaonK}
below, and an easy observation that $\Aut_\Gamma(K/L,m_{K/L})$ 
is naturally isomorphic to $N_K(L)/L$, provide a self contained sketch of the proof 
of the theorem.

In the above statement $\Out(K)$ is assumed to be trivial and $\Gamma$ to be torsion free
just to simplify the statement.
However the assumption that $K$ is connected is essential.
Indeed, the dense embedding of $\Gamma={\rm PSL}_n(\bbZ)$ in the compact profinite
group $K={\rm PSL}_n(\bbZ_p)$ where $p$ is a prime, gives
\[
	\Out(\Rel_{{\rm PSL}_n(\bbZ)\acts {\rm PSL}_n(\bbZ_p)})\cong {\rm PSL}_n(\bbQ_p)\rtimes \bbZ/2\bbZ
\]
where the $\bbZ/2$-extension is given by the transpose $g\mapsto g^{tr}$.
The inclusion $\supset$ was found in \cite{Gefter:1996:out2}, and the equality 
is proved in \cite[Theorem 1.6]{Furman:Outer:05},
where many other computations of $\Out(\Rel_{\Gamma\acts X})$ are carried out for
actions of lattices in higher rank Lie groups.

Finally, we recall that the recent preprint \cite{Popa+Vaes:08Ber} of Popa and Vaes quoted above
(Theorem~\ref{T:Popa-Vaes:F+Out}) shows that an arbitrary totally disconnected lcsc group $G$ can arise as $\Out(\Rel_{\Gamma\acts X})$
for an essentially free action of a \emph{free} group $\bbF_\infty$.


\medskip

\subsubsection{Cohomology} 
\label{subs:rel_cohomology}\hfill\\

Equivalence relations have groups of cohomology associated to them similar
to cohomology of groups. These were introduced by Singer \cite{Singer:1955}
and largely emphasized by Feldman and Moore \cite{Feldman+Moore:77II}.
Given, say a type $\II_1$, equivalence relation $\Rel$ on $(X,\mu)$ consider
\[
	\Rel^{(n)}=\setdef{(x_0,\dots,x_n)\in X^{n+1}}{(x_i,x_{i+1})\in\Rel}
\] 
equipped with the infinite Lebesgue measure $\tilde\mu^{(n)}$ defined by
\[
	\tilde\mu^{(n)}(A)=\int_X \#\setdef{(x_1,\dots,x_n)}{(x_0,\dots,x_n)\in\Rel^{(n)}}\,d\mu(x_0).
\]
Take $(\Rel^{(0)},\tilde\mu^{0})$ to be $(X,\mu)$.
Note that $(\Rel^{(1)},\mu^{(1)})$ is just $(\Rel,\tilde\mu)$ from \S \ref{sub:rel_basic_definition}.
Since $\mu$ is assumed to be $\Rel$-invariant, the above formula is invariant under permutations
of $x_0,\dots,x_n$.  

Fix a Polish Abelian group $A$ written multiplicatively (usually $A=\bbT$). 
The space $C^n(\Rel,A)$ of $n$-cochains consists of equivalence classes (modulo $\tilde\mu^{(n)}$-null sets)
of measurable maps $\Rel^{(n)}\to A$, linked by the operators $d_n:C^n(\Rel,A)\to C^{n+1}(\Rel,A)$ 
\[
	d_n(f)(x_0,\dots,x_{n+1})=\prod_{i=0}^{n+1}f(x_0,\dots,\hat{x}_i,\dots,x_0)^{(-1)^i}.
\]
Call $Z^n(\Rel)={\rm Ker} (d_n)$ the $n$-cocycles, and $B^n(\Rel)={\rm Im}(d_{n-1})$ the $n$-coboundaries;
the cohomology groups are defined by $H^n(\Rel)=Z^n(\Rel)/B^n(\Rel)$.
In degree $n=1$ the $1$-cocycles are measurable maps $c:(\Rel,\mu)\to A$ such that
\[
	c(x,y)c(y,x)=c(x,z)
\] 
and $1$-coboundaries have the form $b(x,y)=f(x)/f(y)$ for some measurable $f:X\to A$.

\medskip

If $A$ is a compact Abelian group, such as $\bbT$, then $C^{1}(\Rel,A)$ is a Polish group
(with respect to convergence in measure). Being closed subgroup in $C^{1}(\Rel,A)$, the 1-cocycles 
$Z^1(\Rel,A)$ form a Polish group. 
Schmidt \cite{Schmidt:1981strerg} showed that $B^1(\Rel,A)$ is closed in $Z^1(\Rel,A)$ iff $\Rel$
is strongly ergodic. 

There are only few cases where $H^1(\Rel,\bbT)$ were computed:
C.C. Moore \cite{Moore:1982} constructed a relation with trivial $H^1(\Rel,\bbT)$.
Gefter \cite{Gefter:1987:coh} considered $H^1(\Rel_{\Gamma\acts G},\bbT)$ for 
actions of property (T) group $\Gamma$ densely imbedded in a semi-simple Lie group $G$.
More recently Popa and Sasyk \cite{Popa+Sasyk:2007} studied $H^1(\Rel_{\Gamma\acts X},\bbT)$
for property (T) groups $\Gamma$ with Bernoulli actions $(X,\mu)=(X_0,\mu_0)^\Gamma$.
In both cases $H^1(\Rel_{\Gamma\acts X},\bbT)$ is shown to coincide with 
group of characters ${\rm Hom}(\Gamma,\bbT)$.
Higher cohomology groups remain mysterious. 

\bigskip

The fact that $A$ is Abelian is essential to the definition of $H^n(\Rel,A)$ for $n>1$.
However in degree $n=1$ one can define $H^1(\Rel,\Lambda)$ as a \emph{set} for a general 
target group $\Lambda$\footnote{\ In order to define the notion of measurability $\Lambda$ should
have a Borel structure, and better be a Polish group; often it is a discrete countable groups,
or a Lie group.}.
In fact, this notion is commonly used in this theory under the name of
\embf{measurable cocycles} (see \S \ref{sec:cocycles} and \ref{ssub:superrigidity} below).
For the definition in terms of equivalence relations 
let $Z^1(\Rel,\Lambda)$ denote the \emph{set} of all measurable maps (mod $\tilde\mu$-null sets)
\[
	c:(\Rel,\tilde\mu)\to\Lambda\qquad{s.t.}\qquad
	c(x,z)=c(x,y)c(y,z)
\]
and let $H^1(\Rel,\Lambda)=Z^1(\Rel,\Lambda)/\sim$ where the equivalence $\sim$ between
$c, c'\in Z^1(\Rel,\Lambda)$ is declared if $c(x,y)=f(x)^{-1}c'(x,y)f(y)$ for
some measurable $f:(X,\mu)\to\Lambda$.

If $\Rel=\Rel_{\Gamma\acts X}$ is the orbit relation of an essentially free
action, then $Z^1(\Rel_{\Gamma\acts X},\Lambda)$ coincides with the set
of measurable cocycles $\alpha:\Gamma\times X\to\Lambda$ by
$\alpha(g,x)=c(x,gx)$. 
Note that ${\rm Hom}(\Gamma,\Lambda)/\Lambda$ maps into $H^1(\Rel,\Lambda)$,
via $c_\pi(x,y)=\pi(g)$ for the unique $g\in\Gamma$ with $x=gy$.
The point of cocycles superrigidity theorems is to
show that under favorable conditions this map is surjective.


\medskip

\subsubsection{Free decompositions} 
\label{ssub:free_decompositions}\hfill\\

Group theoretic notions such as free products, amalgamated products, and HNN-extensions
can be defined in the context of equivalence relations -- see Gaboriau \cite[Section IV]{Gaboriau:Inven2000cost}.
For example, a $\II_1$-relation $\Rel$ is said to split as a free product of sub-relations $\{\Rel_i\}_{i\in I}$, denoted
$\Rel=*_{i\in I}\Rel_i$, if 
\begin{enumerate}
	\item 
	$\Rel$ is generated by $\{\Rel_i\}_{i\in I}$, i.e., $\Rel$ is the smallest equivalence
	relation containing the latter family;
	\item 
	Almost every chain $x=x_0,\dots,x_n=y$, where  
	$x_{j-1}\neq x_j$, $(x_{j-1},x_j)\in\Rel_{i(j)}$ and $i(j+1)\neq i(j)$, has $x\neq y$.
\end{enumerate}
If $\Srel$ is yet another subrelation, one says that $\Rel$ splits as a \embf{free product} of $\Rel_i$ \embf{amalgamated over} $\Srel$,
$\Rel=*_\Srel \Rel_i$, if in condition (2) one replaces $x_{j-1}\neq x_j$ by $(x_{j-1},x_j)\not\in\Srel$.

The obvious example of the situation above is an essentially free action of a free product of groups 
$\Gamma_3=\Gamma_1*\Gamma_2$ (resp. amalgamated product $\Gamma_5=\Gamma_1*_{\Gamma_4}\Gamma_2)$ on a
probability space $(X,\mu)$; in this case the orbit relations 
$\Rel_i=\Rel_{\Gamma_i\acts X}$ satisfy $\Rel_3=\Rel_1*\Rel_2$ (resp. $\Rel_5=\Rel_1*_{\Rel_4}\Rel_2$).

Another useful construction (see Ioana, Peterson, Popa \cite{Ioana+Peterson+Popa:2008}) is as follows. 
Given measure preserving (possibly ergodic) relations $\Rel_1$, $\Rel_2$ on
a probability space $(X,\mu)$ for $T\in\Aut(X,\mu)$ consider the relation
generated by $\Rel_1$ and $T(\Rel_2)$. It can be shown that for a residual set of $T\in\Aut(X,\mu)$
the resulting relation is a free product of $\Rel_1$ and $T(\Rel_2)$. 
A similar construction can be carried out for amalgamated products. 
Note that in contrast with the category of groups the isomorphism type of the free product is
not determined by the free factors alone.
 
\medskip

Ioana, Peterson and Popa \cite{Ioana+Peterson+Popa:2008} obtained strong rigidity results for 
free and amalgamated products of ergodic measured equivalence relations and $\II_1$-factors with 
certain rigidity properties.
Here let us describe some results obtained by Alvarez and Gaboriau \cite{Alvarez+Gaboriau:2008p1322},
which are easier to state; they may be viewed as an analogue of Bass-Serre theory in the context
of equivalence relations. 
Say that a $\II_1$-relation $\Rel$ \embf{freely indecomposable} (FI)
if $\Rel$ is not a free product of its subrelations.
A group $\Gamma$ is said to be \embf{measurably freely indecomposable} (MFI)
if all its essentially free action give freely indecomposable orbit relations.
A group may fail to be MFI even if it is freely indecomposable in the group theoretic sense
(surface groups provide an example).
Not surprisingly, groups with property (T) are MFI
(cf. Adams Spatzier \cite{Adams+Spatzier:1990:T-tree}); but more generally 
\begin{theorem}[Alvarez-Gaboriau \cite{Alvarez+Gaboriau:2008p1322}]\label{T:Alvarez-Gaboriau:MFI}
If $\Gamma$ is non-amenable and $\beta^{(2)}_1(\Gamma)=0$ then $\Gamma$ is MFI.	
\end{theorem}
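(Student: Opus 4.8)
The plan is to prove the contrapositive in a sharp form: if \emph{some} essentially free p.m.p. action $\Gamma\acts(X,\mu)$ has orbit relation $\Rel=\Rel_{\Gamma\acts X}$ that splits as a nontrivial free product $\Rel=*_{i\in I}\Rel_i$ of subrelations, then $\Gamma$ is amenable or $\beta^{(2)}_1(\Gamma)>0$. I would first reduce to the ergodic case: over the ergodic decomposition of $X$ the splitting restricts componentwise and stays nontrivial on a positive-measure set of components, while $\beta^{(2)}_1(\Rel_{\Gamma\acts X_t})=\beta^{(2)}_1(\Gamma)$ on each ergodic component. So I may assume $\Rel$ is an ergodic $\II_1$-relation; in particular its classes are infinite a.e., $\beta^{(2)}_0(\Rel)=0$, and (as recorded in \S\ref{ssub:amenability_strong_ergodicity_property_t_}) $\Rel$ is amenable iff $\Gamma$ is.

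The engine is an additivity formula for the first $\ell^2$-Betti number over free products of relations, the analogue of the group identity $\beta^{(2)}_1(*_i\Gamma_i)=\sum_i\beta^{(2)}_1(\Gamma_i)+(|I|-1)-\sum_i\beta^{(2)}_0(\Gamma_i)$. In Gaboriau's framework for $\ell^2$-Betti numbers of $\II_1$-relations (\S\ref{ssub:rel_ell_2_betti_numbers}), writing $\beta^{(2)}_0(\Rel_i)=\int_X|\Rel_i[x]|^{-1}\,d\mu(x)\in[0,1]$ for the size of the finite part of each factor, this reads
\[
	\beta^{(2)}_1(\Rel)=\sum_{i\in I}\beta^{(2)}_1(\Rel_i)+\sum_{i\in I}\bigl(1-\beta^{(2)}_0(\Rel_i)\bigr)-1 .
\]
The crucial feature is that every summand on the right is nonnegative apart from the single global $-1$. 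Hence $\beta^{(2)}_1(\Rel)=0$ is possible only when $\beta^{(2)}_1(\Rel_i)=0$ for every $i$ and, simultaneously, the exact balance $\sum_{i\in I}(1-\beta^{(2)}_0(\Rel_i))=1$ holds: any factor with $\beta^{(2)}_1>0$, or any aperiodic factor, would together with a second nontrivial factor push the right-hand side strictly above $0$.

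It then remains to show that this balanced configuration forces $\Rel$ to be amenable. Rewriting the balance pointwise, $\int_X\sum_i(1-|\Rel_i[x]|^{-1})\,d\mu(x)=\mu(X)$; moreover, by ergodicity and the reduced-word condition at least two factors are nontrivial at a.e.\ $x$, so if any term equalled $1$ (an infinite local class) the rest would vanish, a contradiction, whence all factors are finite-class a.e. The density $\sum_i(1-|\Rel_i[x]|^{-1})$ is exactly the per-point cost of the Bass--Serre tree that the splitting attaches to each $\Rel$-class; since $\Rel$ has infinite classes, a.e.\ such tree is infinite and so has cost density at least $1$, with equality precisely for a bi-infinite line. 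The integral equality forces this density to be $1$ a.e., so the trees are lines, $\Rel$ is treeable of cost $1$, and by Hjorth's theorem (a treeable relation of cost $n\in\{1,2,\dots\}$ is generated by a free $\bbF_n$-action) with $n=1$, $\Rel$ is generated by a free $\bbZ$-action, i.e. $\Rel\cong\Rel_{amen}$ and $\Gamma$ is amenable. This closes the contrapositive, so every essentially free action of a non-amenable $\Gamma$ with $\beta^{(2)}_1(\Gamma)=0$ has a freely indecomposable orbit relation, i.e. $\Gamma$ is MFI.

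The main obstacle is the additivity formula itself. Establishing it requires building the $\ell^2$-homology of a free product of measured relations, with von Neumann dimensions over $M_\Rel$, via a Mayer--Vietoris sequence gluing the factor complexes along their common $0$-skeleton $X$, and then reading off the degree-$1$ correction terms. The genuinely delicate point is the bookkeeping of finite-class and non-ergodic factors, so that the $\beta^{(2)}_0$ corrections come out exactly as above: ergodicity and $\II_1$-ness of the \emph{total} relation must be invoked to exclude the mixed degenerate configurations (a factor trivial on part of $X$ and finite on the rest) that would otherwise decouple the estimate. Once the formula and its pointwise (cost-density) refinement are in place, the nonnegativity observation together with Hjorth's theorem finishes the argument cheaply.
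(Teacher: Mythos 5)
The survey contains no proof of this theorem --- it is quoted from Alvarez--Gaboriau --- so your proposal must be judged on its own terms. Its skeleton uses the right toolkit, and is close in spirit to the published proof: pass to the contrapositive, invoke Gaboriau's additivity formula for $\beta^{(2)}_1$ over free products of relations (a genuine theorem, provable by the Mayer--Vietoris argument you sketch), and convert the resulting balanced configuration into amenability via treeability and cost. The finite-class endgame is also essentially fine once cleaned up: if every factor has finite classes a.e., each is treeable of cost $1-\beta^{(2)}_0(\Rel_i)$, the free product is treeable of cost $\sum_i(1-\beta^{(2)}_0(\Rel_i))=1$ by Gaboriau's cost additivity, and Hjorth's theorem gives a generating free $\bbZ$-action, i.e.\ amenability of $\Rel$ and hence of $\Gamma$. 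Note that no pointwise ``cost density of the Bass--Serre tree'' argument is needed there, nor is that argument valid as stated: cost is a global infimum over graphings, and your balance is an identity between integrals, which does not localize.

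The genuine gap is the step feeding that endgame: the deduction, from the integral balance, that every factor has $\beta^{(2)}_1(\Rel_i)=0$ and finite classes a.e. This is exactly where the group case and the relation case part ways. For groups, any nontrivial free factor satisfies $1-\beta^{(2)}_0(\Gamma_i)\ge 1/2$, so two nontrivial factors already exhaust the budget and force both to be $\bbZ/2$; that is what makes the balance bite. For subrelations this fails completely: a nontrivial subrelation can be nontrivial only on a set of small measure, so $1-\beta^{(2)}_0(\Rel_i)$ can be arbitrarily small, and a single factor can simultaneously have $\beta^{(2)}_1(\Rel_i)>0$ and $\beta^{(2)}_0(\Rel_i)>0$ (infinite classes on part of $X$, finite or trivial classes elsewhere). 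Concretely, two factors, each with infinite hyperfinite classes on a subset of measure $1/2$ and trivial classes elsewhere, satisfy your balance exactly ($\beta^{(2)}_1=0$ and $1-\beta^{(2)}_0=1/2$ for each) while being nowhere finite-class on their supports; your parenthetical argument (``if any term equalled $1$ the rest would vanish'') excludes only factors that are aperiodic on \emph{all} of $X$. Ruling out such mixed, partially supported, possibly non-ergodic factors requires input you never supply --- ergodicity of $\Rel$ forcing the supports to overlap, and an analysis of the decomposition on the overlaps; this normalization is the technical core of Alvarez--Gaboriau's paper. The same issue undermines your opening reduction to the ergodic case: the two factors may be nontrivial on disjoint sets of ergodic components, so that no single component inherits a nontrivial splitting. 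As written, the proposal proves the theorem only under the extra hypothesis that all free factors are finite-class a.e.\ (or all globally aperiodic), and defers the actual difficulty to the closing ``bookkeeping'' remark.
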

\begin{theorem}[Alvarez-Gaboriau \cite{Alvarez+Gaboriau:2008p1322}]\label{T:Alvarez-Gaboriau:irr-rigidity}
Let $I$, $J$ be two finite or countable index sets, $\{\Gamma_i\}_{i\in I}$ and $\{\Lambda_j\}_{j\in J}$
be two families of MFI groups, $\Gamma=*_{i\in I}\Gamma_i$, $\Lambda=*_{j\in J}\Lambda_j$,
and $\Gamma\acts (X,\mu)$, $\Lambda\acts (Y,\nu)$ be essentially free p.m.p. actions where
each $\Gamma_i\acts (X,\mu)$ and $\Lambda_j\acts (Y,\nu)$ are ergodic. Assume that 
$\Gamma\acts X\soe\Lambda\acts Y$.

Then $|I|=|J|$ and there is a bijection $\theta:I\to J$
so that $\Gamma_i\acts X\soe \Lambda_{\theta(i)}\acts Y$.
\end{theorem}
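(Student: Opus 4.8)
The plan is to pass from groups to equivalence relations and to prove a unique-factorization (Kurosh-type) theorem for free products of freely indecomposable subrelations.

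First I would reformulate the hypothesis relation-theoretically. By the free-product description of orbit relations in \S\ref{ssub:free_decompositions}, the orbit relation $\Rel=\Rel_{\Gamma\acts X}$ splits as $\Rel=*_{i\in I}\Rel_i$ with $\Rel_i=\Rel_{\Gamma_i\acts X}$, and similarly $\Srel=\Rel_{\Lambda\acts Y}=*_{j\in J}\Srel_j$ with $\Srel_j=\Rel_{\Lambda_j\acts Y}$. The ergodicity hypothesis makes each $\Rel_i$ and each $\Srel_j$ an ergodic $\II_1$-subrelation, and since the $\Gamma_i,\Lambda_j$ are MFI, each $\Rel_i$ and each $\Srel_j$ is freely indecomposable. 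The stable orbit equivalence $\Gamma\acts X\soe\Lambda\acts Y$ produces, after restriction/amplification (Remark~\ref{R:amp}), a single $\II_1$-relation carrying two free-product decompositions into ergodic, freely indecomposable factors. The theorem thus becomes the assertion that such a decomposition is unique up to permutation and stable isomorphism of the factors.

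Second, the core is a \emph{measured Kurosh theorem}: a freely indecomposable ergodic subrelation of a free product $*_{j}\Srel_j$ is, up to the full pseudogroup $[[\Rel]]$, contained in a single factor $\Srel_j$. To prove this I would build a measurable field of Bass--Serre trees $\{T_x\}$ on which the groupoid of $\Rel=*_j\Srel_j$ acts, with vertex stabilizers the factors and trivial edge data, mirroring the classical construction recalled in \S\ref{ssub:free_decompositions}. Restricting this action to $\Rel_i$, I would run the measured analogue of the dichotomy ``either a group acting on a tree fixes a vertex, or it splits as a free/amalgamated product''. Free indecomposability of $\Rel_i$ rules out a nondegenerate splitting, while ergodicity of $\Rel_i$ prevents the stabilized vertex from varying inconsistently across factors; together they force $\Rel_i$ to stabilize a single vertex generically, i.e. $\Rel_i$ is conjugate by an element of $[[\Rel]]$ into one factor $\Srel_{\theta(i)}$. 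This defines $\theta\colon I\to J$.

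Third, applying the same argument with the roles reversed yields $\sigma\colon J\to I$ with each $\Srel_j$ conjugate into $\Rel_{\sigma(j)}$. The nesting $\Rel_i\hookrightarrow\Srel_{\theta(i)}\hookrightarrow\Rel_{\sigma\theta(i)}$ inside $\Rel=*_i\Rel_i$, combined with the fact that distinct factors of a free product meet trivially (condition (2) in the definition) and with ergodicity, forces $\sigma\theta=\id$ and $\theta\sigma=\id$; hence $\theta$ is a bijection, $|I|=|J|$, and each containment $\Rel_i\hookrightarrow\Srel_{\theta(i)}$ becomes an isomorphism after restriction, i.e. $\Gamma_i\acts X\soe\Lambda_{\theta(i)}\acts Y$. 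The main obstacle is the measured Kurosh step: making rigorous the measurable field of Bass--Serre trees and the ergodic ``fixed-vertex versus splitting'' dichotomy, since one must control how the vertex stabilized by $\Rel_i$ varies over $X$ and extract a genuine free-product splitting of subrelations from the failure to fix a vertex. This is exactly where the ergodicity of each factor and the definition of freely indecomposable relation must be used in tandem.
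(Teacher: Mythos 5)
The survey itself states this theorem without proof, citing Alvarez--Gaboriau \cite{Alvarez+Gaboriau:2008p1322} and describing that work precisely as ``an analogue of Bass-Serre theory in the context of equivalence relations.'' Your outline --- pass to orbit relations, prove a measured Kurosh-type theorem via measurable fields of Bass--Serre trees showing that each freely indecomposable ergodic factor is conjugate by an element of the full pseudo-group into a single factor of the other decomposition, then compose the two directions and use ergodicity of the factors to force the resulting maps to be inverse bijections --- is exactly the strategy of the cited proof, so this is essentially the same approach.
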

The assumption that each free factor is ergodic is important here;
Alvarez and Gaboriau also give an analysis of the general situation (where this assumption is dropped).
%



\bigskip

\subsection{Rigidity of equivalence relations} 
\label{sub:er_rigidity}\hfill\\

The close relation between ME and SOE allows to deduce
that certain orbit relations $\Rel_{\Gamma\acts X}$ remember  
the acting group $\Gamma$ and the action $\Gamma\acts (X,\mu)$
up to isomorphism, or up to a \embf{virtual isomorphism}.
This slightly technical concept is described in the following:   
\begin{lemma}\label{L:v-iso}
Suppose an ergodic ME-coupling $(\Omega,m)$ of $\Gamma$ with $\Lambda$
corresponds to a SOE between ergodic actions $T:\Gamma\acts (X,\mu)\soe\Lambda\acts (Y,\nu)$.
Then the following are equivalent:
\begin{enumerate}
	\item 
	There exist short exact sequences 
	\[
		1\to \Gamma_0\to \Gamma\to \Gamma_1\to 1,\qquad 1\to \Lambda_0\to \Lambda\to\Lambda_1\to 1
	\] 
	where $\Gamma_0$ and $\Lambda_0$ are finite, a discrete $(\Gamma_1,\Lambda_1)$-coupling
	$(\Omega_1,m_1)$ and an equivariant map $\Phi:(\Omega,m)\to(\Omega_1,m_1)$;
	\item
	There exist isomorphism between finite index subgroups 
	\[
		\Gamma_1>\Gamma_2\ \cong\ \Lambda_2<\Lambda_1,
	\] 
	so that $\Gamma_1\acts X_1=X/\Gamma_0$ and $\Lambda_1\acts Y_1=Y/\Lambda_0$ 
	are induced from some isomorphic ergodic actions $\Gamma_2\acts X_2 \cong \Lambda_2\acts Y_2$.
	\item
	The SOE (or ME) cocycle $\Gamma\times X\to\Lambda$ is conjugate in $\Lambda$ to a cocycle
	whose restriction to some finite index subgroup $\Gamma_1$ is a homomorphism $\Gamma_1\to \Lambda$
	(the image is necessarily of finite index).
\end{enumerate}	
\end{lemma}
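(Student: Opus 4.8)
The plan is to prove the cycle $(1)\Rightarrow(2)\Rightarrow(3)\Rightarrow(1)$, using throughout the dictionary between couplings and cocycles recorded in \S\ref{sub:ergodic_theory_orbit_equivalence}: an ergodic $(\Gamma,\Lambda)$-coupling is recovered from its ME-cocycle $\alpha\colon\Gamma\times X\to\Lambda$ (with $X=\Omega/\Lambda$) as $\Omega\cong X\times\Lambda$ equipped with the commuting actions of \eqref{e:XLambda}, and two cocycles give isomorphic couplings precisely when they are conjugate in $\Lambda$. The recurring structural input is the anatomy of a \emph{discrete} ergodic coupling $(\Omega_1,m_1)$ of two infinite groups: since the fundamental-domain decompositions in Definition~\ref{D:ME} are \emph{disjoint}, both actions are free; ergodicity makes the product act transitively; and a point stabilizer is then the graph of an isomorphism $\theta\colon\Gamma_2\xrightarrow{\ \sim\ }\Lambda_2$ between subgroups $\Gamma_2<\Gamma_1$, $\Lambda_2<\Lambda_1$, these having finite index exactly when the two fundamental domains are finite. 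The finite normal subgroups $\Gamma_0,\Lambda_0$ enter only through the quotient homomorphisms $\Gamma\to\Gamma_1$, $\Lambda\to\Lambda_1$ with respect to which $\Phi$ is equivariant.

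For $(1)\Rightarrow(2)$ I would feed the given $\Phi\colon\Omega\to\Omega_1$ into this anatomy, extracting $\theta$ and the finite-index $\Gamma_2,\Lambda_2$. The map $\Omega/\Lambda\to\Omega_1/\Lambda_1=\Gamma_1/\Gamma_2$ induced by $\Phi$ descends to a $\Gamma_1$-equivariant map $X_1=X/\Gamma_0\to\Gamma_1/\Gamma_2$ onto a finite set; as $\Gamma_1$ permutes its fibers transitively and the fiber over the base point is a $\Gamma_2$-space $X_2$, this says precisely that $\Gamma_1\acts X_1$ is induced from $\Gamma_2\acts X_2$, and symmetrically $\Lambda_1\acts Y_1$ from $\Lambda_2\acts Y_2$. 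The SOE restricts on the core to an isomorphism $\Gamma_2\acts X_2\cong\Lambda_2\acts Y_2$ intertwined by $\theta$, and ergodicity of $(\Omega,m)$ forces these core actions to be ergodic.

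The step $(2)\Rightarrow(3)$ is a cocycle computation. With $\Gamma_1\acts X_1$, $\Lambda_1\acts Y_1$ induced from a $\theta$-conjugate pair, the rearrangement cocycle read on the core coincides with $\theta$; hence the ME-cocycle, restricted to the finite-index preimage of $\Gamma_2$ and post-composed with $\Lambda\to\Lambda/\Lambda_0$, is cohomologous to the homomorphism $\theta$. Upgrading this to an honest homomorphism into $\Lambda$, as $(3)$ demands, is the one genuinely delicate point: after lifting the conjugating function through the finite-fiber projection $\Lambda\to\Lambda/\Lambda_0$, one is left with a cocycle valued in the finite group $\Lambda_0$, and such a cocycle over the ergodic core is cohomologous to a homomorphism after a further passage to a finite-index subgroup (the stabilizer of an ergodic component of the associated finite skew-product). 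This produces the reduction required by $(3)$.

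Finally, for $(3)\Rightarrow(1)$ I start from a cocycle that, after conjugation in $\Lambda$, restricts on a finite-index subgroup $\Gamma'<\Gamma$ to a homomorphism $\rho\colon\Gamma'\to\Lambda$; essential freeness makes $\rho$ injective. In the model $\Omega\cong X\times\Lambda$ the projection $\Phi_0\colon(x,\ell)\mapsto\ell$ is $(\Gamma',\Lambda)$-equivariant onto $\Lambda$ (with $\Gamma'$ acting through $\rho$), and $\Phi_{0*}m$ is the counting measure scaled by $m(X)<\infty$, so $\Lambda$ is a discrete $(\Gamma',\Lambda)$-coupling quotient of $\Omega$. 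Since preimages of fundamental domains are fundamental domains, the $\Gamma'$-domain $\rho(\Gamma')\backslash\Lambda$ has finite measure, forcing $[\Lambda:\rho(\Gamma')]<\infty$ (the last clause of $(3)$); inducing $\Phi_0$ up to $\Gamma$ then yields the discrete $(\Gamma,\Lambda)$-coupling $\Gamma\times_{\Gamma'}\Lambda$ and the map $\Phi$, with $\Gamma_0=\Lambda_0$ trivial. I expect the principal difficulty to be exactly the finite-kernel reconciliation highlighted in $(2)\Rightarrow(3)$ --- translating between the ``finite normal quotient'' data of $(1)$--$(2)$ and the honest homomorphism into $\Lambda$ of $(3)$ --- together with the routine but necessary verifications that all induction sections are measurable and that every intermediate space is a genuine coupling with finite fundamental domains.
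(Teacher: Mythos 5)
The survey states Lemma~\ref{L:v-iso} without proof, so your argument has to stand on its own --- and it does not. The directions $(1)\Rightarrow(2)$ and $(3)\Rightarrow(1)$ are essentially sound (one repair: in $(3)\Rightarrow(1)$ essential freeness does \emph{not} make $\rho$ injective; it only makes $\ker\rho$ finite, since after conjugation $\ker\rho$ preserves the finite-measure fundamental domain $X$ and acts freely on it, while a.e.\ $\Gamma$-orbit meets $X$ in finitely many points --- harmless, because (1) tolerates finite kernels). The genuine gap is exactly the step you flagged in $(2)\Rightarrow(3)$: the principle that a cocycle with values in a finite group over an ergodic p.m.p.\ action becomes cohomologous to a homomorphism after passing to a finite-index subgroup is \emph{false}. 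Take $T$ the Bernoulli shift on $X=\{0,1\}^{\bbZ}$ and the $\bbZ/2\bbZ$-valued cocycle $\epsilon$ generated by $x\mapsto x_0$. For every $N$ and every $c\in\bbZ/2\bbZ$, the function $\epsilon(N,\cdot)+c$ is not a $T^N$-coboundary: approximating a putative transfer function by a function of finitely many coordinates, the middle partial sums of i.i.d.\ fair bits stay conditionally uniform, so the coboundary identity could hold with probability at most roughly $1/2$. Hence $\epsilon|_{N\bbZ\times X}$ is cohomologous to \emph{no} homomorphism $N\bbZ\to\bbZ/2\bbZ$; equivalently, every power of the skew product (and its composition with the flip) is ergodic, so your ``stabilizer of an ergodic component of the finite skew-product'' is all of the group and produces nothing.

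This is not a presentational defect that more care would fix, because the same cocycle embeds into a coupling satisfying (1) and (2). Let $\Gamma=\bbZ\acts X$ be the Bernoulli shift and let $\Lambda=\bbZ\times\bbZ/2\bbZ$ act on $Y=X\times\bbZ/2\bbZ$ by $(n,s):(x,t)\mapsto(T^nx,\,t+s+\epsilon(n,x))$; both actions are ergodic, essentially free, and SOE (restrict $\Rel_{\Lambda\acts Y}$ to $X\times\{0\}$), with ME-cocycle $\alpha(n,x)=(n,\epsilon(n,x))$. The coupling $\Omega=X\times\Lambda$ of (\ref{e:XLambda}) maps equivariantly onto the trivial discrete $(\bbZ,\bbZ)$-coupling by $(x,(n,s))\mapsto n$, with kernels $\Gamma_0=\{1\}$ and $\Lambda_0=\bbZ/2\bbZ$, so (1) and (2) hold; yet (3) fails, since the $\bbZ/2\bbZ$-component of any conjugate of $\alpha$ restricted to $N\bbZ\times X$ is a conjugate of $\epsilon|_{N\bbZ}$ and is never independent of $x$. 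So $(2)\Rightarrow(3)$, with (3) read literally as a homomorphism into $\Lambda$, cannot be proved by any argument: the equivalence only becomes true if (3) is read modulo the finite kernel, i.e., as asking that the composed cocycle $\Gamma\times X\to\Lambda\to\Lambda/\Lambda_0$ be conjugate, in $\Lambda/\Lambda_0$, to a homomorphism on a finite-index subgroup. With that reading, your $(2)\Rightarrow(3)$ reduces to the easy computation you describe before the upgrading step, and the upgrading step should be deleted rather than repaired; the statement $(3)\Rightarrow(1)$ should then also be run with the quotient group $\Lambda_1=\Lambda/\Lambda_0$ as the target.
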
	

Let us now state two general forms of relation rigidity. Here is one form
\begin{thm}
	\label{T:OE-rigidity-combined}
Let $\Gamma\acts (X,\mu)$ be an ergodic essentially free action of one of the types below,
$\Lambda$ an arbitrary group and $\Lambda\acts (Y,\nu)$ as essentially free p.m.p. action
whose orbit relation $\Rel_{\Lambda\acts Y}$ is weakly isomorphic to $\Rel_{\Gamma\acts X}$.

Then $\Lambda$ is commensurable up to finite kernels to $\Gamma$ and the actions $\Gamma\acts X$ 
and $\Lambda\acts Y$ are virtually isomorphic; in particular, the SEO-index is necessarily rational. 

\medskip

The list of actions $\Gamma\acts X$ with this SOE-rigidity property include:
\begin{enumerate}
	\item \label{i:higherrank}
	$\Gamma$ is a lattice in a connected, center free, simple, Lie group $G$ 
	of higher rank, and $\Gamma\acts X$ has no equivariant quotients of
	the form $\Gamma\acts G/\Gamma'$ where $\Gamma'<G$ is a lattice
	(\cite[Theorem A]{Furman:OE:99});
	\item \label{i:products}
	$\Gamma=\Gamma_1\times\cdots\times\Gamma_n$ where $n\ge 2$, $\Gamma_i\in\Creg$,
	and $\Gamma_i\acts (X,\mu)$ are ergodic; in addition assume that $\Lambda\acts (Y,\nu)$
	is mildly mixing (Monod-Shalom \cite{Monod+Shalom:OE:05});
	\item \label{i:MCG}
	$\Gamma$ is a finite index subgroup in a (product of) Mapping Class Groups as
	in Theorem~\ref{T:Kida-MErigidity} (Kida \cite{Kida:2008OE}).
\end{enumerate}
\end{thm}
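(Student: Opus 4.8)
The three cases are handled by a single reduction: translate the weak isomorphism of orbit relations into a Measure Equivalence coupling, apply the ME-rigidity theorem already established for the relevant family, and then read off the virtual isomorphism. The bridge between the relation language and the coupling language is furnished by Theorem~\ref{T:ME-SOE} and by Lemma~\ref{L:v-iso}.

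The plan is as follows. A weak isomorphism $\Rel_{\Lambda\acts Y}\cong\Rel_{\Gamma\acts X}^{t}$ is, by definition (see \S\ref{ssub:restriction_and_weak_isomorphisms}), a Stable Orbit Equivalence $T:\Gamma\acts(X,\mu)\soe\Lambda\acts(Y,\nu)$ of index $t$. Since both actions are essentially free, the (SOE)$\,\Longrightarrow\,$(ME) direction of Theorem~\ref{T:ME-SOE} manufactures from $T$ an ME-coupling $(\Omega,m)$ of $\Gamma$ and $\Lambda$ with $[\Gamma:\Lambda]_\Omega=t$, whose associated SOE recovers $T$; as the $\Gamma\times\Lambda$-invariant functions on $\Omega$ are the $\Gamma$-invariant functions on $\Omega/\Lambda\cong X$, ergodicity of $\Gamma\acts X$ makes $\Omega$ ergodic. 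The content of Lemma~\ref{L:v-iso} is that $\Gamma\acts X$ and $\Lambda\acts Y$ are virtually isomorphic precisely when $\Omega$ admits a \emph{discrete} coupling as an equivariant quotient. So in each case it remains to (i) pin down $\Lambda$ up to commensurability with finite kernels, and (ii) produce a discrete quotient of $\Omega$.

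For case~(\ref{i:higherrank}) I would invoke Theorem~\ref{T:Furman:ME}, which already asserts that $\Lambda$ is commensurable up to finite kernels to a lattice in $G$ and that the ergodic coupling $\Omega$ has a quotient that is atomic (discrete), or $G$, or $\Aut(G)$ with Haar measure. The extra hypothesis — that $\Gamma\acts X$ has no equivariant quotient of the form $\Gamma\acts G/\Gamma'$ with $\Gamma'<G$ a lattice — is exactly what rules out the last two options: a quotient coupling $\Omega\to G$ (resp. $\Omega\to\Aut(G)$) descends through $\Omega/\Lambda$ to an equivariant quotient of $\Gamma\acts X$ of the forbidden shape. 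Hence the quotient is atomic, and the implication (1)$\Rightarrow$(2) of Lemma~\ref{L:v-iso} yields the virtual isomorphism; rationality of $t$ follows since a discrete coupling has index a ratio of finite subgroup indices (and finite kernels). For case~(\ref{i:products}) the mild mixing hypothesis on $\Lambda\acts Y$ permits the sharper Theorem~\ref{T:Monod-Shalom-wMMIX} to be applied directly (using $\Creg\subset\Cmix$ and the irreducibility of $\Gamma\acts X$): it forces $t=1$, $\Lambda\cong\Gamma$, and an honest isomorphism of the actions, which a fortiori gives the stated conclusion. For case~(\ref{i:MCG}) Theorem~\ref{T:Kida-MErigidity} directly supplies both (i) and a discrete quotient of $\Omega$, with no irreducibility needed, and Lemma~\ref{L:v-iso} again concludes.

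The substantive mathematics is entirely contained in the three cited ME-rigidity theorems, so the work here is organizational: the role of the present statement is to package SOE-rigidity uniformly via Theorem~\ref{T:ME-SOE} and Lemma~\ref{L:v-iso}. The one step that genuinely requires care, and which I expect to be the main obstacle, is the elimination of the non-discrete quotients in case~(\ref{i:higherrank}): one must verify that a quotient coupling $\Omega\to G$ or $\Omega\to\Aut(G)$ really does produce a forbidden quotient action $\Gamma\acts G/\Gamma'$ of $\Gamma\acts X$. This relies on the equivariant map $\Phi:\Omega\to\Aut(G)$ and its transformation rule from the discussion following Theorem~\ref{T:Furman:ME}, together with Ratner's theorem to identify $\Phi_*m$; matching this data with an equivariant quotient of $X=\Omega/\Lambda$ is where the argument must be made precise.
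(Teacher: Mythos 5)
Your proposal follows essentially the same route as the paper: the paper's own justification of this statement is exactly the package of remarks (a)--(d) following it, namely pass from the weak isomorphism to an ergodic ME-coupling via Theorem~\ref{T:ME-SOE}, reduce ``virtually isomorphic'' to condition (1) of Lemma~\ref{L:v-iso} (existence of a discrete quotient coupling), and extract that discrete quotient from the cited rigidity theorems, with the no-$G/\Gamma'$-quotient hypothesis in case (\ref{i:higherrank}) serving precisely to exclude the $G$ and $\Aut(G)$ alternatives in Theorem~\ref{T:Furman:ME}. Your treatment of cases (\ref{i:higherrank}) and (\ref{i:MCG}) is sound as written, including your correct identification of the delicate descent step (quotient coupling $\Omega\to\Aut(G)$ yields a forbidden equivariant quotient $X\to G/\Gamma'$) as the place requiring care.

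The one point where your argument does not cover the stated hypotheses is case (\ref{i:products}). Theorem~\ref{T:Monod-Shalom-wMMIX} assumes that the factors $\Gamma_i$ and the group $\Lambda$ are torsion free, whereas the present statement assumes only $\Gamma_i\in\Creg$ with $\Lambda$ arbitrary; and $\Creg$ does contain groups with torsion, e.g.\ free products of finite groups such as $(\bbZ/3\bbZ)*(\bbZ/3\bbZ)$. This mismatch is exactly why the conclusion here is only ``commensurable up to finite kernels'' and ``virtually isomorphic'' rather than $\Lambda\cong\Gamma$ with isomorphic actions. To handle the general case one argues as in remark (c) after the theorem and \S\ref{sub:superrigidity_for_products}: form the self-coupling $\Sigma=\Omega\times_\Lambda\check\Omega$, use mild mixing of $\Lambda\acts Y$ together with irreducibility of $\Gamma\acts X$ to see that $\Gamma\acts\Sigma/\Gamma$ is irreducible, apply the cocycle superrigidity Theorem~\ref{T:Monod-Shalom:CSR} to conjugate the ME-cocycle to an automorphism of $\Gamma$ modulo finite kernels, and feed the resulting bi-$\Gamma$-equivariant map $\Sigma\to\Gamma$ into the construction of \S\ref{sub:constructing_representations}; this produces condition (1) of Lemma~\ref{L:v-iso} without any torsion-freeness assumption, and your concluding appeal to that lemma then goes through unchanged.
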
	

\embf{(a)} For a concrete example for (\ref{i:higherrank})--(\ref{i:MCG}) one might take Bernoulli actions
$\Gamma\acts (X_0,\mu_0)^\Gamma$. 
In (\ref{i:higherrank}) one might also take $\SL_n(\bbZ)\acts \bbT^n$ or $\SL_n(\bbZ)\acts \SL_n(\bbZ_p)$ with $n\ge 3$.
In (\ref{i:products}) one might look at $\bbF_n\times \bbF_m$ acting on a compact Lie group $K$, e.g. ${\rm SO}_3(\bbR)$,
by $(g,h):k\mapsto gkh^{-1}$ where $\bbF_n$, $\bbF_m$ are imbedded densely in $K$.

\medskip

\embf{(b)}
In (1) the assumption that there are no $\Gamma$-equivariant quotient maps $X\to G/\Gamma'$ 
is necessary, since given such a quotient there is a $\Gamma'$-action on some $(X',\mu')$
with $\Gamma'\acts X'\soe \Gamma\acts X$. The rigidity statement in this case is that
this is a complete list of groups and their essentially free actions up to virtual isomorphism
(\cite[Theorem C]{Furman:OE:99}). The appearance of these factors has to do with
$(G,m_G)$ appearing as a quotient of a $(\Gamma,\Lambda)$-coupling. 

\medskip

\embf{(c)}
The basic technique for establishing the stated rigidity in cases (\ref{i:higherrank}) -- (\ref{i:MCG})
is to establish condition (1) in Lemma~\ref{L:v-iso}. This is done by analyzing a self $\Gamma$-coupling
of the form $\Omega\times_\Lambda\check\Omega$ (where $X=\Omega/\Lambda$ and $Y=\Omega/\Gamma$) 
and invoking an analogue of the construction in
\S \ref{sub:constructing_representations}.

\medskip

\embf{(d)}
In all cases one can sharpen the results (eliminate the "virtual") by imposing some benign additional
assumptions: rule out torsion in the acting groups, and impose ergodicity for actions of finite index subgroups.

\bigskip

The second \emph{stronger} form of relation rigidity refers to 
\embf{rigidity of relation morphisms} which are obtained from
$\Gdsc$-\embf{cocycle superrigid actions} 
discovered by Sorin Popa (see \S \ref{sub:deformation_vs_rigidity_technique}). 
We illustrate this framework by the following particular statement
(see \cite[Theorem 0.4]{Popa:2007coc1} and \cite[Theorem 1.8]{Furman:2007popa}).
\begin{theorem}\label{T:cons-of-csr}
Let $\Gamma\acts (X,\mu)$ be a mixing $\Gdsc$-cocycle superrigid action, such as:
\begin{enumerate}
	\item A Bernoulli $\Gamma$-action on $(X_0,\mu_0)^\Gamma$, 
	where $\Gamma$ has property (T), or $\Gamma=\Gamma_1\times\Gamma_2$ with $\Gamma_1$ non-amenable
	and $\Gamma_2$ being infinite;
	\item $\Gamma\acts K/L$ where $\Gamma\to K$ is a homomorphism with dense image
	in a simple compact Lie group $K$ with trivial $\pi_1(K)$, $L<K$ is a closed subgroup, and $\Gamma$ has (T). 
\end{enumerate}
Let $\Lambda$ be some group with an ergodic essentially free measure preserving action $\Lambda\acts (Y,\nu)$\footnote{ The 
space $(Y,\nu)$ might be finite or infinite Lebesgue measure space.}, $X'\subset X$ a positive measure subset
and $T:X'\to Y$ a measurable map with $T_*\mu\prec \nu$ and
\[
	(x_1,x_2)\in \Rel_{\Gamma\acts X}\cap (X'\times X')\qquad\Longrightarrow\qquad
	(T(x_1),T(x_2))\in \Rel_{\Lambda\acts Y}.
\]
Then there exist
\begin{itemize}
	\item 
	an exact sequence  $\Gamma_0\overto{}\Gamma\overto{\rho}\Lambda_1$ 
	with finite $\Gamma_0$ and $\Lambda_1<\Lambda$;
	\item
	A $\Lambda_1$-ergodic subset $Y_1\subset Y$ with $0<\nu(Y_1)<\infty$;
	\item
	Denoting $(X_1,\mu_1)=(X,\mu)/\Gamma_0$ and $\nu_1=\nu(Y_1)^{-1}\cdot \nu|_{Y_1}$, there is an isomorphism 
	$T_1:(X_1,\mu_1)\cong (Y_1,\nu_1)$ 
	of $\Lambda_1$-actions.
\end{itemize}
Moreover, $\mu$-a.e. $T(x)$ and $T_1(\Gamma_0x)$ are in the same $\Lambda$-orbit.
\end{theorem}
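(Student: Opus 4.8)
The plan is to convert the partially defined relation morphism $T$ into a measurable cocycle, straighten it using the $\Gdsc$-cocycle superrigidity of $\Gamma\acts(X,\mu)$, and then read off the asserted structure from the straightened map. First I would make $T$ globally defined. Since $\Gamma\acts(X,\mu)$ is ergodic, the $\Rel_{\Gamma\acts X}$-saturation of $X'$ is conull, and the relation-preserving hypothesis guarantees that for $x\in X'$ the $\Lambda$-orbit of $T(x)$ depends only on the $\Gamma$-orbit of $x$ (if $gx,g'x\in X'$ then $T(gx)$ and $T(g'x)$ lie in one $\Lambda$-orbit, since $(gx,g'x)\in\Rel_{\Gamma\acts X}\cap(X'\times X')$). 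Using a measurable selection of return times into $X'$ I would extend $T$ to a map $\hat T\colon X\to Y$, agreeing with $T$ on $X'$, still satisfying $(x,y)\in\Rel_{\Gamma\acts X}\Rightarrow(\hat T(x),\hat T(y))\in\Rel_{\Lambda\acts Y}$ on all of $X$. Essential freeness of $\Lambda\acts Y$ then lets me define
\[
\alpha\colon\Gamma\times X\to\Lambda,\qquad \hat T(gx)=\alpha(g,x)\,\hat T(x),
\]
and a direct check gives the cocycle identity $\alpha(g_1g_2,x)=\alpha(g_1,g_2x)\alpha(g_2,x)$.

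Next, because $\Gamma\acts(X,\mu)$ is $\Gdsc$-cocycle superrigid and $\Lambda$ is discrete, $\alpha$ is cohomologous to a homomorphism: there are $\rho\in\operatorname{Hom}(\Gamma,\Lambda)$ and measurable $f\colon X\to\Lambda$ with $\alpha(g,x)=f(gx)\,\rho(g)\,f(x)^{-1}$. Setting $S(x)=f(x)^{-1}\hat T(x)$ straightens the cocycle to $\rho$, i.e.\ $S(gx)=\rho(g)S(x)$ for a.e.\ $x$ and all $g$; moreover $S(x)$ lies in the $\Lambda$-orbit of $\hat T(x)$, hence of $T(x)$ for $x\in X'$. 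Put $\Lambda_1=\rho(\Gamma)$ and $\Gamma_0=\ker\rho$.

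Finiteness of $\Gamma_0$ is where mixing enters. If $\Gamma_0$ were infinite, then mixing of $\Gamma\acts(X,\mu)$ forces $\Gamma_0\acts(X,\mu)$ to be ergodic; but $S$ is $\Gamma_0$-invariant (as $\rho(\Gamma_0)=\{e\}$), so $S$ would be essentially constant, making $S_*\mu$ atomic and, by freeness, $\Lambda_1$ trivial --- a degenerate situation incompatible with the non-atomic hypotheses. Hence $\Gamma_0$ is finite and $X_1=X/\Gamma_0$ carries the induced $\Lambda_1\cong\Gamma/\Gamma_0$-action. The measure $S_*\mu$ is a $\Lambda_1$-invariant probability measure with $S_*\mu\prec\nu$, and by ergodicity of $\Gamma\acts X$ it is $\Lambda_1$-ergodic; writing $S_*\mu=h\,d\nu$, the set $Y_1=\{h>0\}$ is $\Lambda_1$-invariant and the Radon--Nikodym derivative $d(\nu|_{Y_1})/d(S_*\mu)$ is $\Lambda_1$-invariant, hence $S_*\mu$-a.e.\ constant. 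This yields $0<\nu(Y_1)<\infty$ and $S_*\mu=\nu_1=\nu(Y_1)^{-1}\nu|_{Y_1}$.

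Finally, $S$ descends to a $\Lambda_1$-equivariant measure-preserving map $T_1\colon(X_1,\mu_1)\to(Y_1,\nu_1)$, and the ``moreover'' clause is immediate since $S(x)$ and $T(x)$ share a $\Lambda$-orbit. Freeness of $\Lambda\acts Y$ gives injectivity of $T_1$ along orbits: if $S(x)=S(gx)=\rho(g)S(x)$ then $\rho(g)=e$, so $g\in\Gamma_0$. The main obstacle is upgrading this orbitwise injectivity to \emph{global} injectivity, i.e.\ showing that $T_1$ is a genuine isomorphism and not a nontrivial equivariant cover whose fibre relation meets distinct $\Lambda_1$-orbits. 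I expect to control this by comparing the canonical infinite measures $\tilde\mu_1$ on $\Rel_{\Lambda_1\acts X_1}$ and $\tilde\nu_1$ on $\Rel_{\Lambda_1\acts Y_1}$ under $S\times S$: orbitwise bijectivity identifies the two orbit relations fibrewise, and the finiteness of $\nu_1$ together with the mixing/weak-mixing strength of the action should force the fibre relation of $T_1$ to be trivial, thereby identifying $\Rel_{\Gamma\acts X}/\Gamma_0$ with $\Rel_{\Lambda_1\acts Y_1}$ and exhibiting $T_1$ as an isomorphism of $\Lambda_1$-actions.
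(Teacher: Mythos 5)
You should first note that the survey does not actually prove Theorem \ref{T:cons-of-csr}: it refers to Popa \cite{Popa:2007coc1} (Theorem 0.4) and to \cite{Furman:2007popa} (Theorem 1.8), so the comparison below is with the argument of those sources. Your first five steps reproduce that argument faithfully and correctly: extending $T$ to all of $X$ by ergodicity of the saturation, extracting the cocycle $\alpha$ from essential freeness of $\Lambda\acts Y$, untwisting $\alpha$ to a homomorphism $\rho$ by $\Gdsc$-cocycle superrigidity, ruling out an infinite kernel by mixing (an infinite subgroup of a mixing action acts ergodically, so $S$ would be essentially constant and $\nu$ would have an atom), and identifying $S_*\mu$ with the normalized restriction $\nu_1$ by $\Lambda_1$-invariance and ergodicity of the Radon--Nikodym derivative. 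All of this is sound.

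The gap you flag at the end, however, is genuine, and the route you propose for closing it cannot work. With only the one-directional hypothesis as displayed in the statement (``$\Gamma$-equivalent points map to $\Lambda$-equivalent points''), the isomorphism conclusion is in fact \emph{false}. Take $\Gamma$ an infinite property (T) group, $X=(\{0,1\}\times\{0,1\})^\Gamma$ and $Y=\{0,1\}^\Gamma$ the Bernoulli actions, $\Lambda=\Gamma$, $X'=X$, and $T:X\to Y$ the coordinate projection. Then $T$ is $\Gamma$-equivariant (hence a relation morphism), $T_*\mu=\nu$, and $\Gamma\acts X$ is mixing and $\Gdsc$-cocycle superrigid by Theorem \ref{T:Popa-Bernoulli:CSR}; yet no $T_1$ as in the conclusion can exist: injectivity of $T_1$ together with the ``moreover'' clause would send each fibre $T^{-1}(y)$, at most $|\Gamma_0|$-to-one, into the countable set $\Lambda y$, forcing the fibres to be essentially countable, whereas the conditional measures of $\mu$ over $T$ are non-atomic Bernoulli measures. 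The same example shows that your heuristic fails on its own terms: there the map produced by your construction is $T_1=T$ itself, which is equivariant, measure-preserving, orbitwise injective, between a mixing action and a set of finite measure --- and still continuum-to-one. So no joining or mixing argument can upgrade orbitwise injectivity to global injectivity; the hypotheses as literally stated in the survey are simply too weak (the survey is loose here; the cited originals are not).

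What is missing is the reverse implication, which is present in the source theorems (there $T$ is a local isomorphism of relations: it carries $\Rel_{\Gamma\acts X}$ restricted to $X'$ \emph{onto} $\Rel_{\Lambda\acts Y}$ restricted to the image, so the displayed implication is an ``iff''). Once you have it, global injectivity follows in two lines from what you already proved: if $S(x_1)=S(x_2)$, then since each $S(x_i)$ lies on the $\Lambda$-orbit of $T(x_i')$ for some $x_i'\in X'\cap \Gamma x_i$, the points $T(x_1'),T(x_2')$ lie on one $\Lambda$-orbit; the reverse implication then puts $x_1',x_2'$, hence $x_1,x_2$, on one $\Gamma$-orbit, say $x_2=gx_1$; and $S(x_2)=\rho(g)S(x_1)=S(x_1)$ with essential freeness gives $\rho(g)=e$, i.e.\ $x_2\in\Gamma_0x_1$. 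Thus the fibres of $S$ are single $\Gamma_0$-orbits, so $T_1$ is an injective, measure-preserving, $\Lambda_1$-equivariant map between probability spaces, hence the asserted isomorphism.
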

   

\medskip

\subsubsection{A question of Feldman and Moore} 
\label{ssub:FM_question}\hfill\\

Feldman and Moore showed \cite{Feldman+Moore:77I} that any countable Borel equivalence relation 
$\Rel$ can be generated by a Borel action of a countable group. 
They asked whether one can find a \emph{free} action of some group, so that $\Rel$-classes would be in one-to-one 
correspondence with the acting group. 
This question was answered in the negative by Adams \cite{Adams:88FM}.
In the context of measured relations, say of type $\II_1$, the question is whether it is possible 
to generate $\Rel$ (up to null sets) by an \emph{essentially free} action of some group.
This question was also settled in the negative in \cite[Theorem D]{Furman:OE:99}, using the following basic constructions: 
\begin{enumerate}
	\item 
	Start with an essentially free action $\Gamma\acts (X,\mu)$ which is rigid
	as in Theorem~\ref{T:OE-rigidity-combined} or \ref{T:cons-of-csr},
	and let $\Rel=(\Rel_{\Gamma\acts X})^t$ with an \emph{irrational} $t$. 
	\item
	Consider a proper imbedding $G\hookrightarrow H$ of higher rank simple Lie groups choose a lattice
	$\Gamma<H$, say $G=\SL_3(\bbR)\subset H=\SL_4(\bbR)$ with $\Gamma=\SL_4(\bbZ)$.
	Such actions always admit a Borel cross-section $X\subset H/\Gamma$ for the $G$-action, 
	equipped with a holonomy invariant probability measure $\mu$.
	Take $\Rel$ on $(X,\mu)$ to be the relation of being in the same $G$-orbit.
\end{enumerate}
In case (1) one argues as follows: if some group $\Lambda$ has an essentially free action $\Lambda\acts (Y,\nu)$
with $(\Rel_{\Gamma\acts X})^t=\Rel\cong\Rel_{\Lambda\acts Y}$ then the rigidity implies 
that $\Gamma$ and $\Lambda$ are commensurable up to finite kernel, and $\Gamma\acts X$ is virtually 
isomorphic to $\Lambda\acts Y$. 
But this would imply that the index $t$ is rational, contrary to the assumption.
This strategy can be carried out in other cases of very rigid actions as in 
\cite{Monod+Shalom:OE:05, Popa:2007coc1, Kida:2008OE, Ioana:2008profinite}. 
Theorem~\ref{T:GammaonK} provides an example of this type $\Rel=(\Rel_{\Gamma\acts K})^t$
where $\Gamma$ is a Kazhdan group densely imbedded in a compact connected Lie group $K$.
So the reader has a sketch of the full proof for a $\II_1$-relation which cannot be generated 
by an essentially free action of any group.

Example of type (2) was introduced by Zimmer in \cite{Zimmer:1991:trans}, where it was proved that
the relation $\Rel$ on such a cross-section cannot be essentially freely generated by a group $\Lambda$
which admits a linear representation with an infinite image. The linearity assumption was removed
in \cite{Furman:OE:99}. This example is particularly interesting since it cannot be "repaired"
by restriction/amplification; as any $\Rel^t$ can be realized as a cross-section of the same $G$-flow
on $H/\Gamma$.

\begin{ques*}[Vershik]
Let $\Rel$ on $(X,\mu)$ be a $\II_1$-relation which cannot be generated by an essentially 
free action of a group; and let $\Gamma\acts (X,\mu)$ be some action producing $\Rel$.
One may assume that the action is faithful, i.e., $\Gamma\to\Aut(X,\mu)$ is an embedding.
What can be said about $\Gamma$ and the structure of the measurable family $\{\Gamma_x\}_{x\in X}$
of the stabilizers of points in $X$? 
\end{ques*}
In \cite{Popa+Vaes:08Rn} Sorin Popa and Stefaan Vaes give an example of a $\II_1$-relation 
$\Rel$ (which is a restriction of the $\II_\infty$-relation $\Rel_{\SL_5(\bbZ)\acts \bbR^5}$ to
a subset $A\subset \bbR^5$ of positive finite measure)
which has property (T) but cannot be generated by an action (not necessarily free)
of \emph{any} group with property (T).
 

\newpage
\section{Techniques} 
\label{sec:techniques}

\subsection{Superrigidity in semi-simple Lie groups} 
\label{ssub:superrigidity}\hfill\\

The term \emph{superrigidity} refers to a number of phenomena originated 
and inspired by the following celebrated discovery of G. A. Margulis.
\begin{theorem}[Margulis \cite{Margulis:1974:ICM}]\label{T:Margulis-sr}
Let $G$ and $G'$ be (semi-)simple connected real center free Lie groups without compact factors
with ${\rm rk}(G)\ge 2$, $\Gamma<G$ be an irreducible lattice and $\pi:\Gamma\to G'$ a homomorphism
with $\pi(\Gamma)$ being Zariski dense in $G'$ and not precompact.
Then $\pi$ extends to a (rational) epimorphism $\bar\pi:G\to G'$.
\end{theorem}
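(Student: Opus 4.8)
The plan is to follow the boundary-map strategy underlying Margulis' original argument. Since the projection of a Zariski dense subgroup to any factor remains Zariski dense, I would first reduce to the case where $G'$ is simple, extending factor by factor and reassembling; the non-precompactness hypothesis is what prevents the boundary map constructed below from being essentially constant. Next, fix a minimal parabolic $P<G$, so that the Furstenberg boundary $B=G/P$ is a compact $\Gamma$-space, and a minimal parabolic $P'<G'$ with flag variety $G'/P'$. Because $P$ is amenable, the action $\Gamma\acts B$ is amenable; since $\Gamma$ acts continuously on the compact metrizable space $G'/P'$ through $\pi$, amenability produces a measurable $\Gamma$-equivariant map
\[
	\psi\colon B\longrightarrow \Prob(G'/P').
\]

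The second stage is to upgrade $\psi$ to a map into $G'/P'$ itself. Here I would exploit that $\pi(\Gamma)$ is non-precompact and Zariski dense: the contraction dynamics of $G'$ on $G'/P'$ (proximality of the flag variety) force any $\Gamma$-equivariant family of measures to degenerate to Dirac masses, for otherwise the supports or barycenters would trace out a proper $\pi(\Gamma)$-invariant algebraic subset, contradicting Zariski density. This yields an essentially unique measurable $\Gamma$-equivariant map $\phi\colon B\to G'/P'$.

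The crux --- and the only place where ${\rm rk}(G)\ge 2$ is genuinely used --- is to show that $\phi$ agrees almost everywhere with a \emph{rational} map. I would analyze the behavior of $\phi$ along the horospherical subgroups attached to the various proper parabolics of $G$; the higher rank hypothesis supplies an abundance of commuting split and unipotent subgroups, together with a Weyl group large enough to constrain $\phi$ simultaneously in several directions. Feeding this regularity into Margulis' algebraicity lemma --- a measurable map intertwining the actions of a Zariski dense subgroup on two homogeneous algebraic varieties must be rational --- promotes $\phi$ to a rational $\Gamma$-map. I fully expect this rationality step to be the main obstacle: in rank one it simply fails (lattices in ${\rm SO}_{n,1}$ and ${\rm SU}_{n,1}$ admit genuine deformations), so any correct treatment must invoke the higher rank structure in an essential way rather than merely as a hypothesis carried along.

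Finally, I would transfer equivariance from $\Gamma$ to $G$: because $\Gamma$ is Zariski dense in $G$ (Borel density) and $\phi$ is rational, the $\Gamma$-equivariance propagates to $G$-equivariance, producing a rational $G$-map $G/P\to G'/P'$. Such a map is induced by a rational homomorphism $\bar\pi\colon G\to G'$ with $\bar\pi|_\Gamma=\pi$, and the Zariski density of $\pi(\Gamma)=\bar\pi(\Gamma)$ forces $\bar\pi$ to be surjective, i.e.\ an epimorphism onto $G'$.
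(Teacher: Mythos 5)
Your outline follows the standard boundary-map proof of superrigidity, and the first thing to say is that the paper you are being compared against does not actually prove this theorem: it states it as quoted background, refers to Margulis' monograph for proofs, and remarks only that the core of the available proofs is ``a combination of the theory of algebraic groups and purely ergodic-theoretic arguments.'' Your sketch is exactly an argument of that kind --- amenability of $\Gamma\acts G/P$ producing $\psi:B\to\Prob(G'/P')$, degeneration to Dirac masses, rationality of the resulting map $\phi:B\to G'/P'$, and extension via Zariski density --- so there is no divergence of route to report, only the question of whether what you wrote constitutes a proof.

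It does not, on two counts. First, the degeneration step is misattributed. On a flag variety there is no ``barycenter,'' and the honest mechanism is Furstenberg's lemma: a probability measure on a projective variety either is supported on a finite union of proper subvarieties or has compact stabilizer. One then uses ergodicity of $\Gamma$ on $B\times B$ (Moore's theorem): in the nondegenerate branch the essential image of $\psi$ lands in a single proper $G'$-orbit $G'/K_0$ with $K_0$ compact, and a circumcenter argument produces a $\pi(\Gamma)$-fixed point there, contradicting \emph{non-precompactness}; in the degenerate branch an essentially constant configuration of proper subvarieties would be $\pi(\Gamma)$-invariant, contradicting \emph{Zariski density}. Your sketch assigns the two hypotheses to roughly the opposite roles and replaces the lemma by an appeal to ``proximality,'' which is not an argument here. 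Second, and more seriously, the rationality of $\phi$ is not a step \emph{in} the proof --- it \emph{is} the proof: the entire higher-rank content of the theorem lives in Margulis' argument that a measurable $\Gamma$-equivariant map is rational, via the commuting horospherical subgroups and the analysis along the fibrations $G/P\to G/Q$. Your proposal invokes this as ``Margulis' algebraicity lemma'' rather than supplying it, and you candidly say so. The result is a correct roadmap with the two hard analytic steps black-boxed; as a plan it identifies the right structure, but it cannot be checked as a proof independently of the very work it is meant to reprove.
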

The actual result is more general than stated, as it applies to products of 
semi-simple algebraic groups over general local fields.
We refer the reader to the comprehensive  monograph (Margulis \cite{Margulis:book}) for the general statements,
proofs and further results and applications, including the famous Arithmeticity Theorem.


The core of (some of the available) proofs of Margulis' superrigidity Theorem
is a combination of the theory of algebraic groups and purely ergodic-theoretic arguments.
The result applies to uniform and non-uniform lattices alike,
it also covers irreducible lattices in higher rank Lie groups, such as $\SL_2(\bbR)\times\SL_2(\bbR)$.
Let us also note that the assumption that $\pi(\Gamma)$ is not precompact in $G'$ is redundant if 
$\pi(\Gamma)$ is Zariski dense in a real Lie group $G'$ (since compact groups over $\bbR$ are algebraic), 
but is important in general (cf. $\SL_n(\bbZ)< \SL_n(\bbQ_p)$ is Zariski dense
but precompact).

\medskip

In \cite{Zimmer:cocyclesuper:80} R. J. Zimmer has obtained a far reaching generalization 
of Margulis' superrigidity, passing from the context of representations of lattices
to the framework of measurable cocycles over probability measure preserving actions 
(representations of "virtual subgroups" in Mackey's terminology). 
The connection can be briefly summarized as follows:   
given a transitive action $G\acts X=G/\Gamma$ and some topological group
$H$; there is a bijection between measurable cocycles $G\times G/\Gamma\to H$ modulo
cocycle conjugation and homomorphisms $\Gamma\to H$ modulo conjugation in $H$
\[
	H^1(G\acts G/\Gamma, H)\qquad\cong\qquad {\rm Hom}(\Gamma,H)/H
\]
(see \S \ref{sub:canonical_class_of_a_lattice_induction}, and \cite{Zimmer:book:84, Furstenberg:afterMargulis-Zimmer}).
In this correspondence, a representation $\pi:\Gamma\to H$ extends to a homomorphism $G\to H$
iff the corresponding cocycle $\pi\circ c:G\times G/\Gamma\to H$ is conjugate to a homomorphism
$G\to H$.
Zimmer's Cocycle Superrigidity Theorem states that under appropriate non-degeneracy assumptions 
a measurable cocycle over an \emph{arbitrary} p.m.p. ergodic action $G\acts (X,\mu)$ is conjugate
to a homomorphism.
\begin{theorem}[Zimmer \cite{Zimmer:cocyclesuper:80}, see also \cite{Zimmer:book:84}]
	\label{T:Zimmer-csr}
Let $G$, $G'$ be semi-simple Lie group as in Theorem~\ref{T:Margulis-sr}, in particular ${\rm rk}_\bbR(G)\ge 2$,
let $G\acts (X,\mu)$ be an irreducible probability measure preserving action
and $c:G\times X\to G'$ be a measurable cocycle which is Zariski dense and not compact.
Then there exist a (rational) epimorphism $\pi:G\to G'$ and a measurable $f:X\to G'$
so that $c(g,x)=f(gx)^{-1}\pi(g)f(x)$.
\end{theorem}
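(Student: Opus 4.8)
The plan is to follow the boundary-theoretic route, reducing the cocycle statement to an equivariant map problem and then extracting the homomorphism $\pi$ from the rigidity of the $G$-action on its Furstenberg boundary. Let $P<G$ be a minimal parabolic and put $B=G/P$, so that the action $G\acts B$ is amenable (in Zimmer's sense, as used in \S\ref{ssub:amenability_property_t_a_t_menability}). Fix a parabolic $Q'<G'$ with $V=G'/Q'$ the corresponding flag variety, on which $G'$ acts by an algebraic (hence continuous) action with $\Prob(V)$ compact metrizable. First I would use the amenability of $G\acts B$, transported through the cocycle $c$, to produce a measurable $c$-equivariant map
\[
	\phi:X\times B\to \Prob(V),\qquad \phi(gx,gb)=c(g,x)_*\,\phi(x,b)\quad (g\in G).
\]
This is the standard consequence of relative amenability: the cocycle defines an affine $G$-action on the sections $X\times B\to\Prob(V)$, and amenability yields a fixed section.

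Next I would sharpen $\phi$ into a genuine boundary map valued in $V$ itself. The hypotheses that $c$ is Zariski dense and not compact are exactly what is needed here: Zariski density forces the algebraic hull of $c$ to be all of $G'$, so no measurable reduction of the values $\phi(x,b)$ to a proper algebraic subgroup is possible, while the non-compactness (properness) hypothesis rules out the degenerate case in which the measures $\phi(x,b)$ concentrate on a $G'$-invariant compact orbit. Invoking Furstenberg's lemma on the behaviour of measures on $V$ under sequences in $G'$ (stationarity and proximality of the $G$-action on $B$), I would conclude that $\phi(x,b)$ is $\mu\times m_B$-a.e.\ a Dirac mass, i.e.\ $\phi:X\times B\to V$ is a measurable $c$-equivariant map.

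The crux, and the step I expect to be the main obstacle, is upgrading $\phi$ to a map that is essentially \emph{independent of} $x\in X$ and \emph{algebraic} in $b$; this is precisely where ${\rm rk}_{\bbR}(G)\ge 2$ and the irreducibility of $G\acts(X,\mu)$ enter. The strategy is to study the map on the fibre product $X\times B\times B$, exploiting that the diagonal $G$-action on $B\times B$ has a conull (open Bruhat cell) orbit together with the contracting dynamics of one-parameter subgroups on $B$; a Mautner/Moore-type ergodicity argument in higher rank then pins the essential values of $c$, forcing $\phi(x,-)$ to be a.e.\ equal to a single rational map $\psi:B\to V$. Rationality of a measurable $G$-equivariant map between homogeneous algebraic spaces is automatic (by the standard rigidity of measurable equivariant maps into varieties), and such a $\psi$ is equivariant only with respect to a continuous homomorphism $\pi:G\to G'$; in the real higher-rank setting this $\pi$ is rational. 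The auxiliary straightening function $f:X\to G'$ measuring the discrepancy between $c(g,x)$ and $\pi(g)$ then emerges from the $x$-dependence eliminated in this step, giving $c(g,x)=f(gx)^{-1}\pi(g)f(x)$. I would finally note that one may alternatively package the last two paragraphs by restricting to a lattice $\Gamma<G$ and invoking Margulis' Theorem~\ref{T:Margulis-sr} to extend the resulting representation, but the intrinsic boundary argument is what makes the higher-rank hypothesis do its work.
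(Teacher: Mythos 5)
The paper never proves this theorem: it is quoted as background, with the proof residing in Zimmer \cite{Zimmer:cocyclesuper:80} and \cite{Zimmer:book:84}, so the only meaningful comparison is against that argument. Your outline is precisely that argument — Zimmer's cocycle adaptation of Margulis' boundary-map proof: amenability of $G\acts X\times G/P$ yields the $c$-equivariant map into $\Prob(G'/Q')$; Furstenberg's lemma on stabilizers of measures on flag varieties, combined with the Zariski-density and non-compactness hypotheses (which rule out reduction of the cocycle to a compact or proper algebraic subgroup), upgrades it to a map into the variety; and the higher-rank hypothesis enters through ergodicity of subgroup actions and rationality of measurable equivariant maps. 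So you have reconstructed the proof of record, not an alternative to it.

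One correction at the step you yourself flag as the crux. It cannot be that $\phi(x,\cdot)$ is a.e.\ equal to a \emph{single} rational map $\psi:B\to V$: if that held literally, equivariance $\psi(gb)=c(g,x)\psi(b)$ together with Zariski density of the image of $\psi$ would force $c(g,x)=\pi(g)$ identically, i.e.\ $c$ would \emph{be} a homomorphism rather than merely cohomologous to one, and there would be no $x$-dependence left from which your $f$ could "emerge" — as written, that part of your argument is circular. The correct statement is weaker: higher-rank rationality shows each $\phi(x,\cdot)$ is a rational map, and ergodicity of $G\acts(X,\mu)$ together with the fact that the $G'$-action on the variety of rational maps $B\to V$ is tame (orbits are countably separated, stabilizers algebraic) forces all $\phi(x,\cdot)$ into a \emph{single $G'$-orbit}. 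Writing $\phi(x,\cdot)=f(x)\cdot\psi_0$ via a measurable selection, the equivariance identity then says that $g\mapsto f(gx)^{-1}c(g,x)f(x)$ stabilizes $\psi_0$, and the stabilizer analysis identifies it with a rational homomorphism $\pi$ independent of $x$; this is where $f$ actually comes from. With that reformulation your sketch matches Zimmer's proof step for step.
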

In the above statement \embf{irreducibility} of $G\acts(X,\mu)$ means 
mere ergodicity if $G$ is a simple group,
and ergodicity of the action $G_i\acts (X,\mu)$ for each factor $G_i$ in the case
of a semi-simple group $G=\prod_{i=1}^n G_i$ with $n\ge 2$ factors.
For a lattice $\Gamma<G=\prod G_i$ in a semi-simple group
the transitive action $G\acts G/\Gamma$ is irreducible precisely iff $\Gamma$ is an irreducible
lattice in $G$.
The notions of being \embf{Zariski dense} (resp. \embf{not compact}) for a cocycle $c:G\times X\to H$
mean that $c$ is not conjugate to a cocycle $c^f$ taking values in a proper algebraic 
(resp. compact) subgroup of $H$.

The setting of cocycles over p.m.p. actions adds a great deal of generality to the 
superrigidity phenomena. 
First illustration of this is the fact that once cocycle superrigidity is known
for actions of $G$ it passes to actions of lattices in $G$:
given an action $\Gamma\acts (X,\mu)$ of a lattice $\Gamma<G$ one obtains
a $G$-action on $\bar{X}=G\times_\Gamma X$ by acting on the first coordinate
(just like the composition operation of ME-coupling \S \ref{sub:groups}).
A cocycle $c:\Gamma\times X\to H$ has a natural lift to $\bar{c}:G\times \bar{X}\to H$
and its cohomology is directly related to that of the original cocycle.
So cocycle superrigidity theorems have an almost automatic bootstrap from lcsc groups to their lattices.
The induced action $G\acts \bar{X}$ is ergodic iff $\Gamma\acts X$ is ergodic;
however irreducibility is more subtle. Yet, if $\Gamma\acts (X,\mu)$ is mixing
then $G\acts \bar{X}$ is mixing and therefore is irreducible.

Theorem~\ref{T:ZimmerOE} was the first application of Zimmer's cocycle superrigidity \ref{T:Zimmer-csr} 
(see \cite{Zimmer:cocyclesuper:80}).
Indeed, if $\alpha:\Gamma\times X\to \Gamma'$ is the rearrangement cocycle associated to an Orbit Equivalence
$T:\Gamma\acts(X,\mu)\ore \Gamma'\acts (X',\mu')$ where $\Gamma<G$, $\Gamma'<G'$ are lattices,
then, viewing $\alpha$ as taking values in $G'$, Zimmer observes that $\alpha$ is Zariski dense
using a form of Borel's density theorem and deduces that $G\cong G'$ (here for simplicity 
the ambient groups are assumed to be simple, connected, center-free and  ${\rm rk}_\bbR(G)\ge2$).
Moreover there is a homomorphism $\pi:\Gamma\to G'$ and $f:X\to G'$ so that 
$\alpha(\gamma,x)=f(\gamma x)\pi(\gamma)f(x)^{-1}$ with $\pi:\Gamma\to \pi(\Gamma)<G'$ being isomorphism
of lattices. 

\begin{remark}\label{R:dichotomy}
At this point it is not clear whether $\pi(\Gamma)$ should be (conjugate to) $\Gamma'$, and 
even assuming $\pi(\Gamma)=\Gamma'$ whether $f$ takes values in $\Gamma'$. 
In fact, the self orbit equivalence of the $\Gamma$ action on $G/\Gamma$ given by
$g\Gamma\mapsto g^{-1}\Gamma$ gives a rearrangement cocycle
$c:\Gamma\times G/\Gamma\to\Gamma$ which is conjugate to the identity $\Gamma\to\Gamma$
by a unique map $f:G/\Gamma\to G$ with $f_*(m_{G/\Gamma})\prec m_G$.
However, if $\pi(\Gamma)=\Gamma'$ and $f$ takes values in $\Gamma'$ it follows
that the original actions $\Gamma\acts (X,\mu)$ and $\Gamma'\acts (X',\mu')$ are isomorphic
via the identification $\pi:\Gamma\cong\Gamma'$. We return to this point below.
\end{remark}

\medskip

\subsubsection{Superrigidity and ME-couplings} 
\label{ssub:Superrigidity_and_ME_couplings}\hfill\\

Zimmer's cocycle superrigidity theorem applied to OE or ME-cocycles 
(see \S \ref{sub:oe_cocycles}, \ref{sub:me_cocycles}) has a natural interpretation in terms of ME-couplings.
Let $G$ be a higher rank simple Lie group (hereafter implicitly, connected, and center free),
denote by $i:G\to\Aut(G)$  the adjoint homomorphism (which is an embedding since $G$ is center free).
\begin{theorem}[{\cite[Theorem 4.1]{Furman:ME:99}}]\label{T:canonical-quotients}
Let $G$ be a higher rank simple Lie group, $\Gamma_1,\Gamma_2<G$ lattices,
and $(\Omega,m)$ an ergodic $(\Gamma_1,\Gamma_2)$-coupling. 
Then there exists a unique measurable map $\Phi:\Omega\to \Aut(G)$ 
so that $m$-a.e. on $\Omega$
\[
	\Phi(\gamma_1 \omega)=i(\gamma_1) \Phi(\omega),\qquad \Phi(\gamma_2\omega)=\Phi(\omega)i(\gamma_2)^{-1}
	\qquad (\gamma_i\in\Gamma_i).
\]
Moreover, $\Phi_*m$ is either the Haar measure on a group $G\cong \Ad(G)\le G'\le \Aut(G)$, or is atomic
in which case $\Gamma_1$ and $\Gamma_2$ are commensurable.
\end{theorem}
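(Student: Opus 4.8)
The plan is to derive Theorem~\ref{T:canonical-quotients} from Zimmer's cocycle superrigidity Theorem~\ref{T:Zimmer-csr} by converting the ME-coupling into cocycle data, applying superrigidity, and then repackaging the result as an equivariant map. First I would fix fundamental domains $X,Y\subset\Omega$ for the $\Gamma_2$- and $\Gamma_1$-actions respectively, and use the domain $X$ for $\Omega/\Gamma_2$ to define the ME-cocycle $\alpha:\Gamma_1\times X\to\Gamma_2$ via the requirement that $\gamma_1\cdot x$ and $\alpha(\gamma_1,x)^{-1}\cdot(\gamma_1 x)$ land back in $X$ (see \S\ref{sub:me_cocycles}); concretely, for $x\in X\cong\Omega/\Gamma_2$ one writes $\gamma_1 x = \alpha(\gamma_1,x)\,\tilde{x}$ with $\tilde{x}\in X$. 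Composing with the inclusion $i:\Gamma_2\hookrightarrow G\hookrightarrow\Aut(G)$, I regard $\alpha$ as a cocycle $\Gamma_1\times X\to G$.

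Next I would verify the hypotheses of Theorem~\ref{T:Zimmer-csr}: the action $\Gamma_1\acts X\cong\Omega/\Gamma_2$ is ergodic (since $\Omega$ is an ergodic coupling), and I must check that $\alpha$ is Zariski dense and not compact as a $G$-valued cocycle. This is the step I expect to be the main obstacle. Compactness is easy to rule out (a compact-valued reduction would force the $\Gamma_2$-action to have an invariant finite-measure structure incompatible with a genuine infinite coupling), but Zariski density requires an argument in the spirit of Borel's density theorem: one must show that no conjugate $\alpha^f$ takes values in a proper algebraic subgroup, using that $\Gamma_2<G$ is a lattice (hence Zariski dense) and that the coupling genuinely intertwines the two lattices. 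After bootstrapping from $\Gamma_1$ to the ambient group $G$ via the induced action $G\acts G\times_{\Gamma_1}X$ (as described in the discussion following Theorem~\ref{T:Zimmer-csr}), superrigidity yields a homomorphism $\pi:G\to\Aut(G)$ — which, $G$ being simple and center free, is the adjoint embedding up to an automorphism of $G$ — together with a measurable $f:X\to\Aut(G)$ satisfying $\alpha(\gamma_1,x)=f(\gamma_1 x)^{-1}\,\pi(\gamma_1)\,f(x)$.

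The remaining work is to assemble $\Phi$ from $f$ and the cocycle data. Using the identification $\Omega\cong X\times\Gamma_2$ (every $\omega$ writes uniquely as $\gamma_2\cdot x$ with $x\in X$), I would define
\[
	\Phi(\gamma_2\cdot x)=i(\gamma_2)\,f(x)
\]
and check directly that this is well defined and satisfies the two equivariance relations
\[
	\Phi(\gamma_1\omega)=i(\gamma_1)\Phi(\omega),\qquad
	\Phi(\gamma_2\omega)=\Phi(\omega)\,i(\gamma_2)^{-1},
\]
the first being exactly the content of the superrigidity conclusion combined with the cocycle identity, the second holding by construction. Uniqueness of $\Phi$ follows from ergodicity of the $\Gamma_1\times\Gamma_2$-action: any two such maps differ by an $\Aut(G)$-valued function invariant under both sides, hence essentially constant, and the constant can be normalized away.

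Finally, to identify $\Phi_*m$ I would push the Haar measure structure forward: the map $\Phi$ is $G\times G$-equivariant (for the two copies of $G\supset\Gamma_i$ acting by left and right translation on $\Aut(G)$), so $\Phi_*m$ is invariant under left translation by $\Ad(G)$ and right translation by $\Ad(G)$. By a measure-classification argument — here one invokes Ratner's theorem as indicated in comment \textbf{(e)} and the discussion before Theorem~\ref{T:Furman:ME} — the only ergodic measures with this bi-invariance are either atomic (forcing $\Gamma_1,\Gamma_2$ to be commensurable, since the stabilizer structure then exhibits a discrete coupling as in Lemma~\ref{L:v-iso}) or the Haar measure on an intermediate closed subgroup $\Ad(G)\le G'\le\Aut(G)$. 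This dichotomy is precisely the asserted conclusion.
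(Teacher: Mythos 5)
Your overall strategy is exactly the paper's: extract the ME-cocycle from a $\Gamma_2$-fundamental domain, apply Zimmer's cocycle superrigidity (Theorem~\ref{T:Zimmer-csr}) after normalizing the resulting homomorphism to be the embedding $i$, assemble $\Phi$ from the untwisting map $f$ via the decomposition $\Omega\cong X\times\Gamma_2$, and identify $\Phi_*m$ via Ratner's theorem. However, the assembly step fails as written. With your formula $\Phi(\gamma_2 x)=i(\gamma_2)f(x)$ one gets $\Phi(\gamma_2'\omega)=i(\gamma_2')\Phi(\omega)$, i.e.\ \emph{left} $\Gamma_2$-equivariance, not the required $\Phi(\gamma_2'\omega)=\Phi(\omega)\,i(\gamma_2')^{-1}$; the claim that the second relation holds ``by construction'' is false. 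The $\Gamma_1$-relation does not close either: with your convention $\gamma_1x=\alpha(\gamma_1,x)(\gamma_1\cdot x)$ (which is inverse to the convention of \S\ref{sub:me_cocycles}; note that this $\alpha$ satisfies the reversed identity $\alpha(\gamma_1'\gamma_1,x)=\alpha(\gamma_1,x)\,\alpha(\gamma_1',\gamma_1\cdot x)$, so Zimmer's theorem literally applies to $\alpha^{-1}$ rather than to $\alpha$), substituting the superrigidity identity yields $\Phi(\gamma_1\gamma_2x)= i(\gamma_2)\,f(\gamma_1\cdot x)^{-1}i(\gamma_1)f(x)\,f(\gamma_1\cdot x)$, which is not $i(\gamma_1)\Phi(\gamma_2x)=i(\gamma_1)i(\gamma_2)f(x)$. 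The repair is mechanical --- with the convention of \S\ref{sub:me_cocycles} and the paper's formula $\Phi(\gamma_2 x)=f(x)\,i(\gamma_2)^{-1}$ both relations do hold --- but as stated your $\Phi$ is not the map the theorem asserts.

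Two further steps are wrong in a more substantive way. For uniqueness, the difference $\Psi(\omega)=\Phi_1(\omega)^{-1}\Phi_2(\omega)$ of two solutions is \emph{not} invariant under both groups: it is $\Gamma_1$-invariant but transforms by conjugation under $\Gamma_2$, namely $\Psi(\gamma_2\omega)=i(\gamma_2)\Psi(\omega)i(\gamma_2)^{-1}$, so ergodicity of $\Gamma_1\times\Gamma_2$ alone gives nothing. One must descend to the finite measure space $\Omega/\Gamma_1$ and invoke the strong ICC property of $\Aut(G)$ (conjugation-invariant probability measures are trivial, cf.\ \S\ref{sub:constructing_representations}) to force $\Psi=e$ a.e.; in particular uniqueness is exact, and there is no ``constant to normalize away''. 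For the measure identification, your claim that $\Phi_*m$ is invariant under left and right translations by $\Ad(G)$ is unjustified and cannot be right: $\Phi$ intertwines only the $\Gamma_1\times\Gamma_2$-action (there is no $G\times G$-action on $\Omega$), so $\Phi_*m$ is merely invariant under left translation by $i(\Gamma_1)$ and right translation by $i(\Gamma_2)$. Indeed a genuinely bi-$\Ad(G)$-invariant measure could never be atomic, so your version of the dichotomy would erase precisely the commensurable case. The correct route, as in the paper, is to descend to the $\Gamma_1$-equivariant map $X=\Omega/\Gamma_2\to\Aut(G)/i(\Gamma_2)$ and apply Ratner's classification of finite $\Gamma_1$-ergodic invariant measures on that homogeneous space; homogeneity of such measures is what produces the alternative ``atomic (hence $\Gamma_1,\Gamma_2$ commensurable) versus Haar on some $\Ad(G)\le G'\le\Aut(G)$''.
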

\begin{proof}[Sketch of the proof]
To construct such a $\Phi$, choose a fundamental domain $X\subset \Omega$ for the $\Gamma_2$-action
and look at the ME-cocycle $c:\Gamma_1\times X\to\Gamma_2<G$.
Apply Zimmer's cocycle superrigidity theorem to find $\pi:\Gamma_1\to G$ and $\phi:X\to G$.
Viewing $G$ as a subgroup in $\Aut(G)$, one may adjust $\pi$ and $\phi:X\to\Aut(G)$ by some $\alpha\in\Aut(G)$,
so that $\pi$ is the isomorphism $i:\Gamma_1\to\Gamma_1$, to get
\[
	c(\gamma_1,x)=\phi(\gamma_1.x)^{-1}i(\gamma_1) \phi(x).
\]
Define $\Phi:\Omega\to\Aut(G)$ by $\Phi(\gamma_2 x)=\phi(x)i(\gamma_2)^{-1}$ and check that it satisfies
the required relation. 
To identify the measure $\Phi_*m$ on $\Aut(G)$ one uses Ratner's theorem, which provides the classification of 
$\Gamma_1$-ergodic \emph{finite} measures on $\bar{G}/\Gamma_2$.
\end{proof}

Theorem~\ref{T:Furman:ME} is then proved using this fact with $\Gamma_1=\Gamma_2$ plugged into
the construction in \ref{sub:constructing_representations} which describes an unknown group $\Lambda$
essentially as a lattice in $G$. 

Note that there are two distinct cases in Theorem~\ref{T:canonical-quotients}: 
either $\Phi_*m$ is atomic, in which case
$(\Omega,m)$ has a discrete ME-coupling as a quotient, or $\Phi_*m$ is a Haar measure
on a Lie group.
The former case leads to a virtual isomorphism between the groups and the actions 
(this is case (1) in Theorem~\ref{T:OE-rigidity-combined}); in the latter
$\Gamma_1\acts X\cong\Omega/\Gamma_2$ has a quotient of the form $\Gamma_1\acts \bar{G}/\Gamma_2$
(which is \cite[Theorem C]{Furman:OE:99}).
This dichotomy clarifies the situation in Remark~\ref{R:dichotomy} above.


\bigskip

\subsection{Superrigidity for product groups} 
\label{sub:superrigidity_for_products}\hfill\\

Let us now turn to a brief discussion of Monod-Shalom rigidity (see \S\S \ref{ssub:classes_creg_and_cmix}, \ref{ssub:products_of_hyperbolic_like_groups}). 
Consider a special case of Margulis-Zimmer superrigidity results where the target group $G'$ has rank one 
(say $G'={\rm PSL}_2(\bbR)$),
while $G$ has higher rank. 
The conclusion of the superrigidity Theorems \ref{T:Margulis-sr} (resp. \ref{T:Zimmer-csr})
is that either a representation (resp. cocycle) is degenerate\footnote{ Here precompact, or contain in a parabilic}, 
or there is an epimorphism $\pi:G\to G'$.
The latter case occurs \emph{if and only if} $G$ is semi-simple $G=\prod G_i$, 
with one of the factors $G_i\simeq G'$,
and $\pi:G\to G'$ factoring through the projection $\pi:G\overto{{\rm pr}_i} G_i\simeq G'$. 
In this case the given representation of the lattice extends to $\pi$ (resp. the cocycle 
is conjugate to the epimorphism $\pi$).

This special case of Margulis-Zimmer superrigidity, i.e., from higher rank $G$ to rank one $G'$,
was generalized by a number of authors 
\cite{Burger+Mozes:CAT-1, Adams:1988tree, Adams+Spatzier:1990:T-tree}
replacing the assumption that the target group $G'$ has rank one, by
more geometric notions, such as $G'=\Isom(X)$ where $X$ is a proper CAT(-1) space.
In the setting considered by Monod and Shalom the target group is "hyperbilic-like"
in a very general way, while the source group $G$ rather than being higher rank semi-simple Lie group,
is just a product $G=G_1\times\cdots\times G_n$ of $n\ge 2$ \emph{arbitrary} 
compactly generated (in fact, just lcsc) groups.
The philosophy is that the number $n\ge 2$ of direct factors 
provides enough \emph{higher rank} properties for such statements.
\begin{theorem}[Monod - Shalom \cite{Monod+Shalom:CO:04}]\label{T:Monod-Shalom:CSR}
Let $G=G_1\times\cdots\times G_n$ be a product of $n\ge 2$
lcsc groups, $G\acts (X,\mu)$ an irreducible p.m.p. action, $H$ is hyperbolic-like group, 
and $c:G\times X\to H$ is a non-elementary measurable cocycle.

Then there is a non-elementary closed subgroup $H_1<H$, a compact normal subgroup $K\triangleleft H_1$,
a measurable $f:X\to H$, and a homomorphism $\rho:G_i\to H_1/K$ from one of the factors $G_i$ of $G$,
so that the conjugate cocycle $c^f$ takes values in $H_1$, and $G\times X\to H_1\to H_1/K$
is the homomorphism $\pi:G\overto{pr_i} G_i\overto{\rho} H_1/K$.
\end{theorem}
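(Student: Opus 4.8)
The plan is to run the bounded-cohomology version of the Margulis--Zimmer argument, using the product structure of $G$ to supply the "higher-rank thrust" in place of a single higher-rank simple factor. The hyperbolic-like hypothesis on $H$ enters through the existence of a measurable amenable $H$-boundary $M$ (the ideal boundary of the tree, the $\mathrm{CAT}(-1)$ space, or the hyperbolic graph on which $H$ acts), on which $H$ acts with a strong contraction/proximality property and for which the diagonal action on the space $M^{(2)}$ of distinct pairs is, up to compact stabilizers, proper. This is exactly the geometric content that places $H$ in $\Cmix$ and produces a nonzero bounded class with coefficients having no invariant vectors.

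First I would construct a boundary map. Fix an admissible probability measure on $G$ and let $B=B_1\times\cdots\times B_n$ be its Furstenberg--Poisson boundary, so that each factor action $G_i\acts B_i$ is amenable and doubly ergodic with coefficients. Since $G\acts B$ is amenable and the cocycle $c$ defines a skew-product $G$-action on $X\times M$, amenability yields a $c$-equivariant measurable assignment $\varphi\colon X\times B\to \Prob(M)$, i.e.\ $\varphi(gx,gb)=c(g,x)_*\varphi(x,b)$. Then I would invoke the standard contraction lemma (of Furstenberg--Zimmer--Kaimanovich type, combining proximality of $H\acts M$ with double ergodicity of $G\acts B\times B$): non-elementarity of $c$ forces $\varphi$ to take values in Dirac masses, and in fact to produce a $c$-equivariant map into $M^{(2)}$.

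The heart of the argument --- and the step I expect to be the main obstacle --- is to show that this boundary map, and hence the cocycle, factors through a single coordinate projection $\pr_i\colon G\to G_i$. Here I would use the Burger--Monod splitting of second bounded cohomology of a product, together with the double ergodicity of each $G_i\acts B_i$: a nonzero class $H^2_b$ with coefficients having no invariant vectors (which is precisely what the hyperbolic-like, i.e.\ $\Cmix$, hypothesis on $H$ supplies) cannot have mixed support across two factors. Consequently the $c$-equivariant map $B_1\times\cdots\times B_n\to M^{(2)}$ is forced to depend on only one factor $B_i$ and to be independent of the $X$-variable up to a coboundary. Controlling the genuinely two-factor contribution --- proving that the relevant mixed bounded-cohomology terms vanish --- is the technical core, and is where the hypothesis $n\ge 2$ is indispensable.

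Finally, from an $X$-independent $c|_{G_i}$-equivariant map $B_i\to M^{(2)}$ I would read off the conclusion. Such a map determines the cocycle only up to the stabilizer in $H$ of a pair of boundary points; this stabilizer is a closed subgroup $H_1<H$ whose maximal compact normal part is $K\triangleleft H_1$, and the induced action on $M^{(2)}$ identifies $H_1/K$ as the receptacle of a genuine homomorphism $\rho\colon G_i\to H_1/K$. The residual $X$-dependence of $\varphi$, being a coboundary, is absorbed into a measurable $f\colon X\to H$ conjugating $c$ into $H_1$, so that the composite $G\times X\to H_1\to H_1/K$ becomes the homomorphism $\pi=\rho\circ\pr_i$, as claimed.
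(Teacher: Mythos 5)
Your sketch has a genuine gap at exactly the point you flag as the heart of the argument, and the mechanism you invoke there cannot close it. The Burger--Monod splitting theorem is a statement about classes in $H^2_b(G_1\times\cdots\times G_n,\,E)$ with coefficients in a separable coefficient module; it says nothing directly about equivariant measurable maps $B_1\times\cdots\times B_n\to M^{(2)}$. To bring it to bear one must first \emph{manufacture} a bounded class out of the cocycle: one needs the special class of the hyperbolic-like group $H$ with mixing (e.g.\ $\ell^2$) coefficients --- this is precisely what the Mineyev--Monod--Shalom ideal-bicombing construction provides, and what membership in $\Creg$ means --- then pull it back through $c$ and induce through the p.m.p.\ action to land in $H^2_b(G, L^2(X;\pi))$; non-elementarity of the cocycle is what guarantees this pulled-back class is non-zero. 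Only then does the splitting theorem localize the class on a single factor $G_i$, and one still needs the delicate converse step translating this cohomological localization back into the assertion that $c$ is cohomologous to a cocycle factoring through $\pr_i$. Your proposal constructs no class, pulls nothing back, and the inference ``no mixed classes exist, hence the boundary map depends only on $B_i$ and is $X$-independent up to coboundary'' is a non sequitur. This is also where your sketch diverges from the paper's route: the proof the survey refers to (Monod--Shalom \cite{Monod+Shalom:CO:04}) is purely cohomological and never passes through a map to $M^{(2)}$; the genuinely boundary-theoretic route you are gesturing at is the Bader--Furman approach for convergence groups (class $\Dynam$), which replaces bounded cohomology at this step by Weyl-group/ergodicity arguments on boundary pairs --- and you do not supply those either.

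Two further problems. First, your final step identifies $H_1$ as the stabilizer in $H$ of a pair of boundary points. For a tree, CAT($-1$), hyperbolic-graph, or general convergence action, pair-stabilizers are precisely the \emph{elementary} subgroups, whereas the theorem requires $H_1$ to be non-elementary; so this identification cannot be correct. The actual structure is essentially the opposite: once the boundary map $\psi$ is independent of $x$, equivariance shows that the elements $c(g,x)c(g,x')^{-1}$ fix the essential image of $\psi$ in $M$ pointwise; non-elementarity forces that image to contain at least three points, so its pointwise stabilizer $K$ is compact, $H_1$ is the closed subgroup generated by the values of the conjugated cocycle (which normalizes $K$), and it is modulo this $K$ that the cocycle becomes the homomorphism $\rho$. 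Second, the survey explicitly interprets ``hyperbolic-like'' in this theorem via the class $\Creg$, i.e.\ $H^2_b(H,\ell^2(H))\neq\{0\}$, a purely cohomological hypothesis under which no compact boundary $M$ with proximality properties is available at all; your opening step therefore has nothing to start from. Even if completed, your argument would prove the $\Dynam$-variant (Theorem \ref{T:Bader+Furman:CSR}), not the stated theorem.
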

This beautiful theorem is proved using the technology of \emph{second bounded cohomology}
(developed in \cite{Burger+Monod:JEMS:99, Burger+Monod:GAFA:02, Monod:01bdd} and applied
in this setting in \cite{Monod+Shalom:CO:04, Mineyev+Monod+Shalom:2004}) with the 
notions of \emph{hyperbolic-like} and \emph{non-elementary} interpreted in the context 
of the class $\Creg$.

\medskip

Suppose $\Gamma=\Gamma_1\times\cdots\times\Gamma_n$, $n\ge 2$, is a product of
"hyperbolic-like" groups. Let $(\Omega,m)$ be a self ME-coupling of $\Gamma$.
Consider a ME-cocycle $\Gamma\times X\to\Gamma$ which can be viewed as a combination
of $n$ cocycles 
\[
	c_i:\Gamma\times X\overto{c}\Gamma\overto{pr_i}\Gamma_i\qquad (i=1,\dots,n)
\]
and assume that $\Gamma\acts \Omega/\Gamma$ is an \emph{irreducible} action.
Viewing the source group $\Gamma$ as a product of $n\ge 2$ factors acting irreducibly,
and recalling that the target groups $\Gamma_i$ are "hyperbolic-like" Monod and Shalom apply
Theorem~\ref{T:Monod-Shalom:CSR}.
The cocycles arising from ME coupling turn out to be non-elementary, leading to the 
conclusion that each cocycle $c_i$ is conjugate to a homomorphism $\rho_i:\Gamma_{j(i)}\to \Gamma_i'$,
modulo some reductions and finite kernels,.
Since $\Gamma_i$ commute, the conjugations can be performed independently and \emph{simultaneously} 
on all the cocycles $c_i$.
After some intricate analysis of the map $i\to j(i)$,
kernels and co-kernels of $\rho_i$, Monod and Shalom show that in the setting of ME couplings 
as above the map $i\to j(i)$ is a permutation
and $\rho_i$ are isomorphisms.
Thus the original cocycle $c$ can be conjugate to an \emph{automorphism} of $\Gamma$. 

This ME-cocycle superrigidity can now be plugged into an analogue of Theorem~\ref{T:canonical-quotients}
to give a measurable bi-$\Gamma$-equivariant map $\Omega\to\Gamma$, 
which can be used as an input to a construction like Theorem~\ref{T:reps}.
This allows to identify \emph{unknown} groups $\Lambda$ Measure
Equivalent to $\Gamma=\Gamma_1\times\cdots\times \Gamma_n$.
The only delicate point is that starting from a $\Gamma\acts X\soe\Lambda\acts Y$
and the corresponding $(\Gamma,\Lambda)$-coupling $\Omega$
one needs to look at the self $\Gamma$-coupling $\Sigma=\Omega\times_\Lambda\check\Omega$
and apply the cocycle superrigidity result to $\Gamma\acts \Sigma/\Gamma$.
In order to guarantee that the latter action is irreducible, Monod and Shalom
require $\Gamma\acts X$ to be irreducible and $\Lambda\acts Y$ to be \emph{mildly mixing}.
They also show that the assumption on mild mixing is necessary for the result. 

\medskip

In \cite{Bader+Furman:Weyl-hyplike} Uri Bader and the author proposed to
study higher rank superrigidity phenomena using a notion of a (generalized) Weyl group,
which works well for higher rank simple Lie groups, arbitrary products $G=G_1\times\cdots\times G_n$
of $n\ge 2$ factors, and exotic $\tilde{A}_2$ groups, which are close relatives to lattices in $\SL_3(\bbQ_p)$.
In particular:
\begin{theorem}[Bader - Furman \cite{Bader+Furman:Weyl-hyplike}]\label{T:Bader+Furman:CSR}
Theorem \ref{T:Monod-Shalom:CSR} holds for target groups from class $\Dynam$.
\end{theorem}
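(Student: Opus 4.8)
The plan is to reprove Theorem~\ref{T:Monod-Shalom:CSR} with the bounded-cohomological machinery of Monod--Shalom replaced by boundary theory together with the generalized Weyl-group mechanism of \cite{Bader+Furman:Weyl-hyplike}, exploiting that the convergence action defining membership in $\Dynam$ supplies exactly the contraction dynamics one needs. Throughout I fix a compact metrizable space $M$ on which $H\in\Dynam$ acts as a convergence group with amenable point-stabilizers, and call a closed subgroup of $H$ \emph{elementary} if it fixes a point of $M$ or an unordered pair of points; such subgroups are amenable. For the source group $G=G_1\times\cdots\times G_n$ with $n\ge2$ I choose for each factor a strong (Furstenberg--Poisson) boundary $B_i$ enjoying double ergodicity with unitary coefficients, and set $B=\prod_i B_i$, so that $G\acts B$ is an amenable $G$-boundary while $G\acts(X,\mu)$ is irreducible by hypothesis.

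First I would use amenability of the $G$-action on $B\times X$: the cocycle $c$ turns the field of compact convex sets $\Prob(M)$ over $B\times X$ into a $G$-equivariant bundle, and amenability yields a measurable $G$-map $\phi\colon B\times X\to\Prob(M)$. Next, invoking the convergence dynamics, I would show that if $c$ is non-elementary then $\phi$ is in fact Dirac, i.e.\ descends to a $G$-map $B\times X\to M$: the proper action of $H$ on the space of distinct triples forces any such equivariant measure field to be supported on at most two points, and non-elementarity of $c$ rules out the cases of support of size $\le2$. Passing to the double boundary then produces a $G$-equivariant assignment $B\times B\times X\to M^{(2)}$ into distinct pairs.

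The heart of the argument, and the step I expect to be the main obstacle, is to show that this boundary map depends on only \emph{one} coordinate $b_i$ of $b=(b_1,\dots,b_n)\in B$. This is precisely where the product structure must supply the ``higher-rank thrust'': using the generalized Weyl group attached to $B$ together with the irreducibility of $G\acts X$ (which guarantees ergodicity of the relevant factor-actions on $B\times B\times X$), one plays the factors $G_j$ with $j\neq i$ against the contraction dynamics in $M$ to conclude that $\phi(b,x)$ is unchanged when $b_j$ is varied for every $j\neq i$. Establishing the requisite Weyl-group ergodicity in the convergence-group setting, rather than the CAT($-1$)/$\Creg$ setting of \cite{Monod+Shalom:CO:04}, is the delicate technical content and the place where the amenability of stabilizers is genuinely used.

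Finally, once $\phi$ factors through $b_i$ and is invariant under $G_j$ for $j\neq i$, I would conjugate $c$ by a measurable $f\colon X\to H$ so that $c^f$ preserves the resulting boundary data; the essential range of $c^f$ then generates a non-elementary closed subgroup $H_1<H$, and the kernel of its action on the relevant boundary datum yields a compact normal $K\triangleleft H_1$. The cocycle identity, combined with the $G_j$-invariance for $j\neq i$, forces the composite $G\times X\to H_1\to H_1/K$ to be independent of $X$ and hence to descend to an honest homomorphism $\rho\colon G_i\to H_1/K$ factoring through the projection $\pr_i$, exactly as asserted. The passage from the measurable cocycle to the genuine homomorphism is routine once the Weyl-group reduction has isolated the single active factor.
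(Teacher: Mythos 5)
You should first be aware that this survey contains no proof of Theorem~\ref{T:Bader+Furman:CSR}: the statement is a citation to the preprint \cite{Bader+Furman:Weyl-hyplike}, and the only indication of method given in the paper is that the result is obtained ``using a notion of a (generalized) Weyl group'' and then plugged into the Monod--Shalom machine. Your outline is exactly that method --- strong boundaries $B_i$ for the factors, amenability of $G\acts B\times X$ producing a map to $\Prob(M)$, convergence dynamics upgrading it to a boundary map into $M$, and Weyl-group ergodicity isolating a single factor $G_i$ --- so at the level of strategy you coincide with the cited work, and there is nothing more detailed in the paper itself to compare against.

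As a proof, however, two load-bearing steps do not stand as written. First, your Dirac-measure step is internally contradictory: you claim properness of $H\acts M^3\setminus{\rm Diag}$ forces the measures $\phi(b,x)$ to be supported on at most two points, and then that non-elementarity of $c$ ``rules out the cases of support of size $\le 2$'' --- taken together these rule out everything and would show no non-elementary cocycle exists. What is actually available: support of size $\ge 3$ leads, via metric ergodicity of $B\times X$ over $X$ (properness alone does not suffice, since $\phi$ depends on $b$), to a reduction of $c$ to a compact subgroup, which non-elementarity forbids; the remaining two-point case cannot be dismissed by non-elementarity either, precisely because a map to pairs defined on $B\times X$ is not a reduction of the cocycle over $X$ --- eliminating or absorbing the pair case requires a separate argument. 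Second, and decisively, the single-coordinate-dependence step, which you correctly identify as the heart, is asserted rather than proved; that assertion \emph{is} the theorem, since everything surrounding it in your sketch is standard boundary-theory boilerplate that would apply equally to $n=1$, where the conclusion is false. Naming the generalized Weyl group and the irreducibility of $G\acts X$ does not yet produce the ergodicity statement on $B\times B\times X$ that forces $\phi$ to depend on one coordinate only, and no argument for it is offered. So the route is the right one, but the proposal is a plan for a proof, not a proof.
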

Here $\Dynam$ is a class of \emph{hyperbolic-like} groups which includes many of the 
examples in $\Creg$. Plugging this into Monod-Shalom machine one obtains the same
results of products of groups in class $\Dynam$.

 
\medskip

\subsection{Strong rigidity for cocycles} 
\label{sub:strong_rigidity_for_cocycles}\hfill\\

In the proof of Theorem~\ref{T:canonical-quotients} Zimmer's cocycle superrigidity
was applied to a Measure Equivalence cocycle. This is a rather special class of cocycles
(see \S \ref{sub:me_cocycles}). If cocycles are analogous to representations of lattices
then ME-cocycles are analogous to isomorphisms between lattices, in particular,
they have an "inverse". 
Kida's work on ME for Mapping Class Groups focuses on rigidity results 
for such cocycles. We shall not attempt to explain the ingredients used in this work,
but will just formulate the main technical result analogous to Theorem~\ref{T:canonical-quotients}. 
Let $\Gamma$ be a subgroup of finite index in $\Gamma(\Sigma_{g,p})^\diamond$
with $3g+p-4>0$, $C=C(\Sigma_{g,p})$ denote its curve complex, and $\Aut(C)$
the group of its automorphisms; this is a countable group commensurable to $\Gamma$.
\begin{theorem}[Kida \cite{Kida:2006ME}]\label{T:Kida-coc}
Let $(\Omega,m)$ be a self ME-coupling of $\Gamma$. Then there exists a measurable
map $\Gamma\times\Gamma$-equivariant map $\Phi:\Omega\to \Aut(C)$.
\end{theorem}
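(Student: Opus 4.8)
The plan is to follow the template of Theorem~\ref{T:canonical-quotients}, substituting for Zimmer's cocycle superrigidity a measure-theoretic analogue of Ivanov's theorem identifying $\Aut(C)$ with the extended mapping class group. First I would choose a Borel fundamental domain $X\subset\Omega$ for the right $\Gamma$-action, so that $\Gamma\acts X\cong\Omega/\Gamma$ is a p.m.p. action carrying the associated ME-cocycle $c:\Gamma\times X\to\Gamma$ (in the language of \S\ref{sub:me_cocycles}). Constructing the bi-equivariant $\Phi:\Omega\to\Aut(C)$ is then equivalent to showing that $c$, regarded as taking values in the countable group $\Aut(C)$ containing $\Gamma$ as a finite-index subgroup, is cohomologous to the canonical embedding $\iota:\Gamma\hookrightarrow\Aut(C)$ furnished by Ivanov's theorem; the map $\Phi$ is then recovered from the straightening (transfer) function exactly as in the proof sketch of Theorem~\ref{T:canonical-quotients}, by setting $\Phi(\gamma_2 x)=\phi(x)\iota(\gamma_2)^{-1}$.

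The engine that replaces the Lie-theoretic boundary $G/P$ is the Thurston boundary $\mathcal{PML}(\Sigma)$ with its Lebesgue measure class. The first key step is Kida's fact that this is a $\Gamma$-\emph{boundary} in the sense of Burger--Monod: the $\Gamma$-action on $\mathcal{PML}(\Sigma)$ is amenable and doubly ergodic with unitary coefficients. Amenability of the boundary furnishes, for the measured groupoid attached to the self-coupling, a measurable equivariant assignment of probability measures on the compact $\Gamma$-space $\mathcal{PML}(\Sigma)$ (or on the space of geodesic laminations carrying the $\Aut(C)$-action). Double ergodicity then rigidifies this datum: arguing on pairs and triples of boundary points one pins down the support of these measures along minimal, filling laminations, thereby eliminating the degenerate ``reducible'' and ``amenable'' possibilities.

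The combinatorial heart of the argument --- and the step I expect to be the main obstacle --- is the groupoid version of Ivanov's rigidity theorem. One must translate the equivariant measure/point data on $\mathcal{PML}(\Sigma)$ into an assignment respecting the full simplicial structure of the curve complex $C$: disjointness (commuting Dehn twists), adjacency, and the topological types of the associated subsurfaces. Concretely, one classifies the measurable family of subgroupoids stabilizing a vertex or simplex of $C$ --- the elementary, reducible subgroupoids --- shows that the cocycle $c$ carries this structure $\Gamma$-equivariantly, and invokes the rigidity of the simplicial automorphism group: any reconstruction preserving the intersection-number pattern is realized by a unique element of $\Aut(C)$. The delicacy is that all of this must be carried out measurably and $m$-almost everywhere, uniformly over the coupling, so that the local combinatorial reconstruction of an element of $\Aut(C)$ at almost every point of $\Omega$ glues into a single Borel map $\Phi$.

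Once this measurable reconstruction is in place, equivariance is essentially automatic: the two commuting $\Gamma$-actions on $\Omega$ act on the reconstructed curve-complex data by pre- and post-composition, yielding $\Phi(\gamma_1\omega)=\iota(\gamma_1)\Phi(\omega)$ and $\Phi(\gamma_2\omega)=\Phi(\omega)\iota(\gamma_2)^{-1}$ for $\gamma_i\in\Gamma$. Uniqueness of $\Phi$ would follow from ergodicity of the coupling together with the triviality of centralizers in $\Aut(C)$, just as in Theorem~\ref{T:canonical-quotients}. This map $\Phi$ is precisely the input needed, via the construction of \S\ref{sub:constructing_representations}, to deduce the ME-rigidity statement of Theorem~\ref{T:Kida-MErigidity}.
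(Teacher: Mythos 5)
Your proposal follows essentially the same route the paper outlines for Kida's theorem: the Thurston boundary $\mathcal{PML}(\Sigma)$ as a Burger--Monod $\Gamma$-boundary (amenability plus double ergodicity), the measurable characterization of the reducible subgroupoids arising in a self-coupling, and a groupoid version of Ivanov's rigidity theorem, all packaged in the template of Theorem~\ref{T:canonical-quotients}. The survey itself only sketches these ingredients and defers the substance to Kida's papers, and your outline matches that sketch, correctly identifying the groupoid Ivanov rigidity step as the main technical obstacle.
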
	

\medskip

Returning to the point that ME-cocycles are analogous to isomorphism between lattices,
one might wonder whether Theorem~\ref{T:canonical-quotients} holds in cases where Mostow rigidity applies,
specifically for $G$ of rank one with ${\rm PSL}_2(\bbR)$ excluded.
In \cite{Bader+Furman+Sauer:2009} this is proved for $G\simeq \Isom(\hypsp^n_\bbR)$, $n\ge 3$, 
and a restricted ME.
\begin{theorem}[Bader - Furman - Sauer \cite{Bader+Furman+Sauer:2009}]\label{T:BFS-mostow}
Theorem~\ref{T:canonical-quotients} applies to $\ell^1$-ME-couplings of lattices in $G={\rm SO}_{n,1}(\bbR)$, $n\ge 3$.
\end{theorem}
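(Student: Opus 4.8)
The plan is to run the proof of Theorem~\ref{T:canonical-quotients} essentially verbatim, substituting for the higher-rank tools the rank-one ingredients available for $G={\rm SO}_{n,1}(\bbR)$, $n\ge 3$. First I would fix an ergodic $\ell^1$-$(\Gamma_1,\Gamma_2)$-coupling $(\Omega,m)$, choose a fundamental domain $X\subset\Omega$ for the $\Gamma_2$-action realizing the $\ell^1$-bound, and form the associated ME-cocycle $c:\Gamma_1\times X\to\Gamma_2<G$. The $\ell^1$-hypothesis on the coupling is exactly the assertion that $|c(\gamma_1,-)|_{\Gamma_2}\in L^1(X,\mu)$ for every $\gamma_1\in\Gamma_1$, so that $c$ is an $\ell^1$-integrable cocycle. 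As in Zimmer's original argument, a Borel-density/Zariski-closure argument shows that $c$, viewed with values in $G\subset\Aut(G)$, is neither precompact nor contained in a proper algebraic subgroup, i.e. it is non-degenerate.

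The decisive step replaces Zimmer's cocycle superrigidity, which is unavailable in rank one, by the $\ell^1$-cocycle strong rigidity theorem for ME-cocycles of lattices in ${\rm SO}_{n,1}(\bbR)$, $n\ge 3$ (the result of \cite{Bader+Furman+Sauer:2009} described in \S\ref{ssub:hyperbolic_lattices} that also underlies Theorem~\ref{T:BFSrigidity}). Applied to the non-degenerate $\ell^1$-cocycle $c$, it should produce a homomorphism $\pi:\Gamma_1\to G$ with image a lattice, together with a measurable $\phi:X\to G$ straightening $c$, namely $c(\gamma_1,x)=\phi(\gamma_1.x)^{-1}\pi(\gamma_1)\phi(x)$. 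Since $n\ge 3$, I would then invoke Mostow rigidity for $G$: the isomorphism $\pi:\Gamma_1\to\pi(\Gamma_1)$ of lattices extends to an automorphism of $G$, so after precomposing $\phi$ and $\pi$ with a suitable $\alpha\in\Aut(G)$ we may assume $\pi=i|_{\Gamma_1}$, the restriction of the adjoint embedding. One then defines $\Phi:\Omega\to\Aut(G)$ by $\Phi(\gamma_2 x)=\phi(x)\,i(\gamma_2)^{-1}$ and checks, exactly as in Theorem~\ref{T:canonical-quotients}, that $\Phi(\gamma_1\omega)=i(\gamma_1)\Phi(\omega)$ and $\Phi(\gamma_2\omega)=\Phi(\omega)i(\gamma_2)^{-1}$; uniqueness of such a $\Phi$ follows from ergodicity of the $\Gamma_1\times\Gamma_2$-action.

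It then remains to identify $\Phi_*m$. Descending through the $\Gamma_2$-action, $\Phi$ induces a $\Gamma_1$-equivariant map $\bar\Phi:X\cong\Omega/\Gamma_2\to\Aut(G)/i(\Gamma_2)$, so that $\bar\Phi_*\mu$ is a $\Gamma_1$-invariant (ergodic) probability measure on the finite-volume homogeneous space $\Aut(G)/i(\Gamma_2)$. As in Theorem~\ref{T:canonical-quotients}, the goal is the dichotomy that this measure is either purely atomic, forcing a discrete quotient and commensurability of $\Gamma_1$ and $\Gamma_2$, or else the $\bar G$-invariant (Haar) measure on an intermediate group $\Ad(G)\le G'\le\Aut(G)$. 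This is the rank-one analogue of the Ratner-measure-classification step, and here, because $\Gamma_1$ need not contain any unipotent elements, the clean unipotent-dynamics input of the higher-rank case is not available; one must substitute the measure rigidity for these lattice actions supplied in \cite{Bader+Furman+Sauer:2009}.

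The main obstacle will be concentrated in the cocycle-rigidity step. In rank one there is no higher-rank mechanism, and, as the footnote to \S\ref{ssub:hyperbolic_lattices} records, the Corlette--Zimmer superrigidity for ${\rm Sp}_{n,1}(\bbR)$ and $F_{4(-20)}$ carries boundedness hypotheses that ME-cocycles violate. The $\ell^1$-integrability condition on the coupling is precisely what makes rigidity possible for ${\rm SO}_{n,1}(\bbR)$: it permits the cocycle to be integrated and fed into the $\ell^1$-homological and simplicial-volume machinery of \cite{Bader+Furman+Sauer:2009}. Thus the crux of the matter is verifying that the ME-cocycle of an $\ell^1$-coupling really is an $\ell^1$-integrable, non-degenerate cocycle to which that strong rigidity theorem applies. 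Granting this, the construction of $\Phi$ and the measure dichotomy follow the higher-rank template, with Mostow rigidity (valid exactly for $n\ge 3$) supplying the extension of $\pi$ to an automorphism of $G$.
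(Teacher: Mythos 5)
Your proposal is circular at its decisive step. The ``$\ell^1$-cocycle strong rigidity theorem for ME-cocycles of lattices in ${\rm SO}_{n,1}(\bbR)$, $n\ge 3$'' that you invoke as a black box is not an independent tool available prior to this theorem: it \emph{is} this theorem. The cocycle strong rigidity result of \cite{Bader+Furman+Sauer:2009} referenced in \S\ref{ssub:hyperbolic_lattices} and the coupling statement of Theorem~\ref{T:BFS-mostow} are the same result in two equivalent formulations, related by exactly the routine fundamental-domain dictionary (cocycle $\leftrightarrow$ bi-equivariant map $\Phi$) that makes up the rest of your argument. The situation is genuinely different from the higher-rank case you are imitating: there, Zimmer's Theorem~\ref{T:Zimmer-csr} is a strictly more general statement --- it applies to arbitrary Zariski-dense, non-precompact cocycles over arbitrary irreducible p.m.p.\ actions --- and Theorem~\ref{T:canonical-quotients} is \emph{derived} from it. In rank one no such prior general cocycle superrigidity exists (as the paper notes, the Corlette--Zimmer results for ${\rm Sp}_{n,1}(\bbR)$ and $F_{4(-20)}$ carry boundedness hypotheses violated by ME-cocycles, and nothing of the sort holds for ${\rm SO}_{n,1}(\bbR)$); the rank-one rigidity statement is formulated \emph{only} for $\ell^1$ ME-cocycles, i.e., it is the coupling theorem itself. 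Relatedly, you have misplaced the crux: checking that the ME-cocycle of an $\ell^1$-coupling is $\ell^1$-integrable is not a difficulty at all --- it is the definition of an $\ell^1$-coupling in \S\ref{ssub:_ell_p_measure_equivalence}.

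What actually has to be proved is the untwisting of such a cocycle, and the paper's proof does this by entirely different means, which your proposal never engages: homological methods ($\ell^1$ and other completions of the usual homology) combined with a version of the Gromov--Thurston proof of Mostow rigidity for $\Isom(\hypsp^n_\bbR)$, $n\ge 3$, adapted to the measure-equivalence setting. In particular, Mostow rigidity enters the real argument at this structural level (via simplicial volume and boundary maps), not merely --- as in your sketch --- as a final step upgrading the isomorphism $\pi:\Gamma_1\to\pi(\Gamma_1)$ to an automorphism of $G$ after the cocycle has already been straightened. The same deferral problem recurs in your treatment of $\Phi_*m$: Ratner's theorem is unavailable here (lattices in ${\rm SO}_{n,1}(\bbR)$, notably uniform ones, need not contain unipotents), and the substitute measure analysis you cite is again part of \cite{Bader+Furman+Sauer:2009}'s proof of this very theorem. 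In short, your argument reduces Theorem~\ref{T:BFS-mostow} to itself plus translation steps; the content that makes the theorem true is missing.
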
	
The proof of this result uses homological methods ($\ell^1$ and other completions of the usual homology)
combined with a version of Gromov-Thurston proof of Mostow rigidity (for $\Isom(\hypsp^n_\bbR)$, $n\ge 3$)
adapted to this setting.


\medskip

\subsection{Cocycle superrigid actions} 
\label{sub:deformation_vs_rigidity_technique}\hfill\\

In all the previous examples the structure of the acting group was the sole
source for (super)rigidity. 
Recently Sorin Popa has developed a number of remarkable cocycle superrigidity results 
of a completely different nature \cite{Popa:2006:Betti, Popa:2006p750, Popa:2006RigidityI, Popa:2006RigidityII, Popa:2007coc1,
Popa:2007ICM, Popa:2008spec-gap}.
These results exhibit an extreme form of cocycle superrigidity, and rather than relying only on 
the properties of the acting group $\Gamma$ take the advantage of the \emph{action} $\Gamma\acts (X,\mu)$.
\begin{defn}\label{D:GCSR}
An action $\Gamma\acts (X,\mu)$ is $\Grps$-\embf{cocycle superrigid}, where
$\Grps$ is some class of topological groups, if for every $\Lambda\in\Grps$
every measurable cocycle $c:\Gamma\acts X\to \Lambda$
has the form $c(g,x)=f(gx)^{-1}\rho(g)f(x)$ for some homomorphism
$\rho:\Gamma\to\Lambda$ and some measurable $f:X\to\Lambda$.  
\end{defn}
Here we shall focus on the class $\Gdsc$ of all countable groups;
however the following results hold for all cocycles taking values in a 
broader class $\Ufin$ which contains $\Gdsc$ and $\Gcpt$ -- separable compact groups.
Note that the concept of $\Gdsc$-cocycle superrigidity is unprecedentedly strong:
there is no assumption on the cocycle, the assumption on the target group
is extremely weak, the "untwisting" takes place in the same target group.
\begin{theorem}[Popa \cite{Popa:2007coc1}]\label{T:Popa-Bernoulli:CSR}
Let $\Gamma$ be a group with property (T), $\Gamma\acts (X,\mu)=(X_0,\mu_0)^\Gamma$
be the Bernoulli action. Then $\Gamma\acts (X,\mu)$ is $\Gdsc$-cocycle superrigid.
\end{theorem}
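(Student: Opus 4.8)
The plan is to prove Popa's cocycle superrigidity theorem for Bernoulli actions of property (T) groups via the \emph{deformation/rigidity} strategy, which is the archetype of Popa's method. The key tension is between two antagonistic forces: the \emph{malleability} of the Bernoulli action, which produces a continuous family of deformations of the diagonal embedding, and the \emph{rigidity} coming from property (T), which forces any almost-invariant object to be genuinely invariant. I would set up the proof so that these two forces collide and pin the cocycle down.

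Let me sketch the steps. Let $\Gamma \acts (X,\mu) = (X_0,\mu_0)^\Gamma$ and let $c : \Gamma \times X \to \Lambda$ be a measurable cocycle into a countable (more generally $\Ufin$) group $\Lambda$. \textbf{Step 1 (Malleability).} First I would exhibit the malleable deformation of the Bernoulli action. Consider the double Bernoulli action $\Gamma \acts (X \times X, \mu \times \mu) = (X_0 \times X_0, \mu_0 \times \mu_0)^\Gamma$. The flip automorphism and the diagonal embedding $\Delta : X \to X \times X$ can be connected: there is a one-parameter family $(\alpha_t)_{t \in [0,1]}$ of measure-preserving automorphisms of $X \times X$, commuting with the $\Gamma$-action, with $\alpha_0 = \id$ and $\alpha_1$ equal to the flip, obtained because each coordinate factor $X_0 \times X_0$ carries a measure-preserving flow rotating the two copies into each other. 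This continuity in $t$ is the analytic heart of malleability.

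\textbf{Step 2 (Transport and almost-invariance).} Using $c$, I would form two cocycles on the double space by pulling back along the two coordinate projections, and compare $c$ with its $\alpha_t$-deformation $c \circ \alpha_t$. The composed/difference data defines, for each $t$, a measurable map, and the cocycle identity together with malleability shows that as $t \to 0$ these deformed cocycles converge to $c$ in measure. This produces a family of almost-$\Gamma$-invariant vectors (or almost-invariant sections valued in an appropriate $\Gamma$-representation built from $\Lambda$, e.g. on $\ell^2$ of a suitable groupoid/transformation object), the approximation improving as $t \to 0$. \textbf{Step 3 (Rigidity via (T)).} Here property (T) enters decisively: the family of almost-invariant sections must, by the spectral-gap/fixed-vector dichotomy characterizing property (T) (see the Delorme--Guichardet characterization recalled in \S\ref{ssub:amenability_property_t_a_t_menability}), be close to an genuinely $\Gamma$-invariant section for all small $t$. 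Invariance of this section is exactly the statement that $c$ and its small deformation $c \circ \alpha_t$ are \emph{cohomologous} via a coboundary that is close to the identity.

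\textbf{Step 4 (Untwisting and conclusion).} A continuity/openness argument in $t$ then lets me propagate this local cohomology: the set of $t$ for which $c \circ \alpha_t$ is cohomologous to $c$ is both open and closed in $[0,1]$ and contains a neighborhood of $0$, hence equals $[0,1]$. Evaluating at $t=1$ shows $c$ is cohomologous to its flip-transported version, and feeding this back through the coordinate symmetry of $X \times X$ forces the resulting transfer function $f$ to depend on a single coordinate; the cocycle identity then shows $c(g,x) = f(gx)^{-1}\rho(g) f(x)$ with $\rho : \Gamma \to \Lambda$ a genuine homomorphism. The \textbf{main obstacle} I anticipate is Step 2--3: one must carefully construct the correct Hilbert-space (or, for the full $\Ufin$ generality, the correct nonlinear/group-valued) object on which $\Gamma$ acts so that ``approximate fixed point'' is meaningful and so that property (T) applies, and then control the measurable untwisting function $f$ so that the local-to-global continuity argument actually closes without the transfer functions degenerating. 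Making the convergence in Step 2 uniform enough to trigger the (T) gap, and ensuring the cohomology witnessed at the Hilbert-space level descends to an honest $\Lambda$-valued coboundary, is where the real work lies.
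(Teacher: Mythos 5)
Your proposal follows exactly the paper's (and Popa's) deformation/rigidity scheme: an s-malleable path in $\Aut_\Gamma(X\times X)$ from the identity to the flip, local rigidity of cocycles coming from property (T) applied along that path, and a final untwisting over the square. This is the proof sketched in \S\ref{sub:proofs}, with your open-closed propagation in $t$ replacing the implicit chaining there, and your Step~4 playing the role of Corollary~\ref{C:YY}; note that the actual engine of that last step is \emph{weak mixing} of the Bernoulli action (ergodicity of the diagonal action on $X\times X$), which you should invoke explicitly rather than ``coordinate symmetry.''

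There is, however, one genuine gap. Your Step~1 asserts that ``each coordinate factor $X_0\times X_0$ carries a measure-preserving flow rotating the two copies into each other,'' and this is false whenever the base $(X_0,\mu_0)$ has atoms --- a case the theorem allows, and which includes the most classical Bernoulli shifts over a finite base. For instance, if $X_0$ consists of two atoms of measures $p\neq q$, then $\Aut(X_0\times X_0,\mu_0\times\mu_0)$ is the two-element group generated by the flip: since any automorphism must permute atoms of equal measure, and since in the weak topology the image of an atom is locally constant along any continuous path, no path (let alone a flow) joins the identity to the flip. The same failure occurs for any base with at least two atoms. So malleability, as you construct it, holds only for diffuse $(X_0,\mu_0)$, and your argument proves the theorem only in that case. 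The paper closes this gap as its very first step: the quotient map $([0,1],dx)^\Gamma\to(X_0,\mu_0)^\Gamma$ is relatively weakly mixing, and by Proposition~\ref{P:H1-inj}(2) the pullback in cohomology along a relatively weakly mixing quotient is injective, so cocycle superrigidity for the diffuse-base Bernoulli action descends to every Bernoulli action $(X_0,\mu_0)^\Gamma$. You need this reduction (or an equivalent one) before your Steps~1--4 can begin.
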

In fact, the result is stronger: it suffices to assume that $\Gamma$ has \embf{relative property (T)} with respect to 
a \embf{w-normal} subgroup $\Gamma_0$, and $\Gamma\acts (X,\mu)$ has a \embf{relatively weakly mixing} extension
$\Gamma\acts (\bar{X},\bar{\mu})$ which is \embf{s-malleable}, while $\Gamma_0\acts (\bar{X},\bar{\mu})$ 
is weakly mixing. Under these conditions $\Gamma\acts (X,\mu)$ is $\Ufin$-cocycle superrigid.
See \cite{Popa:2007coc1} and \cite{Furman:2007popa} for the relevant definitions and more details. 	
We indicate the proof (of the special case above) in \S \ref{sub:proofs}. 
Vaguely speaking Popa's approach exploits the tension between certain (local) \emph{rigidity}  
provided by the acting group and \emph{deformations} supplied by the action.
In the following remarkable result, Popa further relaxed the property (T) assumption. 
\begin{theorem}[Popa \cite{Popa:2008spec-gap}]\label{T:Popa-specgap:CSR}
Let $\Gamma$ be a group containing a product $\Gamma_1\times\Gamma_2$
where $\Gamma_1$ is non-amenable, $\Gamma_2$ is infinite, and $\Gamma_1\times\Gamma_2$
is w-normal in $\Gamma$. 
Then any Bernoulli action $\Gamma\acts (X,\mu)$ is $\Ufin$-cocycle superrigid.
\end{theorem}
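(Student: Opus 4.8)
The plan is to follow Popa's deformation-versus-rigidity strategy, the same skeleton that proves Theorem~\ref{T:Popa-Bernoulli:CSR}, but with the \emph{relative property (T)} input replaced by a \emph{spectral gap} supplied by the non-amenable factor $\Gamma_1$. Fix a cocycle $c\colon\Gamma\times X\to\Lambda$ with $\Lambda\in\Ufin$, so that $\Lambda$ sits inside the unitary group $U(\mathcal{M})$ of a finite von Neumann algebra $(\mathcal{M},\tau)$. First I would reduce to the case $\Gamma=\Gamma_1\times\Gamma_2$: since the Bernoulli action of any infinite subgroup is weakly mixing, and the property of being untwistable propagates along weakly normal chains (one transports a transfer function up each normal step using weak mixing, as in \cite{Popa:2007coc1}), it suffices to untwist $c$ on $\Gamma_1\times\Gamma_2$ and then extend the resulting homomorphism to all of $\Gamma$ along the w-normal chain.

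The core is the malleable deformation of the Bernoulli action. Passing to the doubled space $\tilde X=X\times X\cong(X_0\times X_0)^\Gamma$, s-malleability provides a one-parameter family $(\alpha_t)_{t\in\bbR}$ of automorphisms of $\tilde X$ commuting with the diagonal $\Gamma$-action, with $\alpha_0=\id$ and $\alpha_1$ the coordinate flip, together with a symmetry $\beta$ satisfying $\beta\alpha_t\beta=\alpha_{-t}$. Lifting $c$ through the first projection gives a cocycle $\tilde c$ on $\tilde X$, and I would form the deformed cocycles $c_t=\tilde c\circ\alpha_t$; then $c_0=\tilde c$ depends only on the first coordinate while $c_1$ depends only on the second.

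The decisive spectral-gap step comes next, and this is where I expect the main difficulty to lie. Because $\Gamma_1$ is non-amenable, the Koopman representation of $\Gamma_1$ on $L^2(X)\ominus\bbC$ --- being contained in a multiple of the left regular representation --- has a spectral gap. I would transfer this into a spectral gap for the $\Gamma_1$-representation on $c_0$-equivariant square-integrable sections $\tilde X\to\mathcal{M}$ inside $L^2(\tilde X)\,\protimes_{\mathcal{M}}\,\overline{L^2(\tilde X)}$, on which the deformed cocycle acts by an almost-intertwiner: since $\alpha_t\to\id$, the pointwise comparison of $c_t$ with $c_0$ supplies almost-$\Gamma_1$-invariant vectors, and the spectral gap forces these to be \emph{uniformly} almost-invariant over $g\in\Gamma_1$. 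Uniform almost-invariance then yields a genuine invariant vector, i.e.\ a measurable transfer map $v_t\colon\tilde X\to U(\mathcal{M})$ with $c_t(g,w)=v_t(gw)^{-1}c_0(g,w)v_t(w)$ for all small $|t|<t_0$. This is exactly the rigidity that relative property (T) provides in Theorem~\ref{T:Popa-Bernoulli:CSR}, now extracted from spectral gap instead; controlling the passage from pointwise to uniform convergence (equivalently, ruling out almost-central sequences in the relevant bimodule) is the delicate analytic heart of the argument, and the role of $\Gamma_2$ being infinite is to guarantee the weak mixing that both the deformation and the final untwisting require.

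Finally I would run the standard continuation argument: using the semigroup relation for $\alpha_t$ together with $\beta\alpha_t\beta=\alpha_{-t}$, the set of $t\in[0,1]$ for which $c_t$ is cohomologous to $c_0$ is shown to be open (from the $t_0$-step) and closed, hence all of $[0,1]$, so $c_1\sim c_0$. Since $c_0$ and $c_1$ depend on complementary coordinates of $\tilde X$, a Fubini argument combined with weak mixing of the Bernoulli action of $\Gamma_1\times\Gamma_2$ forces the transfer function to be essentially independent of the $\tilde X$-variable; this independence delivers a homomorphism $\rho\colon\Gamma\to\Lambda$ and a measurable $f\colon X\to\Lambda$ with $c(g,x)=f(gx)^{-1}\rho(g)f(x)$, which is the asserted $\Ufin$-cocycle superrigidity.
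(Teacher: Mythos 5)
There is a genuine gap, and it sits exactly where you flagged "the delicate analytic heart": the spectral-gap step as you set it up cannot work. A first warning sign is structural: in your argument $\Gamma_2$ plays no essential role. You invoke it only "to guarantee weak mixing", but the Bernoulli action of the non-amenable group $\Gamma_1$ alone is already mixing, hence weakly mixing. So if your argument were correct it would prove that the Bernoulli action of \emph{any} non-amenable group (say $\bbF_2$) is $\Ufin$-cocycle superrigid. This is false: by Peterson--Sinclair, $\Ufin$-cocycle superrigidity of a Bernoulli action forces $\beta^{(2)}_1(\Gamma)=0$, whereas $\beta^{(2)}_1(\bbF_2)=1$ (alternatively, cocycle superrigidity would force OE-superrigidity as in Theorem~\ref{T:cons-of-csr}, contradicting the existence of orbit equivalences between non-isomorphic Bernoulli shifts of $\bbF_2$). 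The technical reason your step fails is that spectral gap, unlike property (T), is not a statement about all representations: the Fell-absorption argument gives a multiple of $\lambda_{\Gamma_1}$ (hence no almost-invariant vectors) only on the orthocomplement of $L^2(X)\protimes L^2(\scrM)$ inside the doubled, twisted module, while the almost-invariant vector produced by the deformation (the identity comparison between $c_t$ and $c_0$) lives in, or arbitrarily close to, precisely that complementary subspace. The gap therefore says nothing about it, and there is no mechanism to upgrade it to an exactly invariant vector, i.e.\ to a transfer map $v_t$ over $\Gamma_1$. The local-rigidity mechanism of Proposition~\ref{P:local-rigidity}, which you are implicitly trying to reproduce, genuinely needs (relative) property (T).

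Popa's actual proof routes the spectral gap through the \emph{product structure}, which is why $\Gamma_1\times\Gamma_2$ is indispensable. The gap of $\Gamma_1$ is applied to the \emph{conjugation} ($\Ad$) representation on the twisted crossed-product bimodule, and the test vectors are not cocycle comparisons over $\Gamma_1$ but the elements $\eta_t(g)=\alpha_t(\tilde u_g)\tilde u_g^{*}$ for $g\in\Gamma_2$, where $\tilde u_g$ are the cocycle-twisted canonical unitaries. Because $[\Gamma_1,\Gamma_2]=\{e\}$, one gets the estimate $\|\Ad(\tilde u_h)(\eta_t(g))-\eta_t(g)\|_2\le 2\,\|\alpha_{-t}(\tilde u_h)-\tilde u_h\|_2$ for $h\in\Gamma_1$, and crucially the right-hand side is \emph{uniform in $g\in\Gamma_2$}. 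Spectral gap then forces $\eta_t(g)$ uniformly close to the relative commutant of the $\Gamma_1$-unitaries, i.e.\ the deformation $\alpha_t$ converges uniformly on $\{\tilde u_g:g\in\Gamma_2\}$; Popa's transversality lemma and the continuation trick you describe then untwist the cocycle on $\Gamma_2$ first, after which the untwisting is propagated to $\Gamma_1\times\Gamma_2$ (since $\Gamma_2$ is normal there and acts weakly mixingly, as in Proposition~\ref{P:H1-wnormal} and Corollary~\ref{C:YY}) and finally to $\Gamma$ by w-normality, which is the one step you did set up correctly. So the infiniteness of $\Gamma_2$ and the commutation with $\Gamma_1$ are the engine of the spectral-gap step, not an afterthought supplying weak mixing.
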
	
The deformations alluded above take place for the diagonal $\Gamma$-action
on the square $(X\times X,\mu\times\mu)$.
This action is supposed to be ergodic, equivalently the original action
should be weak \emph{mixing mixing} and satisfy addition properties.
\emph{Isometric actions} or, staying in the ergodic-theoretic terminology, 
actions with \emph{discrete spectrum}, provide the opposite type of dynamics.
These actions have the form $\Gamma\acts K/L$ where $L<K$ are compact groups, $\Gamma\to K$
a homomorphism with dense image, and $\Gamma$ acts by left translations.
Totally disconnected $K$ corresponds to profinite completion $\varprojlim\Gamma/\Gamma_n$
with respect to a chain of normal subgroups of finite index.
Isometric actions $\Gamma\acts K/L$ with profinite $K$, can be called
\embf{profinite ergodic} actions of $\Gamma$ -- these are precisely inverse
limits $X=\varprojlim X_n$ of transitive $\Gamma$-actions on finite spaces.
Adrian Ioana found the following "virtually $\Gdsc$-cocycle superrigidity"
phenomenon for profinite actions of Kazhdan groups.
\begin{theorem}[Ioana \cite{Ioana:2008profinite}]\label{T:Ioana-profinite:CSR}
Let $\Gamma\acts X=K/L$ be an ergodic profinite action. Assume that $\Gamma$
has property (T), or a relative property (T) with respect to a normal subgroup
$\Gamma_0$ which acts ergodically on $X$. Then any measurable cocycle 
$c:\Gamma\acts X\to\Lambda$ into a discrete group, is conjugate
to a cocycle coming from a finite quotient $X\to X_n$,
i.e., $c$ is conjugate to a cocycle induced from a homomorphism $\Gamma_n\to\Lambda$
of a finite index subgroup.
\end{theorem}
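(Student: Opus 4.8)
The plan is to run Popa's deformation/rigidity scheme, with the continuous malleable deformation of the Bernoulli setting replaced by the \emph{profinite tower}, which is exactly why one recovers only a finite quotient (the ``virtual'' in the statement). First I would realize the action concretely: write $X=K/L$ with $K=\varprojlim K/N_m$ a profinite group, $N_m\trianglelefteq K$ open, $\Gamma\to K$ dense, so that the $\Gamma$-action extends to the transitive $K$-action, the finite quotients are $X_n=K/(LN_n)$ with projections $p_n:X\to X_n$, and the point-stabilizers $\Gamma_n<\Gamma$ have finite index. The crucial structural input is the ergodic decomposition of the diagonal action $\Gamma\acts X\times X$: since $\Gamma$ is dense in the compact group $K$, the $\Gamma$-invariant functions are the $K$-invariant ones, so the ergodic components are exactly the $K$-orbits, indexed by the double-coset space $L\backslash K/L$, the component $\Omega_k$ passing through $(L,kL)$. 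On each $\Omega_k$ I would form the genuine unitary representation $\pi_k$ of $\Gamma$ on $L^2(\Omega_k)\otimes\ell^2(\Lambda)$ attached to the honest cocycle $c\times c$ into $\Lambda\times\Lambda$ via the \emph{commuting} left and right regular representations $\lambda,\rho$ of $\Lambda$, namely $(\pi_k(\gamma)F)(x,y)=\lambda(c(\gamma,\gamma^{-1}x))\rho(c(\gamma,\gamma^{-1}y))F(\gamma^{-1}x,\gamma^{-1}y)$. On the diagonal component $\Omega_e\cong X$ (where $x=y$) this is the conjugation twist $\lambda(c)\rho(c)$, which fixes the constant section $\delta_{e_\Lambda}$, so $\pi_e$ has a genuine invariant vector.

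The heart of the argument is the rigidity statement that this invariant vector is \emph{isolated} and therefore persists to nearby components. As $k\to e$ in $K$ the homogeneous spaces $\Omega_k$ converge to the diagonal $\Omega_e$ and $c\times c$ converges to its diagonal restriction, so the constant section $\delta_{e_\Lambda}$ transports to a net of \emph{almost}-invariant vectors for the $\pi_k$. At this point property (T) of $\Gamma$ --- or relative property (T) with respect to $\Gamma_0$, upgraded using normality of $\Gamma_0$ together with ergodicity of $\Gamma_0\acts X$ (so $\Gamma_0$ is dense in $K$ and acts ergodically on each $\Omega_k$) --- provides a fixed Kazhdan pair and forces, for all $k$ in some level subgroup $K_n=\ker(K\to K/N_n)$, a \emph{genuine} $\pi_k$-invariant vector. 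Because $\Lambda$ is discrete, the support of such an invariant vector is an equivariant finite-set-valued map, from which one extracts an atomic section $\delta_{\phi_k}$ with $\phi_k:\Omega_k\to\Lambda$ measurable and $c(\gamma,x)=\phi_k(\gamma x,\gamma y)\,c(\gamma,y)\,\phi_k(x,y)^{-1}$ on $\Omega_k$; this exhibits $c(\cdot,x)$ and $c(\cdot,y)$ as cohomologous along every pair with $p_n(x)=p_n(y)$.

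The final step is to assemble these fibrewise untwistings into a single transfer function. Running a measurable-selection argument over the near-diagonal $\{(x,y):p_n(x)=p_n(y)\}$ (equivalently over $k\in K_n$) I would produce $f:X\to\Lambda$ such that the conjugate cocycle $c^f(\gamma,x)=f(\gamma x)^{-1}c(\gamma,x)f(x)$ is invariant under the residual right $K_n$-translations, hence depends only on the image $p_n(x)\in X_n$. A cocycle over $\Gamma\acts X$ factoring through the finite transitive quotient $\Gamma\acts X_n=\Gamma/\Gamma_n$ is precisely one induced from a homomorphism $\Gamma_n\to\Lambda$, under the identification ${\rm Hom}(\Gamma_n,\Lambda)/\Lambda\cong H^1(\Gamma\acts\Gamma/\Gamma_n,\Lambda)$, which is the asserted conclusion.

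The main obstacle I anticipate is the passage from the isolated invariant vector on the diagonal to honest almost-invariant vectors for the family $\pi_k$: one must make quantitative the convergence $\Omega_k\to\Omega_e$ in a manner compatible with the \emph{fixed} Kazhdan pair, despite the fact that distinct components $\Omega_k$ are genuinely different homogeneous spaces (with isotropy $L\cap kLk^{-1}$) and that the target $\Lambda$ is an arbitrary, possibly non-amenable, infinite discrete group whose regular representation carries no invariant vectors at all, so that the only source of invariance is the conjugation structure on the diagonal. Controlling this uniformly, and then carrying out the measurable gluing of the $\phi_k$ without destroying the cocycle identity, is where the real work lies; it is exactly the property (T) rigidity that pins the relevant level $n$ to be finite, rather than producing only a limiting untwisting over the full inverse limit.
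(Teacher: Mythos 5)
Your architecture is, in essence, the paper's own: the paper proves the more general Theorem~\ref{T:GammaonK} (discrete spectrum actions, subsuming Ioana's theorem), and it does so by first reducing to trivial isotropy $X=K$ via Proposition~\ref{P:H1-inj}(1), and then exploiting exactly the structure you describe. Indeed, once $L$ is trivial the ergodic components of the diagonal $\Gamma$-action on $K\times K$ are the graphs $\setdef{(x,xt^{-1})}{x\in K}$ of the right translations, so your components $\Omega_k$, your representations $\pi_k$ on $L^2(\Omega_k)\otimes\ell^2(\Lambda)$ with commuting left/right regular twists, and your extraction of $\phi_k$ from the support of an invariant vector correspond point for point to the paper's deformed cocycles $c_t(g,x)=c(g,xt^{-1})$ and to the proof of the local rigidity Proposition~\ref{P:local-rigidity} applied to the pair $(c,c_t)$; the uniformity you worry about is the easy part, since for each $g$ in the finite Kazhdan set $\mu\setdef{x}{c(g,xt^{-1})\neq c(g,x)}\to 0$ as $t\to e$, and in the profinite case the resulting neighborhood $U$ of $e$ contains an open subgroup $K_1<K$.

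The genuine gap is your last step. Assembling the fibrewise untwistings $\{\phi_k\}$ into one transfer function is not ``a measurable-selection argument'', and no selection argument will do it: the $\phi_k$ live on distinct ergodic components, a measurable choice of them carries no coherence across components, and the resulting two-variable function gives no single $f:X\to\Lambda$ making $c^f$ right-$K_1$-invariant. The missing idea is a multiplicative coherence relation in the deformation parameter. Identifying each component with $K$ via the first coordinate, your $\phi_k$ become the transfer functions $f_t:K\to\Lambda$ with $c(g,xt^{-1})=f_t(gx)c(g,x)f_t(x)^{-1}$, normalized by $\mu\setdef{x}{f_t(x)=e}>3/4$; this normalization (which also forces the invariant-vector support to be a singleton) pins each $f_t$ uniquely, and one then checks that the discrepancy $F(x)=f_{ts}(x)^{-1}f_t(xs^{-1})f_s(x)$ satisfies $F(gx)=c(g,x)F(x)c(g,x)^{-1}$, vanishes (equals $e$) on a set of positive measure, and that this set is $\Gamma$-invariant, hence conull by ergodicity. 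The identity $f_{ts}(x)=f_t(xs^{-1})f_s(x)$, valid for all $t,s\in K_1$, is then a cocycle equation for the simply transitive right action of $K_1$ on itself, hence a coboundary: $f_t(x)=\phi(xt^{-1})\phi(x)^{-1}$ with $\phi(x)=f_{x^{-1}x_0}(x_0)$. Only at this point does one get $c^\phi(g,xt^{-1})=c^\phi(g,x)$ for all $t\in K_1$, so that $c^\phi$ factors through the finite quotient $K/K_1$ and is induced from a homomorphism of $\Gamma_1=\tau^{-1}(K_1)$. Note also that this trivialization is available only after the reduction to $X=K$: for nontrivial $L$ your components $K/(L\cap kLk^{-1})$ are pairwise non-isomorphic homogeneous spaces and there is no simply transitive action against which to trivialize, which is precisely why the paper performs the reduction first rather than carrying $L\backslash K/L$ through the argument as you do.
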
	
In \S \ref{ssub:a_cocycle_superrigidity_theorem} a similar result is proven for all discrete 
spectrum actions (not necessarily profinite ones). 


\bigskip

\subsection{Constructing representations} 
\label{sub:constructing_representations}\hfill\\

In Geometric Group Theory many QI rigidity results are proved using the following trick.
Given a metric space $X$ one declares self-quasi-isometries $f, g:X\to X$
to be equivalent if \[\sup_{x\in X} d(f(x),g(x))<\infty.\]
Then equivalence classes of q.i. form a \emph{group}, denoted $\QI(X)$.
This group contains (a quotient of) $\Isom(X)$, which can sometimes be identified 
within $\QI(X)$ in coarse-geometric terms.
If $\Gamma$ is a group with well understood $\QI(\Gamma)$ 
and $\Lambda$ is an \emph{unknown} group q.i. to $\Gamma$,
then one gets a \emph{homomorphism}
\[
	\rho:\Lambda\to \Isom(\Lambda)\to\QI(\Lambda)\cong \QI(\Gamma)
\]
whose kernel and image can then be analyzed. 

Facing a similar problem in the Measure Equivalence category, there is a difficulty in defining
an analogue for $\QI(\Gamma)$. 
Let us describe a construction which allows to analyze the class of all groups 
ME to a given group $\Gamma$ from an information about self ME couplings of $\Gamma$.

Let $G$ be a lcsc unimodular group. Let us assume that $G$ has the \embf{strong ICC} property,
by which we mean that the only regular Borel conjugation invariant probability measure on $G$ 
is the trivial one, namely the Dirac mass $\delta_e$ at the origin.
For countable groups this is equivalent to the condition that all non-trivial conjugacy classes
are infinite, i.e., the usual ICC property.
Connected, (semi) simple Lie groups with trivial center and no compact factors provide other examples 
of strongly ICC groups.

Theorems~\ref{T:canonical-quotients}, \ref{T:Kida-coc}, \ref{T:BFS-mostow} are instances
where a strongly ICC group $G$ has the property that for any\footnote{ In the case of $G=\Isom({\bf H}^n)$
we restrict to all $\ell^1$-ME couplings.}  ME self coupling $(\Omega,m)$ of a lattice $\Gamma$ in $G$ there 
exists a bi-$\Gamma$-equivariant measurable map to $G$, i.e. a Borel map $\Phi:\Omega\to G$
satisfying $m$-a.e.
\[ 
	\Phi((\gamma_1,\gamma_2)\omega)=\gamma_1\Phi(\omega)\gamma_2^{-1}\qquad (\gamma_1,\gamma_2\in\Gamma).
\] 
It is not difficult to see that the strong ICC property implies that such a map is also unique.
(It should also be pointed out that the existence of such maps for self couplings of lattices is equivalent to 
the same property for self couplings of the lcsc group $G$ itself; but here we shall stay in the framework of
countable groups).
The following general tool shows how these properties of $G$ can be used to classify all groups ME to a lattice $\Gamma<G$; up to finite kernels these turn out to be lattices in $G$.
\begin{theorem}[Bader-Furman-Sauer \cite{Bader+Furman+Sauer:2009}]	\label{T:reps}
Let $G$ be a strongly ICC lcsc unimodular group, $\Gamma<G$ a lattice,
and $\Lambda$ some group ME to $\Gamma$ and $(\Omega,m)$ be a $(\Gamma,\Lambda)$-coupling.
Assume that the self ME-coupling $\Sigma=\Omega\times_\Lambda\check{\Omega}$ of $\Gamma$
admits a Borel map $\Phi:\Sigma\to G$, satisfying a.e.
\[
	\Phi([\gamma_1 x,\gamma_2 y])=\gamma_1\cdot \Phi([x,y])\cdot \gamma_2^{-1}
	\qquad (\gamma_1,\gamma_2\in\Gamma).
\]	
Then there exists a short exact sequence $K\overto{} \Lambda\overto{} \bar{\Lambda}$ 
with $K$ finite and $\bar{\Lambda}$ being a lattice in $G$, and
a Borel map $\Psi:\Omega\to G$ so that a.e.
\[
	\Phi([x,y])=\Psi(x)\cdot\Psi(y)^{-1},\quad
	\Psi(\gamma z)=\gamma\cdot\Psi(z),\quad
	\Psi(\lambda z)=\Psi(z)\cdot \bar\lambda^{-1}.
\]
Moreover, the pushforward of $\Psi_*m$ is a Radon measure on $G$ invariant under
the maps 
\[
	g\mapsto \gamma g\bar\lambda,\qquad (\gamma\in\Gamma,\ \bar\lambda\in\bar\Lambda).
\]
\end{theorem}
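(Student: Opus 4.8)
The plan is to read off $\Psi$ as a measurable ``square root'' of $\Phi$, using the self-coupling structure $\Sigma=\Omega\times_\Lambda\check\Omega$ together with the uniqueness coming from strong ICC, and then to identify the pushforward measure. The guiding model is Example~\ref{E:ME-lattices} (and the analogue Theorem~\ref{T:canonical-quotients}), where $\Omega=G$, $\Sigma=G\times_\Lambda\check G$, the map $\Phi$ recovers the group multiplication, and $\Psi$ is essentially the identity $\Omega=G\to G$; the general statement should be an equivariant, measure-theoretic version of this factorisation.

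First I would exploit the extra symmetries of the self-coupling. As remarked just before the statement, strong ICC forces the bi-$\Gamma$-equivariant map $\Sigma\to G$ to be unique. Applying this uniqueness to the flip automorphism $\sigma\colon[x,y]\mapsto[y,x]$ of $\Sigma$ (which interchanges $\Omega$ and $\check\Omega$ and reverses the two $\Gamma$-actions) should yield the involutivity relation
\[
\Phi([y,x])=\Phi([x,y])^{-1}\qquad\text{a.e.,}
\]
and, applying it to a threefold composition of $\Omega$ with $\check\Omega$, the cocycle identity
\[
\Phi([x,z])=\Phi([x,y])\cdot\Phi([y,z])\qquad\text{a.e.}
\]
These two identities are the heart of the construction: they say that $(x,y)\mapsto\Phi([x,y])$ is a multiplicative $G$-valued $1$-cocycle over the diagonal $\Lambda$-groupoid on $\Omega\times\Omega$. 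I expect verifying them — in particular pinning down the bookkeeping of the two $\Gamma$-actions and of the diagonal $\Lambda$-quotient so that uniqueness genuinely applies — to be routine but delicate.

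Granting the cocycle identity, I would then untwist it. Fixing (by a measurable selection on a positive-measure set and extending equivariantly) a reference point $y_*\in\Omega$ and setting $\Psi(x):=\Phi([x,y_*])$, the flip and cocycle relations give
\[
\Phi([x,y])=\Phi([x,y_*])\cdot\Phi([y_*,y])=\Psi(x)\cdot\Psi(y)^{-1}.
\]
The equivariance $\Psi(\gamma z)=\gamma\Psi(z)$ is immediate from that of $\Phi$. The $\Lambda$-behaviour is forced by the identification $[\lambda x,y_*]=[x,\lambda^{-1}y_*]$ in $\Sigma$: the cocycle identity turns $\Psi(\lambda x)$ into $\Psi(x)$ times a factor $\Phi([y_*,\lambda^{-1}y_*])$ depending only on $\lambda$, which I would define to be $\bar\lambda^{-1}$; the cocycle identity then shows $\lambda\mapsto\bar\lambda$ is a homomorphism $\rho\colon\Lambda\to G$ (after fixing the correct inversion convention). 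Setting $K=\ker\rho$ and $\bar\Lambda=\rho(\Lambda)$ produces the exact sequence.

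It remains to identify $\Psi_*m$ and to deduce that $\bar\Lambda$ is a lattice and $K$ finite. From the equivariances, $\Psi_*m$ is automatically invariant under $g\mapsto\gamma g$ and $g\mapsto g\bar\lambda$, hence under $g\mapsto\gamma g\bar\lambda$. The substantive point — and what I expect to be the main obstacle — is that this pushforward is a genuine Radon (locally finite) measure rather than a degenerate object with mass escaping to infinity. Here I would use that $\Sigma$, being a composition of finite-covolume couplings, is itself a $(\Gamma,\Gamma)$-coupling with finite-measure fundamental domains for $\Gamma\times\Gamma$, so that $\Phi_*m_\Sigma$ descends to a finite measure on the double-coset space, together with strong ICC to exclude escape of mass. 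Once $\Psi_*m$ is known to be Radon, left-$\Gamma$ and right-$\bar\Lambda$ invariance together with unimodularity of $G$ force it to be a Haar measure; the finiteness of the $\Gamma$- and $\Lambda$-covolumes of $\Omega$ then shows $\bar\Lambda$ is a lattice, and the fibres of $\rho$ must be finite, so $K$ is finite. Finally, $\Psi$ is the promised coupling quotient realising $\Lambda/K\cong\bar\Lambda$ as a lattice in $G$, exactly as in the model case.
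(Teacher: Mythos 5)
Your first half is essentially the right strategy, and it is in fact the strategy of the Bader--Furman--Sauer argument (note the survey states Theorem \ref{T:reps} without proof, so the comparison is with the cited source). Strong ICC gives uniqueness of bi-$\Gamma$-equivariant maps on \emph{any} self-coupling of $\Gamma$; applied to the flip it yields $\Phi([y,x])=\Phi([x,y])^{-1}$, and applied to a suitable threefold space it yields the cocycle identity, after which the base-point definition of $\Psi$ and $\rho$ goes through. Two points of care, though. First, the threefold space must actually be a self-coupling: $(\Omega\times\Omega\times\Omega)/\Lambda_{\rm diag}$ is \emph{not} one (its $\Gamma\times\Gamma$-fundamental domains have infinite measure, and on it the two candidate maps $[x,y,z]\mapsto\Phi([x,z])$ and $[x,y,z]\mapsto\Phi([x,y])\Phi([y,z])$ are a priori distinct equivariant maps, so uniqueness cannot be quoted there). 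One must also quotient by the \emph{middle} $\Gamma$-action, i.e.\ work on $\bigl(\Omega\times(\Omega/\Gamma)\times\Omega\bigr)/\Lambda$, which \emph{is} a self-coupling of $\Gamma$ for the two outer actions and on which both maps descend; there uniqueness does give the cocycle identity. Second, your formula $\bar\lambda^{-1}:=\Phi([y_*,\lambda^{-1}y_*])$ evaluates the a.e.-defined $\Phi$ on a null set of pairs, so it is not well defined as written; the standard repair is to show, using the cocycle identity through a generic auxiliary point, that $x\mapsto\Psi(x)^{-1}\Psi(\lambda x)$ is essentially constant and to define $\bar\lambda$ from its essential value.

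The genuine gap is in your last step. The assertion that once $\Psi_*m$ is Radon, left-$\Gamma$- and right-$\bar\Lambda$-invariance plus unimodularity ``force it to be a Haar measure'' is false, and with it your deduction that $\bar\Lambda$ is a lattice collapses. Already for the trivial self-coupling $\Omega=\Gamma$ one gets $\Psi=$ the inclusion $\Gamma\hookrightarrow G$ and $\Psi_*m=$ counting measure on $\Gamma$: Radon, bi-invariant, not Haar. The survey itself flags exactly this dichotomy right after the theorem: assuming ergodicity, $\Psi_*m$ is either Haar (on $G$ or a coset of a finite index subgroup) \emph{or counting measure on a coset of a lattice} $\Gamma'\supseteq\Gamma$, and separating these cases in the Lie setting needs Ratner's theorem. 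Moreover, strong ICC is not what prevents escape of mass: Radonness follows from the $\Gamma$-equivariance alone, since $\Psi_*m=\sum_{\gamma\in\Gamma}\gamma\cdot\mu$ with $\mu=\Psi_*(m|_Y)$ finite ($Y$ a $\Gamma$-fundamental domain), whence $\Psi_*m(C)=\int\#(\Gamma g\cap C)\,d\mu(g)<\infty$ because the sets $\Gamma g$ are uniformly separated in a right-invariant metric and $G$ is unimodular. Radonness then does give $K=\ker\rho$ finite (your argument here is fine) and $\bar\Lambda$ discrete. The lattice property requires a separate argument, for instance: $\Psi$ descends to a $\Gamma$-equivariant map $\Omega/\Lambda\to G/\bar\Lambda$, so $G/\bar\Lambda$ carries a nonzero finite $\Gamma$-invariant measure; averaging its $G$-translates over $(G/\Gamma,m_{G/\Gamma})$ produces a finite $G$-invariant measure on $G/\bar\Lambda$, which by uniqueness of invariant measures on homogeneous spaces is proportional to the Haar quotient measure, so $\bar\Lambda$ has finite covolume.
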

If $G$ is a (semi)-simple Lie group the last condition on the pushforward measure
can be analyzed using Ratner's theorem (as in Theorem~\ref{T:canonical-quotients})
to deduce that assuming ergodicity $\Psi_*m$ is either a Haar measure on $G$, 
or on a coset of its finite index subgroup,
or it is (proportional to) counting measure on a coset of a lattice $\Gamma'$
containing $\Gamma$ and a conjugate of $\bar\Lambda$ as finite index subgroups.

Theorem~\ref{T:reps} is a streamlined and improved version of
similar statements obtained in \cite{Furman:ME:99} for higher rank lattices, 
in \cite{Monod+Shalom:OE:05} for products, and in \cite{Kida:2006ME} for mapping class groups. 
%


\bigskip

\subsection{Local rigidity for measurable cocycles} 
\label{sub:local_rigidity}\hfill\\

The \emph{rigidity vs. deformations}  approach to rigidity results developed by Sorin Popa 
led to number of striking results in von Neumann algebras and in Ergodic theory (some been mentioned
in \S \ref{sub:deformation_vs_rigidity_technique}). 
Let us illustrate the \emph{rigidity} side of this approach by the following simple 
purely ergodic-theoretic statement, which is a variant of Hjorth's \cite[Lemma 2.5]{Hjorth:05Dye}.

\medskip

Recall that one of the several equivalent forms of property (T) is the following statement:
a lcsc group $G$ has (T) if there exist a compact $K\subset G$ and $\epsilon>0$
so that for any unitary $G$-representation $\pi$
and any $(K,\epsilon)$-almost invariant unit vector $v$
there exists a $G$-invariant unit vector $w$ with $\|v-w\|<1/4$.
%
\begin{prop}\label{P:local-rigidity}
Let $G$ be a group with property (T) and $(K,\epsilon)$ as above.
Then for any ergodic probability measure preserving action $G\acts (X,\mu)$,
any countable group $\Lambda$ and any pair of cocycles
$\alpha,\beta: G\times X\to\Lambda$ with
\[
	 \mu\setdef{x\in X}{\alpha(g,x)= \beta(g,x)}>1-\frac{\epsilon^2}{2}
	\qquad (\forall g\in K)
\]
there exists a measurable map $f:X\to \Lambda$ so that $\beta=\alpha^f$. 
Moreover, one can assume that 
\[
	\mu\setdef{x}{f(x)=e}>3/4.
\]
\end{prop}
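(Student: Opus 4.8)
The plan is to reformulate the hypothesis as almost-invariance of a distinguished vector in a unitary $G$-representation built from the two cocycles, apply property (T) to produce a genuine invariant vector, and then read off the conjugating map $f$ from that vector. Concretely, I would work on $\scrH = L^2(X,\mu;\ell^2(\Lambda))$ and let $G$ act by
\[
	(\pi(g)\xi)(x) = L_{\alpha(g,g^{-1}x)}\,R_{\beta(g,g^{-1}x)}\,\xi(g^{-1}x),
\]
where $L$ and $R$ are the commuting left and right regular representations of $\Lambda$ on $\ell^2(\Lambda)$, so that $(L_aR_b\eta)(\lambda)=\eta(a^{-1}\lambda b)$. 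The cocycle identities for $\alpha$ and $\beta$, together with $L_aL_{a'}=L_{aa'}$, $R_bR_{b'}=R_{bb'}$ and the commutation of $L$ with $R$, show that $\pi$ is a genuine homomorphism, and it is unitary because $G\acts (X,\mu)$ is measure preserving.

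First I would test almost-invariance on the constant unit vector $v=\mathbf{1}_X\otimes\delta_e$. Since $L_aR_b\delta_e=\delta_{ab^{-1}}$, one computes $(\pi(g)v)(x)=\delta_{\alpha(g,g^{-1}x)\beta(g,g^{-1}x)^{-1}}$, and hence
\[
	\|v-\pi(g)v\|^2 = 2\,\mu\setdef{x\in X}{\alpha(g,g^{-1}x)\neq\beta(g,g^{-1}x)}
	= 2\,\mu\setdef{x\in X}{\alpha(g,x)\neq\beta(g,x)},
\]
the last equality by the measure-preserving substitution $x\mapsto gx$. The hypothesis then gives $\|v-\pi(g)v\|<\epsilon$ for every $g\in K$, so $v$ is $(K,\epsilon)$-almost invariant; property (T), in the recalled form, supplies a $G$-invariant unit vector $w$ with $\|v-w\|<1/4$.

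It remains to extract $f$ from $w$, and this is where the main difficulty lies. Spelling out $\pi(g)w=w$ yields, for a.e.\ $x$ and all $g$, the pointwise relation $w(gx)(\lambda)=w(x)(\alpha(g,x)^{-1}\lambda\,\beta(g,x))$. I would first observe that $x\mapsto\|w(x)\|_{\ell^2(\Lambda)}$ is $G$-invariant (the $L_aR_b$ are isometries), so by ergodicity $\|w(x)\|=1$ a.e.; similarly $N(x):=\#\setdef{\lambda}{|w(x)(\lambda)|=\max_{\lambda'}|w(x)(\lambda')|}$ is $G$-invariant, since $\lambda\mapsto\alpha(g,x)^{-1}\lambda\beta(g,x)$ permutes $\Lambda$, hence $N$ is a.e.\ a finite constant. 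The natural candidate for $f$ is the maximizing coordinate of $w(x)$, but this is only well defined when $N=1$, and this tie-breaking issue is the genuine obstacle: an arbitrary measurable choice among maximizers need not be compatible with the shift $\lambda\mapsto\alpha^{-1}\lambda\beta$. I would resolve it using $\|v-w\|<1/4$: a Markov estimate applied to $\int_X\|w(x)-\delta_e\|^2\,d\mu<1/16$ shows that on a set $A$ with $\mu(A)>3/4$ one has $\|w(x)-\delta_e\|^2<1/4$, whence $\Re\,w(x)(e)>7/8$ and so $|w(x)(e)|^2>1/2$; as the squared coordinates sum to $1$, $e$ is the \emph{unique} maximizer throughout $A$. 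Thus $N=1$ on a set of positive measure, so $N=1$ a.e., and $f(x):=$ the unique maximizer of $|w(x)(\cdot)|$ is a well-defined measurable map with $f\equiv e$ on $A$.

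Finally, I would feed uniqueness back into the invariance relation. The maximizer $f(gx)$ of $|w(gx)(\cdot)|=|w(x)(\alpha(g,x)^{-1}(\cdot)\beta(g,x))|$ is attained precisely when $\alpha(g,x)^{-1}\lambda\beta(g,x)=f(x)$, giving $f(gx)=\alpha(g,x)f(x)\beta(g,x)^{-1}$, that is $\beta(g,x)=f(gx)^{-1}\alpha(g,x)f(x)=\alpha^f(g,x)$, as required. Since $f\equiv e$ on $A$, we obtain $\mu\setdef{x}{f(x)=e}\ge\mu(A)>3/4$, which settles the ``moreover'' clause.
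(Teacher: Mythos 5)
Your proposal is correct and takes essentially the same route as the paper's proof: the identical unitary representation (you write it on $L^2(X,\mu;\ell^2(\Lambda))$, the paper on $L^2(X\times\Lambda,\mu\times m_\Lambda)$ with the skew action $g:(x,\lambda)\mapsto(gx,\alpha(g,x)\lambda\beta(g,x)^{-1})$, which is the same thing), the same test vector $v=\mathbf{1}_{X\times\{e\}}$, property (T) producing an invariant $w$ with $\|v-w\|<1/4$, and $f$ extracted as the maximizer of $|w(x)(\cdot)|$, with the cocycle identity $f(gx)=\alpha(g,x)f(x)\beta(g,x)^{-1}$ recovered from uniqueness of that maximizer. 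The only (minor, equally valid) divergence is bookkeeping: the paper proves uniqueness of the maximizer from $p>3/4$ together with $kp^2\le\|w(x)\|^2_{\ell^2}=1$ and then bounds $\mu\{f=e\}>8/9$ by a separate integral splitting, whereas your single Markov estimate on $\int_X\|w(x)-\delta_e\|^2\,d\mu<1/16$ yields both the uniqueness (on a set of measure $>3/4$, propagated a.e.\ by ergodicity of the invariant function $N$) and the ``moreover'' clause at once.
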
	
\begin{proof}
Let $\tilde{X}=X\times\Lambda$ be equipped with the infinite measure $\tilde{\mu}=\mu\times m_\Lambda$
where $m_\Lambda$ stands for the counting measure on $\Lambda$.
Then $G$ acts on $(\tilde{X},\tilde{\mu})$ by
\[
	g:(x,\lambda)\mapsto (g.x, \alpha(g,x)\lambda\beta(g,x)^{-1}).
\]
This action preserves $\tilde{\mu}$ and we denote by $\pi$ the corresponding
unitary $G$-representation on $L^2(\tilde{X},\tilde{\mu})$.
The characteristic function $v={\bf 1}_{X\times\{e\}}$ satisfies 
\[
	\|v-\pi(g)v\|^2=2-2{\rm Re}\langle\pi(g)v,v\rangle<2-2(1-\frac{\epsilon^2}{2})=\epsilon^2
	\qquad(g\in\Gamma)
\]
and therefore there exists a $\pi(G)$-invariant unit vector $w\in L^2(\tilde{X},\tilde{\Lambda})$
with $\|v-w\|<1/4$. Since $1=\|w\|^2=\int_X \sum_\lambda |w(x,\lambda)|^2$
we may define 
\[
	p(x)=\max_{\lambda}|w(x,\lambda)|,\qquad \Lambda(x)=\setdef{\lambda}{|w(x,\lambda)|=p(x)}
\]
and observe that $p(x)$ and the cardinality $k(x)$ of the finite set $\Lambda(x)$ are
measurable $\Gamma$-invariant functions on $(X,\mu)$; hence are a.e. constants $p(x)=p\in(0,1]$, $k(x)=k\in\{1,2,\dots\}$.
Since 
$1/16>\|v-w\|^2\ge (1-p)^2$ we have $p>3/4$. 
It follows that $k=1$ because $1=\|w\|^2\ge kp^2$.
Therefore  $\Lambda(x)=\{f(x)\}$ for some measurable map $f:X\to\Lambda$. 
The $\pi(G)$-invariance of $w$ gives $\pi(G)$-invariance of the characteristic 
function of $\setdef{(x,f(x))\in\tilde{X}}{x\in X}$, which is equivalent to
\begin{equation}\label{e:fgx-fx}
	f(gx)=\alpha(g,x)f(x)\beta(g,x)^{-1}\qquad \text{and}\qquad \beta=\alpha^f.
\end{equation}
Let $A=f^{-1}(\{e\})$ and $a=\mu(A)$. Since $\sum_{\lambda}|w(x,\lambda)|^2$ is a $G$-invariant
function it is a.e. constant $\|w\|^2=1$. Hence for $x\notin A$ we have
$|w(x,e)|^2\le 1-|w(x,f(x))|^2=1-p^2$, and 
\[
	\frac{1}{16}>\|v-w\|^2\ge a\cdot (1-p^2)+(1-a)\cdot (1-(1-p^2))\ge (1-a)\cdot p^2>\frac{9(1-a)}{16}.
\]
Thus $a=\mu\setdef{x\in X}{f(x)=e}>8/9>3/4$ as required. 
\end{proof}	

\bigskip

\subsection{Cohomology of cocycles} 
\label{ssub:cohomology_of_cocycles}\hfill\\

Let us fix two groups $\Gamma$ and $\Lambda$. There is no real assumption on $\Gamma$,
it may be any lcsc group, but we shall impose an assumption on $\Lambda$. 
One might focus on the case where $\Lambda$ is a countable group (class $\Gdsc$), 
but versions of the statements below would apply also to separable compact groups, or groups
in a larger class $\Ufin$ of all Polish groups which imbed in the unitary group of a
von-Neumann algebra with finite faithful trace\footnote{ This class, introduced by Popa
contains both discrete countable groups and separable compact ones.}, or
a potentially even larger class $\Ginv$ of groups with a bi-invariant metric,
and the class $\Galg$ of connected algebraic groups over local fields, say of zero characteristic. 

Given a (not necessarily free) p.m.p. action $\Gamma\acts (X,\mu)$ let $Z^1(X,\Lambda)$, 
or $Z^1(\Gamma\acts X,\Lambda)$, denote the space of all measurable cocycles $c:\Gamma\times X\to\Lambda$
and by $H^1(X,\Lambda)$, or $H^1(\Gamma\acts X,\Lambda)$, 
the space of equivalence classes of cocycles up to conjugation
by measurable maps $f:X\to\Lambda$. 
If $\Lambda\in\Galg$ we shall focus on a subset $H_{ss}^{1}(X,\Lambda)$
of (classes of) cocycles whose algebraic hull is connected, semi-simple, center free
and has no compact factors.
 
Any $\Gamma$-equivariant quotient map $\pi:X\to Y$ defines a pull-back
$Z^1(Y,\Lambda)\to Z^1(X,\Lambda)$ by $c^\pi(g,x)=c(g,\pi(x))$, which 
descends to 
\[
	H^1(Y,\Lambda)\overto{\pi^*} H^1(X,\Lambda).
\]
Group inclusions $i:\Lambda<\bar\Lambda$, and $j:\Gamma'<\Gamma$ give rise to push-forward maps
\[
	H^1(X,\Lambda)\overto{i_*} H^1(X,\bar\Lambda),\qquad H^1(\Gamma\acts X,\Lambda)\overto{j_*} H^1(\Gamma'\acts X,\Lambda).
\]

\begin{ques*} 
	What can be said about these maps of the cohomology ?
\end{ques*}

\medskip

The discussion here is inspired and informed by Popa's \cite{Popa:2007coc1}. In particular,
the following statements \ref{P:H1-inj}(2), \ref{P:H1-tightness}, \ref{P:H1-wnormal}(1), \ref{C:YY}
are variations on Popa's original \cite[Lemma 2.11, Proposition 3.5, Lemma 3.6, Theorem 3.1]{Popa:2007coc1}.
Working with class $\Ginv$ makes the proofs more transparent than in $\Ufin$ -- 
this was done in \cite[\S 3]{Furman:2007popa}.
Proposition \ref{P:H1-inj} for semi-simple target (3) is implicit in \cite[Lemma 3.5]{Furman:OE:99}.
The full treatment of the statements below, including Theorem~\ref{T:H1-pb-pu}, will appear in \cite{Furman:cohom}.

\medskip

\begin{prop}\label{P:H1-inj}
Let $\pi:X\to Y$ be a $\Gamma$-equivariant quotient map. Then 
\[
	H^1(Y,\Lambda)\overto{\pi^*} H^1(X,\Lambda) 
\]
is injective in the following cases:
\begin{enumerate}
	\item $\Lambda$ is discrete and torsion free.
	\item $\Lambda\in\Ginv$ and $\pi:X\to Y$ is relatively weakly mixing.
	\item $\Lambda\in\Galg$ and $H^1(-,\Lambda)$ is replaced by $H_{ss}^{1}(-,\Lambda)$.
\end{enumerate}
\end{prop}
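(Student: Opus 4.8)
The plan is to reduce all three cases to a single assertion: if $c_1,c_2\in Z^1(\Gamma\acts Y,\Lambda)$ satisfy $\pi^*c_1\sim\pi^*c_2$, then the conjugacy is already realizable over $Y$. Unwinding the definitions, $\pi^*c_1\sim\pi^*c_2$ means there is a measurable $f\colon X\to\Lambda$ with $c_2(g,\pi x)=f(gx)^{-1}c_1(g,\pi x)f(x)$, and injectivity is equivalent to the claim that $f$ may be chosen $\pi$-measurable, i.e.\ constant on $\pi$-fibres. To detect this I would pass to the relative product $X\times_Y X$ with the diagonal $\Gamma$-action and set
\[
	F(x,x')=f(x)f(x')^{-1}.
\]
Since $c_i(g,\pi x)=c_i(g,\pi x')$ for $x,x'$ in a common fibre, $F$ satisfies the conjugation equivariance $F(gx,gx')=c_1(g,y)\,F(x,x')\,c_1(g,y)^{-1}$ over $\bar\pi\colon X\times_Y X\to Y$. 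A Fubini argument shows that $f$ descends to $Y$ precisely when $F\equiv e$, in which case the resulting $\bar f$ (with $f=\bar f\circ\pi$) conjugates $c_1$ to $c_2$ on $Y$. Thus the whole proposition becomes the statement that \emph{the discrepancy $F$ is trivial}, and the three hypotheses are three distinct mechanisms forcing this.

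Case (2) is the cleanest. As $\Lambda\in\Ginv$ carries a bi-invariant metric $d$, conjugation is isometric, so $\omega\mapsto d(e,F(\omega))$ is a genuine $\Gamma$-invariant function on $X\times_Y X$. Because $X\to Y$ is relatively weakly mixing, so is the relative square $X\times_Y X\to Y$, and the defining feature of relative weak mixing — that an equivariant section into an isometric $\Lambda$-space must be pulled back from the base — forces $F$ to descend to some $\bar F\colon Y\to\Lambda$. Feeding $\bar F$ back through the fibrewise Fubini argument shows $f$ is a.e.\ constant on a.e.\ fibre and hence $\bar F\equiv e$, giving descent of $f$. I would isolate the relative-weak-mixing untwisting as a lemma (the standard Furstenberg--Zimmer/Popa mechanism); it is the structural heart of this case.

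Case (1) has no mixing hypothesis, so $X\times_Y X$ need not be ergodic and $F$ cannot be descended that way; here I would argue fibrewise and let torsion-freeness play the role of weak mixing. Disintegrating $\mu=\int_Y\mu_y\,d\nu(y)$ and putting $\rho_y=(f|_{X_y})_*\mu_y\in\Prob(\Lambda)$, the equivariance becomes the two-sided relation $\rho_{gy}=c_1(g,y)\,\rho_y\,c_2(g,y)^{-1}$, so the autocorrelation $\pi_y=\rho_y*\check\rho_y$ is conjugation-equivariant with $\pi_y(\{e\})=\|\rho_y\|_2^2$. Cauchy--Schwarz shows $e$ is a mode of $\pi_y$, and equality $\pi_y(\{\lambda\})=\pi_y(\{e\})$ would require $\rho_y$ to be invariant under left translation by $\lambda$, impossible for a probability measure unless $\lambda$ has finite order; torsion-freeness thus makes $e$ the \emph{unique} mode. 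The maximal-atom set $M_y\subset\Lambda$ of $\rho_y$ is then a $\Gamma$-equivariant finite-set-valued section whose cardinality, being $\Gamma$-invariant, is a.e.\ constant by ergodicity of $Y$. The remaining point — ruling out $|M_y|>1$, equivalently converting \emph{unique mode} $+$ \emph{torsion-free} into $\rho_y=\delta_{\bar f(y)}$ — is the main obstacle in this case: a nontrivial such section is exactly the data of a relatively-finite subextension, which a torsion-free target cannot support, and this is where I expect the technical work to concentrate.

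Case (3) is handled algebraically, following \cite[Lemma 3.5]{Furman:OE:99}. The algebraic hull is a conjugacy-invariant of a cohomology class that is unchanged under the quotient pullback, so $c_1,c_2$ and their pullbacks share a common hull $L\le\Lambda$ which, by the definition of $H^1_{ss}$, is connected, semisimple, center-free and without compact factors. After reducing $c_1,c_2$ to take values in $L$, Zariski density in $L$ forces the conjugating $f$, and hence $F$ on $X\times_Y X$, to take values in the normalizer $N_\Lambda(L)$; center-freeness rigidifies $C_\Lambda(L)$ while the outer part $N_\Lambda(L)/L$ is finite, so the only remaining ambiguity is finite and can be absorbed, realizing the conjugacy over $Y$. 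The essential obstacle here is purely group-theoretic — controlling $N_\Lambda(L)$ for a semisimple center-free $L$ — and once that is in hand the descent is automatic, requiring neither torsion-freeness nor mixing. In all three cases, then, the single hard point is the triviality of the discrepancy $F$, with relative weak mixing, torsion-freeness, and semisimplicity of the hull serving as the three interchangeable inputs that guarantee it.
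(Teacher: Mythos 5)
Your reduction of all three cases to the triviality of the fibrewise discrepancy is a sensible frame, and two of the three cases are essentially fine. The paper itself contains no proof of this proposition (case (2) is attributed to Popa \cite{Popa:2007coc1} and \cite[\S 3]{Furman:2007popa}, case (3) is said to be implicit in \cite[Lemma 3.5]{Furman:OE:99}, and the full treatment is deferred to the unpublished \cite{Furman:cohom}), so the comparison is with the intended standard arguments. Your case (2) is exactly that standard mechanism and is correct; it can even be streamlined: since the metric is bi-invariant, the \emph{two-sided} translation cocycle $(g,y)\mapsto\bigl(\lambda\mapsto c_1(g,y)\,\lambda\, c_2(g,y)^{-1}\bigr)$ acts isometrically on $\Lambda$, and $f$ itself is equivariant for it, so the relative-weak-mixing descent lemma applies directly to $f$ without passing through $X\times_Y X$. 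Your case (3) points in the right direction, but two details are off: conjugation-equivariance of the discrepancy plus Zariski density of the hull $L$ places its values in the \emph{centralizer} $Z_\Lambda(L)$ (Borel density), not the normalizer, and the residual ambiguity is a $Z_\Lambda(L)$-valued cocycle which is killed by the hull condition defining $H^1_{ss}$ — not by any finiteness ($Z_\Lambda(L)$ need not be finite).

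The genuine problem is case (1), and it is not a fillable gap: the principle you invoke — that a torsion-free target ``cannot support'' a nontrivial relatively-finite subextension — is false, so no argument can extract the equivariant point from $M_y$ in general. Concretely, let $\Lambda=\langle a,b \mid bab^{-1}=a^{-1}\rangle$ be the Klein bottle group, which is torsion free; note $ba=a^{-1}b$ and that the conjugacy class of $b$ is $\{a^{2m}b:m\in\bbZ\}$, so $b\not\sim a^{-1}b$. Take $\Gamma=\bbZ$, $Y=\bbT$ with an irrational rotation $y\mapsto y+\alpha$, and $X=\{0,1\}\times\bbT$ with the generator acting by $(i,y)\mapsto(1-i,\,y+\alpha)$ (an ergodic two-point extension), $\pi$ the projection. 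Let $c_1\equiv b$ and $c_2\equiv a^{-1}b$ be the constant cocycles over $Y$. With $f(0,y)=e$, $f(1,y)=a$ one checks on the generator that $f(g.x)^{-1}c_1f(x)$ equals $a^{-1}\cdot b\cdot e=a^{-1}b$ at $(0,y)$ and $e\cdot b\cdot a=ba=a^{-1}b$ at $(1,y)$, so $\pi^*c_1\sim\pi^*c_2$ over $X$. Over $Y$, however, $c_2=c_1^{h}$ would read $h(y+\alpha)=b\,h(y)\,b^{-1}a$; writing $h=a^{m(y)}b^{n(y)}$ this forces $n(y+\alpha)=n(y)$, hence $n\equiv n_0$ by ergodicity, then $m(y+\alpha)=-m(y)+(-1)^{n_0}$, hence $m$ is invariant under the (ergodic) rotation by $2\alpha$, hence constant $m_0$ with $2m_0=\pm1$ — impossible. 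So $\pi^*[c_1]=\pi^*[c_2]$ while $[c_1]\neq[c_2]$: injectivity fails. Every intermediate step of your argument is consistent with this example ($\rho_y=\frac12(\delta_e+\delta_a)$, unique mode of the autocorrelation at $e$, $M_y=\{e,a\}$ equivariant with $k=2$); there is simply no equivariant selection, and correspondingly statement (1), taken literally, is false.

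It is worth recording what your argument \emph{does} prove: if $\Lambda$ is bi-orderable (torsion-free abelian, free, torsion-free nilpotent groups, \dots), then $\bar f(y)=\min M_y$ with respect to a bi-invariant total order satisfies $\min\bigl(uMv^{-1}\bigr)=u(\min M)v^{-1}$ and is the required equivariant selection, so your scheme closes completely. The Klein bottle group is the classical example of a torsion-free group that is not bi-orderable, precisely because of the relation $bab^{-1}=a^{-1}$ — an element conjugating $a$ to $a^{-1}$ — and this is exactly the engine of the counterexample. Note also $(a^{-1}b)^2=b^2$, so the two classes merge on the index-two subgroup of $\Gamma$; this is consistent with the way case (1) is actually deployed in the survey (Theorem~\ref{T:GammaonK} allows passage to finite-index subgroups and ergodic components), but not with injectivity of $\pi^*$ as stated in Proposition~\ref{P:H1-inj}(1). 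So the correct conclusion of your analysis is that case (1) requires a hypothesis beyond torsion-freeness (bi-orderability suffices), or a weaker, virtual conclusion.
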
	
The notion of \embf{relative weakly mixing} was introduced independently by Zimmer \cite{Zimmer:1976:wm} 
and Furstenberg \cite{Furstenberg:1977:Sz}: a $\Gamma$-equivariant map $\pi:X\to Y$ is relatively weakly mixing
if the $\Gamma$-action on the fibered product $X\times_Y X$ is ergodic (or ergodic relatively to $Y$);
this turns out to be equivalent to the condition that $\Gamma\acts X$ contains no intermediate isometric 
extensions of $\Gamma\acts Y$.

\begin{prop}\label{P:H1-tightness}
	Let $i:\Lambda<\bar\Lambda\in\Ginv$ be a closed subgroup, and $\Gamma\acts(X,\mu)$ some p.m.p. action.
	Then 
	\[
		H^1(X,\Lambda)\overto{i_*} H^1(X,\bar\Lambda)
	\] 
	is injective.
\end{prop}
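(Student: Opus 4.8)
The plan is to unwind what injectivity of $i_*$ means. A class in $H^1(X,\Lambda)$ is represented by a cocycle $c:\Gamma\times X\to\Lambda$, and $i_*$ merely regards $c$ as $\bar\Lambda$-valued; so injectivity is equivalent to the statement that two cocycles $c_1,c_2:\Gamma\times X\to\Lambda$ that are conjugate over $\bar\Lambda$ are already conjugate over $\Lambda$. I would therefore fix a measurable $f:X\to\bar\Lambda$ with $c_2=c_1^{\,f}$, i.e. $c_2(g,x)=f(gx)^{-1}c_1(g,x)f(x)$ a.e., and try to manufacture a $\Lambda$-valued conjugator out of $f$.

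The candidate is the metric projection of $f$ onto the subgroup. Using the bi-invariant metric $d$ on $\bar\Lambda\in\Ginv$, set $\lambda^*(x)\in\Lambda$ to be a nearest point of $\Lambda$ to $f(x)$, i.e. a minimiser of $\lambda\mapsto d(f(x),\lambda)$. The whole point is that bi-invariance forces this choice to be equivariant: since $c_1,c_2$ take values in $\Lambda$ and $f(gx)=c_1(g,x)f(x)c_2(g,x)^{-1}$, bi-invariance gives
\[
 d\bigl(f(gx),\lambda\bigr)=d\bigl(f(x),\,c_1(g,x)^{-1}\lambda\, c_2(g,x)\bigr)\qquad(\lambda\in\Lambda),
\]
and as $\lambda$ runs over $\Lambda$ so does $c_1(g,x)^{-1}\lambda c_2(g,x)$. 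Hence the minimiser transforms by $\lambda^*(gx)=c_1(g,x)\lambda^*(x)c_2(g,x)^{-1}$, which rearranges to $c_2(g,x)=\lambda^*(gx)^{-1}c_1(g,x)\lambda^*(x)$, i.e. $c_2=c_1^{\,\lambda^*}$ with $\lambda^*:X\to\Lambda$. This is exactly the desired $\Lambda$-conjugacy. (The same bi-invariance computation shows $D(x):=d(f(x),\Lambda)$ is $\Gamma$-invariant, and no ergodicity of $\Gamma\acts X$ is needed anywhere.)

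The hard part is that this argument presumes the nearest point $\lambda^*(x)$ is \emph{unique} and depends \emph{measurably} on $x$; for an arbitrary bi-invariant metric, projection onto a closed subgroup can fail to be single-valued, and without uniqueness the transformation rule for $\lambda^*$, hence the equivariance, collapses. Securing uniqueness is therefore the crux, and it is here that the structure of $\Ginv$ (or $\Ufin$) must be used: after truncating to $\min(d,1)$ and passing to the completion, one wants a convexity property guaranteeing a unique minimiser. The tension is that the subgroup $\Lambda$ itself is not convex, so one cannot directly invoke the uniqueness of nearest points onto closed convex sets.

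To resolve this I would pass to the $\Ufin$ model, realising $\bar\Lambda$ in the unitary group of a finite von Neumann algebra $(\mathcal{M},\tau)$ with $d=\|\cdot\|_2$, so that $d$ comes from the Hilbert space $L^2(\mathcal{M},\tau)$ where nearest-point projections onto closed convex sets are unique. Applying the trace-preserving conditional expectation $E_{\mathcal{N}}$ onto $\mathcal{N}=\Lambda''$ to the relation $f(gx)=c_1(g,x)f(x)c_2(g,x)^{-1}$, and using that $c_1,c_2$ are $\mathcal{N}$-valued together with the $\mathcal{N}$-bimodularity of $E_{\mathcal{N}}$, produces a canonical bi-equivariant element $a(x)=E_{\mathcal{N}}(f(x))\in\mathcal{N}$ whose polar part is a bi-equivariant partial isometry. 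The most delicate remaining step, which I expect to be the main obstacle, is to upgrade this object back into the subgroup $\Lambda$ rather than the larger algebra $\mathcal{N}$: for this I would combine the $\Gamma$-invariance of $D$ with a maximality/approximation argument and the closedness of $\Lambda$. In the write-up I would favour the more transparent $\Ginv$ formulation, replacing $E_{\mathcal{N}}$ by a circumcenter construction in the completion, where the bi-invariant geometry makes the equivariance computation above the entire content of the proof.
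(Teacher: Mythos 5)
Your reduction of injectivity to ``$\Lambda$-valued cocycles conjugate over $\bar\Lambda$ are conjugate over $\Lambda$'' is correct, and so is your equivariance computation: bi-invariance does show that the set of nearest points $M(x)=\{\lambda\in\Lambda:\ d(f(x),\lambda)=d(f(x),\Lambda)\}$ transforms by $M(gx)=c_1(g,x)M(x)c_2(g,x)^{-1}$. But the gap you flag (a canonical, single-valued, measurable selection from $M$) is not a reparable technicality: it cannot be filled by any device, because the statement at this level of generality is false. Take $\Gamma=\bbZ$ acting by a Bernoulli shift $T$ on $(X,\mu)$ (weakly mixing), $\bar\Lambda=S_3$ with the discrete metric (bi-invariant, so $S_3\in\Ufin\subseteq\Ginv$), and $\Lambda=A_3$. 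The $\Lambda$-valued cocycles $c_1(n,x)=(123)^n$ and $c_2(n,x)=(132)^n$ are conjugate over $\bar\Lambda$ by the \emph{constant} map $f\equiv(12)$, since $(12)(123)(12)=(132)$. They are not conjugate over $\Lambda$: writing a putative conjugator as $\lambda=(123)^{\ell}$ with $\ell:X\to\bbZ/3\bbZ$, commutativity of $A_3$ turns the conjugacy equation into $\ell(Tx)-\ell(x)\equiv -1\pmod 3$; then $\ell$ is invariant under the ergodic transformation $T^3$, hence a.e.\ constant, contradicting $\ell(Tx)-\ell(x)\equiv-1$. So $i_*[c_1]=i_*[c_2]$ while $[c_1]\neq[c_2]$. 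This example hits your crux exactly: here $M(x)=A_3$ (every element of $\Lambda$ is at distance $1$ from $(12)$), it transforms equivariantly as a set, yet it admits \emph{no} equivariant selection --- and neither of your proposed repairs can manufacture one. In $L(S_3)$ the conditional expectation onto $L(A_3)$ sends $u_{(12)}$ to $0$, so the polar part is $0$; and the circumcenter of $\{u_e,u_{(123)},u_{(132)}\}$ in $L^2(L(S_3),\tau)$ is $\frac{1}{3}(u_e+u_{(123)}+u_{(132)})$, which is not a unitary, let alone an element of $\Lambda$ --- the subgroup is not convex, exactly as you feared.

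For calibration: the survey gives no proof of this proposition (it presents it as a variation on Popa's Proposition 3.5, with proofs deferred to \cite[\S 3]{Furman:2007popa} and to the unpublished \cite{Furman:cohom}), and the statements actually established in that circle of ideas are weaker in precisely the way the example forces. Under a weak mixing hypothesis one can show that the transfer function lies in a single coset, $f(x)=v_0\,\lambda(x)$ with $v_0\in\bar\Lambda$ constant and $\lambda:X\to\Lambda$ measurable; that is, two $\Lambda$-valued cocycles conjugate over $\bar\Lambda$ become conjugate over $\Lambda$ only \emph{after} conjugating one of them by a constant element of $\bar\Lambda$ (in the example, by $(12)$). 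Note that the example is itself weakly mixing, so no mixing hypothesis alone can rescue bald injectivity; the conclusion must be weakened to this ``tightness up to constants'', or hypotheses must be added ruling the constants out. Any correct write-up should aim at that statement, and its proof goes through weak mixing of the diagonal action on $X\times X$ (Popa's route), not through metric projections: the uniqueness your scheme needs simply is not there, so the nearest-point approach has to be abandoned rather than patched.
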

This useful property fails in $\Galg$ setting: if $\Gamma<G$ is a lattice in a (semi-) simple Lie group
and $c:\Gamma\times G/\Gamma\to\Gamma$ in the canonical class then viewed as cocycle into $G>\Gamma$,
$c$ is conjugate to the identity imbedding $\Gamma\cong \Gamma<G$, but as a $\Gamma$-valued
cocycle it cannot be "untwisted".

\begin{prop}\label{P:H1-wnormal}
	Let $\pi:X\to Y$ be a quotient map of ergodic actions, and $j:\Gamma'<\Gamma$ be a normal, or sub-normal, or w-normal 
	closed subgroup acting ergodically on $X$. Assume that either
	\begin{enumerate}
		\item $\Lambda\in\Ginv$ and $\pi$ is relatively weakly mixing, or
		\item $\Lambda\in\Galg$ and one considers $H^1_{ss}(-,\Lambda)$.
	\end{enumerate} 
	Then $H^1(\Gamma\acts Y,\Lambda)$ is the push-out of the rest of the following diagram:
	\[ 
		\xymatrix{ 
		  H^{1}(\Gamma\acts X,\Lambda) \ar[r]^{j_*} & H^1(\Gamma'\acts X,\Lambda)\\
	 	  H^{1}(\Gamma\acts Y,\Lambda) \ar[u]^{\pi^{*}} \ar[r]^{j_*}  & H^{1}(\Gamma'\acts Y,\Lambda)\ar[u]^{\pi^*}
		}
	\]
	In other words, if the restriction to $\Gamma'\acts X$ of a cocycle $c:\Gamma\times X\to\Lambda$ is conjugate to
	one descending to $\Gamma'\times Y\to\Lambda$, then $c$ has a conjugate
	that descends to $\Gamma\times X\to\Lambda$.
\end{prop}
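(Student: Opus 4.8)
The plan is to prove the equivalent \emph{descent} formulation stated after the diagram: a cocycle $c\colon\Gamma\times X\to\Lambda$ whose restriction to $\Gamma'\times X$ is conjugate to a cocycle pulled back from $\Gamma'\acts Y$ has a conjugate that is pulled back from $\Gamma\acts Y$. Together with the injectivity of the vertical maps $\pi^{*}$ recorded in Proposition~\ref{P:H1-inj}, this yields the asserted universal (pushout) property of the square. Throughout I write $y=\pi(x)$ and use that $\pi$ is $\Gamma$-equivariant, so $\Gamma$ acts on $Y$ with $\pi(gx)=gy$.

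First I would reduce to a normalized situation. By hypothesis $j_{*}[c]=\pi^{*}[\bar\beta]$ for some class on $\Gamma'\acts Y$, so there is $f\colon X\to\Lambda$ with $c^{f}(g',x)=d(g',\pi x)$ for all $g'\in\Gamma'$, where $d\colon\Gamma'\times Y\to\Lambda$ is a cocycle. Replacing $c$ by the cohomologous $c^{f}$, I may assume $c(g',x)=d(g',\pi x)$ on $\Gamma'\times X$; it then suffices to show that this $c$ itself descends, i.e.\ that $c(g,\cdot)$ is $\pi$-measurable for every $g\in\Gamma$.

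Treat first the normal case $\Gamma'\triangleleft\Gamma$. Fix $g\in\Gamma$, put $\phi_{g}(x)=c(g,x)$, and expand $c$ along the identity $g'g=g\,(g^{-1}g'g)$ using that $a:=g^{-1}g'g\in\Gamma'$ by normality. The cocycle relation gives
\[
	\phi_{g}(ax)=d'(a,y)\,\phi_{g}(x)\,d(a,y)^{-1},\qquad d'(a,y):=d(gag^{-1},\,gy),
\]
and a direct check shows $d'$ is again a cocycle for $\Gamma'\acts Y$. Thus $\phi_{g}$ intertwines the two pulled-back cocycles $\pi^{*}d'$ and $\pi^{*}d$ over $\Gamma'\acts X$. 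The crux is to show that such an intertwiner is itself $\pi$-measurable. In case (1), the function $(x_{1},x_{2})\mapsto\mathrm{dist}\bigl(\phi_{g}(x_{1}),\phi_{g}(x_{2})\bigr)$ on $X\times_{Y}X$ is $\Gamma'$-invariant, because the bi-invariant metric of $\Lambda\in\Ginv$ is preserved by the left-right translations $\lambda\mapsto d'(a,y)\lambda\,d(a,y)^{-1}$; hence $\phi_{g}$ generates only an \emph{isometric} intermediate extension of $Y$ inside $X$, which relative weak mixing forbids unless it is trivial. This is exactly the mechanism behind Proposition~\ref{P:H1-inj}(2), so $\phi_{g}$ descends. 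In case (2), Proposition~\ref{P:H1-inj}(3) supplies a conjugator $\bar\psi_{g}$ over $Y$ between $d'$ and $d$; the discrepancy $(\bar\psi_{g}\circ\pi)^{-1}\phi_{g}$ is a self-intertwiner of the Zariski-dense semisimple cocycle $\pi^{*}d$, and since $\Gamma'$ is ergodic on $X$ it takes values a.e.\ in the centralizer of the (adjoint, center-free) algebraic hull, hence is trivial; again $\phi_{g}$ descends. Writing $\phi_{g}(x)=e(g,\pi x)$ and using the cocycle identity for $c$ together with surjectivity of $\pi$, one verifies $e(g_{1}g_{2},y)=e(g_{1},g_{2}y)\,e(g_{2},y)$, so $e\colon\Gamma\times Y\to\Lambda$ is a cocycle with $\pi^{*}[e]=[c]$.

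Finally I would pass from the normal case to the sub-normal and w-normal cases by (transfinite) induction along an ascending chain $\Gamma'=\Gamma_{0}\triangleleft\Gamma_{1}\triangleleft\cdots$ with union $\Gamma$. Each $\Gamma_{\alpha}\supseteq\Gamma'$ still acts ergodically on $X$, and $\pi$ remains relatively weakly mixing for it (as $X\times_{Y}X$ is already $\Gamma'$-ergodic), so the normal case lifts the descent of $c$ from $\Gamma_{\alpha}$ to $\Gamma_{\alpha+1}$; at limit stages descent passes to the union, since it is checked separately for each group element. Reaching $\Gamma$ proves the descent statement, and combined with the injectivity of $\pi^{*}$ from Proposition~\ref{P:H1-inj} this gives the pushout square. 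I expect the main obstacle to be the descent lemma inside the normal case — extracting from relative weak mixing (respectively from the semisimple algebraic hull) that the intertwiner $\phi_{g}$ is genuinely $\pi$-measurable, rather than merely that its cohomology class comes from $Y$; controlling the non-compact fibers when $\Lambda$ is discrete is the delicate point, and is where the precise form of relative weak mixing used in Proposition~\ref{P:H1-inj}(2) is essential.
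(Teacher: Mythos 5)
A preliminary remark: the survey contains no proof of Proposition~\ref{P:H1-wnormal} — it is stated with proofs deferred to Popa's \cite[Lemma 3.6]{Popa:2007coc1}, to \cite[\S 3]{Furman:2007popa}, and to the unpublished \cite{Furman:cohom} — so your attempt can only be measured against the arguments in those sources, which it does follow structurally. Much of what you write is correct: the normalization $c|_{\Gamma'\times X}=\pi^{*}d$, the identity $\phi_g(ax)=d'(a,y)\,\phi_g(x)\,d(a,y)^{-1}$ with $d'(a,y)=d(gag^{-1},gy)$ (and the check that $d'$ is a cocycle), the use of bi-invariance to get an invariant distance function, the element-wise reassembly of a cocycle $e$ on $Y$, and the shape of the chain induction. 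The genuine gap is at the crux of case (1): the function $h(x_1,x_2)=\mathrm{dist}(\phi_g(x_1),\phi_g(x_2))$ on $X\times_Y X$ is invariant only under the diagonal $\Gamma'$-action, so to make it descend to $Y$ you need relative ergodicity of $X\times_Y X$ over $Y$ \emph{for the subgroup} $\Gamma'$, i.e.\ relative weak mixing of $\pi$ with respect to $\Gamma'$. What you invoke is relative weak mixing for $\Gamma$ together with ergodicity of $\Gamma'$ on $X$, and this does not imply it: an extension with no intermediate $\Gamma$-isometric factors can contain plenty of intermediate $\Gamma'$-isometric ones, and the factor generated by $\phi_g$ is only $\Gamma'$-isometric, so relative weak mixing for $\Gamma$ does not forbid it. Your parenthetical in the induction step, ``as $X\times_Y X$ is already $\Gamma'$-ergodic'', is precisely the missing statement, asserted rather than proved.

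Moreover the gap cannot be repaired under your reading of the hypotheses, because the statement is then false. Take the lamplighter group $\Gamma=(\bbZ/2\bbZ)\wr\bbZ$ acting on $X=\{0,1\}^{\bbZ}$, with $\Gamma'=\bigoplus_{\bbZ}\bbZ/2\bbZ$ (normal) acting by translations and $\bbZ$ by the Bernoulli shift; let $Y$ be a point and $\Lambda=\bbZ/2\bbZ$. Then $\Gamma'$ is ergodic on $X$ and $\Gamma\acts X$ is weakly mixing, so $\pi$ is relatively weakly mixing for $\Gamma$. With $T=\{0,1,2,\dots\}\subset\bbZ$, the formulas $c(f,x)=\sum_{t\in T}f(t)$ for $f\in\Gamma'$ and $c(\gamma,x)=\sum_{s\in(T-\gamma)\triangle T}x(s)$ for $\gamma\in\bbZ$ define a cocycle $c:\Gamma\times X\to\Lambda$ (the cocycle identities reduce to associativity of symmetric differences) whose restriction to $\Gamma'$ is literally a homomorphism; yet $c$ is not cohomologous to any homomorphism of $\Gamma$. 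Indeed, an untwisting function would be a measurable eigenfunction of the ergodic $\Gamma'$-translation action on the compact group $X$, hence of the form $x\mapsto\sum_{s\in S}x(s)$ with $S\subset\bbZ$ finite, and the shift relation then forces $(S-1)\triangle S=\{-1\}$, which is impossible since $(S-1)\triangle S$ always has even cardinality. The conclusion is that the relative weak mixing in Proposition~\ref{P:H1-wnormal}(1) must be taken with respect to the $\Gamma'$-action; this matches Popa's original lemma and the survey's own use of the proposition in the proof of Theorem~\ref{T:Popa-Bernoulli:CSR}, where ``$\Gamma_0\acts(\bar X,\bar\mu)$ is weakly mixing'' is assumed. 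With that hypothesis your argument is essentially the standard proof and does close; the chain induction then becomes cleaner than you present it, since after the single initial conjugation the descent is proved element by element (the set of $g$ for which $\phi_g$ descends is a subgroup), and at every stage the relevant invariant function on $X\times_Y X$ is invariant under a group containing $\Gamma'$, so relative weak mixing for $\Gamma'$ alone drives all stages, with nothing extra needed at limit ordinals.
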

The condition $\Gamma'<\Gamma$ is \embf{w-normal} (weakly normal) means that there exists a well ordered chain $\Gamma_i$
of subgroups starting from $\Gamma'$ and ending with $\Gamma$, so that $\Gamma_i\triangleleft\Gamma_{i+1}$ and 
for limit ordinals $\Gamma_j=\bigcup_{i<j} \Gamma_i$ (Popa).  

\medskip

Let $\pi_i:X\to Y_i$ is a collection of $\Gamma$-equivariant quotient maps.
Then $X$ has a unique $\Gamma$-equivariant quotient $p:X\to Z=\bigwedge Y_i$, which is maximal among
all common quotients $p_i:Y_i\to Z$. 
Identifying $\Gamma$-equivariant quotients with $\Gamma$-equivariant complete sub $\sigma$-algebras of $\mathcal{X}$,
one has $p^{-1}(\mathcal{Z})=\bigcap_i \pi_i^{-1}(\mathcal{Y}_i)$; or in the operator algebra 
formalism $p^{-1}(L^\infty(Z))=\bigcap_i \pi_i^{-1}(L^\infty(Y_i))$.
\begin{thm}\label{T:H1-pb-pu}
Let $\pi_i:X\to Y_i$, $1\le i\le n$, be a finite collection of $\Gamma$-equivariant quotients, and $Z=\bigwedge_{i=1}^n Y_i$. 
Then $H^1(Z,\Lambda)$ is the push-out of $H^1(Y_i,\Lambda)$ under conditions (1)-(3) of Proposition~\ref{P:H1-inj}:
\[ 
	\xymatrix{ 
	 & H^{1}(X,\Lambda) & \\
	H^{1}(Y_1,\Lambda) \ar[ur]^{\pi_1^{*}}   & \cdots H^{1}(Y_i,\Lambda) \ar[u]^{\pi_i^*} \cdots & \ar[ul]_{\pi_n^{*}} H^{1}(Y_n,\Lambda)\\ 
	 & H^{1}(Z,\Lambda) \ar[ur]^{p_1^{*}} \ar[u]^{p_i^*}\ar[ul]_{p_n^{*}} & 
	}
\]
More precisely, if $c_i:\Gamma\times X\to\Lambda$ are cocycles (in case (3) assume $[c_i]\in H^1_{ss}(Y_i,\Lambda)$), whose
pullbacks $c_i(g,\pi_i(x))$ are conjugate over $X$, then there exists a unique class $[c]\in H^1(Z,\Lambda)$,
so that $c(g,p_i(y))\sim c_i(g,y)$ in $Z^1(Y_i,\Lambda)$ for all $1\le i\le n$.
\end{thm}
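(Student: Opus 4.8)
The plan is to deduce the stated push-out property from the injectivity results of Proposition~\ref{P:H1-inj} together with a single descent lemma in the case $n=2$, and then to reach general $n$ by induction. Uniqueness is immediate: if $[c],[c']\in H^1(Z,\Lambda)$ both satisfy $p_i^*[c]=p_i^*[c']=[c_i]$, then injectivity of $p_1^*\colon H^1(Z,\Lambda)\to H^1(Y_1,\Lambda)$ (Proposition~\ref{P:H1-inj}, in whichever of the cases (1)--(3) we work) forces $[c]=[c']$. For existence I would first reformulate: writing $p=p_i\circ\pi_i\colon X\to Z$ and setting $\xi=\pi_i^*[c_i]$, which is a \emph{single} class in $H^1(X,\Lambda)$ by the hypothesis that the pullbacks are mutually conjugate, it suffices to prove $\xi\in\operatorname{im}\bigl(p^*\colon H^1(Z,\Lambda)\to H^1(X,\Lambda)\bigr)$. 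Indeed, given $\xi=p^*[c]$ one has $\pi_i^*\bigl(p_i^*[c]\bigr)=p^*[c]=\xi=\pi_i^*[c_i]$, and injectivity of each $\pi_i^*$ (again Proposition~\ref{P:H1-inj}) yields $p_i^*[c]=[c_i]$ at no extra cost. Finally the passage from $n$ to $2$ is an induction: the classes $c_i$ with $i<n$ descend, by the inductive hypothesis, to a class $c'$ on $Y'=\bigwedge_{i<n}Y_i$ whose pullback to $X$ is again $\xi$, and applying the case $n=2$ to the two quotients $Y'$ and $Y_n$ of $X$ (with $Z=Y'\wedge Y_n$) produces the required $c$.

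The heart of the matter is therefore the case $n=2$: for $\pi_1\colon X\to Y_1$, $\pi_2\colon X\to Y_2$ with $Z=Y_1\wedge Y_2$, a class $\xi\in\operatorname{im}\pi_1^*\cap\operatorname{im}\pi_2^*$ lies in $\operatorname{im}p^*$. After replacing $X$ by the join $Y_1\vee Y_2$ (legitimate, since $H^1(Y_1\vee Y_2,\Lambda)\to H^1(X,\Lambda)$ is injective under our hypotheses) I may assume $X=Y_1\times_Z Y_2$, so that $\mathcal{Y}_1\cap\mathcal{Y}_2=\mathcal{Z}$ holds exactly. Choose representatives $\gamma_1(g,(y_1,y_2))=c_1(g,y_1)$ and $\gamma_2(g,(y_1,y_2))=c_2(g,y_2)$ of $\xi$ and a conjugating $h\colon X\to\Lambda$ with
\[
	c_2(g,y_2)=h(gy_1,gy_2)^{-1}\,c_1(g,y_1)\,h(y_1,y_2).
\]
Restricting to pairs sharing a $Y_2$-coordinate and setting $\tilde H(y_1,y_1',y_2)=h(y_1',y_2)h(y_1,y_2)^{-1}$ on $Y_1\times_Z Y_1\times_Z Y_2$, a direct manipulation gives the twisted equivariance
\[
	\tilde H(gy_1,gy_1',gy_2)=c_1(g,y_1')\,\tilde H(y_1,y_1',y_2)\,c_1(g,y_1)^{-1},
\]
whose right-hand side does not involve $y_2$. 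The goal is to conclude that $\tilde H$ is independent of the $Y_2$-coordinate; by a measurable selection of base points in the $Y_1$-fibres over $Z$ this yields a separation of variables $h(y_1,y_2)=s(y_1)\,t(y_2)$, and then $t(gy_2)c_2(g,y_2)t(y_2)^{-1}=s(gy_1)^{-1}c_1(g,y_1)s(y_1)$ is simultaneously $\mathcal{Y}_1$- and $\mathcal{Y}_2$-measurable, hence $\mathcal{Z}$-measurable; this is the desired cocycle $\zeta$ on $Z$ with $p^*[\zeta]=\xi$.

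The main obstacle is precisely the descent of $\tilde H$ to the base $Y_1\times_Z Y_1$. Because the twisting $(g,(y_1,y_1'))\mapsto\bigl(\lambda\mapsto c_1(g,y_1')\lambda c_1(g,y_1)^{-1}\bigr)$ depends only on the base coordinates, $\tilde H$ is a $\Gamma$-invariant section of a bundle pulled back along the extension $Y_1\times_Z Y_1\times_Z Y_2\to Y_1\times_Z Y_1$, which is the base change over $Y_1\times_Z Y_1$ of $Y_2\to Z$. In case (2), where $\Lambda\in\Ginv$ carries a bi-invariant metric, the conjugation maps are fibrewise isometries, so $\tilde H$ is an invariant section of a fibrewise-isometric pulled-back bundle; relative weak mixing of $Y_2\to Z$ (inherited by the base change) rules out nontrivial isometric sub-extensions and forces $\tilde H$ to descend, i.e.\ to be independent of $y_2$. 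In the semisimple case (3) the same descent is instead supplied by the rigidity of the connected, center-free, compact-factor-free algebraic hull (Borel density together with Zimmer's algebraic-hull machinery), which makes the minimal field of definition of $\xi$ genuinely canonical, hence contained in both $\mathcal{Y}_1$ and $\mathcal{Y}_2$; and in case (1) discreteness and torsion-freeness of $\Lambda$ play the analogous role, pinning $h$ down up to a trivial centralizer and reducing the independence of $\tilde H$ to a countability argument. I expect this descent step--and the verification that the separation $h=s\cdot t$ can be realized with measurable $s,t$--to be the only substantial points; the reductions of the first paragraph and the measurability of the glued cocycle $\zeta$ are routine.
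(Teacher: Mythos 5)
A preliminary caveat: the paper itself contains no proof of Theorem~\ref{T:H1-pb-pu}. It records only that the proof ``relies on Proposition~\ref{P:H1-inj} and contains it as a special case,'' and defers the full treatment to the unpublished reference \cite{Furman:cohom}. So your attempt can only be measured against the intended strategy, not a written argument. That said, your skeleton is the natural one, and it is the evident generalization of the proof of the motivating case, Corollary~\ref{C:YY} (where $X=Y\times Y$ and $Z=\{\mathrm{pt}\}$): uniqueness from injectivity, induction on $n$, and, for $n=2$, the map $\tilde H(y_1,y_1',y_2)=h(y_1',y_2)h(y_1,y_2)^{-1}$ whose twisted equivariance involves only the base coordinates, followed by separation of variables $h=s\cdot t$ and the observation that $s(gy_1)^{-1}c_1(g,y_1)s(y_1)=t(gy_2)c_2(g,y_2)t(y_2)^{-1}$ is measurable with respect to both $\pi_1^{-1}(\mathcal{Y}_1)$ and $\pi_2^{-1}(\mathcal{Y}_2)$, hence with respect to $\mathcal{Z}$. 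Your equivariance computation for $\tilde H$ is correct, and the reformulation of existence as $\xi\in\operatorname{im}(p^*)$ plus injectivity of $\pi_i^*$ is sound.

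However, three steps are genuine gaps, not routine verifications. (a) The reduction ``I may assume $X=Y_1\times_Z Y_2$'' is unjustified: passing to the join $Y_1\vee Y_2$ is legitimate (by injectivity), but the join carries an \emph{arbitrary} joining of $Y_1$ and $Y_2$ supported on the fibered product, not the relatively independent measure (trivial meet does not imply relative independence). Your argument uses the product structure twice --- in describing $X\times_{Y_2}X\to Y_1\times_Z Y_1$ as a base change of $Y_2\to Z$, and, fatally, in the base-point step: to write $h(y_1,y_2)=s(y_1)t(y_2)$ you must evaluate $h$ at pairs $(y_1^0(z),y_2)$, which for a general joining need not lie in the support of $X$, where $h$ is simply undefined; this step must be replaced by an argument with fiberwise conditional distributions. (b) In case (2) the mixing bookkeeping is missing: the hypothesis gives relative weak mixing of $\pi_i:X\to Y_i$ only, while you invoke it for $Y_2\to Z$ (descent of $\tilde H$), for $p_i:Y_i\to Z$ (uniqueness via injectivity of $p_i^*$), and, in the induction, for $X\to\bigwedge_{i<n}Y_i$. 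These do follow, but only through a Furstenberg--Zimmer structure argument --- e.g.\ any intermediate isometric extension $W$ of $Z$ inside $Y_2$ has $W\vee Y_1$ isometric over $Y_1$ inside $X$, hence $W\vee Y_1=Y_1$ by relative weak mixing of $\pi_1$, hence $W\subset Y_1\wedge Y_2=Z$ --- which you neither state nor prove; the same goes for the descent lemma (invariant sections of fibrewise isometric bundles over relatively weakly mixing extensions descend) that your case (2) rests on. (c) Cases (1) and (3), where no mixing whatsoever is assumed and the entire burden falls on the nature of $\Lambda$, are dispatched with slogans (``a countability argument,'' ``Zimmer's algebraic-hull machinery''); this is precisely where the substance of Proposition~\ref{P:H1-inj}(1),(3) and of its strengthening to the push-out statement lies, and nothing resembling an argument is offered there.
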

The proof of this Theorem relies on Proposition~\ref{P:H1-inj} and contains it as a special case $n=1$.

This result can be useful to push cocycles to deeper and deeper quotients; if $\pi:X\to Y$ is 
\emph{a minimal} quotient to which a cocycle or a family of cocycles can descend up to conjugacy,
then it is \embf{the minimal} or \embf{characteristic} quotient for these cocycles: if they descend
to any quotient $X\to Y'$ then necessarily $X\to Y'\to Y$.
For example if $\Gamma<G$ is a higher rank lattice, $\Lambda$ a discrete group and $c:\Gamma\times X\to \Lambda$
is an OE (or ME) cocycle, then either $c$ descends to a $\Gamma$-action on a finite set (virtual isomorphism case),
or to $X\overto{\pi} G/\Lambda'$ with $\Lambda\simeq \Lambda'$ lattice in $G$, where $\pi$ is \emph{uniquely defined}
by $c$ (initial OE or ME).

An important special (and motivating) case of Theorem~\ref{T:H1-pb-pu} is that of $X=Y\times Y$ where $\Gamma\acts Y$ is a
weakly mixing action. Then the projections $\pi_i:X\to Y_i=Y$, $i=1,2$, give $Z=Y_1\wedge Y_2=\{pt\}$
and $H^1(\Gamma\acts \{pt\},\Lambda)={\rm Hom}(\Gamma,\Lambda)$. So
\begin{cor}[{Popa \cite[Theorem 3.1]{Popa:2007coc1}, see also \cite[Theorem 3.4]{Furman:2007popa}}]\label{C:YY}
Let $\Gamma\acts Y$ be a weakly mixing action and $c:\Gamma\times Y\to\Lambda$ a cocycle into $\Lambda\in\Ginv$.
Let $X=Y\times Y$ with the diagonal $\Gamma$-action, $c_1, c_2:\Gamma\times X\to\Lambda$ the cocycles
$c_i(g,(y_1,y_2))=c(g,y_i)$. If $c_1\sim c_2$ over $X$ then there exists homomorphism $\rho:\Gamma\to\Lambda$
and a measurable $f:Y\to\Lambda$, so that $c(g,y)=f(gy)^{-1}\rho(g)\,f(y)$.
\end{cor}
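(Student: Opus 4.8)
The plan is to deduce the statement from the push-out Theorem~\ref{T:H1-pb-pu}, of which it is exactly the case $n=2$, $X=Y\times Y$. First I would set up the two coordinate projections $\pi_1,\pi_2\colon X=Y\times Y\to Y$, which are $\Gamma$-equivariant quotient maps for the diagonal action, and take $c_1=c_2=c\in Z^1(\Gamma\acts Y,\Lambda)$ as the two cocycles over $Y_1=Y_2=Y$. With this identification the pulled-back cocycles $c^{\pi_i}(g,(y_1,y_2))=c(g,y_i)$ are precisely the cocycles $c_i$ of the statement, so the hypothesis $c_1\sim c_2$ over $X$ says exactly that $c^{\pi_1}$ and $c^{\pi_2}$ are conjugate over $X$ --- which is the hypothesis of Theorem~\ref{T:H1-pb-pu}.

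Next I must verify the two ingredients the theorem needs in the $\Ginv$ setting. The genuinely dynamical one, and the step I expect to be the main obstacle, is relative weak mixing of the projections: the fibered product $X\times_Y X$ over $Y$ via $\pi_1$ is $\Gamma$-equivariantly $Y\times Y\times Y$ with the diagonal action, and weak mixing of $\Gamma\acts Y$ gives ergodicity of $\Gamma\acts Y^{n}$ for every $n$. Hence $\pi_1$ (and likewise $\pi_2$) is relatively weakly mixing, so Proposition~\ref{P:H1-inj}(2) applies. It is exactly here that the weak-mixing hypothesis on $Y$ is consumed. The second ingredient is that the meet $Z=Y_1\wedge Y_2$ is trivial: since $p^{-1}(\mathcal{Z})=\pi_1^{-1}(\mathcal{Y})\cap\pi_2^{-1}(\mathcal{Y})$, any set in this intersection has indicator of the form $u(y_1)=v(y_2)$ a.e., forcing $u,v$ constant by Fubini, so $Z=\{pt\}$.

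Finally I would read off the conclusion. By Theorem~\ref{T:H1-pb-pu} there is a unique class $[\rho]\in H^1(\Gamma\acts\{pt\},\Lambda)$ with $p_i^{*}[\rho]=[c]$ over $Y$. A cocycle over a point is a homomorphism, since the cocycle identity degenerates to $\rho(gh)=\rho(g)\rho(h)$, and conjugacy of such classes is conjugacy by a constant element of $\Lambda$, so $H^1(\Gamma\acts\{pt\},\Lambda)={\rm Hom}(\Gamma,\Lambda)/\Lambda$; moreover $p_i^{*}\rho(g,y)=\rho(g)$. Thus $[\rho]$ is represented by a genuine homomorphism $\rho\colon\Gamma\to\Lambda$, and the relation $p_1^{*}[\rho]=[c]$ unwinds to the existence of a measurable $f\colon Y\to\Lambda$ with $c(g,y)=f(gy)^{-1}\rho(g)f(y)$, which is exactly the assertion.

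I would add a remark on where the substance actually lies. In a self-contained argument that did not quote Theorem~\ref{T:H1-pb-pu}, the real obstacle would be the untwisting itself: using the bi-invariant metric on $\Lambda\in\Ginv$ to upgrade the conjugating function $f(y_1,y_2)$ on $Y\times Y$ to one that separates the two variables, with ergodicity of the higher powers $Y^{n}$ forcing the requisite $\Gamma$-invariance. That mechanism is precisely what is packaged inside Proposition~\ref{P:H1-inj} and Theorem~\ref{T:H1-pb-pu}, which is why invoking them yields the short proof above.
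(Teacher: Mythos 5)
Your proposal is correct and is essentially the paper's own argument: the paper derives Corollary~\ref{C:YY} precisely as the special case $n=2$, $X=Y\times Y$ of Theorem~\ref{T:H1-pb-pu}, noting that weak mixing makes the projections relatively weakly mixing quotients with meet $Z=Y_1\wedge Y_2=\{pt\}$ and that $H^1(\Gamma\acts\{pt\},\Lambda)={\rm Hom}(\Gamma,\Lambda)/\Lambda$. Your added verifications (ergodicity of $Y^n$ for the fibered product, and the Fubini argument for triviality of the meet) are exactly the details the paper leaves implicit.
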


\bigskip

\subsection{Proofs of some results} 
\label{sub:proofs}\hfill\\

In this section we shall give a relatively self contained proofs 
of some of the results mentioned above. 

\subsubsection{Sketch of a proof for Popa's cocycle superrigidity theorem \ref{T:Popa-Bernoulli:CSR}}\hfill\\

First note that without loss of generality the base space $(X_0,\mu_0)$
of the Bernoulli action may be assumed to be non-atomic.
Indeed, Proposition~\ref{P:H1-inj}(2) implies that for each of
the classes $\Gdsc\subset \Ufin\subseteq \Ginv$ the corresponding 
cocycle superrigidity descends through relatively weakly mixing quotients,
and $([0,1],dx)^\Gamma\to (X_0,\mu_0)^\Gamma$ is such.

Given any action $\Gamma\acts (X,\mu)$ consider the diagonal $\Gamma$-action on $(X\times X,\mu\times\mu)$
and its centralizer $\Aut_\Gamma(X\times X)$ in the Polish group $\Aut(X\times X,\mu\times\mu)$.
It always contain the flip $F:(x,y)\mapsto (y,x)$. 
Bernoulli actions $\Gamma\acts X=[0,1]^\Gamma$ have the special property (called \embf{s-malleability} by Popa)
that there is a path 
\[
	p:[1,2]\to \Aut_\Gamma(X\times X),\qquad\text{with}\qquad p_1={\rm Id},\qquad p_2=F.
\]
Indeed, the diagonal component-wise action of $\Aut([0,1]\times[0,1])$ on $X\times X=([0,1]\times[0,1])^\Gamma$ 
embeds into $\Aut_\Gamma(X\times X)$ and can be used to connect ${\rm Id}$ to $F$.

Fix a cocycle $c:\Gamma\acts X\to \Lambda$. Consider the two lifts to $X\times X\to X$: 
\[
	c_i:\Gamma\acts X\times X\to\Lambda,\qquad c_i(g,(x_1,x_2))=c(g,x_i),\qquad (i=1,2).
\]
Observe that they are connected by the continuous path of cocycles $c_t(g,(x,y))=c_1(g,p_t(x,y))$,
$1\le t\le 2$.
Local rigidity \ref{P:local-rigidity} implies that $c_1$ and $c_2$ are conjugate over $X\times X$, and
the proof is completed invoking Corollary \ref{C:YY}.
Under the weaker assumption of relative property (T) with respect to a w-normal subgroup,
Popa uses Proposition~\ref{P:H1-wnormal}.
 
\subsubsection{A cocycle superrigidity theorem} 
\label{ssub:a_cocycle_superrigidity_theorem}

We state and prove a cocycle superrigidity theorem, inspired and generalizing 
Adrian Ioana's Theorem~\ref{T:Ioana-profinite:CSR}. 
Thus a number of statements (Theorems~\ref{T:Out1}, \ref{T:cons-of-csr}(2), \S \ref{ssub:FM_question}) 
in this survey get a relatively full treatment.
The proof is a good illustration of Popa's \emph{deformation vs. rigidity} approach.

Recall that an ergodic p.m.p. action $\Gamma\acts (X,\mu)$ is said to have a \embf{discrete spectrum}
if the Koopman $\Gamma$-representation on $L^2(X,\mu)$ is a Hilbert sum of finite dimensional subrepresentations.
Mackey proved (generalizing Halmos - von Neumann theorem for $\bbZ$, and using Peter-Weyl ideas) 
that discrete spectrum action
is measurably isomorphic to the isometric $\Gamma$-action on $(K/L,m_{K/L})$, $g:kL\mapsto \tau(g)kL$,
where $L<K$ are compact separable groups and $\tau:\Gamma\to K$ is a homomorphism
with dense image.
 
\begin{theorem}[after Ioana's Theorem \ref{T:Ioana-profinite:CSR}, \cite{Ioana:2008profinite}]
\label{T:GammaonK}
Let $\Gamma\acts (X,\mu)$ be an ergodic p.m.p. action with discrete spectrum.
Assume that $\Gamma$ has property (T), or contains a w-normal subgroup $\Gamma_0$
with property (T) acting ergodically on $(X,\mu)$. 
Let $\Lambda$ be an arbitrary torsion free discrete countable group and 
$c:\Gamma\times X\to \Lambda$ be a measurable cocycle.

Then there is a finite index subgroup $\Gamma_1<\Gamma$, a $\Gamma_1$-ergodic component $X_1\subset X$
(of measure $\mu(X_1)=[\Gamma:\Gamma_1]^{-1}$), a homomorphism $\rho:\Gamma_1\to \Lambda$ and a measurable map $\phi:X\to \Lambda$,
so that the conjugate cocycle $c^\phi$ restricted to $\Gamma_1\acts X_1\to\Lambda$,
is the homomorphism $\rho:\Gamma_1\to\Lambda$. The cocycle $c^\phi:\Gamma\times X\to \Lambda$
is induced from $\rho$.
\end{theorem}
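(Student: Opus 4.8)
The plan is to run Popa's \emph{deformation vs.\ rigidity} scheme, feeding the property (T) hypothesis into the local rigidity Proposition~\ref{P:local-rigidity} while manufacturing the required deformation out of the compact group underlying the discrete-spectrum action. First I would invoke Mackey's description quoted above to identify $\Gamma\acts(X,\mu)$ with the isometric action $\Gamma\acts(K/L,m_{K/L})$, $g:kL\mapsto\tau(g)kL$, where $L<K$ are compact separable groups and $\tau:\Gamma\to K$ has dense image. The immediate difficulty is that left translations by $K$ (the obvious deformation) do not commute with $\Gamma$, while right translations do not descend to $K/L$. I would therefore pass to the cover $\Gamma\acts(K,m_K)$ by left translation: it is ergodic (density of $\tau(\Gamma)$) with discrete spectrum, the cocycle $c$ pulls back to a right-$L$-invariant cocycle $\hat c:\Gamma\times K\to\Lambda$, and now right translations $R_k:x\mapsto xk$ commute with the left $\Gamma$-action. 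Consequently, for each $k\in K$ the shifted map $\hat c^{R_k}(g,x):=\hat c(g,xk)$ is again a genuine cocycle over $\Gamma\acts K$.

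Next comes the rigidity step. Fixing Kazhdan data $(Q,\epsilon)$ for $\Gamma$ with $Q$ finite, and using continuity of translation in measure on the compact group $K$, I would choose a symmetric open neighborhood $V\ni e$ so that $\mu\{x:\hat c(g,xk)\neq\hat c(g,x)\}<\epsilon^2/2$ for all $k\in V$ and $g\in Q$. Proposition~\ref{P:local-rigidity} then yields, for each $k\in V$, a measurable $f_k:K\to\Lambda$ with $\hat c^{R_k}=\hat c^{f_k}$ and $\mu\{f_k=e\}>3/4$. The set $S$ of all $k$ for which $\hat c^{R_k}\sim\hat c$ is a subgroup containing $V$, hence an open (so clopen, compact, finite-index) subgroup $K_1<K$. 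A direct computation of how the conjugations compose shows $f_{kk'}(x)=f_{k'}(x)\,f_k(xk')$, i.e.\ $(k,x)\mapsto f_k(x)$ is a measurable $\Lambda$-valued cocycle for the right action of the \emph{compact} group $K_1$ on $(K,m_K)$.

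I would then exploit that $\Lambda$ is discrete and torsion free: a cocycle of a compact-group action into such a $\Lambda$ is a coboundary (its values lie in a precompact, hence finite, hence trivial subgroup, using automatic continuity of measurable homomorphisms of compact groups into discrete ones). Writing $f_k(x)=\hat\phi(x)\,\hat\phi(xk)^{-1}$ and conjugating by $\hat\phi$ makes $\hat c^{\hat\phi}$ invariant under all $R_k$, $k\in K_1$; thus it descends to a cocycle $\bar c$ over the \emph{finite} transitive action $\Gamma\acts K/K_1$. Its restriction to the stabilizer $\Gamma_1=\tau^{-1}(K_1)$ of the base point (of index $[K:K_1]$) is a homomorphism $\rho:\Gamma_1\to\Lambda$, and $\bar c$ is by construction induced from $\rho$; the fiber $X_1\subset X$ over the base point is the required $\Gamma_1$-ergodic component of measure $[\Gamma:\Gamma_1]^{-1}$. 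The same torsion-free/compact argument, applied to the possible failure of $\hat\phi$ to be right-$L$-invariant, lets me replace $\hat\phi$ by an $L$-invariant conjugator and so descend everything from $K$ back to $X=K/L$, producing the desired $\phi:X\to\Lambda$. When $K$ is connected, $V$ generates all of $K$, forcing $K_1=K$, $\Gamma_1=\Gamma$ and an honest homomorphism (the situation behind Theorem~\ref{T:Out1}); the finite-index phenomenon only reflects disconnectedness of $K$.

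Finally, when property (T) is available only for a w-normal ergodic subgroup $\Gamma_0$, I would first run the argument for $\Gamma_0$ (with $\overline{\tau(\Gamma_0)}$ in place of $K$) to untwist $c|_{\Gamma_0}$, and then propagate the conclusion to $\Gamma$ via the push-out Proposition~\ref{P:H1-wnormal}, exactly as in the sketch of Popa's Bernoulli theorem. I expect the genuine obstacle to be the middle step: assembling the individually-guaranteed conjugators $f_k$ into a single coherent cocycle over $K_1$ and proving it is a coboundary. This is precisely where one must use that the $f_k$ are canonical (close to the identity, hence essentially unique by local rigidity), that $k\mapsto f_k$ can be chosen measurable, and that torsion-freeness of $\Lambda$ annihilates the residual compact ambiguity.
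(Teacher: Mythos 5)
Your overall strategy coincides with the paper's: deform the cocycle by the commuting right translations, use Proposition~\ref{P:local-rigidity} to get near-identity conjugators $f_k$ for $k$ in a small neighborhood $V$, assemble these into a cocycle over an open finite-index subgroup $K_1<K$, solve a coboundary equation, and read off the homomorphism $\rho$ on $\Gamma_1=\tau^{-1}(K_1)$ together with the induced structure. The genuine gap sits exactly at the step you dispose of with ``a direct computation'': the identity $f_{kk'}(x)=f_{k'}(x)f_k(xk')$ can only be proved (by the ergodicity argument, which needs both sides to equal $e$ on a common set of positive measure) when $k$, $k'$ and $kk'$ all lie in the small neighborhood where the canonical, unique, near-identity conjugators exist. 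For a general $k\in K_1$ your subgroup argument shows only that \emph{some} conjugator exists; it is well defined only up to the group of self-conjugations of $\hat c$, i.e.\ maps $h:K\to\Lambda$ with $h(gx)=\hat c(g,x)\,h(x)\,\hat c(g,x)^{-1}$, and for an arbitrary (even measurable) choice of the $f_k$ the cocycle identity fails by an $h$-valued defect. Your closing paragraph correctly flags this as the crux, but the ingredients you list — canonicity, measurability, torsion-freeness — do not by themselves resolve it.

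What closes this gap in the paper is a topological argument absent from your proposal: a ``cocycle continuation'' of $f_{\cdot}$ along $V$-quasi-paths in $K_1$, which is locally well defined by the uniqueness statement, and whose global ambiguity is a homomorphism from the quasi-homotopy group $\pi^{(V)}_1(K_1)$ into the group of self-conjugations of $\hat c$ (a torsion-free group, since $\Lambda$ is). Torsion-freeness kills this homomorphism only because $\pi^{(V)}_1(K_1)$ is \emph{finite}, and that finiteness is where property (T) enters a second time: writing $K_1$ as an inverse limit of connected compact Lie groups and finite groups, $\pi^{(V)}_1(K_1)$ is a quotient of the fundamental group of a connected compact Lie group with no torus factors — torus factors are excluded precisely because $K_1$ contains a dense image of a Kazhdan group. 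Without this input your scheme is stuck: were $K_1$ allowed a torus quotient, the monodromy group would surject onto $\bbZ$ and torsion-freeness of $\Lambda$ would kill nothing. Two lesser points: the coboundary step is justified not by ``automatic continuity'' but by the elementary fact that $f_{ts}(x)=f_t(xs^{-1})f_s(x)$ is a cocycle equation for the simply transitive action of $K_1$ on itself, solved explicitly by $\phi(x)=f_{x^{-1}x_0}(x_0)$; and the paper reduces from $X=K/L$ to $X=K$ at the outset via Proposition~\ref{P:H1-inj}(1), rather than by descending $\hat\phi$ at the end.
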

The assumption that $\Lambda$ is torsion free is not essential; in general, one might need
to lift the action to a finite cover $\hat{X}_1\to X_1$ via a finite group which imbeds in $\Lambda$.
If $K$ is a connected Lie group, then $\Gamma_1=\Gamma$ and $X_1=X=K/L$.
The stated result is deduced from the case where $L$ is trivial, i.e. $X=K$, 
using Proposition~\ref{P:H1-inj}(1).
We shall make this simplification and assume $\Gamma$ has property (T)
(the modification for the more general case uses an appropriate version of Proposition~\ref{P:local-rigidity} and Proposition~\ref{P:H1-tightness}). 
An appropriate modification of the result handles compact groups
as possible target group $\Lambda$ for the cocycle.
\begin{proof}
The $K$-action by right translations: $t:x\mapsto xt^{-1}$, commutes with the $\Gamma$-action on $K$; in fact, $K$ is precisely
the centralizer of $\Gamma$ in $\Aut(K,m_K)$. 
This allows us to deform the initial cocycle $c:\Gamma\times X\to\Lambda$ be setting
\[
	c_t(g,x)=c(g,xt^{-1})\qquad (t\in K).
\]
Let $F\subset \Gamma$ and $\epsilon>0$ be as in the "local rigidity" Proposition~\ref{P:local-rigidity}.
Then for some open neighborhood
$U$ of $e\in K$ for every $t\in U$ there is a unique measurable $f_t:K\to \Lambda$ with
\[
	c_t(g,x)=c(g,xt^{-1})=f_t(gx)c(g,x)f_t(x)^{-1}\qquad \mu\setdef{x}{f_t(x)=e}>\frac{3}{4}.
\] 
Suppose $t,s\in U$ and $ts\in U$. Then
\begin{eqnarray*}
	f_{ts}(gx)\, c(g,x)\, f_{ts}(x)^{-1} &=&c_{ts}(g,x)=c(g,xs^{-1}t^{-1})\\
	&=&f_t(gxs^{-1})\,c(g,xs^{-1})\,f_t(xs^{-1})^{-1}\\
	&=&f_t(gxs^{-1})f_s(gx)\, c(g,x)\, [f_t(xs^{-1})f_s(x)^{-1}]^{-1}.
\end{eqnarray*}
This can be rewritten as 
\[
	F(gx)=c(g,x)\, F(x)\, c(g,x)^{-1},\qquad \text{where}\qquad F(x)=f_{ts}(x)^{-1}f_t(xs^{-1})f_s(x).
\]
Since $f_t, f_s, f_{ts}$ take value $e$ with probability $>3/4$, it follows that $A=F^{-1}(\{e\})$
has $\mu(A)>0$. The equation implies $\Gamma$-invariance of $A$. Thus $\mu(A)=1$ by ergodicity. 
Hence whenever $t,s,ts \in U$
\begin{equation}\label{e:f-coc}
	f_{ts}(x)=f_t(xs^{-1})f_s(x).
\end{equation}
If $K$ is a totally disconnected group, i.e., a profinite completion of $\Gamma$ as in Ioana's 
Theorem~\ref{T:Ioana-profinite:CSR}, then $U$ contains an open subgroup $K_1<K$. 
In this case one can skip the following paragraph.

In general, let $V$ be a symmetric neighborhood of $e\in K$ so that $V^2\subset U$,
and let $K_1=\bigcup_{n=1}^\infty V^n$. Then $K_1$ is an open (hence also closed) subgroup of $K$;
in the connected case $K_1=K$. 
We shall extend the family $\{f_t:K\to \Lambda\}_{t\in V}$ to be defined for all $t\in K_1$
while satisfying (\ref{e:f-coc}), using "cocycle continuation" procedure akin to analytic continuation.
For $t,t'\in K_1$ a $V$-\embf{quasi-path} $p_{t\to t'}$ from $t$ to $t'$ is a sequences $t=t_0,t_1,\dots,t_n=t'$
where $t_{i}\in t_{i-1}V$. 
Two $V$-quasi-paths from $t$ to $t'$ are $V$-\embf{close} if they are within $V$-neighborhoods
from each other. Two $V$-quasi-paths $p_{t\to t'}$ and $q_{t\to t'}$ are $V$-\embf{homotopic} if there is a chain
$p_{t\to t'}=p^{(0)}_{t\to t'},\dots,p^{(k)}_{t\to t'}=q_{t\to t'}$ of $V$-quasi-paths
where $p^{(i-1)}$ and $p^{(i)}$ are $V$-close, $1\le i\le k$. 
Iterating (\ref{e:f-coc}) one may continue the definition of $f_\cdot$ from $t$ to $t'$ along a $V$-quasi-path;
the continuation being the same for $V$-close quasi-paths, and therefore for $V$-homotopic 
quasi-paths as well (all from $t$ to $t'$). The possible ambiguity of this cocycle continuation procedure
is encoded in the \embf{homotopy group} $\pi^{(V)}_1(K_1)$ consisting of equivalence classes of
$V$-quasi-paths from $e\to e$ modulo $V$-homotopy. 
We claim that this group is finite. In the case of a connected Lie group $K_1$, $\pi^{(V)}_1(K_1)$ is a quotient of $\pi_1(K_1)$ 
which is finite since $K_1$, contaning a dense property (T) group, cannot have torus factors.
This covers the general case as well since $\pi^{(V)}_1(K_1)$ "feels" only finitely many 
factors when $K_1$ is written as an inverse limit of connected Lie groups and finite groups.
Considering the continuations of $f_\cdot$ along $V$-quasi-paths $e\to e$ we get a homomorphism
$\pi^{(V)}_1(K_1)\to\Lambda$ which must be trivial since $\Lambda$ was assumed to be torsion free.
Therefore we obtain a family of measurable maps $f_t:K_1\to\Lambda$ indexed by $t\in K_1$
and still satisfying (\ref{e:f-coc}).

\medskip

Let $\Gamma_1=\tau^{-1}(K_1)$. Then the index $[\Gamma:\Gamma_1]=[K:K_1]$ is finite.
We shall focus on the restriction $c_1$ of $c$ to $\Gamma_1\acts K_1$.
Note that (\ref{e:f-coc}) is a cocycle equation for the \emph{simply transitive action} of $K_1$ on itself.
It follows by a standard argument that it is a coboundary. 
Indeed, for a.e. $x_0\in K_1$ equation (\ref{e:f-coc}) holds
for a.e. $t,s\in K_1$. In particular, for a.e. $t, x\in K_1$, using $s=x^{-1}x_0$, one obtains 
$f_{tx^{-1}x_0}(x_0)=f_t(x) f_{x^{-1}x_0}(x_0)$. This gives
\[
	f_t(x)=\phi(xt^{-1})\phi(x)^{-1},\qquad \text{where}\qquad \phi(x)=f_{x^{-1}x_0}(x_0).
\] 
Equation $c_t=c^{f_t}$ translates into the fact that
the cocycle $c^\phi(g,x)=\phi(gx)^{-1}c(g,x)\phi(x)$ satisfies for a.e. $x, t$
\[
	c^\phi(g,xt^{-1})=c^\phi(g,x).
\]
Thus $c(g,x)$ does not depend on the space variable. Hence it is a homomorphism 
\[
	c^\phi(g,x)=\rho(g).
\]
Finally, the fact that $c^\phi$ is induced from $c_1^\phi$ is straightforward.
\end{proof}	




\appendix 
\newpage
\section{Cocycles} 
\label{sec:cocycles}

Let $G\acts (X,\mu)$ be a measurable, measure-preserving (sometimes just measure class preserving)
action of a topological group $G$ on a standard Lebesgue space $(X,\mu)$, and $H$ be a topological group.
A Borel measurable map $c:G\times X\to H$ forms a \embf{cocycle} if for every $g_1,g_2\in G$
for $\mu$-a.e. $x\in X$ one has
\[
	c(g_2g_1,x)=c(g_2,g_1.x)\cdot c(g_1,x)
\]
If $f:X\to H$ is a measurable map and $c:G\times X\to H$ is a measurable cocycle, define
the $f$-conjugate $c^f$ of $c$ to be
\[
	c^f(g,x)=f(g.x)^{-1}\,c(g,x)\,f(x).
\]
It is straightforward to see that $c^f$ is also a cocycle. One says that $c$ and $c^f$
are (measurably) \embf{conjugate}, or \embf{cohomologous} cocycles.
The space of all measurable cocycles $\Gamma\times X\to\Lambda$
is denoted $Z^1(\Gamma\acts X,\Lambda)$ and the space of equivalence classes
by $H^1(\Gamma\acts X,\Lambda)$.

Cocycles which do not depend on the space variable:
$c(g,x)=c(g)$ are precisely homomorphisms $c:G\to H$.
So cocycles may be viewed as generalized homomorphisms.
In fact, any group action $G\acts (X,\mu)$ defines a \embf{measured groupoid}
$\mathcal{G}$ with $\mathcal{G}^{(0)}=X$, and 
$\mathcal{G}^{(1)}=\setdef{(x,gx)}{x\in X,\ g\in G}$
(see \cite{Anantharaman-Delaroche+Renault:2000} for the background). 
In this context cocycles can be viewed as homomorphisms
$\mathcal{G}\to H$.

If $\pi:(X,\mu)\to (Y,\nu)$ is an equivariant quotient map between $\Gamma$-actions 
(so $\pi_*\mu=\nu$, and $\pi\circ\gamma=\gamma$ for $\gamma\in\Gamma$) then for any
target group $\Lambda$ any cocycle $c:\Gamma\times Y\to\Lambda$ lifts to
$\bar{c}:\Gamma\times X\to\Lambda$ by
\[
	\bar{c}(g,x)=c(g,\pi(x)).
\] 
Moreover, if $c'=c^f\sim c$ in $Z^1(\Gamma\acts Y,\Lambda)$ then
the lifts $\bar{c}'=\bar{c}^{f\circ\pi}\sim \bar{c}$ in $Z^1(\Gamma\acts X,\Lambda)$;
so $X\overto{\pi} Y$ induces
\[
	H^1(\Gamma\acts X,\Lambda)\overfrom{\pi^\diamond} H^1(\Gamma\acts Y,\Lambda)
\]
Note that ${\rm Hom}(\Gamma,\Lambda)$ is $Z^1(\Gamma\acts\{pt\},\Lambda)$
and classes of cocycles on $\Gamma\times X\to\Lambda$ cohomologous to homomorphisms
is precisely the pull back of $H^1(\Gamma\acts\{pt\},\Lambda)$.

\medskip

\subsection{The canonical class of a lattice, (co-)induction} 
\label{sub:canonical_class_of_a_lattice_induction}\hfill\\

Let $\Gamma<G$ be a lattice in a lcsc group. 
By definition the transitive $G$-action
on $X=G/\Gamma$ has an invariant Borel regular probability measure $\mu$.
Let $\mathcal{F}\subset G$ be a Borel \embf{fundamental domain} for the 
right $\Gamma$-action on $G$
(i.e. $\mathcal{F}$ is a Borel subset of $G$ set which meats every coset $g\Gamma$ precisely
once).
Fundamental domains correspond to Borel cross-section 
$\sigma:G/\Gamma\to G$ of the projection $G\to G/\Gamma$.
Define:
\[
	c_{\sigma}:G\times G/\Gamma\to\Gamma,\qquad \text{by}\qquad
	c_{\sigma}(g,h\Gamma)=\sigma(gh\Gamma)^{-1}\,g\,\sigma(h\Gamma). 
\]
Clearly, this is a cocycle (a conjugate of the identity homomorphism $G\to G$); 
however $c_\sigma$ takes values in the subgroup $\Gamma$ of $G$. 
This cocycle is associated to a choice of the cross-section $\sigma$
(equivalently, the choice of the fundamental domain); starting
from another Borel cross-section $\sigma':G/\Gamma\to G$ results in 
a cohomologous cocycle:
\[
	c_{\sigma'}=c_\sigma^f\qquad \text{where}\qquad f:G/\Gamma\to\Gamma
	\qquad\text{is defined by}\qquad \sigma(x)=f(x)\sigma'(x).
\]
Let $\Gamma$ be a lattice in $G$.
%
%
Then any action $\Gamma\acts (X,\mu)$ 
gives rise to the \embf{induced} $G$-action (a.k.a. \embf{suspension}) on 
$\bar{X}=G\times_\Gamma X$ where $G$-acts on the first coordinate. 
Equivalently, $\bar{X}=G/\Gamma\times X$ and $g:(g'\Gamma,x)\mapsto (gg'\Gamma, c(g,g'\Gamma)x)$ 
where $c:G\times G/\Gamma\to \Gamma$ is in the canonical class.
Here the $G$-invariant finite measure $\bar\mu=m_{G/\Gamma}\times\mu$ is ergodic
iff $\mu$ is $\Gamma$-ergodic.
If $\alpha:\Gamma\times X\to H$ is a cocycle, the \embf{induced cocycle} $\bar{\alpha}:G\times\bar{X}\to H$
is given by $\bar{\alpha}(g,(g'\Gamma,x))=\alpha(c(g,g'\Gamma),x)$.
The cohomology of $\bar{\alpha}$ is the same as that of $\alpha$ 
(one relates maps $F:\bar{X}\to H$ to $f:X\to H$ by $f(x)=F(e\Gamma,x)$
taking instead of $e\Gamma$ a generic point in $G/\Gamma$).
In particular, $\bar\alpha$ is cohomologous to a homomorpism $\bar\pi:G\to H$
iff $\alpha$ is cohomologous to a homomorphism $\Gamma\to H$;
see \cite{Zimmer:book:84} for details.

Cocycles appear quite naturally in a number of situations such as (volume preserving)
smooth actions on manifolds, where choosing a measurable trivialization of the tangent bundle,
the derivative becomes a matrix valued cocycle.
We refer the reader to David Fisher's survey \cite{Fisher:survey} where this type of cocycles is 
extensively discussed in the context of Zimmer's programme.
Here we shall be interested in a different type of cocycles: "rearrangement" cocycles associated 
to Orbit Equivalence, Measure Equivalence etc. as follows.

\medskip

\subsection{OE-cocycles} 
\label{sub:oe_cocycles}\hfill\\

Let $\Gamma\acts (X,\mu)$ and $\Lambda\acts (Y,\nu)$ be two measurable, measure preserving,
ergodic actions on probability spaces, and $T:(X,\mu)\to (Y,\nu)$ be an Orbit Equivalence. 
Assume that the $\Lambda$-action is \emph{essentially free}, i.e., for $\nu$-.a.e $y\in Y$,
the stabilizer $\Lambda_y=\setdef{h\in\Lambda}{h.y=y}$ is trivial. 
Then for every $g\in\Gamma$ and $\mu$-a.e. $x\in X$,
the points $T(g.x), T(x)\in Y$ lie on the same $\Lambda$-orbit.
Let $\alpha(g,x)\in\Lambda$ denote the (a.e. unique) element of $\Lambda$ with
\[
	T(g.x)=\alpha(g,x).T(x)
\]
Considering $x,g.x, g'g.x$ one checks that $\alpha$ is actually
a cocycle $\alpha:\Gamma\times X\to\Lambda$. 
We shall refer to such $\alpha$ as the \embf{OE-cocycle}, or the \embf{rearrangement} cocycle,
corresponding to $T$.  

Note that for $\mu$-a.e. $x$, the map $\alpha(-,x):\Gamma\to \Lambda$
is a bijection; it describes how the $\Gamma$-names of points
$x'\in\Gamma.x$ translate into the $\Lambda$-names of $y'\in \Lambda.T(x)$
under the map $T$. 
The inverse map $T^{-1}:(Y,\nu)\to (X,\mu)$ defines an OE-cocycle $\beta:\Lambda\times Y\to \Gamma$
which serves as an "inverse" to $\alpha$ in the sense that a.e.
\[
	\beta(\alpha(g,x),T(X))=g\qquad (g\in\Gamma).
\]

\medskip

\subsection{ME-cocycles} 
\label{sub:me_cocycles}\hfill\\

Let $(\Omega,m)$ be an ME-coupling of two groups $\Gamma$ and $\Lambda$ and let $Y,X\subset \Omega$
be fundamental domains for $\Gamma$, $\Lambda$ actions respectively. 
The natural identification $\Omega/\Lambda\cong X$, $\Lambda\omega\mapsto \Lambda\omega\cap X$,
translates the $\Gamma$-action on $\Omega/\Lambda$ to $\Gamma\acts X$ by
\[
	\gamma:X\ni x \mapsto  g \alpha(g,x) x \in X
\] 
where $\alpha(\gamma,x)$ is the unique element in $\Lambda$ taking $\gamma x \in\Omega$
into $X\subset\Omega$. It is easy to see that $\alpha:\Gamma\times X\to\Lambda$
is a cocycle with respect to the above $\Gamma$-action on $X$ which we denote by a dot $\gamma\cdot x$
to distinguish it from the $\Gamma$-action on $\Omega$.
(If $\Gamma$ and $\Lambda$ are lattices in $G$ then $\alpha:\Gamma\times G/\Lambda\to\Lambda$
is the restriction of the canonical cocycle $G\times G/\Lambda\to\Lambda$).
Similarly we get a cocycle $\beta:\Lambda\times Y\to\Gamma$.  
So the $(\Gamma,\Lambda)$ ME-coupling $\Omega$ and a choice of fundamental domains 
$Y\cong \Omega/\Gamma$, $X\cong \Omega/\Lambda$ define a pair of cocycles
\begin{equation}\label{e:ME-cocycles}
	\alpha:\Gamma\times \Omega/\Lambda\to \Lambda,\qquad
	\beta:\Lambda\times \Omega/\Gamma\to\Gamma
\end{equation}
Changing the fundamental domains amounts to conjugating the cocycles and vise versa.

\begin{remark}
One can characterize ME-cocycles among all measurable cocycles $\alpha:\Gamma\times X\to\Lambda$
as \embf{discrete} ones with \embf{finite covolume}. 
These concepts refer to the following construction: 
let $(\tilde{X},\tilde{\mu})=(X\times\Lambda,\mu\times m_{\Lambda})$ and let
$\Gamma$-act by $g:(x,h)\mapsto (g.x, \alpha(g,x)h)$.
Say that the cocycle is \emph{discrete and has finite covolume} if the action
$\Gamma\acts (\tilde{X},\tilde{\mu})$ admits a finite measure fundamental domain.
\end{remark}



\begin{bibdiv}
\begin{biblist}
	

\bib{Abert+Nikolov:2007cost}{article}{
   author={Abert, M.},
   author={Nikolov, N.},
   title={Rank gradient, cost of groups and the rank versus Heegaard genus problem},
     eprint={arxiv:0701.361},
}
\bib{Abert+Weiss:2008}{article}{
   author={Abert, M.},
   author={Weiss, B.},
   title={Bernoulli actions are weakly contained in any free action},
	note={in preparation},
}
\bib{Adams:88FM}{article}{
   author={Adams, S.},
   title={An equivalence relation that is not freely generated},
   journal={Proc. Amer. Math. Soc.},
   volume={102},
   date={1988},
   number={3},
   pages={565--566},
}
\bib{Adams:1990treeable}{article}{
   author={Adams, S.},
   title={Trees and amenable equivalence relations},
   journal={Ergodic Theory Dynam. Systems},
   volume={10},
   date={1990},
   number={1},
   pages={1--14},
}	
\bib{Adams:1988tree}{article}{
   author={Adams, S.},
   title={Reduction of cocycles with hyperbolic targets},
   journal={Ergodic Theory Dynam. Systems},
   volume={16},
   number={6},
   date={1996},
   pages={1111–-1145},
}
\bib{Adams+Spatzier:1990:T-tree}{article}{
   author={Adams, S. R.},
   author={Spatzier, R. J.},
   title={Kazhdan groups, cocycles and trees},
   journal={Amer. J. Math.},
   volume={112},
   date={1990},
   number={2},
   pages={271--287},
}
\bib{Alvarez+Gaboriau:2008p1322}{article}{
      author={Alvarez, A.},
      author={Gaboriau, D.},
       title={Free products, orbit equivalence and measure equivalence
  rigidity},
         eprint={arxiv:0806.2788},
}
\bib{Anantharaman-Delaroche:2005T}{article}{
   author={Anantharaman-Delaroche, C.},
   title={Cohomology of property $T$ groupoids and applications},
   journal={Ergodic Theory Dynam. Systems},
   volume={25},
   date={2005},
   number={4},
   pages={977--1013},
}
\bib{Anantharaman-Delaroche+Renault:2000}{book}{
   author={Anantharaman-Delaroche, C.},
   author={Renault, J.},
   title={Amenable groupoids},
   series={Monographies de L'Enseignement Math\'ematique [Monographs of
   L'Enseignement Math\'ematique]},
   volume={36},
   note={With a foreword by Georges Skandalis and Appendix B by E. Germain},
   publisher={L'Enseignement Math\'ematique},
   place={Geneva},
   date={2000},
   pages={196},
}
\bib{Andersen:MCG-no-T}{article}{
      author={Andersen, J.E.},
       title={Mapping Class Groups do not have Kazhdan's Property (T)},
         eprint={arXiv:0706.2184},
}
\bib{Atiyah:1976}{article}{
   author={Atiyah, M. F.},
   title={Elliptic operators, discrete groups and von Neumann algebras},
   conference={
      title={Colloque ``Analyse et Topologie'' en l'Honneur de Henri Cartan
      (Orsay, 1974)},
   },
   book={
      publisher={Soc. Math. France},
      place={Paris},
   },
   date={1976},
   pages={43--72. Ast\'erisque, No. 32-33},
}
\bib{Bader+Furman:Weyl-hyplike}{article}{
      author={Bader, U.},
      author={Furman, A.},
       title={Superrigidity via Weyl groups: hyperbolic-like targets},
		note={preprint},
}
\bib{Bader+Furman+Sauer:2009}{article}{
      author={Bader, U.},
      author={Furman, A.},
      author={Sauer, R.},
       title={Integrable measure equivalence and rigidity of hyperbolic lattices},
		eprint={arXiv:1006.5193},
}
\bib{Bader+Furman+Sauer:2010}{article}{
      author={Bader, U.},
      author={Furman, A.},
      author={Sauer, R.},
       title={Efficient subdivision in hyperbolic groups and applications},
      eprint={arXiv:1003.1562},
}
\bib{Bader+Furman+Shaker:Weyl-circle}{article}{
      author={Bader, U.},
      author={Furman, A.},
      author={Shaker, A.},
       title={Superrigidity via Weyl groups: actions on the circle},
		eprint={arxiv:math/0605276},
}
\bib{Bekka+Valette:1993wmT}{article}{
   author={Bekka, M. E. B.},
   author={Valette, A.},
   title={Kazhdan's property $({\rm T})$ and amenable representations},
   journal={Math. Z.},
   volume={212},
   date={1993},
   number={2},
   pages={293--299},
}
\bib{Bekka+deLaHarpe+Valette:2008T}{book}{
   author={Bekka, B.},
   author={de la Harpe, P.},
   author={Valette, A.},
   title={Kazhdan's property ($T$)},
   series={New Mathematical Monographs},
   volume={11},
   publisher={Cambridge University Press},
   place={Cambridge},
   date={2008},
   pages={xiv+472},
}
\bib{BFLM:2007CRAS}{article}{
   author={Bourgain, J.},
   author={Furman, A.},
   author={Lindenstrauss, E.},
   author={Mozes, S.},
   title={Invariant measures and stiffness for non-Abelian groups of toral automorphisms},
   journal={C. R. Math. Acad. Sci. Paris},
   volume={344},
   date={2007},
   number={12},
   pages={737--742},
}
\bib{Bridson+Tweedale+Wilton:ETDS}{article}{
   author={Bridson, M. R.},
   author={Tweedale, M.},
   author={Wilton, H.},
   title={Limit groups, positive-genus towers and measure-equivalence},
   journal={Ergodic Theory Dynam. Systems},
   volume={27},
   date={2007},
   number={3},
   pages={703--712},
}
\bib{Burger+Monod:JEMS:99}{article}{
      author={Burger, M.},
      author={Monod, N.},
       title={Bounded cohomology of lattices in higher rank {L}ie groups},
        date={1999},
     journal={J. Eur. Math. Soc. (JEMS)},
      volume={1},
      number={2},
       pages={199\ndash 235},
}
\bib{Burger+Monod:GAFA:02}{article}{
      author={Burger, M.},
      author={Monod, N.},
       title={Continuous bounded cohomology and applications to rigidity theory},
        date={2002},
     journal={Geom. Funct. Anal.},
      volume={12},
      number={2},
       pages={219\ndash 280},
}
\bib{Burger+Mozes:CAT-1}{article}{
   author={Burger, M.},
   author={Mozes, S.},
   title={${\rm CAT}$(-$1$)-spaces, divergence groups and their
   commensurators},
   journal={J. Amer. Math. Soc.},
   volume={9},
   date={1996},
   number={1},
   pages={57--93},
}
\bib{Cheeger+Gromov:1986L2}{article}{
   author={Cheeger, J.},
   author={Gromov, M.},
   title={$L\sb 2$-cohomology and group cohomology},
   journal={Topology},
   volume={25},
   date={1986},
   number={2},
   pages={189--215},
}
\bib{Cherix+Cowling++:2001}{book}{
   author={Cherix, P.-A.},
   author={Cowling, M.},
   author={Jolissaint, P.},
   author={Julg, P.},
   author={Valette, A.},
   title={Groups with the Haagerup property},
   series={Progress in Mathematics},
   volume={197},
   note={Gromov's a-T-menability},
   publisher={Birkh\"auser Verlag},
   place={Basel},
   date={2001},
   pages={viii+126},
}	
\bib{Chifan+Ioana:Erg-subrels}{article}{
      author={Chifan, I.},
      author={Ioana, A.},
       title={Ergodic Subequivalence Relations Induced by a Bernoulli Action},
		eprint={arXiv:0802.2353},
}
\bib{Connes:80T}{article}{
   author={Connes, A.},
   title={A factor of type ${\rm II}\sb{1}$ with countable fundamental
   group},
   journal={J. Operator Theory},
   volume={4},
   date={1980},
   number={1},
   pages={151--153},
}
\bib{Connes+Feldman+Weiss:81}{article}{
   author={Connes, A.},
   author={Feldman, J.},
   author={Weiss, B.},
   title={An amenable equivalence relation is generated by a single transformation},
   journal={Ergodic Theory Dynamical Systems},
   volume={1},
   date={1981},
   number={4},
   pages={431--450 (1982)},
}
\bib{Connes+Weiss:1980T}{article}{
   author={Connes, A.},
   author={Weiss, B.},
   title={Property ${\rm T}$ and asymptotically invariant sequences},
   journal={Israel J. Math.},
   volume={37},
   date={1980},
   number={3},
   pages={209--210},
}
\bib{Corlette+Zimmer:1994}{article}{
   author={Corlette, K.},
   author={Zimmer, R. J.},
   title={Superrigidity for cocycles and hyperbolic geometry},
   journal={Internat. J. Math.},
   volume={5},
   date={1994},
   number={3},
   pages={273--290},
}
\bib{Cornulier+Stalder+Valette:2008HAP}{article}{
   author={de Cornulier, Y.},
   author={Stalder, Y.},
   author={Valette, A.},
   title={Proper actions of lamplighter groups associated with free groups},
   language={English, with English and French summaries},
   journal={C. R. Math. Acad. Sci. Paris},
   volume={346},
   date={2008},
   number={3-4},
   pages={173--176},
}
\bib{Cowling+Haagerup:1989}{article}{
   author={Cowling, M.},
   author={Haagerup, U.},
   title={Completely bounded multipliers of the Fourier algebra of a simple
   Lie group of real rank one},
   journal={Invent. Math.},
   volume={96},
   date={1989},
   number={3},
   pages={507--549},
}
\bib{Cowling+Zimmer:89sp}{article}{
      author={Cowling, M.},
      author={Zimmer, R.~J.},
       title={Actions of lattices in {\$}\rm Sp(1,n){\$}},
        date={1989},
     journal={Ergod. Th. Dynam. Sys.},
      volume={9},
      number={2},
       pages={221\ndash 237},
}
\bib{Dye:1959}{article}{
   author={Dye, H. A.},
   title={On groups of measure preserving transformation. I},
   journal={Amer. J. Math.},
   volume={81},
   date={1959},
   pages={119--159},
}
\bib{Dye:1963}{article}{
   author={Dye, H. A.},
   title={On groups of measure preserving transformations. II},
   journal={Amer. J. Math.},
   volume={85},
   date={1963},
   pages={551--576},
}
\bib{Eckmann:2000L2}{article}{
   author={Eckmann, B.},
   title={Introduction to $l\sb 2$-methods in topology: reduced $l\sb
   2$-homology, harmonic chains, $l\sb 2$-Betti numbers},
   note={Notes prepared by Guido Mislin},
   journal={Israel J. Math.},
   volume={117},
   date={2000},
   pages={183--219},
}
\bib{Epstein:2008}{article}{
 	author={Epstein, I.},
         eprint={arxiv:0707.4215},
}
\bib{Farb:1997qi}{article}{
   author={Farb, B.},
   title={The quasi-isometry classification of lattices in semisimple Lie
   groups},
   journal={Math. Res. Lett.},
   volume={4},
   date={1997},
   number={5},
   pages={705--717},
}
\bib{Farb+Weinberger:08}{article}{
      author={Farb, B.},
      author={Weinberger, S.},
       title={The intrinsic asymmetry and inhomogeneity of Teichmuller space},
         eprint={arxiv:0804.4428},
}
\bib{Feldman+Moore:77I}{article}{
   author={Feldman, J.},
   author={Moore, C.~C.},
   title={Ergodic equivalence relations, cohomology, and von Neumann algebras. I},
   journal={Trans. Amer. Math. Soc.},
   volume={234},
   date={1977},
   number={2},
   pages={289--324},
}
\bib{Feldman+Moore:77II}{article}{
   author={Feldman, J.},
   author={Moore, C.~C.},
   title={Ergodic equivalence relations, cohomology, and von Neumann algebras. II},
   journal={Trans. Amer. Math. Soc.},
   volume={234},
   date={1977},
   number={2},
   pages={325--359},
}
\bib{Feldman+Sutherland+Zimmer:1989}{article}{
   author={Feldman, J.},
   author={Sutherland, C. E.},
   author={Zimmer, R. J.},
   title={Subrelations of ergodic equivalence relations},
   journal={Ergodic Theory Dynam. Systems},
   volume={9},
   date={1989},
   number={2},
   pages={239--269},
}
\bib{Fisher:survey}{article}{
	author={Fisher, D.},
	title={Groups acting on manifolds: around the Zimmer program},
   eprint={arxiv:0809.4849},
}
\bib{Fisher+Hitchman:csr}{article}{
	author={Fisher, D.},
	author={Hitchman, T.},
	title={Cocycle superrigidity and harmonic maps with infinite dimensional targets},
   eprint={arXiv:math/0511666v3},
}
\bib{Furman:ME:99}{article}{
      author={Furman, A.},
       title={Gromov's Measure Equivalence and rigidity of higher rank lattices},
        date={1999},
        ISSN={0003-486X},
     journal={Ann. of Math. (2)},
      volume={150},
      number={3},
       pages={1059\ndash 1081},
}
\bib{Furman:OE:99}{article}{
      author={Furman, A.},
       title={Orbit Equivalence rigidity},
        date={1999},
        ISSN={0003-486X},
     journal={Ann. of Math. (2)},
      volume={150},
      number={3},
       pages={1083\ndash 1108},
}
\bib{Furman:MM:01}{article}{
      author={Furman, A.},
       title={Mostow-margulis rigidity with locally compact targets},
        date={2001},
     journal={Geom. Funct. Anal.},
      volume={11},
      number={1},
       pages={30\ndash 59},
}
\bib{Furman:Outer:05}{article}{
      author={Furman, A.},
       title={Outer automorphism groups of some ergodic equivalence relations},
        date={2005},
     journal={Comment. Math. Helv.},
      volume={80},
      number={1},
       pages={157\ndash 196},
}
\bib{Furman:2007popa}{article}{
      author={Furman, A.},
       title={On Popa's cocycle superrigidity theorem},
        date={2007},
     journal={Int. Math. Res. Not. IMRN},
      number={19},
       pages={Art. ID rnm073, 46},
}
\bib{Furman:cohom}{misc}{
      author={Furman, A.},
       title={Cohomology of measurable cocycles},
        note={in preparation},
}
\bib{Furstenberg:1967elvelop}{article}{
   author={Furstenberg, H.},
   title={Poisson boundaries and envelopes of discrete groups},
   journal={Bull. Amer. Math. Soc.},
   volume={73},
   date={1967},
   pages={350--356},
}
\bib{Furstenberg:1977:Sz}{article}{
   author={Furstenberg, H.},
   title={Ergodic behavior of diagonal measures and a theorem of Szemer\'edi
   on arithmetic progressions},
   journal={J. Analyse Math.},
   volume={31},
   date={1977},
   pages={204--256},
}	
\bib{Furstenberg:afterMargulis-Zimmer}{article}{
   author={Furstenberg, H.},
   title={Rigidity and cocycles for ergodic actions of semisimple Lie groups
   (after G. A. Margulis and R. Zimmer)},
   conference={
      title={Bourbaki Seminar, Vol. 1979/80},
   },
   book={
      series={Lecture Notes in Math.},
      volume={842},
      publisher={Springer},
      place={Berlin},
   },
   date={1981},
   pages={273--292},
}
\bib{Gaboriau:CRAS1998cost}{article}{
	author={Gaboriau, D.},
	title={Mercuriale de groupes et de relations},
	journal={C. R. Acad. Sci. Paris S\'er. I Math.},
	volume={326},
	date={1998},
	number={2},
	pages={219--222},
}
\bib{Gaboriau:Inven2000cost}{article}{
   author={Gaboriau, D.},
   title={Co\^ut des relations d'\'equivalence et des groupes},
   journal={Invent. Math.},
   volume={139},
   date={2000},
   number={1},
   pages={41--98},
}
\bib{Gaboriau:2000CRAS-L2}{article}{
   author={Gaboriau, D.},
   title={Sur la (co-)homologie $L\sp 2$ des actions pr\'eservant une
   mesure},
   journal={C. R. Acad. Sci. Paris S\'er. I Math.},
   volume={330},
   date={2000},
   number={5},
   pages={365--370},
}
\bib{Gaboriau:2000survey}{article}{
   author={Gaboriau, D.},
   title={On orbit equivalence of measure preserving actions},
   conference={
      title={Rigidity in dynamics and geometry},
      address={Cambridge},
      date={2000},
   },
   book={
      publisher={Springer},
      place={Berlin},
   },
   date={2002},
   pages={167--186},
}
\bib{Gaboriau:IHES2002L2}{article}{
  author={Gaboriau, D.},
   title={Invariants $l^2$ de relations d'{\'e}quivalence et de groupes},
    date={2002},
 journal={Publ. Math. Inst. Hautes Etudes Sci.},
  number={95},
   pages={93\ndash 150},
}
\bib{Gaboriau:2005exmps}{article}{
   author={Gaboriau, D.},
   title={Examples of groups that are measure equivalent to the free group},
   journal={Ergodic Theory Dynam. Systems},
   volume={25},
   date={2005},
   number={6},
   pages={1809\ndash 1827},
}
\bib{Gaboriau+Popa:2005Fn}{article}{
      author={Gaboriau, D.},
      author={Popa, S.},
       title={An uncountable family of non-orbit equivalent actions of {$\bbF\sb n$}},
        date={2005},
        ISSN={0894-0347},
     journal={J. Amer. Math. Soc.},
      volume={18},
      number={3},
       pages={547\ndash 559 (electronic)},
}
\bib{Gaboriau+Lyons:2007}{article}{
      author={Gaboriau, D.},
      author={Lyons, R.},
       title={A measurable-group-theoretic solution to von Neumann's problem},
          eprint={arxiv:0711.1643},
}
\bib{Gefter:1993:out1}{article}{
   author={Gefter, S. L.},
   title={Ergodic equivalence relation without outer automorphisms},
   journal={Dopov./Dokl. Akad. Nauk Ukra\"\i ni},
   date={1993},
   number={11},
   pages={25--27},
}
\bib{Gefter:1996:out2}{article}{
   author={Gefter, S. L.},
   title={Outer automorphism group of the ergodic equivalence relation
   generated by translations of dense subgroup of compact group on its
   homogeneous space},
   journal={Publ. Res. Inst. Math. Sci.},
   volume={32},
   date={1996},
   number={3},
   pages={517--538},
}
\bib{Gefter:1987:coh}{incollection}{
   author={Gefter, S. L.},
   title={On cohomologies of ergodic actions of a T-group on homogeneous spaces of 
		a compact Lie group (Russian)},
   booktitle={Operators in Functional Spaces and Questions of Function 
	Theory. Collect. Sci. Works},
   publisher={Cambridge Univ. Press},
   address={Kiev},
   date={1987},
   pages={77--83},
}
\bib{Gefter+Golodets:1988:fg}{article}{
   author={Gefter, S. L.},
   author={Golodets, V. Ya.},
   title={Fundamental groups for ergodic actions and actions with unit fundamental groups},
   journal={Publ. Res. Inst. Math. Sci.},
   volume={24},
   date={1988},
   number={6},
   pages={821--847 (1989)},
}
\bib{Gromov:1983volume}{article}{
      author={Gromov, M.},
       title={Volume and bounded cohomology},
        date={1982},
     journal={Inst. Hautes \'Etudes Sci. Publ. Math.},
      number={56},
       pages={5\ndash 99 (1983)},
}
\bib{Gromov:1993asinv}{incollection}{
      author={Gromov, M.},
       title={Asymptotic invariants of infinite groups},
        date={1993},
   booktitle={Geometric group theory, vol.\ 2 (sussex, 1991)},
   publisher={Cambridge Univ. Press},
     address={Cambridge},
       pages={1\ndash 295},
}
\bib{delaHarpe:GGTbook}{book}{
   author={de la Harpe, P.},
   title={Topics in geometric group theory},
   series={Chicago Lectures in Mathematics},
   publisher={University of Chicago Press},
   place={Chicago, IL},
   date={2000},
   pages={vi+310},
}
\bib{Hjorth:05Dye}{article}{
      author={Hjorth, G.},
       title={A converse to {D}ye's theorem},
        date={2005},
     journal={Trans. Amer. Math. Soc.},
      volume={357},
      number={8},
       pages={3083\ndash 3103 (electronic)},
}
\bib{Hjorth:06lemma}{article}{
   author={Hjorth, G.},
   title={A lemma for cost attained},
   journal={Ann. Pure Appl. Logic},
   volume={143},
   date={2006},
   number={1-3},
   pages={87--102},
}
\bib{Hjorth+Kechris:MAMS:05}{article}{
      author={Hjorth, G.},
      author={Kechris, A.~S.},
       title={Rigidity theorems for actions of product groups and countable
  {B}orel equivalence relations},
        date={2005},
     journal={Mem. Amer. Math. Soc.},
      volume={177},
      number={833},
       pages={viii+109},
}
\bib{Ioana:2007F2}{article}{
      author={Ioana, A.},
       title={A construction of non-orbit equivalent actions of $\bbF_2$},
	eprint={arXiv:math/0610452},
}
\bib{Ioana:2008profinite}{article}{
      author={Ioana, A.},
       title={Cocycle superrigidity for profinite actions of property (T)  groups},
         eprint={arxiv:0805.2998},
}
\bib{Ioana:2006contF2}{article}{
      author={Ioana, A.},
       title={Orbit inequivalent actions for groups containing a copy of $\bbF_2$},
         eprint={arxiv:0701.5027},
}
\bib{Ioana+Kechris+Tsankov:2008subrel}{article}{
      author={Ioana, A},
      author={Kechris, A.~S},
      author={Tsankov, T.},
       title={Subequivalence relations and positive-definite functions},
         eprint={arxiv:0806.0430},
}
\bib{Ioana+Peterson+Popa:2008}{article}{
   author={Ioana, A.},
   author={Peterson, J.},
   author={Popa, S.},
   title={Amalgamated free products of weakly rigid factors and calculation
   of their symmetry groups},
   journal={Acta Math.},
   volume={200},
   date={2008},
   number={1},
   pages={85--153},
}
\bib{Jolissaint:2001}{misc}{
   author={Jolissaint, P.},
   title={Approximation properties for Measure Equivalent groups},
   date={2001},
   note={preprint},
}
\bib{Jones+Schmidt:1987}{article}{
   author={Jones, V. F. R.},
   author={Schmidt, K.},
   title={Asymptotically invariant sequences and approximate finiteness},
   journal={Amer. J. Math.},
   volume={109},
   date={1987},
   number={1},
   pages={91--114},
}
\bib{Kaimanovich:1997:amen}{article}{
   author={Kaimanovich, V. A.},
   title={Amenability, hyperfiniteness, and isoperimetric inequalities},
   journal={C. R. Acad. Sci. Paris S\'er. I Math.},
   volume={325},
   date={1997},
   number={9},
   pages={999--1004},
}	
\bib{Kazhdan:1967T}{article}{
   author={Ka{\v{z}}dan, D. A.},
   title={On the connection of the dual space of a group with the structure
   of its closed subgroups},
   language={Russian},
   journal={Funkcional. Anal. i Prilo\v zen.},
   volume={1},
   date={1967},
   pages={71--74},
}
\bib{Kechris:new}{book}{
   author={Kechris, A. S.},
   title={Global aspects of ergodic group actions},
   series={Mathematical Surveys and Monographs}, 
   volume={160}, 
   publisher={American Mathematical Society},
   date={2010},
   note={to appear.},
}	
\bib{Kechris+Miller:2004book}{book}{
   author={Kechris, A. S.},
   author={Miller, B. D.},
   title={Topics in orbit equivalence},
   series={Lecture Notes in Mathematics},
   volume={1852},
   publisher={Springer-Verlag},
   place={Berlin},
   date={2004},
   pages={x+134},
}	
\bib{Kechris+Tsankov:PAMS}{article}{
   author={Kechris, A. S.},
   author={Tsankov, T.},
   title={Amenable actions and almost invariant sets},
   journal={Proc. Amer. Math. Soc.},
   volume={136},
   date={2008},
   number={2},
   pages={687--697 (electronic)},
}
\bib{Kida:2008-thesis}{article}{
   author={Kida, Y.},
   title={The mapping class group from the viewpoint of measure equivalence
   theory},
   journal={Mem. Amer. Math. Soc.},
   volume={196},
   date={2008},
   number={916},
   pages={viii+190},
}
\bib{Kida:2008-ME}{article}{
   author={Kida, Y.},
   title={Classification of the mapping class groups up to measure
   equivalence},
   journal={Proc. Japan Acad. Ser. A Math. Sci.},
   volume={82},
   date={2006},
   number={1},
   pages={4--7},
}
\bib{Kida:2008OE}{article}{
      author={Kida, Y.},
       title={Orbit equivalence rigidity for ergodic actions of the mapping
  class group},
        date={2008Feb},
     journal={Geom Dedicata},
      volume={131},
      number={1},
       pages={99\ndash 109},
         PII={9219},
}
\bib{Kida:2006ME}{article}{
      author={Kida, Y.},
       title={Measure Equivalence rigidity of the Mapping Class Group},
        date={2006Jul},
     journal={arXiv},
      volume={math.GR},
         eprint={arxiv:0607600},
        note={39 pages},
}
\bib{Kida:2008-Outer}{article}{
   author={Kida, Y.},
   title={Outer automorphism groups of equivalence relations for mapping
   class group actions},
   journal={J. Lond. Math. Soc. (2)},
   volume={78},
   date={2008},
   number={3},
   pages={622--638},
}
\bib{Kida:amalgamated}{article}{
      author={Kida, Y.},
       title={Rigidity of amalgamated free products in measure equivalence theory},
         eprint={arXiv:0902.2888},
}
\bib{Lackenby:2002Haken}{article}{
      author={Lackenby, M.},
       title={Heegaard splittings, the virtually Haken conjecture and property $\tau$},
         eprint={arxiv:0205327},
}
\bib{Levitt:1995cost}{article}{
   author={Levitt, G.},
   title={On the cost of generating an equivalence relation},
   journal={Ergodic Theory Dynam. Systems},
   volume={15},
   date={1995},
   number={6},
   pages={1173--1181},
}
\bib{Lubotzky+Zimmer:1989tau}{article}{
   author={Lubotzky, A.},
   author={Zimmer, R. J.},
   title={Variants of Kazhdan's property for subgroups of semisimple groups},
   journal={Israel J. Math.},
   volume={66},
   date={1989},
   number={1-3},
   pages={289--299},
}
\bib{Luck:2002L2book}{book}{
   author={L{\"u}ck, W.},
   title={$L\sp 2$-invariants: theory and applications to geometry and
   $K$-theory},
   series={Ergebnisse der Mathematik und ihrer Grenzgebiete. 3. Folge. A
   Series of Modern Surveys in Mathematics [Results in Mathematics and
   Related Areas. 3rd Series. A Series of Modern Surveys in Mathematics]},
   volume={44},
   publisher={Springer-Verlag},
   place={Berlin},
   date={2002},
   pages={xvi+595},
}
\bib{Margulis:1974:ICM}{article}{
   author={Margulis, G. A.},
   title={Discrete groups of motions of manifolds of nonpositive curvature},
   language={Russian},
   conference={
      title={Proceedings of the International Congress of Mathematicians
      (Vancouver, B.C., 1974), Vol. 2},
   },
   book={
      publisher={Canad. Math. Congress, Montreal, Que.},
   },
   date={1975},
   pages={21--34},
}
\bib{Margulis:1978factors}{article}{
   author={Margulis, G. A.},
   title={Factor groups of discrete subgroups and measure theory},
   language={Russian},
   journal={Funktsional. Anal. i Prilozhen.},
   volume={12},
   date={1978},
   number={4},
   pages={64--76},
}
\bib{Margulis:book}{book}{
      author={Margulis, G. A.},
       title={Discrete subgroups of semisimple {L}ie groups},
      series={Ergebnisse der Mathematik und ihrer Grenzgebiete (3) [Results in
  Mathematics and Related Areas (3)]},
   publisher={Springer-Verlag},
     address={Berlin},
        date={1991},
      volume={17},
}
\bib{Mineyev+Monod+Shalom:2004}{article}{
      author={Mineyev, I.},
      author={Monod, N.},
      author={Shalom, Y.},
       title={Ideal bicombings for hyperbolic groups and applications},
        date={2004},
     journal={Topology},
      volume={43},
      number={6},
       pages={1319\ndash 1344},
}
\bib{Monod:01bdd}{book}{
      author={Monod, N.},
       title={Continuous bounded cohomology of locally compact groups},
      series={Lecture Notes in Mathematics},
   publisher={Springer-Verlag},
     address={Berlin},
        date={2001},
      volume={1758},
}
\bib{Monod:2006-ICM}{article}{
   author={Monod, N.},
   title={An invitation to bounded cohomology},
   conference={
      title={International Congress of Mathematicians. Vol. II},
   },
   book={
      publisher={Eur. Math. Soc., Z\"urich},
   },
   date={2006},
   pages={1183--1211},
}
\bib{Monod+Shalom:CRAS:03}{article}{
      author={Monod, N.},
      author={Shalom, Y.},
       title={Negative curvature from a cohomological viewpoint and cocycle
  superrigidity},
        date={2003},
     journal={C. R. Math. Acad. Sci. Paris},
      volume={337},
      number={10},
       pages={635\ndash 638},
}
\bib{Monod+Shalom:CO:04}{article}{
      author={Monod, N.},
      author={Shalom, Y.},
       title={Cocycle superrigidity and bounded cohomology for negatively
  curved spaces},
        date={2004},
     journal={J. Differential Geom.},
      volume={67},
      number={3},
       pages={395\ndash 455},
}
\bib{Monod+Shalom:OE:05}{article}{
      author={Monod, N.},
      author={Shalom, Y.},
       title={Orbit equivalence rigidity and bounded cohomology},
        date={2006},
     journal={Ann. of Math. (2)},
      volume={164},
      number={3},
       pages={825\ndash 878},
}
\bib{Moore:1982}{article}{
   author={Moore, C. C.},
   title={Ergodic theory and von Neumann algebras},
   conference={
      title={Operator algebras and applications, Part 2},
      address={Kingston, Ont.},
      date={1980},
   },
   book={
      series={Proc. Sympos. Pure Math.},
      volume={38},
      publisher={Amer. Math. Soc.},
      place={Providence, R.I.},
   },
   date={1982},
   pages={179--226},
}
\bib{Mostow:1973book}{book}{
   author={Mostow, G. D.},
   title={Strong rigidity of locally symmetric spaces},
   note={Annals of Mathematics Studies, No. 78},
   publisher={Princeton University Press},
   place={Princeton, N.J.},
   date={1973},
   pages={v+195},
}
\bib{Murray+vonNeumann:1943:IV}{article}{
   author={Murray, F. J.},
   author={von Neumann, J.},
   title={On rings of operators. IV},
   journal={Ann. of Math. (2)},
   volume={44},
   date={1943},
   pages={716--808},
}
\bib{Ornstein+Weiss:80rohlin}{article}{
   author={Ornstein, D.~S.},
   author={Weiss, B.},
   title={Ergodic theory of amenable group actions. I. The Rohlin lemma},
   journal={Bull. Amer. Math. Soc. (N.S.)},
   volume={2},
   date={1980},
   number={1},
   pages={161--164},
}
\bib{Popa:correspondences}{article}{
      author={Popa, S.},
       title={Correspondences},
        date={1986},
     journal={	Pr\'epublication Institul Natinoal Pentru Creatie Stiintiﬁca si Tehnica},
}
\bib{Ozawa:Kurosh}{article}{
   author={Ozawa, N.},
   title={A Kurosh-type theorem for type $\rm II_1$ factors},
   journal={Int. Math. Res. Not.},
   date={2006},
   pages={Art. ID 97560, 21},
}
\bib{Ozawa+Popa:2008Cartan1}{article}{
      author={Ozawa, N.},
      author={Popa, S.},
       title={On a class of $\II_1$ factors with at most one Cartan subalgebra II},
         eprint={arXiv:0807.4270v2},
}
\bib{Popa:2006:Betti}{article}{
      author={Popa, S.},
       title={On a class of type $\II_1$ factors with Betti numbers
  invariants},
        date={2006},
     journal={Ann. of Math. (2)},
      volume={163},
      number={3},
       pages={809\ndash 899},
}
\bib{Popa:2006p750}{article}{
      author={Popa, S.},
       title={Some computations of 1-cohomology groups and construction of
  non-orbit-equivalent actions},
        date={2006},
     journal={J. Inst. Math. Jussieu},
      volume={5},
      number={2},
       pages={309\ndash 332},
}
\bib{Popa:2006RigidityI}{article}{
      author={Popa, S.},
       title={Strong rigidity of {$\rm II\sb 1$} factors arising from malleable
  actions of {$w$}-rigid groups. {I}},
        date={2006},
     journal={Invent. Math.},
      volume={165},
      number={2},
       pages={369\ndash 408},
}
\bib{Popa:2006RigidityII}{article}{
      author={Popa, S.},
       title={Strong rigidity of {$\rm II\sb 1$} factors arising from malleable
  actions of {$w$}-rigid groups. {II}},
        date={2006},
     journal={Invent. Math.},
      volume={165},
      number={2},
       pages={409\ndash 451},
}
\bib{Popa:2007coc1}{article}{
      author={Popa, S.},
       title={Cocycle and Orbit Equivalence superrigidity for malleable actions
  of {\$}w{\$}-rigid groups},
        date={2007},
     journal={Invent. Math.},
      volume={170},
      number={2},
       pages={243\ndash 295},
}
\bib{Popa:2007ICM}{article}{
   author={Popa, S.},
   title={Deformation and rigidity for group actions and von Neumann
   algebras},
   conference={
      title={International Congress of Mathematicians. Vol. I},
   },
   book={
      publisher={Eur. Math. Soc., Z\"urich},
   },
   date={2007},
   pages={445--477},
}
\bib{Popa:2008spec-gap}{article}{
   author={Popa, S.},
   title={On the superrigidity of malleable actions with spectral gap},
   journal={J. Amer. Math. Soc.},
   volume={21},
   date={2008},
   number={4},
   pages={981--1000},
}
\bib{Popa+Sasyk:2007}{article}{
   author={Popa, S.},
   author={Sasyk, R.},
   title={On the cohomology of Bernoulli actions},
   journal={Ergodic Theory Dynam. Systems},
   volume={27},
   date={2007},
   number={1},
   pages={241--251},
}
\bib{Popa+Vaes:08Ber}{article}{
      author={Popa, S.},
      author={Vaes, S.},
       title={Strong rigidity of generalized Bernoulli actions and computations
  of their symmetry groups},
        date={2008},
     journal={Adv. Math.},
      volume={217},
      number={2},
       pages={833\ndash 872},
}
\bib{Popa+Vaes:08Finf}{article}{
      author={Popa, S.},
      author={Vaes, S.},
       title={Actions of $\bbF_\infty$ whose $\II_1$ factors and orbit equivalence
  relations have prescribed fundamental group},
         eprint={arxiv:0803.3351},
}
\bib{Popa+Vaes:08Rn}{article}{
      author={Popa, S.},
      author={Vaes, S.},
       title={Cocycle and orbit superrigidity for lattices in $\SL(n,R)$ acting on homogeneous spaces},
         eprint={arxiv:0810.3630},
}
\bib{Popa+Vaes:amalgamated}{article}{
      author={Popa, S.},
      author={Vaes, S.},
       title={Group measure space decomposition of $\II_1$ factors and $W^*$-superrigidity},
         eprint={arxiv:0906.2765},
}

\bib{Ratner:1994:ICM}{article}{
   author={Ratner, M.},
   title={Interactions between ergodic theory, Lie groups, and number
   theory},
   conference={
      title={ 2},
      address={Z\"urich},
      date={1994},
   },
   book={
      publisher={Birkh\"auser},
      place={Basel},
   },
   date={1995},
   pages={157--182},
}
\bib{Sako:IMRN-classS}{article}{
	author={Sako, H.},
      title={The class $\mathcal{S}$ is an ME invariant},
       date={2009},
    journal={Int. Math. Res. Not.},
     note={rnp025},
	eprint={arXiv:0901.3374},
}
\bib{Sako:JFA-ME}{article}{
	author={Sako, H.},
      title={Measure Equivalence Rigidity and Bi-exactness of Groups},
       date={2009},
    journal={J. Funct. Anal.},
     note={to appear},
	eprint={arXiv:0901.3376},
}
\bib{Sauer:2005L2Betti}{article}{
      author={Sauer, R.},
       title={$L^2$-Betti numbers of discrete measured groupoids},
        date={2005},
     journal={Internat. J. Algebra Comput.},
      volume={15},
      number={5-6},
       pages={1169\ndash 1188},
}
\bib{Sauer:2006:GAFA}{article}{
   author={Sauer, R.},
   title={Homological invariants and quasi-isometry},
   journal={Geom. Funct. Anal.},
   volume={16},
   date={2006},
   number={2},
   pages={476--515},
}	
\bib{Sauer:2006p69}{article}{
      author={Sauer, R.},
       title={Amenable covers, volume and $l^2$-Betti numbers of aspherical
  manifolds},
	eprint={arXiv:math/0605627},
}
\bib{Sauer+Thom:2007p196}{article}{
      author={Sauer, R.},
      author={Thom, A.},
       title={A Hochschild-Serre spectral sequence for extensions of discrete
  measured groupoids},
	eprint={arXiv:0707.0906},
}
\bib{Schmidt:1980:SL2}{article}{
   author={Schmidt, K.},
   title={Asymptotically invariant sequences and an action of ${\rm
   SL}(2,\,{\bf Z})$ on the $2$-sphere},
   journal={Israel J. Math.},
   volume={37},
   date={1980},
   number={3},
   pages={193--208},
}
\bib{Schmidt:1981strerg}{article}{
   author={Schmidt, K.},
   title={Amenability, Kazhdan's property $T$, strong ergodicity and
   invariant means for ergodic group-actions},
   journal={Ergodic Theory Dynamical Systems},
   volume={1},
   date={1981},
   number={2},
   pages={223--236},
}
\bib{Shalom:2000:Inven}{article}{
   author={Shalom, Y.},
   title={Rigidity of commensurators and irreducible lattices},
   journal={Invent. Math.},
   volume={141},
   date={2000},
   number={1},
   pages={1--54},
}
\bib{Shalom:2000p101}{article}{
      author={Shalom, Y.},
       title={Rigidity, unitary representations of semisimple groups, and
  fundamental groups of manifolds with rank one transformation group},
        date={2000},
     journal={Ann. of Math. (2)},
      volume={152},
      number={1},
       pages={113\ndash 182},
}
\bib{Shalom:2006:acta}{article}{
      author={Shalom, Y.},
       title={Harmonic analysis, cohomology, and the large-scale geometry of
  amenable groups},
        date={2004},
     journal={Acta Math.},
      volume={192},
      number={2},
       pages={119\ndash 185},
}
\bib{Shalom:2005ECM}{article}{
   author={Shalom, Y.},
   title={Measurable group theory},
   conference={
      title={European Congress of Mathematics},
   },
   book={
      publisher={Eur. Math. Soc., Z\"urich},
   },
   date={2005},
   pages={391--423},
}
\bib{Singer:1955}{article}{
	author={Singer, I. M.},
       title={Automorphisms of finite factors},
        date={1955},
     journal={Amer. J. Math.},
      volume={77},
       pages={117\ndash 133},
}
\bib{Stuck+Zimmer:1994}{article}{
      author={Stuck, G.},
      author={Zimmer, R.~J.},
       title={Stabilizers for ergodic actions of higher rank semisimple groups},
        date={1994},
     journal={Ann. of Math. (2)},
      volume={139},
      number={3},
       pages={723\ndash 747},
}
\bib{Vaes:2007sb}{article}{
   author={Vaes, S.},
   title={Rigidity results for Bernoulli actions and their von Neumann
   algebras (after Sorin Popa)},
   note={S\'eminaire Bourbaki. Vol. 2005/2006},
   journal={Ast\'erisque},
   number={311},
   date={2007},
   pages={Exp. No. 961, viii, 237--294},
}
\bib{Zimmer:1976:wm}{article}{
   author={Zimmer, R.~J.},
   title={Extensions of ergodic group actions},
   journal={Illinois J. Math.},
   volume={20},
   date={1976},
   number={3},
   pages={373--409},
}
\bib{Zimmer:Amenable:78}{article}{
      author={Zimmer, R.~J.},
       title={Amenable ergodic group actions and an application to {P}oisson
  boundaries of random walks},
        date={1978},
     journal={J. Functional Analysis},
      volume={27},
      number={3},
       pages={350\ndash 372},
}
\bib{Zimmer:cocyclesuper:80}{article}{
      author={Zimmer, R.~J.},
       title={Strong rigidity for ergodic actions of semisimple {L}ie groups},
        date={1980},
        ISSN={0003-486X},
     journal={Ann. of Math. (2)},
      volume={112},
      number={3},
       pages={511\ndash 529},
}
\bib{Zimmer:OE:1981}{article}{
      author={Zimmer, R.~J.},
       title={Orbit equivalence and rigidity of ergodic actions of {L}ie groups},
        date={1981},
        ISSN={0143-3857},
     journal={Ergod. Th. Dynam. Sys.},
      volume={1},
      number={2},
       pages={237\ndash 253},
}
\bib{Zimmer:1981:cohom}{article}{
   author={Zimmer, R.~J.},
   title={On the cohomology of ergodic actions of semisimple Lie groups and
   discrete subgroups},
   journal={Amer. J. Math.},
   volume={103},
   date={1981},
   number={5},
   pages={937--951},
}
\bib{MR672181}{article}{
   author={Zimmer, R.~J.},
   title={Ergodic theory, semisimple Lie groups, and foliations by manifolds
   of negative curvature},
   journal={Inst. Hautes \'Etudes Sci. Publ. Math.},
   number={55},
   date={1982},
   pages={37--62},
}
\bib{Zimmer:1982:Bull}{article}{
   author={Zimmer, R.~J.},
   title={Ergodic theory, group representations, and rigidity},
   journal={Bull. Amer. Math. Soc. (N.S.)},
   volume={6},
   date={1982},
   number={3},
   pages={383--416},
}
\bib{Zimmer:1983:products}{article}{
   author={Zimmer, R.~J.},
   title={Ergodic actions of semisimple groups and product relations},
   journal={Ann. of Math. (2)},
   volume={118},
   date={1983},
   number={1},
   pages={9--19},
}
\bib{Zimmer:1984:Kazhdan-on-mfld}{article}{
   author={Zimmer, R.~J.},
   title={Kazhdan groups acting on compact manifolds},
   journal={Invent. Math.},
   volume={75},
   date={1984},
   number={3},
   pages={425--436},
}
\bib{Zimmer:book:84}{book}{
      author={Zimmer, R.~J.},
       title={Ergodic theory and semisimple groups},
      series={Monographs in Mathematics},
   publisher={Birkh\"auser Verlag},
     address={Basel},
        date={1984},
      volume={81},
}
\bib{Zimmer:1991:trans}{article}{
   author={Zimmer, R.~J.},
   title={Groups generating transversals to semisimple Lie group actions},
   journal={Israel J. Math.},
   volume={73},
   date={1991},
   number={2},
   pages={151--159},
}

\end{biblist}
\end{bibdiv}

\end{document}